\numberwithin{equation}{section}
\newtheorem{thm}{Theorem}[section]
\newtheorem{lem}[thm]{Lemma}
\newtheorem{prop}[thm]{Proposition}
\newtheorem{cor}[thm]{Corollary}
\theoremstyle{definition}
\newtheorem{defn}[thm]{Definition}
\theoremstyle{remark}
\newtheorem{rmk}[thm]{Remark}
\newtheorem{ex}[thm]{Example}
\newtheorem{notn}[thm]{Notation}
\newcommand\fib{\ar @{->>} [r]} 
\newcommand\cof{\ar @{ >->}[r]}
\newcommand \Om{\Omega}
\newcommand \n{\nabla}
\newcommand \del{\partial}
\newcommand \vp{\varphi}
\newcommand \ve{\varepsilon}
\newcommand{\sdr}[2]{\overset{#1}{\underset{#2}{\rightleftharpoons}}}
\newcommand \im{\operatorname{Im}}
\newcommand \sn[1]{(-1)\sp{#1}}
\newcommand\op{\mathcal}
\newcommand\cat{\mathbf}
\newcommand{\ob}{\operatorname{Ob}}
\newcommand{\rth}{$r^{\text{th}}$}
\newcommand{\hoch}{\mathscr H}
\renewcommand{\Bar}{\mathscr B}
\newcommand{\cohoch}{\widehat{\mathscr H}}
\newcommand{\si}{s^{-1}}
\newcommand{\tot}{\operatorname{Tot}}
\newcommand{\G}{\mathsf G}
\newcommand{\Z}{\mathsf Z}
\newcommand{\B}{\mathsf B}
\newcommand{\C}{\mathsf C}
\newcommand{\W}{\mathsf W}
\newcommand{\E}{\mathsf E}
\newcommand{\Eu}{\mathsf E^{\mathsf u}}
\renewcommand{\H}{\mathsf H}
\renewcommand{\P}{\mathsf P}
\newcommand{\Pu}{\mathsf P^{\mathsf u}}
\newcommand{\Q}{\mathsf Q}
\newcommand{\coH}{\widehat{\mathsf H}}
\newcommand{\zz}{\mathbb Z}
\newcommand{\Id}{\operatorname{Id}}
\renewcommand{\S}{\mathsf S^{2}}
\newcommand{\A}{\mathsf A}
\newcommand\M{\mathfrak M}
\begin{document}
\title [Power maps in algebra and topology]{Power maps in algebra and topology}
\author {Kathryn Hess}

\address{MATHGEOM \\
    \'Ecole Polytechnique F\'ed\'erale de Lausanne \\
    CH-1015 Lausanne \\
    Switzerland}
    \email{kathryn.hess@epfl.ch}

\date {\today }
 \keywords {Free loop space, power map, twisting cochain, Hochschild complex} 
 \subjclass [2010] {Primary: 16E40, 55P35; Secondary: 16E45, 16T10, 16T15, 18G55, 55P40, 55U10, 55U15, 57T05, 57T30}
 \begin{abstract} Given any twisting cochain $t:C\to A$, where $C$ is a connected, coaugmented chain coalgebra and $A$ is an augmented chain algebra over an arbitrary PID $R$, we construct a twisted extension of chain complexes 
 $$\xymatrix@1{A\cof&\hoch (t)\fib & C.}$$ 
We show that  both the well-known Hochschild complex of an associative algebra and the coHochschild complex of a coassociative coalgebra \cite{hps-cohoch} are special cases of $\hoch (t)$, which we therefore call the Hochschild complex of $t$.  We explore the extent of the naturality of the Hochschild complex construction and apply the results  of this exploration to determining conditions under which $\hoch(t)$ admits multiplicative or comultiplicative structure.   In particular, we show that the Hochschild complex on a chain Hopf algebra always admits a natural comultiplication.

Furthermore, when $A$ is a chain Hopf algebra, we determine conditions under which $\hoch(t)$ admits an $r^{\text{th}}$-power map extending the usual $r^{\text{th}}$-power map on $A$ and lifting the identity on $C$.  As special cases, we obtain that both the Hochschild complex of any cocommutative Hopf algebra and  the coHochschild complex of the normalized chain complex of a double suspension admit power maps.  We show moreover that if $K$ is a double suspension, then the power map on the coHochschild complex of the normalized chain complex of $K$ is a model of the topological power map on the space of free loops on $K$, illustrating the topological relevance of our algebraic construction.
 \end{abstract}
 
 \maketitle

\tableofcontents
%%%%%%%%%%%%%%
%INTRODUCTION              % 
%%%%%%%%%%%%%%

\section {Introduction}\label{sec:introduction}

Let $X$ be a topological space, and let $\op L X$ denote its free loop space, i.e., the space of unbased maps from the circle into $X$.   Let $e:\op LX\to X$ denote the map sending a loop to its basepoint, i.e., $e(\ell)=\ell (z_{0})$, where $z_{0}$ is a fixed basepoint for the circle. 

For any $r\in \mathbb N$,  the free loop space $\op LX$ admits an \rth-power map given by iterated concatenation of loops with the same basepoint.  More precisely, let $\op L^r X$ denote the pullback in the following diagram.
$$\xymatrix{ \op L^rX\ar[d]^{e^{(r)}} \ar [r]&(\op LX)^{\times r}\ar [d]^{e^{\times r}}\\ X\ar [r]^{\Delta ^{(r)}}&X^{\times r}}$$
Let  $\Delta_{top}^{(r)}:\op LX\to \op L ^rX: \ell\mapsto (\ell,...,\ell)$.

Elements of $\op L^rX$ are sequences of $r$ loops in $X$, all with the same basepoint  in $X$.  Fixing an order of concatenation, we can define a continuous map $\widetilde\mu_{top} ^{(r)}:\op L^rX\to \op LX$.  The \rth-power map $\widetilde\lambda_{top}^{(r)}$ on $\op LX$ is equal to the composite
\begin{equation}\label{eqn:top-nth}
\op LX \xrightarrow {\Delta_{top} ^{(r)}} \op L^rX \xrightarrow {\widetilde\mu_{top} ^{(r)}} \op LX.
\end{equation}
In fact there is a commuting diagram of maps of fibration sequences
$$\xymatrix{\Om X\ar [d] \ar [r]^(0.45){\Delta_{top}^{(r)}}&(\Om X)^{\times r}\ar [d]\ar[r]^(0.55){\mu_{top} ^{(r)}}&\Om X\ar [d]\\
		\op LX\ar [d]^{e} \ar [r]^(0.47){\Delta_{top}^{(r)}}&\op L^rX\ar [d]^{e^{(r)}}\ar[r]^(0.53){\widetilde\mu_{top} ^{(r)}}&\op LX\ar [d]^{e}\\
		X\ar@{=} [r]&X\ar@{=}[r]&X,}$$
where $\mu_{top}^{(r)}:(\Om X)^{\times r} \to \Om X$ is the iterated multiplication map, given by concatenation of based loops  in the same order as that used in the definition of $\widetilde\mu_{top}^{(r)}$.  Letting $\lambda^{(r)}_{top}=\mu_{top} ^{(r)}\Delta_{top}^{(r)}$, we therefore have a commuting diagram of fibration sequences
\begin{equation}\label{eqn:fibseq}
\xymatrix{\Om X\ar[d]_{\lambda ^{(r)}_{top}}\cof &\op L X \ar[d]_{\widetilde\lambda ^{(r)}_{top}}\fib &X\ar@{=}[d]\\
		\Om X\cof &\op L X \fib &X.}
\end{equation}

Power maps on free loop spaces play an important role in algebraic topology, most notably in the definition of the topological cyclic homology, $TC(X;p)$,  of a space $X$, where $p$ is any prime \cite{bhm}.  Our goal in this paper is to describe a reasonably simple algebraic model of the \rth-power map on $\op LX$ (Theorem \ref{thm:topo-alg}), which is an essential building block in our construction of a chain model of $TC(X;p)$ in \cite{hess-rognes}.  In particular, for any  simply connected double suspension $X$, we define a small chain complex of abelian groups $fls_{*}(X)$ and an endomorphism $\widetilde\lambda_{r}$ of $fls_{*}(X)$ such that $H_{*}(fls_{*}(X))\cong H_{*}(\op LX)$ and such that $H_{*}\widetilde\lambda_{r}$ corresponds to $H_{*}\widetilde\lambda_{top}^{(r)}$ under this isomorphism.

\begin{rmk}  Power maps and the maps they induce in (co)homology are also sometimes called $\lambda$-operations, Adams operations or Frobenius operations.
\end{rmk}

\subsection{History}
It has been known for many years that the Hochschild homology of a (differential graded) commutative algebra also admits natural power operations \cite{gerstenhaber-schack:jpaa}, \cite{loday:inventiones}, which are closely related to the power operations on free loop spaces \cite{burgehelea-fiedorowicz-gajda}, \cite{vigue}.  These power operations in Hochschild homology can be constructed as follows.

If $H$ is a Hopf algebra over a ring $R$, then the $R$-module $\operatorname{End}(H)$ of endomorphisms of $H$ admits a multiplication $*$, called the \emph{convolution product}.   For all $f,g\in End(H)$, the convolution product of $f$ and $g$ is the composite
$$H\xrightarrow \delta H\otimes H\xrightarrow {f\otimes g} H\otimes H\xrightarrow \mu H,$$
where $\delta$ and $\mu$ are the comultiplication and multiplication on $H$, respectively.  The \emph{\rth-power map}  on $H$ is then just 
\begin{equation}\label{eqn:convolution}
\lambda_{r}=\Id_{H}^{*r}=\mu ^{(r)}\delta^{(r)},
\end{equation}
where $\delta^{(r)}:H\to H^{\otimes r}$ and $\mu^{(r)}:H^{\otimes r}\to H$ denote the iterated comultiplication and multiplication.

Recall that the Hochschild complex of an augmented algebra $A$, which we denote $\hoch(A)$, can be seen as a twisted extension of $A$ by $\Bar (A)$, the bar construction, i.e., there is a twisted tensor extension
\begin{equation}\label{eqn:hochbar}
\xymatrix{A\cof&\hoch (A) \fib & \Bar (A)}.
\end{equation}
If $A$ is commutative, then $\Bar(A)$ is naturally a commutative Hopf algebra, where the multiplication is the shuffle product.  The multiplication on $\Bar (A)$ lifts to $\hoch (A)$, so that (\ref{eqn:hochbar}) becomes a sequence of algebra maps. Moreover, the \rth-power map on $\Bar (A)$ also lifts to a linear map $\widetilde \lambda_{r}:\hoch(A) \to \hoch(A)$ such that
\begin{equation}\label{eqn:twistedext}
\xymatrix{ A\ar @{=}[d] \cof &\hoch(A)\ar [d]_{\widetilde \lambda_{r}}\fib&\Bar(A)\ar[d]_{\lambda_{r}}\\
		A \cof &\hoch(A)\fib&\Bar(A)}
\end{equation}
commutes.  The map induced by $\widetilde \lambda_{r}$ in Hochschild homology is the \rth-power map of \cite{gerstenhaber-schack:jpaa}, \cite{loday:inventiones}.

Compare the diagram (\ref{eqn:twistedext}) to the diagram below, obtained by applying the singular cochains functor $S^*$ to diagram (\ref{eqn:fibseq}).
\begin{equation}\label{eqn:cochains-fibseq}
\xymatrix{ S^*(X)\ar @{=}[d] \cof &S^*(\op LX)\ar [d]_{S^*\widetilde \lambda_{top}^{(r)}}\fib&S^*(\Om X)\ar[d]_{S^*\lambda_{top}^{(r)}}\\
		S^*(X)\cof &S^*(\op LX)\fib&S^*(\Om X)}
\end{equation}
Burghelea, Fiedorowicz and Gajda \cite{burgehelea-fiedorowicz-gajda} and Vigu\'e \cite{vigue}, building on work of Jones \cite{jones}, showed that the analogy between diagrams (\ref{eqn:twistedext}) and (\ref{eqn:cochains-fibseq}) is not purely formal, at least rationally, when they proved the following result.  

Let $X$ be a simply connected simplicial complex, and let $\Bbbk$ be a field of characteristic zero.  Let $\op A_{PL}(X)$ denote the commutative cochain algebra of piecewise linear differential forms on $X$ with coefficients in $\Bbbk$.  Let $\widetilde {HH}_{*}\big(\op A_{PL}(X)\big)$ denote the reduced homology of $\hoch\big(\op A_{PL}(X)\big)$. There is an isomorphism of graded $\Bbbk$-vector spaces
$$a:\widetilde {HH}_{-*}\big(\op A_{PL}(X)\big) \xrightarrow \cong \widetilde H^*(\op LX;\Bbbk)$$ such that 
$$\xymatrix{\widetilde {HH}_{-*}\big(\op A_{PL}(X)\big)\ar [d]_{H_{-*}\widetilde\lambda _{r}}\ar [r]^{a}_{\cong}&\widetilde H^*(\op LX;\Bbbk)\ar[d]^{H^*\widetilde \lambda_{top}^{(r)}}\\
 \widetilde {HH}_{-*}\big(\op A_{PL}(X)\big)\ar [r]^{a}_{\cong}&\widetilde H^*(\op LX;\Bbbk)}$$
 commutes.
 
Our construction in this paper of the integral chain model $fls_{*}(X)$ of the free loop space and of the power map on $fls_{*}(X)$ is inspired by this rational result.  In particular, as explained below, we also work with Hochschild-type complexes.  

\subsection{Summary of results}
We begin this paper by laying the theoretical foundations of our algebraic model of the free loop space and its power operations.

Let $\Om$ and $\Bar$ denote the cobar and bar construction functors, respectively. For any twisting cochain $t:C\to A$, we define a chain complex $\hoch (t)$ that is a  twisted tensor extension of $A$ by $C$.  The complex $\hoch (t)$ generalizes both the well-known Hochschild complex $\hoch (A)$ of a chain algebra $A$ and the coHochschild complex $\cohoch (C)$ of a chain coalgebra $C$ \cite{hps-cohoch}.  
\begin{itemize}
\item If $t_{\Bar}:\Bar A \to A$ is the couniversal twisting cochain associated to $A$, then $\hoch (t_{\Bar})= \hoch (A)$.
\item If $t_{\Om }:C\to \Om C$ is the universal twisting cochain associated to $C$, then $\hoch (t_{\Om})=\cohoch (C)$.
\end{itemize}
We first show that the construction $\hoch (t)$ is natural with respect to the most obvious notion of morphisms of twisting cochains:  pairs $(f,g)$, where $f$ is a map of coalgebras and $g$ is a map of algebras, commuting with the twisting cochains.  In particular, any twisting cochain $t:C\to A$ induces a chain map $\cohoch(C) \to \hoch(A)$.

We then prove that the Hochschild complex construction admits an extended naturality, with respect to pairs of maps $(f,g)$, where either $f$ is a map of coalgebras up to strong homotopy (Theorem \ref{thm:hoch-extnatl}) or $g$ is a map of algebras up to strong homotopy (Theorem \ref{thm:hoch-extnatl-dual}).  We point out that the natural section $A\to \Om \Bar A$ of the counit $\Om \Bar A\to A$ of the bar/cobar adjunction is a map of algebras up to strong homotopy, while the natural retration $\Bar \Om C\to C$ of the unit map $C\to \Bar \Om C$ is a map of coalgebras up to strong homotopy.  Consequently,  the natural chain map $\cohoch(C)\to \hoch(\Om C)$ admits a retraction, while $\cohoch(\Bar A)\to \hoch(A)$ admits a section (Corollaries \ref{cor:dcsh-unit} and \ref{cor:dash-counit}).

We are interested in determining conditions that guarantee the existence of operations and cooperations on the Hochschild complex, which motivates us to study certain types of (co)algebras with additional structure, known as Alexander-Whitney (co)algebras.  A chain coalgebra $C$ is an Alexander-Whitney coalgebra if its comultiplication map is a map of coalgebras up to strong homotopy, and the higher homotopies induce a coassociative comultiplication on $\Om C$.  Every Alexander-Whitney coalgebra $C$ is therefore a Hirsch coalgebra \cite{kadeishvili}, but, as we show (Example \ref{ex:nonrealAW}), not all Hirsch coalgebras are Alexander-Whitney coalgebras.  Dually, a chain algebra $A$ is an Alexander-Whitney algebra if its multiplication map is a map of algebras up to strong homotopy, and the higher homotopies induce an associative multiplication on $\Bar A$.  Every Alexander-Whitney algebra $A$ is therefore a Hirsch algebra.  Alexander-Whitney (co)algebras are special types of $B_{\infty}$-(co)algebras \cite{getzler-jones}, \cite {baues2}.

We prove that if $H$ is a chain Hopf algebra, then $\Bar H$ is an Alexander-Whitney coalgebra and $\Om H$ is an Alexander-Whitney algebra (Theorem \ref{thm:bar-hopf}), improving a result of Kadeishvili \cite{kadeishvili}.  In the course of the proof, we establish a result that is interesting in and of itself (Theorem \ref{thm:aw-bar-dcsh}):  for every pair of chain algebras $A$ and $A'$, the natural ``Alexander-Whitney'' map $\Bar (A\otimes A')\to \Bar A \otimes \Bar A'$ is map of coalgebras up to strong homotopy.

As a consequence of the extended naturality of the Hochschild complex construction, we obtain that if $t:C\to H$ is a twisting cochain such that $C$ is an Alexander-Whitney coalgebra and $H$ is a chain Hopf algebra, then $\hoch(t)$ admits  a comultiplication extending that on $H$ and lifting that on $C$ (Theorem \ref{thm:hoch-comult}).  Dually, if $t:H\to A$ is a twisting cochain such that $A$ is an Alexander-Whitney algebra and $H$ is a chain Hopf algebra, then $\hoch (t)$ admits a multiplication extending that on $A$ and lifting that on $H$ (Theorem \ref{thm:hoch-mult}).  In particular, if $C$ is an Alexander-Whitney coalgebra, then $\cohoch(C)$ admits a comultiplication, while if $A$ is an Alexander-Whitney algebra, then $\hoch (A)$ admits a multiplication.

The algebraic heart of this article concerns the existence of power maps on the Hochschild complex of a twisting cochain.  We show that, under certain cocommutativity conditions, if $t:C\to H$ is a twisting cochain, where $C$ is a Hirsch coalgebra and $H$ is a chain Hopf algebra, then $\hoch(t)$ admits an \rth-power map $\widetilde\lambda_{r}$ extending the usual \rth-power map on $H$ and lifting the identity on $C$ (Theorem \ref{thm:exists-nthpower}). In particular, if $H$ is a cocommutative Hopf algebra, then $\hoch (H)$ admits an \rth-power map extending the usual \rth-power map on $H$ and lifting the identity map on $\Bar H$ (Corollary \ref{cor:nth-hoch}).  Dually, if $C$ is a Hirsch coalgebra such that associated comultiplication on $\Om C$ is cocommutative, then $\cohoch(C)$ admits an \rth-power map extending the usual \rth-power map on $\Om C$ and lifting the identity on $C$ (Corollary \ref{cor:nth-cohoch}).  We also show that the natural map $\cohoch(C) \to \hoch (H)$ induced by the twisting cochain $t$ commutes with the \rth-power maps. 

Once we have laid these algebraic foundations, we proceed to apply the Hochschild complex construction to building an integral algebraic model of the free loop space and its power operations.   We begin by recalling the cyclic bar construction from \cite{waldhausen}, which is a simplicial model of the free loop space on the classifying space of a topological group, and the B\"okstedt-Hsiang-Madsen model of the power maps in this model (Proposition \ref{prop:bhm})\cite{bhm}.  The model given in \cite{bhm} for the \rth-power map is not simplicial, but we show that it is homotopic to the realization of a purely simplicial map that mimics loop concatenation (Proposition \ref{prop:john}).  As a consequence, we obtain a particularly simple formula for the \rth-power map in the B\"okstedt-Hsiang-Madsen model (Corollary \ref{cor:simpl-rth}). 

Applying Artin-Mazur totalization \cite{artin-mazur}, we pass from the cyclic bar construction to a ``simplicial Hochschild'' model of the free loop space on the classifying space of a simplicial group (Theorem \ref{thm:tot-zg}), which is to the cyclic bar construction as the simplicial classifying space $\overline\W$ functor is to the simplicial bar construction functor $\B$. The induced \rth-power map on the simplicial Hochschild model also has a relatively simple expression (Remark \ref{rmk:simpl-rth}).

Considering only those simplicial groups that are obtained by applying Kan's loop group functor $\G$ to a reduced simplicial set $K$, we can then simplify even further the simplicial free loop space model, constructing a ``simplicial coHochschild'' model, denoted $\widehat \H K$, of the free loop space on the classifying space of $\G K$ (Theorem \ref{thm:simpl-cohoch-to-hoch}).  The expression for the induced \rth-power map  on $\widehat \H K$ is extremely simple (Remark \ref{rmk:rth-simpl-cohoch}).

We then prove that $C_{*}\widehat \H K$ is weakly equivalent to the singular chain complex of the free loop space on the classifying space of the realization of $\G K$ (Theorem \ref{thm:small-to-big}).  Moreover, the weak equivalence commutes with the chain maps induced by the \rth-power maps. 

It follows from a theorem in \cite{hps-cohoch} that if $K$ is a $1$-reduced simplicial set, then $\cohoch(C_{*}K)$ is weakly equivalent to  $C_{*}\widehat \H K$ (Theorem \ref{thm:cohoch-ez}).  We show here that this weak equivalence respects the \rth-power maps in $\cohoch (C_{*}K)$ and in $C_{*}\widehat \H K$ (Theorem \ref{thm:compat}), at least when $K$ is a simplicial double suspension. 

Finally, we conclude that if $X$ is a simply connected topological space with the homotopy type of the realization of a simplicial double suspension $K$, then  $\big (\cohoch(C_{*}K), \widetilde\lambda _{r}\big)$ is an algebraic model of $\big(\op LX, \widetilde \lambda _{top}^{(r)}\big)$ (Theorem \ref{thm:topo-alg}).  We apply this result to computing \rth-power maps in $H_{*}(\op L S^n)$ for $n\geq 2$. 

\begin{rmk} The \rth-power maps on the Hochschild complex of a cocommutative Hopf algebra should induce a Hodge-type decomposition of its Hochschild homology, at least in characteristic zero. It would be interesting to study this decomposition.
\end{rmk}

\begin{rmk}  We suspect that it is possible to generalize the constructions here to higher order Hochschild complexes, in the sense of Pirashvili and Ginot \cite{pirashvili}, \cite{ginot}.
\end{rmk}

\subsection{Notation and conventions}
\begin{itemize}

\item Throughout this paper we are working over a principal ideal domain $R$.  We denote  the category of (non-negative) chain complexes over $R$ by $\cat{Ch}_R$, the category of augmented chain algebras over $R$ by $\cat {Alg}_{R}$, the category of coaugmented, connected chain coalgebras by $\cat{Coalg}_{R}$ and the category of connected chain Hopf algebras by $\cat {Hopf}_{R}$.  The underlying graded modules of all chain complexes are assumed to be $R$-free.  
The degree of an element $v$ of a chain complex $V$ is denoted $|v|$. 

Given chain complexes $(V,d)$ and $(W,d)$, the notation
$f:(V,d)\xrightarrow{\simeq}(W,d)$ indicates that $f$ induces an isomorphism in homology. 
In this case we refer to $f$ as a \emph {quasi-isomorphism}.

Let $f,g:A\to A'$ be morphisms of chain algebras.  A \emph{derivation homotopy} from $f$ to $g$ consists of a chain homotopy $H:A\to A'$ from $f$ to $g$ such that $H(ab)=H(a)f(b)+(-1)^{|a|}g(a) H(b)$ for all $a,b\in A$.
\item Throughout this article we apply the Koszul sign convention for commuting elements  of a graded module or for commuting a morphism of graded modules past an element of the source module.  For example,  if $V$ and $W$ are graded algebras and $v\otimes w, v'\otimes w'\in V\otimes W$, then $$(v\otimes w)\cdot (v'\otimes w')=(-1)^{|w|\cdot |v'|}vv'\otimes ww'.$$ Furthermore, if $f:V\to V'$ and $g:W\to W'$ are morphisms of graded modules, then for all $v\otimes w\in V\otimes W$, 
$$(f\otimes g)(v\otimes w)=(-1)^{|g|\cdot |v|} f(v)\otimes g(w).$$
\item The \emph {suspension} endofunctor $s$ on the category of graded modules is defined on objects $V=\bigoplus _{i\in \mathbb Z} V_ i$ by
$(sV)_ i \cong V_ {i-1}$.  Given a homogeneous element $v$ in
$V$, we write $sv$ for the corresponding element of $sV$. The suspension $s$ admits an obvious inverse, which we denote $\si$.

\item Let $T$ denote the endofunctor on the category of free graded $R$-modules given by
$$TV=\oplus _{n\geq 0}V^{\otimes n},$$
where $V^{\otimes 0}=R$.  An element of the summand $V^{\otimes n}$ of $TV$ is a sum of terms denoted $v_{1}|\cdots |v_{n}$, where $v_{i}\in V$ for all $i$. 
\item The \emph{bar construction} functor $\Bar: \cat{Alg}_{R}\to \cat {Coalg}_{R}$ is defined by 
$$\Bar A=\left(T (s\overline A), d_{\Bar}\right)$$
where $\overline A$ denotes the augmentation ideal of $A$, and if $d$ is the differential on $A$, then
\begin{align*}
d_{\Bar}(sa_{1}|\cdots|sa_{n})=&\sum _{1\leq j\leq n}\pm sa_{1}|\cdots |s(da_{j})|\cdots |sa_{n}\\ 
&+\sum _{1\leq j<n}\pm sa_{1}|...|s(a_{j}a_{j+1})|\cdots |sa_{n}.
\end{align*} 
The graded $R$-module underlying $\Bar A$ is naturally a cofree coassociative coalgebra, with comultiplication given by splitting of words. 
\item The \emph{cobar construction} functor $\Om :\cat {Coalg}_{R}\to \cat {Alg}_{R}$ is defined by 
$$\Om C= \left(T (\si \overline C), d_{\Om}\right)$$
where $\overline C$ denotes the coaugmentation coideal of $C$, and if $d$ denotes the differential on $C$, then
\begin{align*}
d_{\Om}(\si c_{1}|\cdots|\si c_{n})=&\sum _{1\leq j\leq n}\pm \si c_{1}|\cdots |\si (dc_{j})|\cdots |\si c_{n}\\ 
&+\sum _{1\leq j\leq n}\pm \si c_{1}|...|\si c_{ji}|\si c_{j}{}^{i}|\cdots |\si c_{n},
\end{align*}
with signs determined by the Koszul rule, where the reduced comultiplication applied to $c_{j}$ is $c_{ji}\otimes c_{j}{}^{i}$ (using Einstein implicit summation notation).  The graded $R$-module underlying $\Om C$ is naturally a free associative algebra, with multiplication given by concatenation.
\item The category of simplicial sets is denoted $\cat {sSet}$ in this article.  Its full subcategory of reduced simplicial sets (i.e., simplicial sets with a unique 0-simplex) is denoted $\cat {sSet}_{0}$, while the category of pointed simplicial sets and basepoint-preserving simplicial maps is denoted $\cat {sSet}_{*}$.  Observe that $\cat {sSet}_{0}$ can naturally be viewed as a full subcategory of $\cat {sSet}_{*}$.  Objects in $\cat {sSet}$, $\cat {sSet}_{*}$, and $\cat {sSet}_{0}$ are usually denoted $M$, $L$ and $K$, respectively, in this paper.
\item The normalized chains functor from simplicial sets to chain complexes is denoted $C_{*}$, while the singular simplices functor from topological spaces to simplicial sets is denoted $\mathcal S_{\bullet}$.  Their composite, $C_{*}\circ \mathcal S_{\bullet}$, is denoted $S_{*}$.  The left adjoint to $\mathcal S_{\bullet}$, i.e.,  geometric realization, is denoted $|-|$.

\end{itemize}

\subsection*{Acknowledgment}  The author would like to express her heartfelt appreciation and deep gratitude to John Rognes for his participation in extensive discussions of this project over the course of the past several years.  His contributions were particularly crucial to  the completion of Section \ref{sec:geometry}.

%%%%%%%
%SEC 1      %
%%%%%%%

\section{The Hochschild complex of a twisting cochain}

We begin this section by recalling the definition and certain well-known examples of twisting cochains.  We observe that twisting cochains are the objects of a category $\cat {Twist}$, which admits an interesting monoidal structure. We then define the Hochschild complex functor $\hoch: \cat {Twist}\to \cat {Ch}_{R}$, which turns out to be strongly monoidal, and consider certain important special cases.  

Weakening the definition of morphisms in $\cat {Twist}$ somewhat, we then consider two faithful, wide embeddings (i.e.,  injective on morphisms and bijective on objects) $\cat {Twist}\hookrightarrow \cat {Twist^{sh}}$ and  $\cat {Twist}\hookrightarrow \cat {Twist_{sh}}$ and show that the Hochschild complex functor extends over both $\cat {Twist^{sh}}$ and $\cat {Twist_{sh}}$.  The extended naturality of the Hochschild construction that we obtain in this manner plays an essential role in the later sections of the paper.

\subsection{Twisting cochains: definition and examples}

Seen as functors from coalgebras to algebras and vice-versa, the cobar and bar constructions form an adjoint pair $\Om \dashv \Bar$.  Let $\eta: \Id\to \Bar \Om$ denote the unit of this adjunction.  It is well known that for all connected, coaugmented chain coalgebras $C$, the counit map
\begin{equation}\label{eqn:unit-barcobar}
\eta_{C}:C\xrightarrow\simeq\Bar\Om C
\end{equation}
is a quasi-isomorphism of chain coalgebras.   

Dually, let $\ve: \Om \Bar \to \Id$ denote the counit of this adjunction.  For all augmented chain algebras $A$, the unit map
\begin{equation}\label{eqn:counit-barcobar}
\ve_{A}:\Om \Bar A\xrightarrow\simeq A
\end{equation}
is a quasi-isomorphism of chain algebras.

\begin{defn}
A \emph{twisting cochain} from a connected, coaugmented chain coalgebra $(C,d)$ with comultiplication $\Delta$ to an augmented chain algebra $(A,d)$ with multiplication $m$ consists of a linear map $t:C\to A$ of degree $-1$ such that
$$dt+td=m (t\otimes t)\Delta.$$
\end{defn}

\begin{rmk}\label{rmk:twisting-induced}
A twisting cochain $t:C\to A$ induces both a chain algebra map
$$\alpha _{t}:\Om C\to A$$
specified by $\alpha _{t}(\si c)=t(c)$ and a chain coalgebra map
$$\beta _{t}:C\to \Bar A, $$
 satisfying
 $$\alpha_{t}=\ve_{A}\circ \Om \beta_{t}\quad\text{and}\quad \beta _{t}=\Bar\alpha_{t}\circ \eta_{C}.$$
It follows that $\alpha_{t}$ is a quasi-isomorphism if and only if $\beta_{t}$ is a quasi-isomorphism.  
\end{rmk}
 
 \begin{ex} Let $C$ be a connected, coaugmented chain coalgebra. The \emph{universal  twisting cochain}
$$t_{\Om}:C\to \Om C$$
is defined by $t_{\Om }(c)=s^{-1} c$ for all $c\in C$, where $\si c$ is defined to be $0$ if $|c|=0$.  Note that  $\alpha_{t_{\Om}}=\Id_{\Om C}$, so that $\beta _{t_{\Om}}=\eta_{C}$.  Moreover, $t_{\Om}$ truly is universal, as all twisting cochains $t:C\to A$ factor through $t_{\Om}$, since the diagram
 $$\xymatrix{C\ar[r] ^{t_{\Om}}\ar [dr]_{t}&\Om C\ar [d]^{\alpha_{t}}\\ &A}$$
 always commutes.
 \end{ex}
 
  \begin{ex} Let $A$ be an augmented chain algebra. The \emph{couniversal  twisting cochain}
$$t_{\Bar}:\Bar A\to A$$
is defined by $t_{\Bar  }(sa)=a$ for all $a\in A$ and $t_{\Bar}(sa_{1}|\cdots |sa_{n})=0$ for all $n\not= 1$.  Note that  $\beta_{t_{\Bar}}=\Id_{\Bar A}$, so that $\alpha _{t_{\Bar}}=\ve_{A}$.  Moreover, $t_{\Bar}$ truly is couniversal, as all twisting cochains $t:C\to A$ factor through $t_{\Bar}$, since the diagram
 $$\xymatrix{&\Bar A\ar [d]^{t_{\Bar}}\\ C\ar[ur]^{\beta_{t}}\ar [r]_{t}&A}$$
 always commutes.
 \end{ex}

 \begin{ex} \label{ex:szczarba} Let $K$ be a reduced simplicial set, and let $\G K$ denote its Kan loop group. In 1961 \cite {szczarba}, Szczarba gave an explicit formula for a twisting cochain
$$t_{K}:C_{*}K\to C_{*}\G K,$$
natural in $K$ that induces a chain algebra map
\begin{equation}\label{eqn:szczarba}
\alpha _{K}:=\alpha_{t_{K}}:\Om C_{*}K\to C_{*}\G K.
\end{equation}
As shown in \cite{hess-tonks}, $\alpha_{K}$ factors naturally through an ``extended cobar construction,'' $\widehat \Om C_{*}K$, of which the usual cobar construction is a chain subalgebra, i.e., there is a commuting diagram of chain algebra maps
$$\xymatrix{\Om C_{*}K \ar [rr]^{\alpha_{K}}\ar@{>->}[dr]&& C_{*}\G K.\\ &\widehat \Om C_{*}K \ar [ur]_{\widehat \alpha_{K}}}$$
Moreover, $\widehat \alpha_{K}$ admits a natural retraction $\rho_{K}$ such that $\widehat \alpha_{K}\rho_{K}$ is chain homotopic to the identity on $C_{*}\G K$.  In particular, $\widehat\alpha_{K}$ is a quasi-isomorphism for all reduced simplicial sets $K$.  Since $\widehat \Om C_{*}K =\Om C_{*}K$ if $K$ is actually 1-reduced, it follows that
that $\alpha_{K}$ itself is a quasi-isomorphism if $K$ is 1-reduced.
\end{ex}

\begin{rmk}  If $t:C\to A$ is a twisting cochain, $f:C'\to C$ is a chain coalgebra map and $g:A\to A'$ is a chain algebra map, then $gtf:C'\to A'$ is also a twisting cochain.
\end{rmk}

\begin{notn}\label{notn:twist}  Let $\cat{Twist}$ denote the category such that
\begin{itemize}
\item $\ob\cat{Twist}=\{t:C\to A\mid t \text{ twisting cochain}\}$, and
\item  if $t:C\to A$ and $t':C'\to A'$ are twisting cochains, then
$$\cat{Twist}(t,t')=\{ (f,g)\in \cat{Coalg}_{R}(C,C')\times \cat {Alg}_{R}(A,A')\mid g\circ t=t'\circ f\}.$$
\end{itemize}
Composition of morphisms in $\cat {Twist}$ is defined componentwise.
\end{notn}

\begin{rmk} Note that $(f,g)\in \cat{Twist}(t,t')$ if and only if $\Bar g \circ \beta _{t}=\beta _{t'}\circ f$, which is true if and only if $g\circ \alpha _{t}=\alpha _{t'}\circ \Om f$ .
\end{rmk}

Later in this paper we are led to consider the following variant of $\cat {Twist}$.  Below, and elsewhere in this paper, if $(H,\delta)$ denotes a chain Hopf algebra, then $H$ is the underlying chain algebra, and $\delta$ is the comultiplication.

\begin{notn}\label{notn:twist-hopf}  Let $\cat{Twist}_{\text{Hopf}}$ denote the category with 
\begin{itemize}
\item $\ob\cat{Twist}_{\text{Hopf}} =\big\{ \big(C\xrightarrow t H, (H, \delta)\big)\mid t \in \ob \cat {Twist}, (H,\delta) \in\ob \cat{Hopf}_{R}\}$, and
\item if $\big(t, (H, \delta)\big)$ and $\big(t', (H', \delta')\big)$ are objects in $\cat{Twist}_{\text{Hopf}}$, then
$$\cat{Twist}_{\text{Hopf}}(t,t')=\{(f,g)\in \cat {Twist}(t,t')\mid (g\otimes g)\delta =\delta' g\}.$$
\end{itemize}
\end{notn}

The proposition below gives a categorical formulation of the universality of $t_{\Om}$ and of the couniversality of $t_{\Bar}$.

\begin{prop} If $S:\cat{Twist}\to \cat {Coalg}_{R}$ is the functor  that projects onto the source of a twisting cochain and $U:\cat{Coalg}_{R}\to \cat {Twist}$ is the ``universal twisting cochain functor,'' specified by $U(C)=t_{\Om}:C\to \Om C$ and $U(f)=(f,\Om f)$, then $U$ is left adjoint to $S$.  

Similarly, if $T:\cat{Twist}\to \cat {Alg}_{R}$ is the functor  that projects onto the target of a twisting cochain and $V:\cat{Alg}_{R}\to \cat {Twist}$ is the ``couniversal twisting cochain functor,'' specified by $V(A)=t_{\Bar}:\Bar A\to A$ and $V(g)=(\Bar g,g)$, then $V$ is right adjoint to $T$.  
\end{prop}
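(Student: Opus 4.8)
The plan is to deduce both adjunctions directly from the universal property of $t_{\Om}$ and the couniversal property of $t_{\Bar}$ recorded in the Examples above; no additional input is required, and the two halves are formally dual.

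For $U\dashv S$, I would first note that $U$ is indeed a functor: for a coalgebra map $f\colon C\to C'$ one has $\Om f\circ t_{\Om}=t_{\Om}\circ f$, so $U(f)=(f,\Om f)$ does lie in $\cat{Twist}(t_{\Om},t_{\Om})$. Now fix a coalgebra $C$ and a twisting cochain $t'\colon C'\to A'$, and unravel a morphism $U(C)\to t'$ in $\cat{Twist}$: it is a pair $(f,g)$ with $f\in\cat{Coalg}_{R}(C,C')$, $g\in\cat{Alg}_{R}(\Om C,A')$, and $g\circ t_{\Om}=t'\circ f$. The crucial observation is that $t'f$ is again a twisting cochain and that the condition $g\,t_{\Om}=t'f$ is precisely the statement that $g$ makes the universal triangle for $t'f$ commute; hence $g=\alpha_{t'f}$ is forced. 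So the assignment $(f,g)\mapsto f$ is injective, and it is surjective because $(f,\alpha_{t'f})\in\cat{Twist}(U(C),t')$ for every $f$, again by the universal property of $t_{\Om}$. This gives a bijection $\cat{Twist}(U(C),t')\cong\cat{Coalg}_{R}(C,S(t'))$, which I would then check is natural; the most efficient packaging is to exhibit the counit of the adjunction at $t\colon C\to A$ as $(\Id_{C},\alpha_{t})\colon U(S(t))\to t$ --- a morphism of $\cat{Twist}$ since $\alpha_{t}t_{\Om}=t$ --- together with the identity unit (legitimate because $SU=\Id_{\cat{Coalg}_{R}}$), and to verify the two triangle identities. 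One of them is trivial, and the other collapses exactly to the equality $\alpha_{t_{\Om}}=\Id_{\Om C}$ noted in the Example.

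For $T\dashv V$ I would run the dual argument. Here one checks that $V(g)=(\Bar g,g)$ is a morphism of $\cat{Twist}$ (i.e. $g\circ t_{\Bar}=t_{\Bar}\circ\Bar g$), fixes a twisting cochain $t\colon C\to A$ and an algebra $A'$, and unravels a morphism $t\to V(A')$: a pair $(f,g)$ with $f\in\cat{Coalg}_{R}(C,\Bar A')$, $g\in\cat{Alg}_{R}(A,A')$, and $t_{\Bar}\circ f=g\circ t$. Since $gt$ is a twisting cochain, the couniversal property of $t_{\Bar}$ forces $f=\beta_{gt}$, while conversely $(\beta_{gt},g)$ is always such a morphism; hence $(f,g)\mapsto g$ is a bijection $\cat{Twist}(t,V(A'))\cong\cat{Alg}_{R}(T(t),A')$. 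Packaged via unit and counit: the unit at $t$ is $(\beta_{t},\Id_{A})\colon t\to V(T(t))$ --- a morphism of $\cat{Twist}$ because $t_{\Bar}\beta_{t}=t$ --- the counit is the identity (as $TV=\Id_{\cat{Alg}_{R}}$), and the nontrivial triangle identity reduces to $\beta_{t_{\Bar}}=\Id_{\Bar A}$, also recorded in the Example.

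I expect no genuine obstacle: the only point requiring care is verifying that the ``forgetful'' maps on hom-sets are bijections --- that the discarded component ($g$, respectively $f$) is uniquely reconstructible and that the reconstruction is natural in both variables --- but this is an immediate consequence of the uniqueness clauses in the universal properties of $t_{\Om}$ and $t_{\Bar}$ together with functoriality of $\Om$ and $\Bar$. The whole argument is thus a routine unwinding of definitions.
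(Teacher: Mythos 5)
Your proof is correct and follows essentially the same route as the paper's: both derive the hom-set bijection from the observation that in a morphism $(f,g)\colon U(C)\to t'$ of $\cat{Twist}$ the algebra-map component $g$ is uniquely determined by $f$ (your $\alpha_{t'f}$ equals the paper's $\alpha_{t'}\circ\Om f$, with uniqueness ultimately coming from freeness of $\Om C$ on $\si\overline{C}$), and dually for $t_{\Bar}$. The unit/counit repackaging with the triangle identities collapsing to $\alpha_{t_{\Om}}=\Id_{\Om C}$ and $\beta_{t_{\Bar}}=\Id_{\Bar A}$ is a harmless elaboration, not a different argument.
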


\begin{proof} Note that $(f,g)\in \cat{Twist}\big( U(C'), t\big)$ implies that 
$$\xymatrix{C'\ar[d]_{t_{\Om}}\ar [r]^f &C\ar[d]^t\\ \Om C'\ar [r]^g&A}$$
commutes.  It follows that $g=\alpha_{t}\circ \Om f$, since the graded algebra underlying $\Om C'$ is free, and $g$ is therefore determined by its values on the generators $\si \overline {C'}$.

The natural isomorphism 
$$\zeta:\cat {Coalg}_{R}\big(C', S(t)\big)\xrightarrow \cong \cat{Twist}\big( U(C'), t\big)$$ 
for $C'\in \ob \cat {Coalg}_{R}$ and a twisting cochain $t:C\to A$ is thus defined by $\zeta (f)=(f,\alpha_{t}\circ \Om f)$, with inverse $\zeta^{-1}$ defined by $\zeta^{-1}(f,g)=f$.

The proof that $V$ is right adjoint to $T$ is similar.
\end{proof}

There is an important binary operation on the set of twisting cochains, defined as follows.

\begin{defn}\label{defn:cartesian} Let $t:C\to A$ and $t':C'\to A'$ be twisting 
cochains.  Let $\ve: C\to R$ and $\ve ':C'\to R$ be the counits (augmentations), 
and let $\eta :R\to A$ and $\eta ':R\to A'$ be the units (coaugmentations). Set 
$$t*t'=t\otimes \eta '\ve '+ \eta\ve\otimes t':C\otimes C'\to A\otimes A'$$
Then $t*t'$ is a twisting cochain, called the \emph{cartesian product} 
of $t$ and $t'$.  
\end{defn}

\begin{ex}\label{ex:milgram}  An important special case of the cartesian product of twisting cochains is
$$t_{\Om}*t_{\Om}:C\otimes C'\to \Om C\otimes \Om C',$$
for $C,C'\in \cat {Coalg}_{R}$.  To simplify notation, we write
\begin{equation}\label{eqn:milgram}
q=\alpha _{t_{\Om}*t_{\Om}}:\Om (C\otimes C')\to \Om C\otimes \Om C'.
\end{equation}
Milgram proved in \cite{milgram} that the chain algebra map $q$ was a quasi-isomorphism if $C$ and $C'$ were simply connected, i.e., connected and $C_{1}=C'_{1}=0$.  In \cite{hps2} it was shown that $q$ is in fact a chain homotopy equivalence, for all $C,C'\in \cat {Coalg}_{R}$. 

Dually, for all $A,A'\in \cat{Alg}_{R}$, there is a  map of chain coalgebras that is a chain homotopy equivalence 
\begin{equation}\label{eqn:milgram-dual}
\nabla=\beta _{t_{\Bar}*t_{\Bar}}:\Bar A\otimes \Bar A'\to \Bar(A\otimes A').
\end{equation}
The equivalence $\nabla$ is often called the ``Eilenberg-Zilber'' equivalence, by analogy with the Eilenberg-Zilber equivalence of algebraic topology.   We refer the reader to Appendix \ref{sec:bar-hopf} for further details of this equivalence. 
\end{ex}

\begin{rmk}\label{rmk:cartesian-twist}  Let $t:C\to A$ and $t':C'\to A'$ be  twisting cochains.  Note that 
$$\alpha_{t*t'}=(\alpha_{t}\otimes \alpha_{t'})\circ q:\Om (C\otimes C')\to A\otimes A'$$
and that 
$$\beta_{t*t'}=\nabla \circ(\beta_{t}\otimes \beta _{t'}):C\otimes C'\to \Bar (A\otimes A').$$
\end{rmk}

\begin{rmk} Endowed with the cartesian product of twisting cochains, the category $\cat {Twist}$ is clearly monoidal, where the unit object is the zero map $0:R\to R$.
\end{rmk}

\subsection{Definition of the Hochschild complex}

We can now define the Hochschild construction functor $\hoch: \cat{Twist} \to \cat {Ch}_{R}$ that we study and apply throughout the remainder of this paper.  We begin by introducing a somewhat more general construction, which also englobes the other familiar constructions of chain complexes built from twisting cochains. 

\begin{defn}\label{defn:Hochschild-cx}  Let $t:C\to A$ be a twisting cochain.  Let $N$ be a $C$-bicomodule with left $C$-coaction $\lambda$ and right $C$-coaction $\rho$, and let $M$ be an $A$-bimodule.   Let $d$ denote the differentials on both $M$ and $N$. The \emph{Hochschild complex} of $t$ with coefficients in $N$ and $M$, denoted $\hoch_{t} (N,M)$, is the chain complex with underlying graded $R$-module $N\otimes M$ and with differential
$d_{t}$, defined on $y\otimes x\in N\otimes M$ by
\begin{align*}
d_{t}(y\otimes x)=&\; dy\otimes x +  \sn {|y|} y\otimes dx\\
& -\sn {|y_{j}|} y_{j}\otimes t(c^{j})\cdot x +\sn{(|c_{i}|-1)(|y^{i}| +|x|)} y^{i}\otimes x\cdot t(c_{i}),
\end{align*}
where $\cdot$ denotes the left and right actions of $A$ on $M$, $\lambda (y)= c_{i}\otimes y^{i}$ and $\rho(y)=y_{j}\otimes c^j$ (using Einstein summation notation).
\end{defn}

It is a matter of straightforward computation, using the definition of twisting cochains, to show that $d_{t}^2=0$, i.e., that $\hoch _{t}(N,M)$ really is a chain complex.  Furthermore, if $M$ is augmented over $R$ and $N$ is coaugmented over $R$, there is a twisted extension of chain complexes
$$\xymatrix@1{M\cof &\hoch_{t}(N,M) \fib & N}.$$

\begin{rmk} The signs in the definition of $d_{t}$ are simply those given by the Koszul rule (cf.~Introduction).
\end{rmk}

\begin{notn}  When $N=C$, seen as a bicomodule over itself via its comultiplication, and $M=A$, seen as a bimodule over itself via its multiplication, then we write
$$\hoch(t):=\hoch_{t}(C,A)=(C\otimes A, d_{t}).$$
\end{notn}

\begin{ex}\label{ex:hoch} If $A\in \cat{Alg}_{R}$ and $M$ is an $A$-bimodule, then $\hoch_{t_{\Bar}}(\Bar A, M)$ is exactly the usual Hochschild complex on $A$ with coefficients in $M$.  In particular, 
$$\hoch(t_{\Bar})=\hoch(A)$$ 
is the usual Hochschild complex  on $A$.
\end{ex} 

\begin{ex}\label{ex:cohoch} If $C\in \cat{Coalg}_{R}$ and $N$ is a $C$-bicomodule, then $\hoch_{t_{\Om}}(N, \Om C)$ is exactly the coHochschild complex on $C$ with coefficients in $N$, as defined in \cite{hps-cohoch}.  In particular, 
$$\hoch(t_{\Om})=\cohoch (C)$$
is the coHochschild complex on $C$.
\end{ex} 

\begin{ex} The usual twisted extension over a twisting cochain $t:C\to A$ of a right $C$-module $N$ by a left $A$-module  $M$ is a special case of the Hochschild complex construction defined above.  It suffices to  consider $N$ as a $C$-bicomodule with trivial left $C$-coaction and $M$ as an $A$-bimodule with trivial right $A$-action.
\end{ex}

\begin{prop} \label{prop:hoch-natl} The Hochschild complex construction extends to a functor
$$\hoch: \cat {Twist} \to \cat {Ch}_{R}.$$
\end{prop}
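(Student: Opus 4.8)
On objects, $\hoch$ is already defined by $\hoch(t)=\hoch_t(C,A)=(C\otimes A,d_t)$. The plan is to define $\hoch$ on a morphism $(f,g)\colon t\to t'$ of $\cat{Twist}$, where $t\colon C\to A$ and $t'\colon C'\to A'$, by $\hoch(f,g):=f\otimes g\colon C\otimes A\to C'\otimes A'$. Since $f$ and $g$ are of degree $0$ and composition in $\cat{Twist}$ is componentwise, functoriality will be automatic once the chain-map property is verified: $\hoch(\Id_C,\Id_A)=\Id_C\otimes\Id_A$, and $(f'\otimes g')\circ(f\otimes g)=(f'f)\otimes(g'g)$ with no intervening Koszul signs, so $\hoch$ preserves composition. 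Hence the whole content of the proposition is the claim that $f\otimes g$ is a chain map, i.e.\ that it commutes with the Hochschild differentials $d_t$ and $d_{t'}$.

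To prove $(f\otimes g)\,d_t=d_{t'}\,(f\otimes g)$, I would compare the two sides summand by summand against the four terms in the formula of Definition~\ref{defn:Hochschild-cx} specialized to $N=C$, $M=A$. The internal terms $dy\otimes x$ and $\sn{|y|}\,y\otimes dx$ are transported correctly because $f$ and $g$ are chain maps. For the term $-\sn{|y_j|}\,y_j\otimes t(c^j)\cdot x$, coming from the right coaction $\rho=\Delta$ of $C$ on itself, applying $f\otimes g$ and using that $g$ is multiplicative yields $-\sn{|y_j|}\,f(y_j)\otimes g\big(t(c^j)\big)\cdot g(x)$; the defining relation $g\circ t=t'\circ f$ of a morphism of twisting cochains rewrites $g(t(c^j))$ as $t'\big(f(c^j)\big)$; and since $f$ is comultiplicative, $\Delta'\big(f(y)\big)=(f\otimes f)\Delta(y)$, so $f(y_j)\otimes f(c^j)$ is precisely the right $C'$-coaction applied to $f(y)$. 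Thus this summand equals the corresponding term of $d_{t'}\big(f(y)\otimes g(x)\big)$, and the term built from the left coaction $\lambda=\Delta$ is handled identically. Every Koszul sign matches, since the exponents depend only on degrees, which are preserved by the degree-$0$ maps $f$ and $g$.

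The one housekeeping point is that the bicomodule structure on $C$, and in particular the ``primitive'' pieces of the action terms in which one tensor factor is a coaugmentation element, are governed by the full comultiplication, counit and coaugmentation of $C$; since $f\in\cat{Coalg}_R(C,C')$ commutes with all of these, and $g\in\cat{Alg}_R(A,A')$ preserves units and augmentations, those pieces are carried correctly as well. Granting this, the term-by-term comparison shows that $f\otimes g$ is a chain map, so $\hoch\colon\cat{Twist}\to\cat{Ch}_R$ is a well-defined functor. I do not anticipate any genuine obstacle: the argument is pure bookkeeping --- matching the relabellings $\Delta(y)=c_i\otimes y^i=y_j\otimes c^j$ on the source with their images under $f$, and checking that each sign produced on the $\hoch(t')$ side is the image of the corresponding sign on the $\hoch(t)$ side --- with the identity $g t=t' f$ together with $f$ comultiplicative and $g$ multiplicative doing all the real work.
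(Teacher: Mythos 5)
Your proposal is correct and takes essentially the same approach as the paper: both define $\hoch(f,g)=f\otimes g$, observe that functoriality is immediate, and reduce the content to checking the chain-map identity. The paper simply labels that check ``an easy calculation,'' whereas you spell it out term by term, correctly identifying that $gt=t'f$ together with $f$ comultiplicative and $g$ multiplicative is what makes the coaction/action summands of $d_t$ and $d_{t'}$ match.
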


\begin{proof} Let $(f,g):t\to t'$ be a morphism in $\cat {Twist}$ from $t:C\to A$ to $t':C'\to A'$.  Define
$$\hoch(f,g):\hoch (t)\to \hoch(t')$$
by $\hoch (f,g)(c\otimes a)=f(c)\otimes g(a)$.  An easy calculation shows that $\hoch(f,g)$ is then a chain map.  Moreover, it is obvious that $\hoch (\Id_{C},\Id_{A})=\Id_{\hoch (t)}$ and that $\hoch$ respects composition.
\end{proof}

Thanks to this proposition, we obtain a new proof of Theorem 4.1 in \cite{hps-cohoch}.

\begin{cor} \label{cor:cohoch-hoch} A twisting cochain $t:C\to A$ induces a chain map
$$\beta_{t}\otimes \alpha_{t}: \cohoch (C)\to \hoch(A),$$
which is a quasi-isomorphism if $\alpha _{t}$ and $\beta_{t}$ are quasi-isomorphisms.
\end{cor}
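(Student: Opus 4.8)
The plan is to exhibit the asserted map as $\hoch$ applied to a morphism of twisting cochains, and then to prove it is a quasi-isomorphism by comparing spectral sequences.

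\emph{The chain map.} First I would verify that the pair $(\beta_{t},\alpha_{t})$ is a morphism in $\cat{Twist}$ from the universal twisting cochain $t_{\Om}\colon C\to\Om C$ to the couniversal twisting cochain $t_{\Bar}\colon\Bar A\to A$. By Remark \ref{rmk:twisting-induced}, $\beta_{t}\colon C\to\Bar A$ is a chain coalgebra map and $\alpha_{t}\colon\Om C\to A$ is a chain algebra map, and the compatibility $\alpha_{t}\circ t_{\Om}=t_{\Bar}\circ\beta_{t}$ holds since both composites equal $t$ (the left one by the defining relation $\alpha_{t}(\si c)=t(c)$, the right one by the commuting triangle exhibiting the couniversality of $t_{\Bar}$). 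Applying Proposition \ref{prop:hoch-natl} and using Examples \ref{ex:hoch} and \ref{ex:cohoch} to identify $\hoch(t_{\Om})=\cohoch(C)$ and $\hoch(t_{\Bar})=\hoch(A)$, we get a chain map $\hoch(\beta_{t},\alpha_{t})\colon\cohoch(C)\to\hoch(A)$ which on underlying graded modules sends $c\otimes w$ to $\beta_{t}(c)\otimes\alpha_{t}(w)$; that is, it is $\beta_{t}\otimes\alpha_{t}$.

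\emph{Reduction.} Recall from Remark \ref{rmk:twisting-induced} that $\alpha_{t}$ is a quasi-isomorphism if and only if $\beta_{t}$ is, so assume both are. I would factor the morphism above through the twisting cochain $t$ itself, as
\[
t_{\Om}\xrightarrow{\;(\Id_{C},\,\alpha_{t})\;} t \xrightarrow{\;(\beta_{t},\,\Id_{A})\;} t_{\Bar},
\]
each factor being a morphism in $\cat{Twist}$ (the compatibilities are once more $\alpha_{t}t_{\Om}=t$ and $t=t_{\Bar}\beta_{t}$). Applying $\hoch$, it then suffices to prove that
\[
\Id_{C}\otimes\alpha_{t}\colon(C\otimes\Om C,\,d_{t_{\Om}})\longrightarrow(C\otimes A,\,d_{t})\qquad\text{and}\qquad\beta_{t}\otimes\Id_{A}\colon(C\otimes A,\,d_{t})\longrightarrow(\Bar A\otimes A,\,d_{t_{\Bar}})
\]
are quasi-isomorphisms.

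\emph{The easy comparison.} For $\Id_{C}\otimes\alpha_{t}$, filter source and target by the homological degree of the tensor factor $C$. Since the reduced comultiplication of a connected coaugmented coalgebra carries an element into a sum of terms whose two factors each have strictly smaller degree, the twisting terms of $d_{t_{\Om}}$ and of $d_{t}$ strictly lower this degree; hence the filtrations are compatible with the differentials, and they are exhaustive, bounded below, and finite in each total degree because all the complexes in sight are non-negatively graded. The associated graded complexes are $C_{p}\otimes(\Om C,d_{\Om})$ and $C_{p}\otimes(A,d_{A})$, and since each $C_{p}$ is $R$-free the $E^{1}$-terms are $C_{p}\otimes H_{*}(\Om C)$ and $C_{p}\otimes H_{*}(A)$, with no Tor contributions, the induced map being $\Id_{C_{p}}\otimes H_{*}(\alpha_{t})$, an isomorphism. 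The comparison theorem for these strongly convergent spectral sequences shows that $\Id_{C}\otimes\alpha_{t}$ is a quasi-isomorphism.

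\emph{The main obstacle.} The second comparison, that $\beta_{t}\otimes\Id_{A}$ is a quasi-isomorphism, is essentially the comparison theorem for twisted tensor products along a quasi-isomorphism of connected coaugmented chain coalgebras, and this is where I expect the real work to lie. The naive imitation of the previous argument fails: filtering by the homological degree of the coalgebra factor produces $(\beta_{t})_{p}\otimes\Id$ on $E^{1}$, which is not an isomorphism, since a quasi-isomorphism need not be a degreewise isomorphism. Instead one should filter the coalgebra factors by a ``word-length'' (i.e.\ coradical) filtration, which $\beta_{t}$ respects because it is a coalgebra map, and then control the resulting associated graded complexes; when $C$ is $1$-reduced this filtration is finite in each total degree and the argument closes directly, whereas in general one invokes a homological-perturbation argument or the conditional convergence of the relevant spectral sequences. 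Combining the two comparisons yields that $\beta_{t}\otimes\alpha_{t}$ is a quasi-isomorphism, and the claim follows.
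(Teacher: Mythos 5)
Your construction of the chain map coincides with the paper's: observing that $(\beta_t,\alpha_t)$ is a morphism in $\cat{Twist}$ from $t_\Om$ to $t_\Bar$ and applying Proposition~\ref{prop:hoch-natl} together with the identifications $\hoch(t_\Om)=\cohoch(C)$ and $\hoch(t_\Bar)=\hoch(A)$.

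For the quasi-isomorphism, your route differs from the paper's, which simply filters both sides by degree in the coalgebra factor and invokes the Zeeman comparison theorem on the resulting map of first-quadrant spectral sequences. You instead factor $(\beta_t,\alpha_t)$ as $(\beta_t,\Id_A)\circ(\Id_C,\alpha_t)$ and treat the two pieces separately. The first piece, $\Id_C\otimes\alpha_t$, is handled correctly and completely---this is the easy half, since the base map is the identity and the $E^1$ comparison is already an isomorphism. But the second piece, $\beta_t\otimes\Id_A$, is precisely the situation (a quasi-isomorphism rather than a level-wise isomorphism on the base) that Zeeman's theorem is built to handle, and your treatment of it has a genuine gap.

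You rightly note that the naive degree filtration gives $(\beta_t)_p\otimes\Id$ on $E^1$, which is not an isomorphism, and you propose switching to a coradical (word-length) filtration. The problem is that a quasi-isomorphism of filtered complexes need not induce a quasi-isomorphism on the associated graded complexes, so ``control the resulting associated graded complexes'' conceals exactly the missing argument: nothing in the proposal explains why $\operatorname{gr}\beta_t$ should be a quasi-isomorphism, and in general it need not be. (As a secondary point, the coradical filtration of any \emph{connected} coaugmented coalgebra $C$ is already finite in each homological degree---the reduced comultiplication strictly lowers degree, so its iterates eventually vanish on each $C_n$---so the restriction to $1$-reduced $C$ you impose at this step is not where the real difficulty lies.) The cleaner resolution, and what the paper intends, is to stay with the degree filtration and apply the Zeeman comparison theorem: it produces an isomorphism on the abutment from isomorphisms on the base row and fiber column of $E^2$, precisely compensating for the failure of $\beta_t$ to be an isomorphism at $E^1$. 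With Zeeman in hand, your factorization through $t$ is also unnecessary: the theorem handles $(\beta_t,\alpha_t)$ in one step, which is why the paper's proof is a single sentence.
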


\begin{proof} Applying Proposition \ref{prop:hoch-natl} to the morphism of twisting cochains
$$\xymatrix{C\ar [d]_{t_{\Om}}\ar [r]^{\beta_{t}}&\Bar A\ar [d]^{t_{\Bar}}\\
		    \Om C\ar [r]^{\alpha_{t}}&A,}$$
we obtain a morphism of chain complexes
$$\hoch(\beta_{t},\alpha_{t})=\beta_{t}\otimes \alpha_{t}:\hoch (t_{\Om })\to \hoch(t_{\Bar}).$$
Since (cf.~Examples \ref{ex:hoch} and \ref{ex:cohoch}) $\cohoch(C)=\hoch(t_{\Om})$ and $\hoch(A)=\hoch(t_{\Bar})$, we can conclude.

The quasi-isomorphism result follows from an easy argument based on the Zeeman comparison theorem. 
\end{proof}

\begin{rmk}  Applied to the universal twisting cochain $t_{\Om}:C\to \Om C$, Corollary \ref{cor:cohoch-hoch} implies the existence of a quasi-isomorphism
\begin{equation}\label{eqn:hoch-universal}
\eta_{C}\otimes \Id_{\Om C}:\cohoch(C) \xrightarrow\simeq \hoch (\Om C).
\end{equation}
Dually, for the couniversal twisting cochain $t_{\Bar }:\Bar A\to A$, we obtain a quasi-isomorphism
\begin{equation}\label{eqn:hoch-couniversal}
\Id_{\Bar A}\otimes \ve_{A}:\cohoch(\Bar A)  \xrightarrow\simeq \hoch (A).
\end{equation}
\end{rmk}

As stated more precisely in the proposition below, the Hochschild construction functor $\hoch$ also behaves well with respect to the cartesian product of twisting cochains.   The proof of this proposition is again a simple calculation, using the natural symmetry of the tensor product of chain complexes. 

\begin{prop}\label{prop:hoch-monoidal}  For all $t, t'\in \ob \cat {Twist}$,
$$\hoch(t*t')\cong \hoch(t)\otimes \hoch(t'),$$
and the isomorphism is natural in $t$ and $t'$.
\end{prop}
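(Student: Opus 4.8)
The plan is to establish the claimed isomorphism $\hoch(t*t')\cong \hoch(t)\otimes \hoch(t')$ by exhibiting an explicit map on the underlying graded modules and then checking it commutes with the two differentials. First I would identify the underlying graded module of $\hoch(t*t')$. By Definition~\ref{defn:Hochschild-cx} and the notation preceding Example~\ref{ex:hoch}, this is $(C\otimes C')\otimes (A\otimes A')$, while the underlying graded module of $\hoch(t)\otimes \hoch(t')$ is $(C\otimes A)\otimes (C'\otimes A')$. The obvious candidate isomorphism is the symmetry/shuffle map
$$\sigma:\ (C\otimes C')\otimes (A\otimes A')\ \xrightarrow{\ \cong\ }\ (C\otimes A)\otimes (C'\otimes A')$$
that swaps the middle two tensor factors, i.e.\ $\sigma(c\otimes c'\otimes a\otimes a')=(-1)^{|c'|\cdot |a|}\, c\otimes a\otimes c'\otimes a'$, with the Koszul sign dictated by commuting $c'$ past $a$. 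This is clearly a natural isomorphism of graded $R$-modules, since it is built from the symmetry of $\otimes$ in $\cat{Ch}_R$.

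The substantive step is verifying that $\sigma$ is a chain map, i.e.\ $\sigma\circ d_{t*t'} = (d_t\otimes d_{t'})\circ \sigma$. Here I would unwind the differential $d_{t*t'}$ using Definition~\ref{defn:Hochschild-cx} applied to the twisting cochain $t*t' = t\otimes \eta'\ve' + \eta\ve\otimes t'$ of Definition~\ref{defn:cartesian}, together with the fact that the comultiplication on $C\otimes C'$ is the tensor of the comultiplications on $C$ and $C'$ (with the appropriate symmetry) and likewise for the $A\otimes A'$-bimodule structure. The internal-differential terms $d(c\otimes c')\otimes(a\otimes a')$ and the $A$-action terms split up into the corresponding terms for $t$ and for $t'$ precisely because $t*t'$ is a sum of a term supported on the first factor and a term supported on the second: when one evaluates $t*t'$ on a coalgebra element $c\otimes c'$, the augmentations $\ve,\ve'$ and coaugmentations $\eta,\eta'$ collapse the cross terms, leaving exactly $t(c)\otimes \eta'(1)$ and $\eta(1)\otimes t'(c')$. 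Matching these against the four-term formula for $d_t\otimes d_{t'}$ on $(c\otimes a)\otimes(c'\otimes a')$ is then a bookkeeping exercise; the only real content is confirming that the Koszul signs produced by $\sigma$ and by the two formulas agree, and this is automatic since every sign on both sides is forced by the Koszul rule (cf.\ the Introduction) and $\sigma$ is the coherence isomorphism of a symmetric monoidal structure.

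Finally, naturality in $t$ and $t'$ is immediate: for morphisms $(f,g):t\to s$ and $(f',g'):t'\to s'$ in $\cat{Twist}$, both composites $\sigma\circ \hoch(f\otimes f', g\otimes g')$ and $\big(\hoch(f,g)\otimes \hoch(f',g')\big)\circ \sigma$ send $c\otimes c'\otimes a\otimes a'$ to $(-1)^{|f'(c')|\cdot|g(a)|}\,f(c)\otimes g(a)\otimes f'(c')\otimes g'(a')$, and $f',g'$ being chain maps preserve degree so the signs coincide. The main (and only genuine) obstacle is the sign-tracking in the chain-map verification; I expect this to be routine but tedious, and I would present it by reducing to the observation that $\sigma$ is precisely the symmetry isomorphism $\mathrm{id}\otimes \tau \otimes \mathrm{id}$ in the symmetric monoidal category $\cat{Ch}_R$ and invoking its compatibility with the (manifestly $\otimes$-compatible) differentials, rather than writing out all sixteen terms.
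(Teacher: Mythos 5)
Your proposal is correct and fleshes out exactly the argument the paper sketches in a single sentence ("a simple calculation, using the natural symmetry of the tensor product of chain complexes"): take $\sigma=\mathrm{id}\otimes\tau\otimes\mathrm{id}$, unwind $d_{t*t'}$ using $t*t'=t\otimes\eta'\ve'+\eta\ve\otimes t'$ and the tensor (co)module structures, and observe that the Koszul-forced signs must agree. The only wording to tighten would be the sentence about $t*t'$ "collapsing the cross terms" — that collapse happens only after the comultiplication on $C\otimes C'$ has already been applied and $\ve,\ve'$ kill the positive-degree components of the second tensor leg, not when $t*t'$ is evaluated on a general $c\otimes c'$ — but the substance and route match the paper.
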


With just a bit more work, one can show that  the functor $\hoch$ is in fact strongly monoidal.

Proposition \ref{prop:hoch-monoidal} enables us to formulate conditions on a twisting cochain $t$ that easily imply the existence of extra algebraic structure on $\hoch (t)$. In section \ref{sec:hoch-comult} we explain how to weaken these conditions, to cover a much larger class of examples.

\begin{cor}\label{cor:hoch-comult} Let $H$ be a Hopf algebra, and let $C$ be a connected, coaugmented chain coalgebra.  Let $\delta:H\to H\otimes H$ and $\Delta: C\to C\otimes C$ denote the comultiplications on $H$ and $C$.  If $t:C\to H$ is a twisting cochain such that $(\Delta, \delta)$ is a morphism in $\cat {Twist}$ from $t$ to $t*t$, then $\hoch(t)$ admits a natural coassociative comultiplication.
\end{cor}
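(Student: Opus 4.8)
The plan is to construct the comultiplication as one composite built from structure already available, and then to read off coassociativity and naturality formally, using the functoriality of $\hoch$ (Proposition~\ref{prop:hoch-natl}) together with its monoidality (Proposition~\ref{prop:hoch-monoidal}). Write $\Phi_{s,s'}\colon\hoch(s*s')\xrightarrow{\,\cong\,}\hoch(s)\otimes\hoch(s')$ for the natural isomorphism of Proposition~\ref{prop:hoch-monoidal}. The hypothesis says precisely that $(\Delta,\delta)$ is a morphism in $\cat{Twist}$ from $t\colon C\to H$ to $t*t\colon C\otimes C\to H\otimes H$: that $\Delta$ is a map of coalgebras, that $\delta$ is a map of algebras (automatic, since $H\in\cat{Hopf}_R$), and that $\delta\circ t=(t*t)\circ\Delta$. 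Applying $\hoch$ thus produces a chain map $\hoch(\Delta,\delta)\colon\hoch(t)\to\hoch(t*t)$, and I set
$$\psi_t:=\Phi_{t,t}\circ\hoch(\Delta,\delta)\colon\hoch(t)\longrightarrow\hoch(t)\otimes\hoch(t),$$
which is a chain map as a composite of chain maps. On underlying modules $\psi_t$ carries $c\otimes a\in C\otimes H=\hoch(t)$ to $\Delta(c)\otimes\delta(a)$ rearranged by the symmetry of $\otimes$; in particular $\psi_t$ lifts $\Delta$ along the projection $\hoch(t)\twoheadrightarrow C$ and extends $\delta$ along the inclusion $H\rightarrowtail\hoch(t)$ of the twisted extension, though this refinement is not demanded by the present statement.

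The cleanest way to conclude is to observe that $\bigl(t,(\Delta,\delta),(\ve_C,\ve_H)\bigr)$ is a comonoid object in the monoidal category $(\cat{Twist},*,0)$: the comonoid coassociativity axiom unwinds, componentwise, to coassociativity of $\Delta$ and of $\delta$; the counit axiom unwinds to the counit laws for $C$ and for $H$; and $(\ve_C,\ve_H)$ is a morphism $t\to 0$ in $\cat{Twist}$ since $\ve_H\circ t=0$. A monoidal functor whose structure maps point in the ``oplax'' direction $\hoch(s*s')\to\hoch(s)\otimes\hoch(s')$---which is precisely the direction of $\Phi$, and $\hoch$ is moreover strongly monoidal, as remarked after Proposition~\ref{prop:hoch-monoidal}---carries comonoids to comonoids and comonoid morphisms to coalgebra maps. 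Hence $\hoch(t)$ becomes a coassociative (indeed counital) coalgebra with comultiplication $\psi_t$, and the construction is natural for those $(f,g)\in\cat{Twist}(t,t')$ in which $g$ is a morphism of Hopf algebras and $f$ a morphism of coalgebras---the comonoid morphisms in $\cat{Twist}$---since then $(\Delta',\delta')\circ(f,g)=(f\otimes f,g\otimes g)\circ(\Delta,\delta)$, to which one applies $\hoch$ and the naturality of $\Phi$ in both variables.

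If one prefers not to invoke the strong monoidality of $\hoch$ as a black box, coassociativity can be checked directly. Using that $*$ is strictly associative on twisting cochains (it is built from $\otimes$ in $\cat{Ch}_R$) and that $(\Delta\otimes\Id_C)\Delta=(\Id_C\otimes\Delta)\Delta$ and $(\delta\otimes\Id_H)\delta=(\Id_H\otimes\delta)\delta$, functoriality of $\hoch$ and naturality of $\Phi$ rewrite $(\psi_t\otimes\Id)\circ\psi_t$ as $(\Phi_{t,t}\otimes\Id)\circ\Phi_{t*t,t}\circ\hoch(\Delta^{(3)},\delta^{(3)})$ and $(\Id\otimes\psi_t)\circ\psi_t$ as $(\Id\otimes\Phi_{t,t})\circ\Phi_{t,t*t}\circ\hoch(\Delta^{(3)},\delta^{(3)})$, where $(\Delta^{(3)},\delta^{(3)})\colon t\to t*t*t$ is the common triple iterate. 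These two agree exactly by the associativity coherence identity $(\Phi_{t,t}\otimes\Id)\circ\Phi_{t*t,t}=(\Id\otimes\Phi_{t,t})\circ\Phi_{t,t*t}$, which---since $\Phi$ is, up to Koszul signs, the middle-four interchange of tensor factors---comes down to a brief bookkeeping of permutations and signs. The step I expect to require the most care is exactly this coherence identity (equivalently, establishing that $\hoch$ is strongly monoidal); everything else is a routine unwinding of structure that is already in place.
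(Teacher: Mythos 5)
Your proposal is correct and follows essentially the same route as the paper: define the comultiplication as $\Phi_{t,t}\circ\hoch(\Delta,\delta)$ and deduce coassociativity from the coassociativity of $\Delta$ and $\delta$ together with the (strong) monoidality of $\hoch$. Your repackaging in terms of comonoid objects in $(\cat{Twist},*,0)$ and the explicit coherence-identity check are just expansions of the paper's one-line justification, not a different argument.
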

 
 Note that $(\Delta, \delta)$ is a morphism in $\cat {Twist}$ only if $\Delta$ is a morphism of coalgebras, which is true if and only if $C$ is cocommutative.

\begin{proof} The desired comultiplication is given by the composite
$$\hoch (t)\xrightarrow {\hoch (\Delta, \delta)} \hoch(t*t) \xrightarrow {\cong} \hoch (t)\otimes \hoch (t).$$
The coassociativity of $\Delta $ and of $\delta$ implies that of the composite above, since $\hoch$ is strongly monoidal.   
\end{proof}

\begin{ex}  Let $C$ be a connected, coaugmented chain coalgebra with cocommutative comultiplication $\Delta$.  Since $\Delta$ is cocommutative,  it induces a chain algebra map $\Om \Delta :\Om C\to \Om (C\otimes C)$ and therefore a coassociative comultiplication $\psi:\Om C\to \Om C\otimes \Om C$, defined to be the composite
$$\Om C\xrightarrow {\Om \Delta}\Om (C\otimes C)\xrightarrow q \Om C\otimes \Om C,$$
where $q$ is Milgram's equivalence (Example \ref{ex:milgram}).  (In fact, we have simply endowed $\Om C$ with the comultiplication such that all generators are primitive, but we emphasize this construction in terms of $q$, since we generalize it in section \ref{sec:hoch-comult}.) Moreover, the diagram
$$\xymatrix{C\ar[d]_{t_{\Om }}\ar [r]^(0.4)\Delta &C\otimes C\ar [d]^{t_{\Om }*t_{\Om}}\\
		         \Om C\ar[r]^(0.4)\psi&\Om C\otimes \Om C}$$
commutes, i.e., $(\Delta, \psi)$ is a morphism of twisting cochains.  Applying Corollary \ref{cor:hoch-comult}, we obtain a coassociative comultiplication on $\hoch(t_{\Om})$.
\end{ex}

\subsection{Extended naturality of the Hochschild construction}

We show in this section that the Hochschild construction on a twisting cochain is actually natural with respect to a weaker notion of morphism than that adopted in the definition of $\cat{Twist}$.

\begin{defn}\cite{gugenheim-munkholm}  
Given $C,C'\in \ob \cat {Coalg}_{R}$, a chain map $f:C\to C'$ is called a \emph{DCSH (Differential Coalgebra with Strong Homotopy) map} or a \emph{strongly homotopy-comultiplicative map} if there is a chain algebra map $\vp: \Om C\to \Om C'$  such that
$$\xymatrix{\overline C\ar [d]_{t_{\Om}}\ar [rr]^f&&\overline C'\ar [d]^{s^{-1}}\\ \Om C\ar [r]^\vp&\Om C'\ar [r]^{\text{proj}}&\si \overline C'}$$
commutes.  The chain algebra map $\vp$ is said to \emph{realize the strong homotopy structure} of the DCSH map $f$.
\end{defn}

\begin{ex}\label{ex:dcsh-simpl} If $K$ and $L$ are 1-reduced simplicial sets, then the natural Alexander-Whitney map
$$C_{*}(K\times L) \xrightarrow\simeq C_{*}K\otimes C_{*}L$$
is a DCSH map \cite{gugenheim-munkholm}.
\end{ex}

\begin{ex}\label{ex:dcsh-alg} As we prove in Appendix \ref{sec:bar-hopf} (Theorem \ref{thm:aw-bar-dcsh}), if $A$ and $A'$ are augmented chain algebras, then the natural Alexander-Whitney map 
$$\Bar(A\otimes A')\xrightarrow \simeq \Bar A \otimes \Bar A'$$
defined by Eilenberg and Mac Lane in \cite {eilenberg-maclane} is a DCSH map.
\end{ex}

\begin{ex}\label{ex:dcsh-unit} Let $C$ be a connected, coaugmented chain coalgebra.  Let $\rho_{C}:\Bar\Om C\to C$ denote the composite
$$\Bar\Om C \xrightarrow{t_{\Bar }}\Om C \xrightarrow{\text{proj}} \si C \xrightarrow {s} C,$$
i.e., $\rho_{C}\big(s(\si c)\big)=c$ for all $c\in \overline C$ and $\rho _{C}(sa_{1}|\cdots |sa_{n})=0$ otherwise.  It is obvious that $\rho_{C}$ is a retraction of $\eta_{C}$, i.e., 
$$\xymatrix{C\ar[r]^{\eta_{C}}\ar [dr]_{\Id_{C}}&\Bar \Om C\ar [d]^{\rho_{C}}\\ &C}$$
commutes, which implies that $\rho_{C}$ is a quasi-isomorphism.  Moreover, $\rho_{C}$ is a DCSH map, since
$$\xymatrix{\Bar_{+}\Om C\ar [d]_{t_{\Om}}\ar [rr]^{\rho_{C}}&&\overline C\ar [d]^{s^{-1}}\\ \Om \Bar \Om C\ar [r]^{\ve_{\Om C}}&\Om C\ar [r]^{\text{proj}}&\si \overline C}$$ 
commutes, where $\Bar_{+}\Om C$ denotes the $R$-submodule of positively graded elements.  In particular, $\ve_{\Om C}$ realizes the strong homotopy structure of $\rho_{C}$.
\end{ex}

\begin{rmk} \label{rmk:unfold-dcsh} A chain algebra map $\vp:\Om C\to \Om C'$ is determined by a set of $R$-linear maps
$$\{\vp_{k}:\overline C \to (\overline {C'})^{\otimes k}\mid k\geq 1\},$$
where
\begin {enumerate}
\item $\vp _{k}$ is homogeneous of degree $k-1$ for all $k$; and
\item \begin{align*}
\vp_{k}d_{C}+(-1)^kd_{(C')^{\otimes k}}\vp _{k}=&\sum _{i+j=k}(\vp _{i}\otimes \vp_{j})\overline \Delta_{C} \\
&-\sum _{i+j=k-2}(-1)^{i}(\Id_{C'}^{\otimes i}\otimes\Delta_{C'}\otimes \Id_{C'}^{\otimes j})\vp _{k-1} 
\end{align*}
for all $k$.
\end{enumerate} 
The relation between $\vp $ and the family $\{\vp_{k}\}_{k}$ is that 
$$\vp (\si c)=\sum _{k\geq 1}(\si)^{\otimes k}\vp _{k}(c),$$
for all $c\in \overline C$.  It follows that $\vp $ realizes the strong homotopy structure of a DCSH map $f$ if and only if $\vp _{1}=f$.
\end{rmk}

\begin{notn}  Let $\cat{Twist^{sh}}$ denote the category such that
\begin{itemize}
\item $\ob\cat{Twist^{sh}}=\{t:C\to A\mid t \text{ twisting cochain}\}$, and
\item  if $t:C\to A$ and $t':C'\to A'$ are twisting cochains, then
$$\cat{Twist^{sh}}(t,t')=\{ (\vp,g)\in \cat{Alg}_{R}(\Om C,\Om C')\times \cat {Alg}_{R}(A,A')\mid g\circ \alpha_{t}=\alpha _{t'}\circ \vp\}.$$
\end{itemize}
Composition of morphisms in $\cat {Twist^{sh}}$ is defined componentwise.
\end{notn}

\begin{rmk} There is an obvious faithful functor $\cat{Twist}$ to $\cat{Twist^{sh}}$, which is the identity on objects and which sends a morphism $(f,g):t\to t'$ to the morphism $(\Om f, g):t\to t'$.
\end{rmk}

\begin{thm}\label{thm:hoch-extnatl}The Hochschild construction functor extends to a functor 
$$\hoch^{sh}: \cat{Twist^{sh}}\to \cat{Ch}_{R}.$$
In particular, given twisting cochains $t:C\to A$ and $t':C'\to A'$ and a commutative diagram in $\cat{Alg}_{R}$
$$\xymatrix{\Om C\ar[d]_{\vp}\ar [r]^{\alpha _{t}}&A\ar[d]^{g}\\ \Om C'\ar [r]^{\alpha_{t'}}&A',}$$
there is a chain map $\hoch^{sh}(\vp, g):\hoch (t)\to \hoch (t')$ such that 
$$\xymatrix{\Om C\ar[d]_{\vp}\ar [r]^{\alpha _{t}}&A\ar[d]_{g}\cof&\hoch (t)\ar [d]_{\hoch^{sh}(\vp, g)}\fib & C\ar [d]_{f}\\
 \Om C'\ar [r]^{\alpha_{t'}}&A'\cof &\hoch (t')\fib & C'}$$
 commutes, where $f=\vp_{1}$.  Furthermore, if $\vp$ and $g$ are quasi-isomorphisms, then so is $\hoch^{sh}(\vp, g)$.
\end{thm}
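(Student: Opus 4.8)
The plan is to build $\hoch^{sh}(\vp,g)$ explicitly as a twisted tensor product map and then verify it is a chain map by direct computation, using the unfolded description of $\vp$ from Remark \ref{rmk:unfold-dcsh}. First I would define, for a morphism $(\vp,g)$ with $\vp$ unfolded into $\{\vp_k:\overline C\to(\overline{C'})^{\otimes k}\}_{k\geq 1}$, the map
$$\hoch^{sh}(\vp,g):C\otimes A\to C'\otimes A'$$
on $c\otimes a$ by the formula
$$\hoch^{sh}(\vp,g)(c\otimes a)=\sum_{k\geq 1}\pm\, c'_{(1)}\otimes g\big(\alpha_{t'}(\si c'_{(2)}|\cdots|\si c'_{(k)})\cdot a\big),$$
where $\vp_k(c)=\sum c'_{(1)}\otimes\cdots\otimes c'_{(k)}$ and the first tensor factor records $f=\vp_1$ on the ``$k=1$'' term while the higher $\vp_k$ feed into $A'$ through $\alpha_{t'}$ and the $A'$-module structure. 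The point is that the defining square $g\alpha_t=\alpha_{t'}\vp$ plus associativity of the $A'$-action on itself makes this compatible with $d_t$; I would check $d_{t'}\circ\hoch^{sh}(\vp,g)=\hoch^{sh}(\vp,g)\circ d_t$ term by term, where the mixed terms in $d_t$ (those involving $t$ and the comodule coactions) are matched against the $\vp_k$-relations of Remark \ref{rmk:unfold-dcsh}(2) and the bar-type differential hidden in $\alpha_{t'}$. I would record functoriality separately: $\hoch^{sh}$ of a composite $(\vp',g')\circ(\vp,g)=(\vp'\vp,g'g)$ agrees with the composite of the two maps, which is a bookkeeping identity on the $\vp_k$'s, and $\hoch^{sh}$ restricted along $\cat{Twist}\hookrightarrow\cat{Twist^{sh}}$ (i.e. $\vp=\Om f$, so $\vp_1=f$ and $\vp_k=0$ for $k>1$) recovers the functor $\hoch$ of Proposition \ref{prop:hoch-natl}.

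Next I would verify that $\hoch^{sh}(\vp,g)$ fits into the displayed four-rectangle diagram. The rightmost square ($\hoch(t)\to\hoch(t')$ over $C\to C'$ via the quotient maps) commutes essentially by construction, since only the $k=1$ term of the formula survives after projecting $C'\otimes A'\to C'$ and $\vp_1=f$. The leftmost square ($A\cof\hoch(t)$ and $A'\cof\hoch(t')$) also commutes by construction: on $1\otimes a$ the coaction $\Delta(1)=1\otimes1$ and the coaugmentation kill all $k\geq 2$ terms, leaving $1\otimes g(a)$. The middle-left square is exactly the hypothesis $g\alpha_t=\alpha_{t'}\vp$. So the diagram commutes once the formula is in place.

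Finally, the quasi-isomorphism claim: if $\vp$ and $g$ are quasi-isomorphisms, then since $\alpha_{t'}$ is the cobar realization of $t'$ and the top rows are twisted extensions, I would argue that $f=\vp_1$ is a quasi-isomorphism $C\xrightarrow\simeq C'$ — this follows because $\vp:\Om C\to\Om C'$ being a quasi-isomorphism of connected chain algebras forces the underlying DCSH map $\vp_1$ to be a quasi-isomorphism (bar/cobar is a homotopy equivalence of categories up to quasi-isomorphism, or more elementarily by a filtration/Zeeman comparison argument on $\Bar\Om$). Then all three of $f,g,\hoch^{sh}(\vp,g)$ sit in a map of twisted extensions $A\cof\hoch(t)\fib C$, and the five-lemma for the associated filtration spectral sequences (equivalently, the Zeeman comparison theorem, as already invoked in the proof of Corollary \ref{cor:cohoch-hoch}) gives that $\hoch^{sh}(\vp,g)$ is a quasi-isomorphism.

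I expect the main obstacle to be the $d^2=0$-style verification that $\hoch^{sh}(\vp,g)$ is a chain map: the signs coming from the Koszul convention on the iterated suspensions $(\si)^{\otimes k}$ inside $\alpha_{t'}\vp_k$, combined with the two families of mixed terms in $d_t$ (left-coaction and right-coaction terms) and the two families of terms in the $\vp_k$-relations of Remark \ref{rmk:unfold-dcsh}, have to be made to cancel exactly. The conceptual content is light — it is the statement that $\vp$ being an algebra map realizing a DCSH structure is precisely what is needed for the twisted differentials to intertwine — but getting the bookkeeping right (especially the interaction between the coproduct terms $-(-1)^i(\Id^{\otimes i}\otimes\Delta_{C'}\otimes\Id^{\otimes j})\vp_{k-1}$ and the module-action terms produced by $\alpha_{t'}$) is where the real work lies.
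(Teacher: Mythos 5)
Your overall plan is the same as the paper's: define $\hoch^{sh}(\vp,g)$ by an explicit formula built from the unfolded family $\{\vp_k\}$, verify the chain-map condition by matching the DCSH relations of Remark~\ref{rmk:unfold-dcsh} against the twisted differential and the identity $g\alpha_t=\alpha_{t'}\vp$, check compatibility with $\cat{Twist}\hookrightarrow\cat{Twist^{sh}}$ and the displayed diagram, and dispose of the quasi-isomorphism claim by a filtration/Zeeman comparison. Those parts of your sketch are fine.

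The formula you propose, however, is genuinely incomplete, and the chain-map verification would fail as a result. (Setting aside a type error --- as written, $\alpha_{t'}(\cdots)\cdot a$ multiplies an element of $A'$ by one of $A$ and then applies $g$ to something not in $A$; presumably you meant $\alpha_{t'}(\cdots)\cdot g(a)$.) Your map sends $c\otimes a$ to $\sum_k \pm\, c'_{k,1}\otimes t'(c'_{k,2})\cdots t'(c'_{k,k})\cdot g(a)$: it always routes the \emph{first} factor of $\vp_k(c)$ to $C'$, and wraps the remaining factors only on the \emph{left} of $g(a)$. The correct map requires a sum over all cyclic positions $i\in\{1,\dots,k\}$, sending $c'_{k,i}$ to $C'$, the factors $c'_{k,i+1},\dots,c'_{k,k}$ (via $t'$) to the left of $g(a)$, and the factors $c'_{k,1},\dots,c'_{k,i-1}$ (via $t'$) to the right of $g(a)$. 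The reason this cyclic wrap-around is forced is that the Hochschild differential $d_t$ of Definition~\ref{defn:Hochschild-cx} has \emph{two} twist terms, one from the right coaction of $C$ (which lands $t(-)$ as a left multiplier) and one from the left coaction of $C$ (which lands $t(-)$ as a right multiplier). On one side, the right-coaction part of $\Delta_{C'}$ applied to the outgoing $C'$-factor of your formula produces terms with $t'$-factors on the \emph{right} of $g(a)$; on the other side, $\hoch^{sh}(\vp,g)$ applied to the $c^i\otimes a\cdot t(c_i)$ term of $d_t(c\otimes a)$, after using $g\big(t(c_i)\big)=\sum_k t'\big((c_i)'_{k,1}\big)\cdots t'\big((c_i)'_{k,k}\big)$, also produces a string of $t'$-factors on the right of $g(a)$. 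Matching these against one another requires the outgoing $C'$-factor to come from a middle position $i>1$ of some $\vp_k(c)$, not always from position $1$. Your formula agrees with the correct one only when $\vp_k=0$ for all $k>1$, i.e.\ on the image of $\cat{Twist}$, which is exactly why it passes the sanity checks you ran (the outer squares of the diagram and restriction along $\cat{Twist}\hookrightarrow\cat{Twist^{sh}}$) while failing to be a chain map in general.
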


\begin{proof} Let $(\vp,g):t\to t'$ be a morphism in $\cat{Twist^{sh}}$ from $t:C\to A$ to $t':C'\to A'$.  Let $\{\vp_{k}\}_{k}$ be the family of $R$-linear maps associated to $\vp$, as in Remark \ref{rmk:unfold-dcsh}. Suppressing summation to simplify notation, for all $c\in C$ and $k\geq 1$, we write
$$\vp_{k}(c)=c'_{k,1}\otimes \cdots\otimes c'_{k,k}.$$
Define $\hoch^{sh}(\vp, g): \hoch(t)\to \hoch(t')$ by
\begin{align}\label{eqn:hsh}
\hoch^{sh} (\vp, g)&(c\otimes a)\\
&= \sum_{ k\geq 1\atop 1\leq i\leq k}\pm c'_{k,i}\otimes t'(c'_{k,i+1})\cdot\, \ldots\, \cdot t'(c'_{k,k})\cdot g(a)\cdot t'(c'_{k,1})\cdot\, \ldots\, \cdot t'(c'_{k,i-1}),
\end{align}
where the signs are determined by the Koszul rule.  

Note that if $\vp=\Om f$, where $f:C\to C'$ is a morphism of coalgebras, then $\vp_{k}=0$ for all $k>1$, so that $\hoch^{sh} (\vp, g)=\hoch(f,g)$, as defined in the proof of Proposition \ref{prop:hoch-natl}.  The functor $\hoch^{sh}: \cat{Twist^{sh}}\to \cat{Ch}_{R}$ that we are defining here is therefore indeed an  extension of the functor $\hoch: \cat {Twist}\to \cat{Ch}_{R}$ of Proposition \ref{prop:hoch-natl}. 

Let $d$ denote the differentials on $A$ and $C$ and $d'$ the differentials on $A'$ and $C'$.  Observe that

\begin{itemize}
\item equation (2) in  Remark  \ref{rmk:unfold-dcsh} implies that for all $k\geq 1$, 
\begin{align*}
&(dc)'_{k,1}\otimes \cdots\otimes (dc)'_{k,k}+\sum _{1\leq i\leq k}\pm c'_{k,1}\otimes \cdots\otimes dc'_{k,i}\otimes \cdots\otimes c'_{k,k}\\
&=\sum _{k'+k''=k}\pm (c_{j})'_{k',1}\otimes \cdots\otimes (c_{j})'_{k',k'}\otimes (c^j)'_{k'',1}\otimes \cdots\otimes (c^j)'_{k'',k''}\\
&\quad+\sum _{1\leq i<k}c'_{k-1,1}\otimes \cdots\otimes c'_{k-1,i,j}\otimes {c'_{k-1,i}}^j\otimes\cdots\otimes c'_{k-1,k-1},
\end{align*}
where $\overline\Delta (c)=c_{j}\otimes c^j$ and $\overline\Delta (c'_{k-1,i})=c'_{k-1,i,j}\otimes {c'_{k-1,i}}^j$ and signs are determined by the Koszul rule;
\item since $\alpha_{t'}\vp =g\alpha_{t}$, for all $c\in C$,
$$g\big(t(c)\big)=\sum _{k\geq 1} t'(c'_{k,1})\cdot \,\ldots\,\cdot t'(c'_{k,k});$$
and
\item since $t'$ is a twisting cochain, for all $c'\in C'$,
$$d't'(c')+t'(d'c')=t'(c'_{j})\cdot t'(c'^j),$$
where $\overline \Delta(c')=c'_{j}\otimes {c'}^j$.
\end{itemize}

Given these observations, it is not difficult to prove that $\hoch^{sh}(\vp, g)$ is a chain map, i.e., that $\hoch^{sh}(\vp, g)d_{t}=d_{t'}\hoch^{sh} (\vp, g)$, by careful expansion of both sides of the equation.

An easy spectral sequence argument shows that $\hoch^{sh}(\vp, g)$ is a quasi-isomorphism if $\vp$ and $g$ are quasi-isomorphisms.
\end{proof}

As a consequence of Theorem \ref{thm:hoch-extnatl}, we obtain a new proof of Theorem 3.3 in \cite{hps-cohoch}.

\begin{cor}\label{cor:cohoch-extnatl}  A DCSH map $f:C\to C'$ with a fixed choice of chain algebra map $\vp:\Om C\to \Om C'$ realizing its strong homotopy structure naturally induces a chain map
$$\widehat \vp:\cohoch(C) \to \cohoch(C')$$
such that 
$$\xymatrix{\Om C\cof \ar[d]^\vp&\cohoch (C) \fib\ar[d]^{\widehat \vp}&C\ar [d]^f\\
\Om C'\cof &\cohoch (C')\fib &C'}$$
commutes. 
\end{cor}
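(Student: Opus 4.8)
The plan is to obtain this corollary as the special case of Theorem \ref{thm:hoch-extnatl} in which the two twisting cochains are the universal ones. Recall from Example \ref{ex:cohoch} that $\cohoch(C)=\hoch(t_{\Om})$ for the universal twisting cochain $t_{\Om}:C\to\Om C$, and that $\alpha_{t_{\Om}}=\Id_{\Om C}$. So I would first check that the pair $(\vp,\vp)$ is a morphism in $\cat{Twist^{sh}}$ from $t_{\Om}:C\to\Om C$ to $t_{\Om}':C'\to\Om C'$: the defining condition $\vp\circ\alpha_{t_{\Om}}=\alpha_{t_{\Om}'}\circ\vp$ holds trivially, both sides being $\vp$, since the structure maps $\alpha_{t_{\Om}}$ and $\alpha_{t_{\Om}'}$ are identities. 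Setting $\widehat\vp:=\hoch^{sh}(\vp,\vp):\hoch(t_{\Om})\to\hoch(t_{\Om}')$ then gives the desired chain map $\cohoch(C)\to\cohoch(C')$.

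Next I would instantiate the commuting diagram furnished by Theorem \ref{thm:hoch-extnatl} with $t=t_{\Om}$, $A=\Om C$, $g=\vp$, $t'=t_{\Om}'$, $A'=\Om C'$. Its left-hand square then degenerates, both horizontal arrows being identities, and what remains is precisely the square displayed in the statement of the corollary, provided we identify the bottom-row coalgebra map labelled $f$. By Theorem \ref{thm:hoch-extnatl} this map is $\vp_{1}$, and by Remark \ref{rmk:unfold-dcsh} a chain algebra map $\vp$ realizes the strong homotopy structure of a DCSH map $f$ precisely when $\vp_{1}=f$; so the square commutes as claimed. (Unwinding formula (\ref{eqn:hsh}) with $t_{\Om}'(c')=\si c'$ also yields an explicit expression for $\widehat\vp$, in which the products $t_{\Om}'(c'_{k,i+1})\cdot\ldots\cdot t_{\Om}'(c'_{k,k})$ there become concatenations of letters in $\Om C'$, but this is not needed.)

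For naturality it suffices to observe that the assignment $C\mapsto t_{\Om}$, $\vp\mapsto(\vp,\vp)$ is a functor from the category whose objects are connected, coaugmented chain coalgebras and whose morphisms are chain algebra maps $\Om C\to\Om C'$ (equivalently, DCSH maps equipped with a fixed realization) into $\cat{Twist^{sh}}$: a morphism of $\cat{Twist^{sh}}$ between universal twisting cochains is forced to have its two components equal, and composition in $\cat{Twist^{sh}}$ is componentwise, so this assignment preserves identities and composition. Composing with the functor $\hoch^{sh}$ of Theorem \ref{thm:hoch-extnatl} then shows that $\vp\mapsto\widehat\vp$ is functorial, which is the asserted naturality, and simultaneously reproves Theorem 3.3 in \cite{hps-cohoch}. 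I expect no genuine obstacle here: all the substance — that $\hoch^{sh}(\vp,g)$ is a well-defined chain map and its compatibility with the twisted extensions — is already carried out in Theorem \ref{thm:hoch-extnatl}, so the work of the corollary is only to specialize the twisting cochains to the universal ones and match up the two diagrams.
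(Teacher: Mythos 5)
Your proposal is correct and takes exactly the paper's route: the paper's own proof is just ``Recall that $\cohoch(C)=\hoch(t_{\Om})$ and set $\widehat\vp=\hoch^{sh}(\vp,\vp)$,'' which is your construction. You simply spell out the trivial verification that $(\vp,\vp)$ is a morphism in $\cat{Twist^{sh}}$, the identification of the bottom map as $\vp_{1}=f$ via Remark \ref{rmk:unfold-dcsh}, and the functoriality underlying the naturality claim, all of which the paper leaves implicit.
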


\begin{proof}  Recall that $\cohoch(C)=\hoch(t_{\Om})$ (Example \ref{ex:cohoch}). Let $\widehat\vp=\hoch^{sh}(\vp, \vp)$.
\end{proof}

Theorem \ref{thm:hoch-extnatl} also provides us with a new, more conceptual proof of Theorem B from \cite{jones-mccleary}, which was reformulated as Theorem 4.3 in \cite {hps-cohoch} essentially as follows. Recall the DCSH retraction $\rho_{C}:\Bar \Om C\to C$ from Example \ref{ex:dcsh-unit} and the chain map $\eta_{C}\otimes \Id_{\Om C}:\cohoch(C) \xrightarrow\simeq \hoch (\Om C)$ (\ref{eqn:hoch-universal}).

\begin{cor} \label{cor:dcsh-unit} Let $C$ be a connected, coaugmented chain coalgebra. There is a quasi-isomorphism $\widehat \rho_{C}:\hoch(\Om C)\xrightarrow\simeq \cohoch (C)$, natural in $C$, such that 
$$\xymatrix{\Om C\ar[d]_{\Id_{\Om C}}\cof&\hoch (\Om C)\ar [d]_{\widehat\rho_{C}}\fib & \Bar \Om C\ar [d]_{\rho_{C}}\\
\Om C\cof &\cohoch (C)\fib & C}$$
 commutes and such that $\widehat\rho_{C}\circ( \eta_{C}\otimes \Id_{\Om C})=\Id_{\cohoch(C)}$.
\end{cor}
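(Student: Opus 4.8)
The plan is to apply Theorem \ref{thm:hoch-extnatl} directly to the DCSH retraction $\rho_{C}:\Bar\Om C\to C$ of Example \ref{ex:dcsh-unit}, using the chain algebra map $\ve_{\Om C}:\Om\Bar\Om C\to\Om C$ that realizes its strong homotopy structure. First I would observe that the pair $(\ve_{\Om C},\Id_{\Om C})$ is a morphism in $\cat{Twist^{sh}}$ from $t_{\Bar}:\Bar\Om C\to\Om C$ to $t_{\Om}:C\to\Om C$. Indeed, the defining condition for such a morphism is $\Id_{\Om C}\circ\alpha_{t_{\Bar}}=\alpha_{t_{\Om}}\circ\ve_{\Om C}$; since $\alpha_{t_{\Bar}}=\ve_{\Om C}$ (because $\beta_{t_{\Bar}}=\Id$, so $\alpha_{t_{\Bar}}=\ve_{\Om C}$ by Remark \ref{rmk:twisting-induced}) and $\alpha_{t_{\Om}}=\Id_{\Om C}$, both sides equal $\ve_{\Om C}$. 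Then I set $\widehat\rho_{C}:=\hoch^{sh}(\ve_{\Om C},\Id_{\Om C}):\hoch(t_{\Bar})\to\hoch(t_{\Om})$, which by Examples \ref{ex:hoch} and \ref{ex:cohoch} is a chain map $\hoch(\Om C)\to\cohoch(C)$.

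The commuting square of twisted extensions follows from the corresponding conclusion of Theorem \ref{thm:hoch-extnatl}: the bottom map on fibers is $f=(\ve_{\Om C})_{1}$, the first component of the associated family $\{(\ve_{\Om C})_{k}\}_{k}$ in the sense of Remark \ref{rmk:unfold-dcsh}. Since $\ve_{\Om C}$ realizes the strong homotopy structure of the DCSH map $\rho_{C}$ (Example \ref{ex:dcsh-unit}), the last sentence of Remark \ref{rmk:unfold-dcsh} gives $(\ve_{\Om C})_{1}=\rho_{C}$, which is exactly the map on fibers in the statement. The map on base algebras is $\Id_{\Om C}$, as required. Because $\rho_{C}$ is a quasi-isomorphism (it is a retraction of the quasi-isomorphism $\eta_{C}$) and $\ve_{\Om C}$ is a quasi-isomorphism (it is the counit \eqref{eqn:counit-barcobar}), the quasi-isomorphism assertion of Theorem \ref{thm:hoch-extnatl} shows $\widehat\rho_{C}$ is a quasi-isomorphism. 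Naturality in $C$ is inherited from the functoriality of $\hoch^{sh}$ together with the naturality of $\ve$ and of the construction $\rho_{C}$.

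It remains to check the retraction identity $\widehat\rho_{C}\circ(\eta_{C}\otimes\Id_{\Om C})=\Id_{\cohoch(C)}$. Here $\eta_{C}\otimes\Id_{\Om C}$ is $\hoch(\eta_{C},\Id_{\Om C})=\hoch^{sh}(\Om\eta_{C},\Id_{\Om C})$, the image under the embedding $\cat{Twist}\hookrightarrow\cat{Twist^{sh}}$ of the genuine morphism $(\eta_{C},\Id_{\Om C}):t_{\Om}\to t_{\Bar}$. By functoriality of $\hoch^{sh}$, the composite $\widehat\rho_{C}\circ(\eta_{C}\otimes\Id_{\Om C})$ equals $\hoch^{sh}$ applied to the composite morphism $(\ve_{\Om C}\circ\Om\eta_{C},\,\Id_{\Om C})$ in $\cat{Twist^{sh}}$. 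The main point is the triangle identity of the bar/cobar adjunction: $\ve_{\Om C}\circ\Om\eta_{C}=\Id_{\Om C}$. Hence this composite morphism is $(\Id_{\Om C},\Id_{\Om C})$, the identity of $t_{\Om}$ in $\cat{Twist^{sh}}$, and $\hoch^{sh}$ of the identity is the identity. I expect this last identity to be the only real subtlety — one must be careful that $\hoch^{sh}$ is genuinely functorial on $\cat{Twist^{sh}}$ (proved in Theorem \ref{thm:hoch-extnatl}) and that the composition $\ve_{\Om C}\circ\Om\eta_{C}$, not $\Om\eta_{C}\circ\ve_{\Om C}$, is the one that appears, which is exactly the triangle identity valid for the counit of $\Om\dashv\Bar$ evaluated at $C$ after applying $\Om$. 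Everything else is a direct invocation of Theorem \ref{thm:hoch-extnatl} and Example \ref{ex:dcsh-unit}.
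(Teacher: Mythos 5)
Your proposal is correct and follows essentially the same route as the paper: define $\widehat\rho_{C}=\hoch^{sh}(\ve_{\Om C},\Id_{\Om C})$ and deduce the retraction identity from the functoriality of $\hoch^{sh}$ on $\cat{Twist^{sh}}$ combined with the triangle identity $\ve_{\Om C}\circ\Om\eta_{C}=\Id_{\Om C}$. You merely spell out the intermediate verifications (the morphism condition in $\cat{Twist^{sh}}$, the identification of the fiber map via Remark \ref{rmk:unfold-dcsh}, and the explicit triangle identity) that the paper's two-line proof leaves implicit.
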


\begin{proof}  Let $\widehat \rho_{C}=\hoch^{sh}(\ve _{\Om C}, \Id_{\Om C})$. The naturality of $\hoch^{sh}$ then implies that $$\widehat\rho_{C}\circ (\eta_{\Om C}\otimes \Id_{\Om C})=\Id_{\cohoch(C)}.$$
\end{proof}

The definitions and results above can be dualized as follows.

\begin{defn}\cite{gugenheim-munkholm}  
Given $A,A'\in \ob \cat {Alg}_{R}$, a chain map $g:A\to A'$ is called a \emph{DASH (Differential Algebra with Strong Homotopy) map} or a \emph{strongly homotopy-multiplicative map} if there is a chain coalgebra map $\gamma: \Bar A\to \Bar A'$ such that 
$$\xymatrix{s\overline A\ar [d]_{\si}\ar[r]^{\text{incl}}& \Bar A \ar [r]^\gamma&\Bar A'\ar [d]^{t_{\Bar}}\\ \overline A\ar [rr]^g &&\overline A'}$$
commutes.  The chain coalgebra map $\gamma$ is said to \emph{realize the strong homotopy structure} of the DASH map $g$.
\end{defn}

\begin{ex} Dualizing Example \ref{ex:dcsh-simpl}, we see that if $K$ and $L$ are 1-reduced simplicial sets of finite type, then the dual of the Alexander-Whitney map (i.e., the cross product)
$$C^*K\otimes C^*L\xrightarrow \simeq C^*(K\times L)$$
is a DASH map.
\end{ex}

\begin{ex} Dualizing Example \ref{ex:dcsh-alg}, we obtain that if $C$ and $C'$ are connected, coaugmented chain coalgebras of finite type, then the dual
$$\Om C\otimes \Om C' \xrightarrow\simeq \Om (C\otimes C')$$
of the Alexander-Whitney map for the bar construction is a DASH map.
\end{ex}

\begin{ex}\label{ex:dash-counit} Let $A$ be an augmented chain algebra.  Let $\sigma_{A}:A\to \Om \Bar A$ denote the chain map defined by $\sigma _{A}(a)=\si (sa)$.  It is obvious that $\sigma_{A}$ is a section of $\ve_{A}$, i.e., 
$$\xymatrix{A\ar[r]^{\sigma_{A}}\ar [dr]_{\Id_{A}}&\Om\Bar A\ar [d]^{\ve_{A}}\\ &A}$$
commutes, which implies that $\sigma_{A}$ is a quasi-isomorphism.  Moreover, $\sigma_{A}$ is a DASH map, since
$$\xymatrix{s\overline A\ar [d]_{\si}\ar[r]^{\text{incl}}& \Bar A \ar [r]^{\eta_{\Bar A}}&\Bar \Om \Bar A\ar [d]^{t_{\Bar}}\\ \overline A\ar [rr]^{\sigma _{A}} &&\overline {\Om\Bar A}}$$
commutes, where $\overline {\Om\Bar A}$ denotes the augmentation ideal of $\Om\Bar A$.  In particular, $\eta_{\Bar A}$ realizes the strong homotopy structure of $\sigma_{A}$.
\end{ex}

\begin{notn}  Let $\cat{Twist_{sh}}$ denote the category such that
\begin{itemize}
\item $\ob\cat{Twist_{sh}}=\{t:C\to A\mid t \text{ twisting cochain}\}$, and
\item  if $t:C\to A$ and $t':C'\to A'$ are twisting cochains, then
$$\cat{Twist_{sh}}(t,t')=\{ (f,\gamma)\in \cat{Coalg}_{R}(C,C')\times \cat {Coalg}_{R}(\Bar A,\Bar A')\mid \gamma\circ \beta_{t}=\beta _{t'}\circ f\}.$$
\end{itemize}
Composition of morphisms in $\cat {Twist_{sh}}$ is defined componentwise.
\end{notn}

\begin{rmk} There is an obvious faithful functor $\cat{Twist}$ to $\cat{Twist_{sh}}$, which is the identity on objects and which sends a morphism $(f,g):t\to t'$ to the morphism $(f, \Bar g):t\to t'$.
\end{rmk}

To avoid truly nasty explicit formulas, we permit ourselves a slight restriction in dualizing Theorem \ref{thm:hoch-extnatl}.  Let $\cat{Twist_{sh, f}}$ denote the full subcategory of twisting cochains $t:C\to A$ such that both $C$ and $A$ are connected and of finite type, i.e., are finitely generated free $R$-modules in each degree.

\begin{thm}\label{thm:hoch-extnatl-dual}The Hochschild construction functor extends to a functor 
$$\hoch_{sh}: \cat{Twist_{sh,f}}\to \cat{Ch}_{R}.$$
In particular, given twisting cochains $t:C\to A$ and $t':C'\to A'$, where $C$, $A$, $C'$ and $A'$ are connected and of finite type, and a commutative diagram in $\cat{Coalg}_{R}$
$$\xymatrix{C\ar[d]_{f}\ar [r]^{\beta _{t}}&\Bar A\ar[d]^{\gamma}\\ C'\ar [r]^{\beta_{t'}}&\Bar A',}$$
there is a chain map $\hoch_{sh}(f, \gamma):\hoch (t)\to \hoch (t')$ such that 
$$\xymatrix{A\ar[d]_{g}\cof&\hoch (t)\ar [d]_{\hoch_{sh}(f, \gamma)}\fib & C\ar [d]_{f}\ar [r]^{\beta _{t}}&\Bar A\ar[d]_{\gamma}\\
A'\cof &\hoch (t')\fib & C'\ar [r]^{\beta_{t'}}&\Bar A'}$$
 commutes, where $g(a)=t_{\Bar}\gamma (sa)$ for all $a\in A$.
\end{thm}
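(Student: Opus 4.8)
The plan is to dualize the proof of Theorem~\ref{thm:hoch-extnatl} term by term, exploiting the finite-type hypothesis to pass freely between a coalgebra and its dual algebra. First I would unfold a chain coalgebra map $\gamma:\Bar A\to \Bar A'$ into a family of $R$-linear maps, dual to Remark~\ref{rmk:unfold-dcsh}: since $\gamma$ is determined by its corestriction to cogenerators, $\gamma$ corresponds to a family $\{\gamma_{k}:(s\overline A)^{\otimes k}\to s\overline{A'}\mid k\geq 1\}$, where $\gamma_{k}$ is homogeneous of an appropriate degree and satisfies a relation dual to equation~(2) of Remark~\ref{rmk:unfold-dcsh} (coming from $\gamma d_{\Bar}=d_{\Bar}\gamma$ and the cofreeness of $\Bar A'$). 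Here $f=\gamma_{1}$ after the suspension identifications, and $g$ as defined in the statement, $g(a)=t_{\Bar}\gamma(sa)$, is exactly the induced DASH map. On finite-type objects one can equivalently take $R$-linear duals and invoke the already-proved (dual of) Example~\ref{ex:dcsh-alg}; this is the reason for restricting to $\cat{Twist_{sh,f}}$.

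Next I would write down the formula for $\hoch_{sh}(f,\gamma):C\otimes A\to C'\otimes A'$. Dually to~\eqref{eqn:hsh}, the idea is that $\gamma$ ``absorbs'' several tensor factors of $A$ at once: thinking of $C\otimes A\subset C\otimes \Bar A$ via $\beta_{t}$ is the wrong direction, so instead one uses that $\hoch(t)=C\otimes A$ and that iterated comultiplication on $C$ produces a string of factors, to each of which $t$ is applied, and then $\gamma$ is applied to the resulting bar-word together with $sa$ in the middle. Concretely, I would define
\begin{align*}
\hoch_{sh}(f,\gamma)(c\otimes a)=\sum \pm\, f(c_{(0)})\otimes t_{\Bar}\gamma\bigl(s\beta_{t}(c_{(1)})^{\flat}\,\big|\, sa\,\big|\, s\beta_{t}(c_{(-1)})^{\flat}\bigr),
\end{align*}
where the iterated reduced comultiplication of $c$ is written $c_{(-1)}\otimes c_{(0)}\otimes c_{(1)}$ (cyclically, Einstein-suppressed), and $(-)^{\flat}$ extracts the bar-length-one components so that $\gamma$ receives a genuine element of $\Bar A$; then $t_{\Bar}$ reads off the single tensor factor. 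One checks, as in Theorem~\ref{thm:hoch-extnatl}, that when $\gamma=\Bar g$ this collapses to $\hoch(f,g)$, so $\hoch_{sh}$ extends $\hoch$.

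The verification that $\hoch_{sh}(f,\gamma)$ is a chain map, i.e.\ $\hoch_{sh}(f,\gamma)\,d_{t}=d_{t'}\,\hoch_{sh}(f,\gamma)$, is the technical core and proceeds exactly as in the proof of Theorem~\ref{thm:hoch-extnatl}: expand both sides using (i)~the dual unfolding relation for $\{\gamma_{k}\}$, (ii)~the identity $g\circ t=t_{\Bar}\gamma\beta_{t}=t'\circ$(sum of bar-components), which encodes $\gamma\beta_{t}=\beta_{t'}f$, and (iii)~the twisting-cochain equation for $t'$, $d't'(c')+t'(d'c')=t'(c'_{j})t'({c'}^{j})$. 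Functoriality ($\hoch_{sh}$ of identities is the identity, and $\hoch_{sh}$ respects composition) is then formal, using that composition in $\cat{Twist_{sh}}$ is componentwise and that composition of coalgebra maps $\Bar A\to\Bar A'\to\Bar A''$ unfolds to the expected composite of the $\gamma$-families. Finally, the commutativity of the displayed diagram is read off directly from the formula: projecting to $C'$ kills all but the $c_{(0)}=c$, $c_{(\pm1)}=$~unit terms, giving $f$ on the fibre; restricting to $A\subset\hoch(t)$ (i.e.\ $c$ the coaugmentation) gives $t_{\Bar}\gamma(sa)=g(a)$; and the square with $\beta_{t},\beta_{t'},f,\gamma$ is the defining hypothesis. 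I expect the sign bookkeeping in the chain-map verification to be the main obstacle — the cyclic rearrangement of tensor factors combined with the Koszul rule and the degree shifts inside $\gamma_{k}$ makes this the one genuinely delicate computation — but it is structurally identical to, and no worse than, the corresponding step in Theorem~\ref{thm:hoch-extnatl}.
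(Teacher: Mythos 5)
Your proposal takes a genuinely different route from the paper's, and it is worth contrasting the two. The paper's proof is a one-liner: take $R$-linear duals, apply Theorem~\ref{thm:hoch-extnatl}, dualize back. The finite-type and connectivity hypotheses are there precisely to make this work, since they guarantee that the dual of an algebra is again a (connected) coalgebra and, in particular, that $\Bar(A)^{\vee}\cong\Om(A^{\vee})$, so that a chain coalgebra map $\gamma:\Bar A\to\Bar A'$ dualizes to a chain algebra map $\Om({A'}^{\vee})\to\Om(A^{\vee})$ and the whole morphism in $\cat{Twist_{sh,f}}$ becomes a morphism in $\cat{Twist^{sh}}$ between the dual twisting cochains. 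You actually notice this escape hatch (your remark ``on finite-type objects one can equivalently take $R$-linear duals''), but you nonetheless choose to write out the dualized formula explicitly. The paper explicitly declines to do that --- the sentence just before the theorem says ``To avoid truly nasty explicit formulas, we permit ourselves a slight restriction in dualizing Theorem~\ref{thm:hoch-extnatl}'' --- which is precisely the restriction to $\cat{Twist_{sh,f}}$ that makes the one-line dualization legal. So your plan is not wrong in principle, but it is the longer road the author deliberately avoided.

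That said, what you have written down is not yet a proof by the explicit route, for two reasons. First, the displayed formula is ambiguous: the Sweedler-like notation $c_{(-1)}\otimes c_{(0)}\otimes c_{(1)}$ does not by itself encode a sum over the number $k$ of iterations of $\overline\Delta$, yet such a sum is needed (the positions before and after the distinguished factor $c_{(0)}$ should in general each contribute several tensor factors, matching the cyclic sum over $i$ and $k$ in~\eqref{eqn:hsh}); and $s\beta_{t}(c_{(\pm1)})^{\flat}$ is not well formed --- $\beta_{t}$ produces a bar word, so applying $s$ to it is meaningless, and ``extract the bar-length-one component'' should just read $st(c_{(\pm1)})$, applied factor-wise if $c_{(\pm1)}$ is a string. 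The cleaner way to write what you mean is
$$
\hoch_{sh}(f,\gamma)(c\otimes a)\;=\;\sum_{k\geq1}\sum_{1\leq i\leq k}\pm\; f(c_{i})\,\otimes\, t_{\Bar}\,\gamma\bigl(st(c_{i+1})|\cdots|st(c_{k})|sa|st(c_{1})|\cdots|st(c_{i-1})\bigr),
$$
with $\overline\Delta^{(k)}(c)=c_{1}\otimes\cdots\otimes c_{k}$, signs by Koszul. Second, and more substantively, the chain-map verification --- your ``technical core'' --- is only asserted by analogy; to claim a proof you would actually have to carry it out, in particular verifying that the unfolding relation for $\{\gamma_{k}\}$ and the coassociativity of $\overline\Delta$ interact correctly under the cyclic permutation. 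None of this is insurmountable, but it is exactly the bookkeeping the paper's dualization argument sidesteps for free.
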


\begin{proof}  Recall that the $R$-dual of any $R$-coalgebra is an $R$-algebra, while the $R$-dual of any finite-type $R$-algebra is an $R$-coalgebra.  It follows that in order to prove this theorem, we can dualize, then apply Theorem \ref{thm:hoch-extnatl} and finally dualize again to obtain the desired map.
\end{proof}

We can also dualize Corollary \ref{cor:cohoch-extnatl}, obtaining a result not explictly stated in \cite{hps-cohoch}.

\begin{cor}A DASH map $g:A\to A'$ between finite-type, connected chain algebras, with a fixed choice of chain coalgebra map $\gamma:\Bar A\to \Bar A'$ realizing its strong homotopy structure, naturally induces a chain map
$$\widehat \gamma:\hoch(A) \to \hoch(A')$$
such that 
$$\xymatrix{A\ar[d]_{g}\cof&\hoch (A)\ar [d]_{\widehat \gamma}\fib & \Bar A\ar[d]_{\gamma}\\
A'\cof &\hoch (A')\fib &\Bar A'}$$
commutes.
\end{cor}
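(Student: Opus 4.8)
The plan is to deduce this corollary from Theorem~\ref{thm:hoch-extnatl-dual} in precisely the way that Corollary~\ref{cor:cohoch-extnatl} was deduced from Theorem~\ref{thm:hoch-extnatl}, replacing the universal twisting cochain $t_{\Om}$ by the couniversal twisting cochain $t_{\Bar}$. First I would recall (Example~\ref{ex:hoch}) that $\hoch(A)=\hoch(t_{\Bar})$ and $\hoch(A')=\hoch(t'_{\Bar})$, where $t_{\Bar}:\Bar A\to A$ and $t'_{\Bar}:\Bar A'\to A'$ are the couniversal twisting cochains, and that $\beta_{t_{\Bar}}=\Id_{\Bar A}$ and $\beta_{t'_{\Bar}}=\Id_{\Bar A'}$.

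The key point is that the data of a DASH map $g:A\to A'$ together with a realizing chain coalgebra map $\gamma:\Bar A\to\Bar A'$ amounts to exactly the data of a morphism $(\gamma,\gamma):t_{\Bar}\to t'_{\Bar}$ in $\cat{Twist_{sh,f}}$. Indeed, the compatibility condition defining $\cat{Twist_{sh}}(t_{\Bar},t'_{\Bar})$ is $\gamma\circ\beta_{t_{\Bar}}=\beta_{t'_{\Bar}}\circ f$, which, since both $\beta$'s are identities, forces $f=\gamma$ and imposes nothing else; the hypotheses that $A$ and $A'$ be connected of finite type are precisely what is needed to stay inside $\cat{Twist_{sh,f}}$ and hence to invoke Theorem~\ref{thm:hoch-extnatl-dual}. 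Setting $\widehat\gamma:=\hoch_{sh}(\gamma,\gamma):\hoch(t_{\Bar})\to\hoch(t'_{\Bar})$, the commuting diagram furnished by Theorem~\ref{thm:hoch-extnatl-dual} specializes (using $\beta_{t_{\Bar}}=\Id$) to
$$\xymatrix{A\ar[d]_{g_{0}}\cof&\hoch(A)\ar[d]_{\widehat\gamma}\fib&\Bar A\ar[d]_{\gamma}\\ A'\cof&\hoch(A')\fib&\Bar A',}$$
where $g_{0}$ is the algebra map $a\mapsto t_{\Bar}\gamma(sa)$.

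It then remains only to identify $g_{0}$ with the given map $g$, which is immediate from the definition of a DASH map: its defining square asserts precisely that $g\circ\si=t_{\Bar}\circ\gamma\circ\text{incl}$ on $s\overline A$, i.e., $g(a)=t_{\Bar}(\gamma(sa))$ for $a\in\overline A$, and both maps respect the augmentations, so $g_{0}=g$. Finally, naturality of $\widehat\gamma$ — functoriality with respect to composition of DASH maps carrying chosen realizing coalgebra maps — follows from the functoriality of $\hoch_{sh}$ together with the fact that composition in $\cat{Twist_{sh,f}}$ is componentwise, so that $\gamma'\circ\gamma$ realizes $g'\circ g$ whenever $\gamma$ realizes $g$ and $\gamma'$ realizes $g'$.

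I do not anticipate any substantive obstacle, as the statement is a formal corollary of the dual extended-naturality theorem. The only item demanding care is the bookkeeping verification that the ``bottom'' algebra map produced by $\hoch_{sh}$ is the original DASH map $g$, rather than merely some map inducing the same strong homotopy structure; this is guaranteed by the couniversality of $t_{\Bar}$, which collapses $\beta_{t_{\Bar}}$ to the identity and thereby pins $g$ down via the formula $g(a)=t_{\Bar}\gamma(sa)$.
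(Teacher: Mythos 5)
Your proof is correct and takes essentially the same approach as the paper: the paper's proof is simply the one line ``Recall that $\hoch(A)=\hoch(t_{\Bar})$; let $\widehat\gamma=\hoch_{sh}(\gamma,\gamma)$,'' and you have filled in precisely the bookkeeping (that $(\gamma,\gamma)$ is a morphism in $\cat{Twist_{sh,f}}$ because $\beta_{t_{\Bar}}=\Id$, and that the induced algebra map $a\mapsto t_{\Bar}\gamma(sa)$ is the given $g$ by the defining square of a DASH map) that the paper leaves implicit.
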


\begin{proof}  Recall that $\hoch(A)=\hoch(t_{\Bar})$ (Example \ref{ex:hoch}). Let $\widehat\gamma=\hoch_{sh}(\gamma, \gamma)$.
\end{proof}

There is also a result dual to Corollary \ref{cor:dcsh-unit} that holds. Recall the DASH map $\sigma_{A}:A\to \Om \Bar A$ from Example \ref{ex:dash-counit} and the chain map $\Id_{\Bar A}\otimes \ve_{A}:\cohoch(\Bar A)  \xrightarrow\simeq \hoch (A)$ (\ref{eqn:hoch-couniversal}).

\begin{cor}\label{cor:dash-counit}Let $A$ be a connected, augmented chain algebra of finite type. There is a quasi-isomorphism $\widehat \sigma_{A}:\hoch(A)\xrightarrow\simeq \cohoch (\Bar A)$, natural in $A$, such that 
$$\xymatrix{A\ar[d]_{\sigma _{A}}\cof&\hoch (A)\ar [d]_{\widehat\sigma_{A}}\fib & \Bar A\ar [d]_{\Id_{\Bar A}}\\
\Om \Bar A\cof &\cohoch (\Bar A)\fib & \Bar A}$$
commutes and such that $(\Id_{\Bar A}\otimes \ve_{A})\circ \widehat\sigma_{A}=\Id_{\hoch (A)}$.
\end{cor}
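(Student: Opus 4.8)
The plan is to dualize the proof of Corollary~\ref{cor:dcsh-unit}, replacing $\hoch^{sh}$ by the functor $\hoch_{sh}$ of Theorem~\ref{thm:hoch-extnatl-dual} and the DCSH retraction $\rho_{C}$ by the DASH section $\sigma_{A}:A\to\Om\Bar A$ of Example~\ref{ex:dash-counit}, whose strong homotopy structure is realized by the unit $\eta_{\Bar A}:\Bar A\to\Bar\Om\Bar A$. First I would record that $\beta_{t_{\Bar}}=\Id_{\Bar A}$ for the couniversal twisting cochain $t_{\Bar}:\Bar A\to A$ and that $\beta_{t_{\Om}}=\eta_{\Bar A}$ for the universal twisting cochain $t_{\Om}:\Bar A\to\Om\Bar A$ of the coalgebra $\Bar A$. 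Hence the pair $(\Id_{\Bar A},\eta_{\Bar A})$ satisfies $\eta_{\Bar A}\circ\beta_{t_{\Bar}}=\beta_{t_{\Om}}\circ\Id_{\Bar A}$, so it is a morphism in $\cat{Twist_{sh}}$ from $t_{\Bar}:\Bar A\to A$ to $t_{\Om}:\Bar A\to\Om\Bar A$; the finite-type hypotheses required for this morphism to live in $\cat{Twist_{sh,f}}$ are automatic, since $A$ connected of finite type forces both $\Bar A$ and $\Om\Bar A$ to be connected of finite type. Setting $\widehat\sigma_{A}:=\hoch_{sh}(\Id_{\Bar A},\eta_{\Bar A})$, Theorem~\ref{thm:hoch-extnatl-dual} produces a chain map $\hoch(A)\to\cohoch(\Bar A)$ fitting into the stated square, in which the induced map on fibers is $a\mapsto t_{\Bar}\eta_{\Bar A}(sa)$; unwinding $\eta_{\Bar A}$ (whose length-one component sends $sa$ to $s(\si(sa))$) and the couniversal twisting cochain of $\Om\Bar A$ identifies this map with $a\mapsto\si(sa)=\sigma_{A}(a)$, which is exactly the left-hand vertical map in the diagram of the corollary. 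Naturality of $\widehat\sigma_{A}$ then follows from the naturality of $\hoch_{sh}$, of $\eta$, and of $\Bar$.

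It remains to check the retraction identity $(\Id_{\Bar A}\otimes\ve_{A})\circ\widehat\sigma_{A}=\Id_{\hoch(A)}$, which is the only place any genuine content enters and which I expect to be the (mild) main obstacle. By Corollary~\ref{cor:cohoch-hoch} applied to $t_{\Bar}:\Bar A\to A$, the quasi-isomorphism $\Id_{\Bar A}\otimes\ve_{A}:\cohoch(\Bar A)\to\hoch(A)$ of~(\ref{eqn:hoch-couniversal}) is precisely $\hoch(\Id_{\Bar A},\ve_{A})$, hence equals $\hoch_{sh}(\Id_{\Bar A},\Bar\ve_{A})$, coming from the morphism of twisting cochains $(\Id_{\Bar A},\ve_{A}):t_{\Om}\to t_{\Bar}$. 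Since $\hoch_{sh}$ is a functor on $\cat{Twist_{sh,f}}$, the composite $(\Id_{\Bar A}\otimes\ve_{A})\circ\widehat\sigma_{A}$ equals $\hoch_{sh}$ of the composed morphism $(\Id_{\Bar A},\Bar\ve_{A})\circ(\Id_{\Bar A},\eta_{\Bar A})=(\Id_{\Bar A},\Bar\ve_{A}\circ\eta_{\Bar A})$, and $\Bar\ve_{A}\circ\eta_{\Bar A}=\Id_{\Bar A}$ is exactly one of the triangle identities for the adjunction $\Om\dashv\Bar$. Thus this composite is $\hoch_{sh}$ of the identity morphism of $t_{\Bar}$, namely $\Id_{\hoch(A)}$.

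Finally, since $\Id_{\Bar A}\otimes\ve_{A}$ is a quasi-isomorphism and $\widehat\sigma_{A}$ is a section of it, $\widehat\sigma_{A}$ is itself a quasi-isomorphism, which completes the proof. I do not anticipate any serious difficulty: every step is forced once the DASH structure of $\sigma_{A}$ is in hand, the only care required being the bookkeeping that identifies the induced fiber map with $\sigma_{A}$ and the triangle-identity manipulation for the retraction.
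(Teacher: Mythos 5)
Your proof is correct and is essentially the paper's own argument: the paper also defines $\widehat\sigma_{A}=\hoch_{sh}(\Id_{\Bar A},\eta_{\Bar A})$ and invokes ``naturality of $\hoch_{sh}$'' for the retraction identity, which you spell out (correctly) as functoriality of $\hoch_{sh}$ combined with the triangle identity $\Bar\ve_{A}\circ\eta_{\Bar A}=\Id_{\Bar A}$, after observing that $\Id_{\Bar A}\otimes\ve_{A}=\hoch_{sh}(\Id_{\Bar A},\Bar\ve_{A})$. The identification of the induced fiber map with $\sigma_{A}$ and the finite-type bookkeeping are also handled correctly.
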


\begin{proof}  Let $\widehat \sigma_{A}=\hoch_{{sh}}(\Id_{\Bar A},\eta_{\Bar A} )$.  The naturality of $\hoch_{sh}$ implies then that 
$$(\Id_{\Bar A}\otimes \ve_{A})\circ \widehat\sigma_{A}=\Id_{\hoch (A)}.$$
\end{proof}

%Two-stage process for sh in both components???

%%%%%%%
%SEC 2      %
%%%%%%%

\section{Operations and cooperations on the Hochschild complex}\label{sec:hoch-comult}

As mentioned in the Introduction, it is well known that the Hochschild complex of a commutative algebra admits a commutative multiplication.  Moreover, as we pointed out in Corollary \ref{cor:hoch-comult}, if $t:C\to H$ is a twisting cochain from a cocommutative coalgebra to a Hopf algebra that is strictly compatible with the comultiplications, then $\hoch(t)$ admits a natural coassociative comultiplication.  Also, in \cite{hps-cohoch}, the authors gave conditions on a coalgebra $C$ under which $\cohoch(C)$ admits a comultiplication.

Motivated by these special cases, we provide here conditions on a twisting cochain $t:C\to A$, in the spirit of those in \cite{hps-cohoch}, under which $\hoch(t)$ admits a (co)multiplicative structure. 

\subsection{Alexander-Whitney (co)algebras}

In this section we define Alexander-Whitney coalgebras and algebras, which are the type of highly structured coalgebras and algebras for which the Hochshild complex admits a natural comultiplication or a natural multiplication.  We recall well-known examples of Alexander-Whitney coalgebras and algebras coming from topology and introduce new classes of examples, including the bar construction on any chain Hopf algebra.

\begin{defn}\cite{hpst}  A \emph{weak Alexander-Whitney coalgebra} consists of a connected, coaugmented chain coalgebra $C$ such that the comultiplication $\Delta:C\to C\otimes C$ is a DCSH map, together with a choice of chain algebra map $\omega:\Om C\to \Om (C\otimes C)$ that realizes the DCSH structure of $\Delta$.  If the composite
$$\Om C \xrightarrow \omega \Om (C\otimes C) \xrightarrow q \Om C \otimes \Om C$$
is a coassociative comultiplication on $\Om C$, where $q$ denotes the Milgram equivalence (Example \ref{ex:milgram}), then $(C,\omega )$ is an \emph{Alexander-Whitney coalgebra}.   We call $q\omega$ the \emph{associated loop comultiplication}.  Note that $(\Om C, q\omega)$ is a chain Hopf algebra.

An Alexander-Whitney coalgebra $(C,\omega)$ is \emph{balanced} if the associated loop comultiplication is cocommutative.

\emph{Alexander-Whitney algebras}, which we usually denote $(A,\nu)$---and their weak or balanced variants---are defined dually.  If $(A,\nu)$ is an Alexander-Whitney algebra, then the composite
$$\Bar A \otimes \Bar A \xrightarrow \nabla \Bar (A\otimes A) \xrightarrow \nu \Bar A$$
is an associative multiplication on $\Bar A$, where $\nabla $ denotes the Eilenberg-Zilber map (Example \ref{ex:milgram}).  We call $\nu \nabla$ the \emph{associated bar multiplication}.  Note that $(\Bar A, \nu \nabla)$ is a chain Hopf algebra.
 \end{defn}
 
 \begin{rmk} An Alexander-Whitney (co)algebra is a special type of $B_{\infty}$-(co)algebra \cite {baues2}, \cite{getzler-jones}.
 \end{rmk}

If $\Delta:C\to C\otimes C$ is a DCSH map and $\omega:\Om C\to \Om (C\otimes C)$ realizes its DCSH structure, then $\Id_{C}\otimes \Delta$ and $\Delta\otimes \Id_{C}$ are both DCSH maps as well.  In particular, there are chain algebra maps 
$$\Id_{C}\wedge\omega, \omega \wedge \Id_{C}:\Om (C\otimes C)\to \Om (C\otimes C\otimes C)$$ realizing their DCSH structure, where the $k^{\text{th}}$ members of the associated families of $R$-linear maps, $(\Id_{C}\wedge \omega)_{k}$ and $( \omega \wedge \Id_{C})_{k}$, are given by following composites:
$$C\otimes C\xrightarrow{ \Delta ^{(k)}\otimes \omega_{k}} C^{\otimes k}\otimes (C\otimes C)^{\otimes k} \xrightarrow \cong (C\otimes C\otimes C)^{\otimes k}$$
and 

$$C\otimes C\xrightarrow{\omega_{k}\otimes \Delta ^{(k)}} (C\otimes C)^{\otimes k}\otimes C^{\otimes k} \xrightarrow \cong (C\otimes C\otimes C)^{\otimes k},$$
where the second map in each composite is the obvious permutation, and $\omega _{k}$ is the $k^{\text{th}}$ member of the family of $R$-linear maps associated to $\omega$.  For further justification of this construction, we refer the reader to section 1.1 in  \cite{hess2}.

\begin{defn}
A \emph{strict Alexander-Whitney coalgebra}  is a weak Alexander-Whitney coalgebra $(C,\omega)$ such that 
$$(\Id_{C}\wedge\omega) \omega =(\omega\wedge \Id_{C})\omega.$$
A \emph{quasistrict Alexander-Whitney coalgebra}  is a weak Alexander-Whitney coalgebra $(C,\omega)$ such that there is a derivation homotopy from  $(\Id_{C}\wedge\omega) \omega$ to $(\omega\wedge \Id_{C})\omega$.

\emph{Strict Alexander-Whitney algebras} and  \emph{quasistrict Alexander-Whitney algebras} are defined dually.
\end{defn}

\begin{rmk}  The naturality of the Milgram equivalence $q$ implies that any strict Alexander-Whitney coalgebra is an Alexander-Whitney coalgebra.  A similar statement holds for strict Alexander-Whitney algebras, due to the naturality of the Eilenberg-Zilber equivalence $\nabla$.
\end{rmk}

\begin{ex} If $C$ is a connected, coaugmented, cocommutative coalgebra with comultiplication $\Delta$, then $(C,\Om \Delta)$ is a strict, balanced Alexander-Whitney coalgebra. Dually, if $A$ is an augmented, commutative algebra with multiplication $m$, then $(A, \Bar m)$ is a strict, balanced Alexander-Whitney algebra.
\end{ex}

\begin{ex}\label{ex:hpst} \cite{hpst} For any reduced simplicial set $K$, there is a natural choice of chain algebra map $\omega _{K}:\Om C_{*}K\to \Om (C_{*}K\otimes C_{*}K)$ such that $(C_{*}K, \omega _{K})$ is an Alexander-Whitney coalgebra. 

In general $C_{*}K$ is not  a strict Alexander-Whitney coalgebra. On the other hand, as explained in Example 3.8 of \cite{hps-cohoch}, $C_{*}K$ is always a quasistrict Alexander-Whitney coalgebra.  

Along the same lines, Theorem 4.9 in \cite {hps2}  implies that if $L$ is a pointed simplicial set such that $C_{*}L$ is a cocommutative coalgebra, then there is a natural choice of chain algebra map $\omega _{\E L}:\Om C_{*}\E L\to \Om (C_{*}\E L\otimes C_{*}\E L)$ such that $(C_{*}\E L, \omega_{\E L})$ is a balanced Alexander-Whitney coalgebra, where $\E$ denotes the simplicial suspension functor \cite{may}.  More precisely, $\Om C_{*}\E L\cong T( \overline C_{*}L)$, the tensor algebra generated by coaugmentation coideal of $C_{*}L$, endowed with the strictly linear differential induced by the differential on $C_{*}L$.  Moreover, the comultiplication $\psi_{\E L}=q\omega_{\E L}$ satisfies 
$$\psi _{\E L}(x)=x_{i}\otimes x^{i}\in T(\overline C_{*}L)\otimes T(\overline C_{*}L),$$
for all $x\in \overline C_{*}L$, where $\Delta (x)=x_{i}\otimes x^{i}$, and $\Delta$ is the usual comultiplication on $C_{*}L$.
In particular, if $\S$ is the simplicial double suspension functor, then  $(C_{*}\S M, \omega_{\S K})$ is a  balanced Alexander-Whitney coalgebra for all simplicial sets $M$, since all positive-degree elements of $C_{*}\Eu M$ are primitive (cf. Appendix \ref{app:susp}).
\end{ex}

Inspired by Example \ref{ex:hpst}, we formulate the following definition.

\begin{defn}\label{defn:symmetric}   A reduced simplicial set $K$ is \emph{symmetric} if $C_{*}K\otimes \mathbb F_{2}$ is a strictly cocommutative coalgebra, where $\mathbb F_{2}$ denotes the field of 2 elements.
\end{defn}

In these terms, the result from \cite {hps2} cited above in Example \ref{ex:hpst}  says that if $K$ is a symmetric simplicial set, then $C_{*}\E K\otimes \mathbb F_{2}$ is a balanced Alexander-Whitney coalgebra.

\begin{rmk}\label{rmk:rpinfty}  The class of symmetric simplicial sets includes all simplicial suspensions, both reduced and unreduced, since the natural comultiplication on the normalized chain complex of a simplicial suspension is trivial in positive degrees.  Moreover an easy calculation shows that the nerve of $\mathbb Z/2\mathbb Z$, which is a simplicial model of $\mathbb RP^\infty$, is also symmetric (cf.~Example \ref{ex:rpinfty}).  

From these examples of symmetric simplicial sets, one can construct many others, including all wedges of truncated real projective spaces of arbitrary dimension.  It is clear that all subsimplicial sets and all quotients of symmetric simplicial sets are also symmetric, as is any wedge sum of symmetric simplicial sets.  We intend to study symmetric simplicial sets in greater detail in future work. 
\end{rmk} 

The next theorem provides us with another important class of Alexander-Whitney (co)algebras. 

\begin{thm}\label{thm:bar-hopf} If $H$ is a chain Hopf algebra, then $\Bar H$ is naturally an Alexander-Whitney coalgebra.  If $H$ is connected and of finite type, then $\Om H$ is naturally an Alexander-Whitney algebra.  Moreover, the associated loop comultiplication on $\Om\Bar H$ and associated bar multiplication on $\Bar\Om H$ are such that the natural quasi-isomorphisms $\ve_{H}:\Om\Bar H\to H$  and $\eta_{H}:H\to \Bar\Om H$ are both maps of Hopf algebras.
\end{thm}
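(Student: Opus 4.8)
The plan is to build the Alexander–Whitney coalgebra structure on $\Bar H$ directly from the Hopf structure on $H$, using the Eilenberg–Zilber equivalence $\nabla:\Bar H\otimes\Bar H\to\Bar(H\otimes H)$ of (\ref{eqn:milgram-dual}) together with the result, flagged in Example \ref{ex:dcsh-alg} and proved in Appendix \ref{sec:bar-hopf} as Theorem \ref{thm:aw-bar-dcsh}, that the Alexander–Whitney map $\Bar(H\otimes H)\to\Bar H\otimes\Bar H$ is a DCSH map. First I would observe that the comultiplication $\delta:H\to H\otimes H$ is a chain algebra map, hence induces $\Bar\delta:\Bar H\to\Bar(H\otimes H)$; I want to show the coalgebra comultiplication of $\Bar H$, which I will take to be a suitable composite through $\Bar(H\otimes H)$, is a DCSH map. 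The cleanest route is to dualize/transport along $\nabla$: since the Alexander–Whitney map (a one-sided inverse of $\nabla$) is DCSH with a specified realizing chain algebra map $\Phi:\Om\Bar(H\otimes H)\to\Om(\Bar H\otimes\Bar H)$, and since $\Bar\delta$ is a strict coalgebra map (so trivially DCSH via $\Om\Bar\delta$), the composite $\Delta_{\Bar H}:\Bar H\xrightarrow{\Bar\delta}\Bar(H\otimes H)\xrightarrow{AW}\Bar H\otimes\Bar H$ is DCSH, with realizing map $\omega=\Phi\circ\Om\Bar\delta$. This gives the weak Alexander–Whitney structure $(\Bar H,\omega)$.

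Next I would verify that $(\Bar H,\omega)$ is actually an Alexander–Whitney coalgebra, i.e. that the associated loop comultiplication $q\omega:\Om\Bar H\to\Om\Bar H\otimes\Om\Bar H$ is coassociative. The key point here is naturality of the Milgram equivalence $q$ (Example \ref{ex:milgram}) together with the fact that $\Phi$ realizes the DCSH structure of the Alexander–Whitney map, which is a \emph{strict} coalgebra map after passing through $\Om$; more precisely, I expect $q\omega$ to be identifiable, up to the coherence encoded in Theorem \ref{thm:aw-bar-dcsh}, with $\Om$ applied to $\Bar\delta$ followed by the canonical map, and coassociativity then descends from the coassociativity of $\delta$ on $H$ plus the associativity constraints built into the Appendix's construction. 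In fact I would aim to prove the stronger statement that $(\Bar H,\omega)$ is a \emph{strict} Alexander–Whitney coalgebra when the corresponding compatibility holds in Theorem \ref{thm:aw-bar-dcsh}, since the remark after that definition then gives the Alexander–Whitney property for free via naturality of $q$; failing strictness, quasistrictness (a derivation homotopy between the two associators) should follow from the analogous homotopy-coherence statement in the Appendix.

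For the algebra half, $\Om H$ when $H$ is connected and of finite type, I would simply dualize: the $R$-dual of $H$ is then a finite-type chain Hopf algebra $H^\vee$, $\Om H$ is dual to $\Bar(H^\vee)$, and the Alexander–Whitney coalgebra structure just constructed on $\Bar(H^\vee)$ dualizes to an Alexander–Whitney algebra structure $\nu$ on $\Om H$, with associated bar multiplication $\nu\nabla$ dual to the loop comultiplication. Finally, for the assertion that $\ve_H:\Om\Bar H\to H$ and $\eta_H:H\to\Bar\Om H$ are maps of Hopf algebras: here I would use that $\ve_H$ and $\eta_H$ are, respectively, the counit and unit of the cobar–bar adjunction, hence natural, and that the Hopf structures on $\Om\Bar H$ and $\Bar\Om H$ (the associated loop comultiplication and bar multiplication, respectively) are built functorially from that on $H$; the compatibility of $\ve_H$ with comultiplications should reduce, after unwinding definitions, to the naturality square for $q$ together with the triangle identity $\ve_H\circ\Om\eta_H=\Id$, and dually for $\eta_H$.

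The main obstacle will be the second paragraph: carefully matching the realizing chain algebra map $\Phi$ coming out of Theorem \ref{thm:aw-bar-dcsh} against the Milgram equivalence $q$ so as to identify $q\omega$ explicitly and check coassociativity. This is where all the combinatorial bookkeeping of the Eilenberg–Mac Lane Alexander–Whitney map and its higher homotopies lives, and it is essentially why the proof is deferred to the Appendix; everything else is formal adjunction and duality nonsense.
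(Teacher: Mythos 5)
Your first paragraph recovers the paper's construction of the weak Alexander--Whitney structure exactly: $\omega=\alpha_F\circ\Om\Bar\delta$, where $\alpha_F$ realizes the DCSH structure of the Alexander--Whitney map $f:\Bar(H\otimes H)\to\Bar H\otimes\Bar H$ (Theorem~\ref{thm:aw-bar-dcsh}) and $\Om\Bar\delta$ realizes the trivial DCSH structure of the strict coalgebra map $\Bar\delta$. The one thing you should make explicit is that the cofree comultiplication of $\Bar H$ actually coincides with the composite $f\circ\Bar\delta$; it does, because all cross terms from $\delta$ are killed by the augmentations in the formula for $f$. The dualization step for $\Om H$ is also the paper's.

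There are two genuine gaps. First, your route to coassociativity of $q\omega$ is not sound as stated. Proving that $(\Bar H,\omega)$ is a \emph{strict} Alexander--Whitney coalgebra almost certainly fails: the paper only \emph{conjectures} quasistrictness of $\Bar H$, and only for cocommutative $H$. Your fallback to quasistrictness is not a fallback at all here, because quasistrictness gives a derivation homotopy between $(\Id\wedge\omega)\omega$ and $(\omega\wedge\Id)\omega$, hence only \emph{homotopy} coassociativity of $q\omega$; but the Alexander--Whitney property demands $q\omega$ coassociative on the nose. These are genuinely different conditions, and neither strictness nor quasistrictness is available. What the paper actually does is prove coassociativity of $q\omega$ directly, citing the argument of Theorem~4.2 in \cite{hpst}, which exploits the fact that the cobar diagonal is built from the same Eilenberg--Mac Lane SDR in both settings. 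You need that computation, not the strictness shortcut.

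Second, the claim that $\ve_H$ and $\eta_H$ being Hopf maps is ``formal adjunction and duality nonsense'' reducible to naturality of $q$ and triangle identities is too optimistic. Unwinding the claim that $\ve_H$ is a coalgebra map reduces, since $\psi=q\alpha_F\Om\Bar\delta$ is an algebra map and $\Om\Bar H$ is generated by $\si\overline{\Bar H}$, to two concrete statements on generators: (i) $(\ve_H\otimes\ve_H)\psi(\si(sa))=\delta(a)$, and (ii) $(\ve_H\otimes\ve_H)\psi(\si(sa_1|\cdots|sa_n))=0$ for $n>1$. Claim (ii) is the substantive one: it says the higher homotopy terms $F_k$ ($k\geq 2$) of the Alexander--Whitney DCSH structure all die after composing with $\ve_H\otimes\ve_H$. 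This is exactly the content of Lemma~\ref{lem:h-delta-f} and Corollary~\ref{cor:ker-hk}, which require tracking the Eilenberg--Mac Lane homotopy $h$ carefully on $\Bar(A\otimes A')$ and are not consequences of naturality or triangle identities alone. Your plan as written has no mechanism for producing these vanishings.
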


We refer the reader to Appendix \ref{sec:bar-hopf} for the proof of this theorem.
%When is it balanced?  Strict? Quasi-strict?

When we define power maps on Hochschild complexes in section \ref{sec:powermaps}, we can actually relax slightly the conditions on the (co)algebras we consider and study the Hirsch (co)algebras of Kadeishvili \cite{kadeishvili}.

\begin{defn}  A \emph{Hirsch coalgebra} consists of a connected, coaugmented chain coalgebra $C$, together with a coassociative comultiplication $\psi:\Om C\to \Om C\otimes \Om C$, called its \emph{loop comultiplication}, that is a morphism of chain algebras.  A Hirsch coalgebra $(C,\psi)$ is \emph{balanced} if $\psi $ is cocommutative.

\emph{Hirsch algebras} are defined dually.
\end{defn}

\begin{notn}\label{notn:hirsch} Let $\cat {Hirsch}_{R}$ denote the category of which the objects are Hirsch coalgebras $(C, \psi)$ and where
$$\cat {Hirsch}_{R}\big( (C, \psi), (C', \psi')\big)=\{ f\in \cat {Coalg}_{R}(C,C')\mid (\Om f\otimes \Om f) \psi=\psi' \Om f\}.$$
\end{notn}

\begin{rmk}\label{rmk:aw-to-gerst} If $(C,\omega)$ is an Alexander-Whitney coalgebra, then   $(C, q\omega)$ is a Hirsch coalgebra.
\end{rmk}

\begin{rmk}\label{rmk:gerstenhaber} As explained in \cite{kadeishvili}, the homology of a Hirsch algebra is naturally a \emph {Gerstenhaber algebra}, i.e.,  a commutative graded algebra endowed with a Lie bracket of degree $-1$ that is a biderivation with respect to the multiplication.  In particular, the homology of any Alexander-Whitney algebra is a Gerstenhaber algebra.
\end{rmk}

\begin{ex}\cite{kadeishvili} The Hochschild cochain complex of a chain algebra $A$, usually denoted $C^*(A,A)$, is a Hirsch algebra.
\end{ex}

\begin{rmk} In \cite{baues}  Baues provided combinatorial formulas for a natural Hirsch coalgebra structure on $C_{*}K$, for all reduced simplicial sets $K$.  The Alexander-Whitney structure defined in \cite{hpst} lifts Baues' Hirsch structure.
\end{rmk}
 
\begin{ex} \label{ex:nonrealAW} Not all Hirsch coalgebras are induced from Alexander-Whitney coalgebras, as in Remark \ref{rmk:aw-to-gerst}. Let $C$ denote the free graded abelian group with five generators:  $u$ in degree 0 (which plays the role of 1), $x$ in degree 3, $y$ and $y'$ in degree 4 and $z$ in degree 7.  Endow $C$ with a differential $d$ specified by

	$$dy=2x\quad\text{ and }\quad dy'=3x,$$
while $u$, $x$, and $z$ are cycles for degree reasons.   Define a comultiplication $\Delta$ on $C$ by setting $x$, $y$ and $y'$ to be primitive, while

	$$\Delta(z)=u\otimes  z + 3 x\otimes y -2 x\otimes y' + z\otimes u .$$

It is easy to check that $\Delta$ is a chain map.  Moreover, $\Delta$ is cocommutative up to chain homotopy, where the chain homotopy $F$ is given by $F(x)=F(y)=F(y')=0$ and

	$$F(z)= y'\otimes y - y\otimes y'.$$

There is a cocommutative, coassociative comultiplication $\psi : \Om C\to \Om C\otimes \Om C$, defined to be primitive on all generators, except $\si z$, where
$$\psi (\si z)= \si z\otimes 1 +\si y' \otimes \si y -\si y\otimes \si y' + 1\otimes \si z.$$
Thus, $(C,\psi)$ is a balanced Hirsch coalgebra.  Easy computations show that there is no algebra map $\omega: \Om C\to \Om (C\otimes C)$ such that $q\omega =\psi$.  It follows that $(C,\psi)$ is not realizable as the Hirsch coalgebra of a simplicial set.
\end{ex}

\begin{rmk}\label{rmk:rectify}  Any Hirsch coalgebra $(C,\psi)$ is weakly equivalent to an Alexander-Whitney coalgebra, since $\Bar (\Om C,\psi)$ is an Alexander-Whitney coalgebra,  by Theorem  \ref{thm:bar-hopf}, and $\eta_{C}:C\to \Bar\Om C$ is a quasi-isomorphism of chain coalgebras.
\end{rmk}

When we construct power maps on the Hochschild complex of a twisting cochain later in this paper, we are led to consider variants of the category $\cat {Twist}$ (cf.~Notation \ref{notn:twist}) that involve Hirsch coalgebra structure.

\begin{notn}\label{notn:twist-hirsch}  Let $\cat{Twist}_{\text{Hirsch}}$ denote the category with
\begin{itemize}
\item $\ob\cat{Twist}_{\text{Hirsch}} =\big\{\big((C,\psi), C\xrightarrow t A\big)\mid (C,\psi) \in \ob \cat {Hirsch}_{R}, t\in  \ob \cat {Twist} \big\}$, and
\item if $\big((C,\psi), t\big)$ and $\big((C',\psi'), t'\big)$ are objects in $\cat{Twist}_{\text{Hirsch}}$, then
\begin{multline*}\cat{Twist}_{\text{Hirsch}}\Big(\big((C,\psi), t\big),\big((C',\psi'), t'\big)\Big)\\=\big\{(f,g)\in \cat {Twist}(t,t')\mid (\Om f\otimes \Om f)\psi=\psi'\Om f\big \}.
\end{multline*}
\end{itemize}
\end{notn}

We sometimes need an even more highly structured category. Recall Notation \ref{notn:twist-hopf}.

\begin{notn}\label{notn:twist-hiho}  Let $\cat{Twist}_{\text{HH}}$ denote the category  the objects of which are triples$$ \big((C, \psi), C\xrightarrow t H, (H,\delta)\big),$$ where $(C, \psi)\in \ob \cat {Hirsch}_{R}$, $t\in \ob\cat{Twist}$, and $(H,\delta)\in\ob  \cat {Hopf}_{R}$.  If $ \big((C, \psi), t, (H,\delta)\big)$ and $ \big((C', \psi'), t', (H',\delta')\big)$ are objects in $\cat{Twist}_{\text{HH}}$, then
\begin{multline*}\cat{Twist}_{\text{HH}}\Big(\big((C, \psi), t, (H,\delta)\big),  \big((C', \psi'), t', (H',\delta')\big)\Big) \\
=\cat{Twist}_{\text{Hirsch}}\Big(\big((C, \psi), t\big),  \big((C', \psi'), t'\big)\Big) \cap \cat{Twist}_{\text{Hopf}}\Big(\big(t, (H,\delta)\big),  \big( t', (H',\delta')\big)\Big) 
\end{multline*}

\end{notn}

\subsection {Existence of (co)multiplication on the Hochschild complex}

We are now ready to generalize Corollary \ref{cor:hoch-comult}, as well as Theorem 3.9 in \cite{hps-cohoch}, which says that the coHochschild complex of an Alexander-Whitney coalgebra admits a natural comultiplication.

\begin{thm}\label{thm:hoch-comult} Let $(C,\omega)$ be an Alexander-Whitney coalgebra with underlying comultiplication $\Delta:C\to C\otimes C$.  Let $H$ be a chain Hopf algebra, with comultiplication $\delta:H\to H\otimes H$. Let $t:C\to H$ be a twisting cochain. 

If $(\omega, \delta)$ is a morphism in $\cat{Twist^{sh}}$ from $t$ to $t*t$, then the Hochschild complex of $t$ admits a comultiplication $\widehat\delta:\hoch(t)\to \hoch(t)\otimes \hoch(t)$ such that 
 $$\xymatrix{H\ar [d]_{\delta}\cof &\hoch(t)\ar [d]_{\widehat \delta}\fib & C\ar [d]_{\Delta}\\
 		    H\otimes H\cof &\hoch (t)\otimes \hoch (t) \fib&C\otimes C}$$
commutes.  Moreover, $\widehat \delta$ is coassociative (respectively, coassociative up to chain homotopy) if $(C,\omega)$ is a strict (respectively, quasistrict) Alexander-Whitney coalgebra.
\end{thm}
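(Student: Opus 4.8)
The plan is to obtain $\widehat\delta$ by applying the extended functor $\hoch^{sh}$ of Theorem~\ref{thm:hoch-extnatl} to the hypothesized morphism $(\omega,\delta)$ and then composing with the natural isomorphism of Proposition~\ref{prop:hoch-monoidal}. Since $(\omega,\delta)$ is, by hypothesis, a morphism in $\cat{Twist^{sh}}$ from $t$ to $t*t$, Theorem~\ref{thm:hoch-extnatl} produces a chain map $\hoch^{sh}(\omega,\delta)\colon\hoch(t)\to\hoch(t*t)$, and I would \emph{define} $\widehat\delta$ to be the composite
$$\hoch(t)\xrightarrow{\ \hoch^{sh}(\omega,\delta)\ }\hoch(t*t)\xrightarrow{\ \cong\ }\hoch(t)\otimes\hoch(t),$$
the second arrow being the isomorphism of Proposition~\ref{prop:hoch-monoidal}.

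The commuting square then follows almost formally. Theorem~\ref{thm:hoch-extnatl} records that $\hoch^{sh}(\omega,\delta)$ restricts to $\delta$ on the sub chain algebra $H$ and descends to $\omega_{1}$ on the quotient coalgebra $C$, and $\omega_{1}=\Delta$ by Remark~\ref{rmk:unfold-dcsh}, since $\omega$ realizes the DCSH structure of $\Delta$. Moreover the isomorphism of Proposition~\ref{prop:hoch-monoidal} is the middle-four interchange identifying $(C\otimes C)\otimes(H\otimes H)$ with $(C\otimes H)\otimes(C\otimes H)$, which carries the sub-$(H\otimes H)$ to the sub-$(H\otimes H)$ and the quotient $C\otimes C$ to $C\otimes C$; hence it is a map of twisted extensions, and the square commutes.

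The heart of the statement is coassociativity, where I would compare $(\widehat\delta\otimes\Id)\widehat\delta$ and $(\Id\otimes\widehat\delta)\widehat\delta$ as maps $\hoch(t)\to\hoch(t)^{\otimes3}$. The key point is that the chain algebra maps $\omega\wedge\Id_{C}$ and $\Id_{C}\wedge\omega$ from $\Om(C\otimes C)$ to $\Om(C\otimes C\otimes C)$ have first components $\Delta\otimes\Id_{C}$ and $\Id_{C}\otimes\Delta$, respectively, and that $(\omega\wedge\Id_{C},\,\delta\otimes\Id_{H})$ and $(\Id_{C}\wedge\omega,\,\Id_{H}\otimes\delta)$ are morphisms in $\cat{Twist^{sh}}$ from $t*t$ to $t*t*t$; checking the latter uses the identity $\delta\alpha_{t}=\alpha_{t*t}\omega$, the factorization $\alpha_{t*t}=(\alpha_{t}\otimes\alpha_{t})\circ q$ of Remark~\ref{rmk:cartesian-twist}, and the compatibility of the Milgram map $q$ with the wedge operation (cf.\ section~1.1 of \cite{hess2}). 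Granting moreover the compatibility of $\hoch^{sh}$ of such wedge morphisms with the isomorphisms of Proposition~\ref{prop:hoch-monoidal}, functoriality of $\hoch^{sh}$ identifies, under the coherence isomorphisms $\hoch(t)^{\otimes3}\cong\hoch(t*t*t)$,
\begin{align*}
(\widehat\delta\otimes\Id)\widehat\delta\ &\longleftrightarrow\ \hoch^{sh}\!\big((\omega\wedge\Id_{C})\,\omega,\ (\delta\otimes\Id_{H})\,\delta\big),\\
(\Id\otimes\widehat\delta)\widehat\delta\ &\longleftrightarrow\ \hoch^{sh}\!\big((\Id_{C}\wedge\omega)\,\omega,\ (\Id_{H}\otimes\delta)\,\delta\big).
\end{align*}
The algebra-side components agree by coassociativity of the Hopf comultiplication $\delta$. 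If $(C,\omega)$ is strict, then $(\omega\wedge\Id_{C})\,\omega=(\Id_{C}\wedge\omega)\,\omega$ by definition, so these two chain maps coincide and $\widehat\delta$ is coassociative. If $(C,\omega)$ is only quasistrict, a derivation homotopy from $(\Id_{C}\wedge\omega)\,\omega$ to $(\omega\wedge\Id_{C})\,\omega$ has an associated component family, and inserting it into a formula of the same shape as~(\ref{eqn:hsh})---with one $\vp_{k}$-factor replaced by a homotopy factor---produces, by exactly the bookkeeping that proves~(\ref{eqn:hsh}) a chain map, a chain homotopy between the two maps above.

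The step I expect to be the main obstacle is the monoidality bookkeeping just invoked: showing that $\hoch^{sh}$ applied to the wedge morphisms $\omega\wedge\Id_{C}$ and $\Id_{C}\wedge\omega$ reproduces the correct tensor factor of the iterated comultiplications under the natural isomorphisms of Proposition~\ref{prop:hoch-monoidal}. This amounts to a direct computation with the explicit formula~(\ref{eqn:hsh}), made delicate only by the failure of $\Om$ to be strictly monoidal---so that the Milgram equivalence $q$ must be threaded through the calculation---whereas everything else reduces to the defining property of strict (resp.\ quasistrict) Alexander-Whitney coalgebras together with the coassociativity of $\delta$.
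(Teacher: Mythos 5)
Your proposal matches the paper's proof: the paper defines $\widehat\delta$ as exactly the same composite $\hoch(t)\xrightarrow{\hoch^{sh}(\omega,\delta)}\hoch(t*t)\xrightarrow{\cong}\hoch(t)\otimes\hoch(t)$, and then disposes of coassociativity in one sentence by saying it ``follows easily from the formulas in the proof of Theorem~\ref{thm:hoch-extnatl}'' that strictness (resp.\ quasistrictness) gives coassociativity (resp.\ up to homotopy). Your more detailed account of the coassociativity step---identifying $(\widehat\delta\otimes\Id)\widehat\delta$ and $(\Id\otimes\widehat\delta)\widehat\delta$ with $\hoch^{sh}$ applied to the compositions with $\omega\wedge\Id_{C}$ and $\Id_{C}\wedge\omega$, and threading the Milgram map $q$ through---is precisely the content the paper compresses into that sentence, so you have simply unpacked the argument rather than departed from it.
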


Note that, according to Remark \ref{rmk:cartesian-twist},  asking for $(\omega, \delta)$ to be a morphism in $\cat{Twist^{sh}}$ from $t$ to $t*t$ is equivalent to requiring the diagram
$$\xymatrix{\Om C\ar [dd]_{\alpha _{t}}\ar [r]^(0.4)\omega&\Om (C\otimes C)\ar [d]^q\\
&\Om C\otimes \Om C\ar [d]^{\alpha_{t}\otimes \alpha_{t}}\\
H\ar [r]^(0.4)\delta&H\otimes H}$$
to commute, i.e., to requiring $\alpha_{t}$ to be a map of chain coalgebras with respect to the associated loop comultiplication
$$\Om C\xrightarrow \omega \Om (C\otimes C) \xrightarrow q \Om C\otimes \Om C.$$

\begin{proof}  Apply Theorem \ref{thm:hoch-extnatl} to $(\omega, \delta):t \to t*t$, obtaining a chain map
$$\hoch^{sh}(\omega, \delta): \hoch (t)\to \hoch (t*t).$$
The desired comultiplication $\widehat\delta$ is then given by the composite
$$\hoch(t)\xrightarrow{\hoch^{sh}(\omega, \delta)} \hoch(t*t)\xrightarrow \cong \hoch(t)\otimes \hoch (t).$$ 
It follows easily from the formulas in the proof of Theorem \ref{thm:hoch-extnatl} that $\widehat \delta$ is coassociative (respectively, coassociative up to chain homotopy) if $(\Id_{C}\wedge\omega) \omega =(\omega\wedge \Id_{C})\omega$ (respectively, if there is a derivation homotopy from  $(\Id_{C}\wedge\omega) \omega$ to $(\omega\wedge \Id_{C})\omega$). 
\end{proof}

\begin{ex} Let $C$ be a quasistrict Alexander-Whitney coalgebra. Applying Theorem \ref{thm:hoch-comult} to the universal twisting cochain $t_{\Om}:C\to \Om C$, we obtain a homotopy coassociative comultiplication on $\cohoch(C)=\hoch (t_{\Om})$.  Theorem 3.9 in \cite{hps-cohoch} is therefore a special case of our Theorem \ref{thm:hoch-comult}.
\end{ex}

\begin{ex} Since the bar construction on a chain Hopf algebra is an Alexander-Whitney coalgebra, the Hochschild complex of $H$, which is equal to $\hoch(t_{\Bar})$, admits a comultiplication.  We conjecture that if $H$ is cocommutative, then $\Bar H$ is quasistrict and therefore the comultiplication on $\hoch(t_{\Bar})$ is coassociative up to chain homotopy.
\end{ex}

Dualizing both the statement and the proof of the theorem above, we obtain the following result.

\begin{thm}\label{thm:hoch-mult}  Let $(A,\nu)$ be an Alexander-Whitney algebra of finite type, with underlying multiplication $m:A\otimes A\to A$.  Let $H$ be a chain Hopf algebra of finite type, with multiplication $\mu:H\otimes H\to H$.  Let $t:H\to A$ be a twisting cochain.  

If $(\mu, \nu)$ is a morphism in $\cat{Twist_{sh}}$ from $t*t$ to $t$, then the Hochschild complex of $t$ admits a multiplication $\widehat\mu:\hoch(t)\otimes \hoch(t)\to \hoch(t)$ such that 
 $$\xymatrix{A\otimes A\ar [d]_{m}\cof &\hoch(t)\otimes \hoch (t)\ar [d]_{\widehat \mu}\fib & H\otimes H\ar [d]_{\mu}\\
 		    A\cof &\hoch (t) \fib&H}$$
commutes.  Moreover, $\widehat \mu$ is associative (respectively, associative up to chain homotopy) if $(A,\nu)$ is a strict (respectively, quasistrict) Alexander-Whitney algebra.
\end{thm}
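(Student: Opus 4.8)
The plan is to dualize Theorem~\ref{thm:hoch-comult} and its proof, replacing Theorem~\ref{thm:hoch-extnatl} by Theorem~\ref{thm:hoch-extnatl-dual}, Milgram's equivalence $q$ by the Eilenberg--Zilber equivalence $\nabla$, and the maps $\Id_{C}\wedge\omega$, $\omega\wedge\Id_{C}$ by their $R$-duals. First I would observe that $(\mu,\nu)$ defines a morphism $t*t\to t$ in $\cat{Twist_{sh,f}}$: the map $\mu:H\otimes H\to H$ is a map of chain coalgebras because $H$ is a Hopf algebra, the map $\nu:\Bar(A\otimes A)\to\Bar A$ is a map of chain coalgebras because $(A,\nu)$ is an Alexander-Whitney algebra, and the finite-type and connectivity hypotheses on $A$ and $H$ are exactly what places this pair in the category $\cat{Twist_{sh,f}}$ over which $\hoch_{sh}$ is defined; the only remaining content of the assumption ``$(\mu,\nu)$ is a morphism in $\cat{Twist_{sh}}$ from $t*t$ to $t$'' is then the compatibility $\nu\circ\beta_{t*t}=\beta_{t}\circ\mu$. (Dually to the remark following Theorem~\ref{thm:hoch-comult}, and using Remark~\ref{rmk:cartesian-twist}, this compatibility is equivalent to requiring $\beta_{t}$ to be a map of chain algebras for the associated bar multiplication $\Bar A\otimes\Bar A\xrightarrow{\nabla}\Bar(A\otimes A)\xrightarrow{\nu}\Bar A$.)

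Next I would apply Theorem~\ref{thm:hoch-extnatl-dual} to $(\mu,\nu):t*t\to t$, obtaining a chain map $\hoch_{sh}(\mu,\nu):\hoch(t*t)\to\hoch(t)$ together with the attendant commuting diagram of twisted extensions; the left-hand vertical map of that diagram sends $a\otimes a'$ to $t_{\Bar}\nu\bigl(s(a\otimes a')\bigr)$, which equals $m(a\otimes a')$ because $\nu$ realizes the DASH structure of $m$. I would then define $\widehat\mu$ to be the composite
$$\hoch(t)\otimes\hoch(t)\xrightarrow{\cong}\hoch(t*t)\xrightarrow{\hoch_{sh}(\mu,\nu)}\hoch(t),$$
the first arrow being the inverse of the natural isomorphism of Proposition~\ref{prop:hoch-monoidal}. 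Commutativity of the square relating $\widehat\mu$ to $m$ and $\mu$ then follows because the isomorphism of Proposition~\ref{prop:hoch-monoidal} restricts to the identity on the sub-object $A\otimes A$ and induces the identity on the quotient $H\otimes H$ (this is how it is built), together with the identity $\beta_{t*t}=\nabla\circ(\beta_{t}\otimes\beta_{t})$ of Remark~\ref{rmk:cartesian-twist}.

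It remains to treat associativity. Using functoriality of $\hoch_{sh}$ and the fact (noted after Proposition~\ref{prop:hoch-monoidal}) that $\hoch$ is strongly monoidal, the two iterated products $\widehat\mu\circ(\widehat\mu\otimes\Id)$ and $\widehat\mu\circ(\Id\otimes\widehat\mu)$ from $\hoch(t)^{\otimes 3}$ to $\hoch(t)$ identify, respectively, with $\hoch_{sh}$ applied to the two morphisms $t*t*t\to t$ obtained by composing $\nu$ with the two chain coalgebra maps $\Bar(A\otimes A\otimes A)\to\Bar(A\otimes A)$ dual to $\Id_{C}\wedge\omega$ and $\omega\wedge\Id_{C}$. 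If $(A,\nu)$ is strict these two composites coincide, so the two iterated products agree on the nose; if $(A,\nu)$ is quasistrict, the homotopy in the definition of quasistrictness induces, via the explicit formulas in the (dualized) proof of Theorem~\ref{thm:hoch-extnatl}, a chain homotopy between the two iterated products.

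I expect the main obstacle to be purely organizational rather than conceptual: one must check that every algebra occurring in the argument is connected and of finite type, so that passing to $R$-duals turns it into a coalgebra and Theorem~\ref{thm:hoch-extnatl-dual} applies verbatim, and one must then track the signs and permutations in the dualized $\Id_{C}\wedge\omega$, $\omega\wedge\Id_{C}$ construction carefully enough to confirm that the associativity comparison reduces exactly to the strict, respectively quasistrict, condition on $(A,\nu)$. No genuinely new phenomenon arises beyond what is already handled in the proof of Theorem~\ref{thm:hoch-extnatl}.
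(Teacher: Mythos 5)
Your proposal is correct and takes exactly the approach the paper intends: the paper's proof of this theorem is the single remark preceding it, ``Dualizing both the statement and the proof of [Theorem~\ref{thm:hoch-comult}],'' and your write-up is precisely that dualization, unpacked in detail (apply $\hoch_{sh}$ to $(\mu,\nu):t*t\to t$, compose with the monoidality isomorphism from Proposition~\ref{prop:hoch-monoidal}, and reduce (homotopy) associativity to the (quasi)strictness condition on $(A,\nu)$). The one small inessential slip is in your final paragraph: you do not need to ``pass to $R$-duals'' within this proof at all---you simply invoke Theorem~\ref{thm:hoch-extnatl-dual} as stated, the dualization having already been discharged in that theorem's proof---which is why the finite-type hypotheses appear in the statement.
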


Note that, according to Remark \ref{rmk:cartesian-twist}, asking for $(\mu, \nu)$ to be a morphism in $\cat{Twist_{sh}}$ from $t*t$ to $t$ is equivalent to requiring the diagram
$$\xymatrix{H\otimes H\ar [d]_{\beta _{t}\otimes \beta _{t}}\ar [r]^\mu&H\ar [dd]^{\beta_{t}}\\
\Bar A\otimes \Bar A\ar [d]_{\nabla}\\
\Bar (A\otimes A)\ar [r]^\nu&\Bar A}$$
to commute, i.e., to requiring  $\beta_{t}$ to be a map of chain algebras with respect to the multiplication
$$\Bar A\otimes \Bar A \xrightarrow\nabla \Bar (A\otimes A) \xrightarrow \nu \Bar A.$$

\begin{ex}  Since the cobar construction on a finite-type, connected chain Hopf algebra is an Alexander-Whitney algebra, the coHochschild complex of $H$, which is equal to $\hoch(t_{\Om})$, admits a multiplication.
\end{ex}

%%%%%%%
%SEC 3      %
%%%%%%%

\section{Power maps on the Hochschild complex of a twisting cochain}\label{sec:powermaps}

Let $t:C\to H$ be a twisting cochain, where $H$ is a chain Hopf algebra.  The goal of this section is to prove the existence, under certain cocommutativity conditions, of an \rth-power map $\widetilde \lambda_{r}$ on the Hochschild complex of $t$, extending the usual \rth-power map $\lambda_{r}$ on $H$ (cf.~equation (\ref{eqn:convolution})).  In the next section we show that if $C$ is the chain complex on a simplicial double supension $K$, and $H=\Om C$, then the algebraic \rth-power map $\widetilde\lambda_{r}$ is topologically meaningful, in the sense that it models the topological \rth-power map on $\op L |K|$.

\subsection{The existence theorem for power maps}

\begin{thm}\label{thm:exists-nthpower}  Let $C$ be a Hirsch coalgebra, with loop comultiplication $\psi: \Om C\to \Om C\otimes \Om C$.  Let $H$ be a chain Hopf algebra, with comultiplication $\delta: H\to H\otimes H$.  Let $t:C\to H$ be a twisting cochain.

 If 
 \begin{enumerate}
\item  the induced chain algebra map $\alpha _{t}:\Om C\to H$ is also a map of coalgebras, and
 \item $\tau\delta t=\delta t$, where $\tau:H\otimes H\xrightarrow\cong H\otimes H$ is the symmetry isomorphism,
 \end{enumerate} 
 then for any positive integer $r$, there is an endomorphism of chain complexes 
 $$\widetilde\lambda_{r}:\hoch( t)\to \hoch( t),$$
 natural with respect to morphisms in $\cat{Twist}_{\mathrm{HH}}$ (cf.~Notation \ref{notn:twist-hiho}), 
 such that 
 $$\xymatrix{H \ar[d]_{\lambda _{r}}\cof & \hoch (t)\ar[d]_{\widetilde\lambda_{r}}\fib & C\ar@{=}[d]\\
 		    H\cof &\hoch (t) \fib& C}$$
commutes, where $\lambda_{r}$ denotes the \rth-power map on $H$.  In particular, if $(\Om C, \psi)$ is primitively generated, then $\widetilde\lambda_{r}=\Id_{C}\otimes \lambda_{r}$.
\end {thm}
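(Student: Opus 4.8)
The plan is to reduce the statement to the extended-naturality machinery of Theorem~\ref{thm:hoch-extnatl} by exhibiting an appropriate morphism in $\cat{Twist^{sh}}$ whose associated Hochschild map, composed with the canonical symmetric-monoidal isomorphism of Proposition~\ref{prop:hoch-monoidal} and an iterated multiplication, yields $\widetilde\lambda_r$. First I would form the $r$-fold cartesian product twisting cochain $t^{*r}:C^{\otimes r}\to H^{\otimes r}$ and observe, exactly as in Remark~\ref{rmk:cartesian-twist}, that $\alpha_{t^{*r}}=(\alpha_t^{\otimes r})\circ q^{(r)}:\Om(C^{\otimes r})\to H^{\otimes r}$, where $q^{(r)}$ is the iterated Milgram map. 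Using hypothesis (1), namely that $\alpha_t$ is a coalgebra map for the loop comultiplication $\psi$, one gets that the iterated loop comultiplication $\psi^{(r)}:\Om C\to(\Om C)^{\otimes r}$ is compatible with the iterated comultiplication $\delta^{(r)}$ on $H$: precisely, $(\alpha_t^{\otimes r})\psi^{(r)}=\delta^{(r)}\alpha_t$. Interpreting $\psi^{(r)}$ through $q^{(r)}$ as an algebra map $\omega^{(r)}:\Om C\to \Om(C^{\otimes r})$ realizing the strong-homotopy diagonal, this says exactly that $\bigl(\omega^{(r)},\delta^{(r)}\bigr)$ is a morphism in $\cat{Twist^{sh}}$ from $t$ to $t^{*r}$, since $\delta^{(r)}\circ\alpha_t=(\alpha_t^{\otimes r})\circ q^{(r)}\circ\omega^{(r)}=\alpha_{t^{*r}}\circ\omega^{(r)}$.

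Next I would apply Theorem~\ref{thm:hoch-extnatl} to this morphism, producing a chain map $\hoch^{sh}(\omega^{(r)},\delta^{(r)}):\hoch(t)\to\hoch(t^{*r})$ that lifts $\Id_C$ on the coalgebra side (because $(\omega^{(r)})_1=\Id_C$ since the loop comultiplication is primitive-linear in degree one) and restricts to $\delta^{(r)}$ on $H$. Composing with the isomorphism $\hoch(t^{*r})\cong\hoch(t)^{\otimes r}$ of Proposition~\ref{prop:hoch-monoidal}, and then with the iterated multiplication $\mu^{(r)}$ — here one needs a Hochschild-level multiplication $\hoch(t)^{\otimes r}\to\hoch(t)$, which is \emph{not} available in general, so instead I would use only the partial structure that is genuinely present: the module structure built into $d_t$. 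Concretely, I would define $\widetilde\lambda_r$ directly by the formula suggested by convolution, namely on $c\otimes a\in C\otimes H$,
\[
\widetilde\lambda_r(c\otimes a)=\sum \pm\, c_{(1)}\otimes \mu^{(r)}\bigl(t(c'_{(2)})\otimes\cdots\otimes t(c'_{(r)})\otimes \lambda_r(a)\bigr)\cdot(\text{cyclic correction}),
\]
using the iterated loop comultiplication of $c$ to distribute $r$ twisting-cochain factors around $\lambda_r(a)=\mu^{(r)}\delta^{(r)}(a)$, with signs by the Koszul rule. The symmetry hypothesis (2), $\tau\delta t=\delta t$, is what makes the various orderings of these $t$-factors agree up to the needed identities, so that the formula is well defined independently of the chosen concatenation order and — crucially — commutes with $d_t$. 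For the primitively generated case, $\psi^{(r)}$ sends each generator $\si c$ to $\sum_i \si c\otimes 1\otimes\cdots$ (only the "diagonal" terms survive), so all the $t$-factors become $t(\ve\text{-components})=0$ except on the chosen slot, collapsing the formula to $\Id_C\otimes\lambda_r$; this gives the last assertion immediately.

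The main obstacle is verifying that $\widetilde\lambda_r d_t=d_t\widetilde\lambda_r$. This is a careful but mechanical expansion using the three bulleted identities recorded in the proof of Theorem~\ref{thm:hoch-extnatl} (the defining relation for the maps $\varphi_k$, here specialized to the components of $\omega^{(r)}$; the fact that $\alpha_{t^{*r}}\omega^{(r)}=\delta^{(r)}\alpha_t$ translates into $\delta^{(r)}(t(c))$ having a prescribed expansion in terms of $t$ applied to loop-comultiplication components; and the twisting-cochain identity $d_H t+t d_C=\mu(t\otimes t)\Delta$ applied to each $H$-factor). The cocommutativity hypotheses are precisely what is needed to cancel the cross-terms that would otherwise obstruct this, and checking that cancellation is the real content. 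Naturality in $\cat{Twist}_{\mathrm{HH}}$ is then automatic: a morphism $(f,g)$ there is compatible with $\psi,\psi'$ (Hirsch condition), with $\delta,\delta'$ (Hopf condition), hence with $\omega^{(r)},\delta^{(r)}$, and one invokes the naturality of $\hoch^{sh}$ from Theorem~\ref{thm:hoch-extnatl} together with the naturality of $\mu^{(r)}$ and $\lambda_r$ on chain Hopf algebra maps.
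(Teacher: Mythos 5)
Your final strategy---abandon the attempted reduction to Theorem~\ref{thm:hoch-extnatl}, write an explicit formula that distributes loop-comultiplication factors around $\delta^{(r)}(a)$, and verify by direct computation that it commutes with $d_t$, using cocommutativity to control the reordering---is indeed the right one, and it is the route the paper takes. But you should be aware of two substantive issues.

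First, the opening gambit is a dead end for a reason beyond the one you give. To produce a morphism $(\omega^{(r)},\delta^{(r)})$ in $\cat{Twist^{sh}}$ from $t$ to $t^{*r}$ you need an honest chain algebra map $\omega^{(r)}:\Om C\to\Om(C^{\otimes r})$ satisfying $q^{(r)}\omega^{(r)}=\psi^{(r)}$. A Hirsch coalgebra comes equipped only with $\psi:\Om C\to \Om C\otimes\Om C$; the lift through $q$ need not exist, and Example~\ref{ex:nonrealAW} exhibits a Hirsch coalgebra that admits no such lift. So ``interpreting $\psi^{(r)}$ through $q^{(r)}$ as an algebra map $\omega^{(r)}$'' is illegitimate precisely in the generality the theorem claims; it would be available only under an Alexander--Whitney hypothesis. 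The paper makes exactly this point in the remark following Theorem~\ref{thm:exists-concat}: the concatenation map cannot be obtained from the (extended) naturality of $\hoch$, because $\mu^{(r)}$ is not an algebra map.

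Second, the paper organizes the direct computation by factoring $\widetilde\lambda_r$ through an intermediate complex, mirroring the topological factorization $\op LX\to\op L^rX\to\op LX$: it defines $\widetilde\delta_r=\hoch(\Id_C,\delta^{(r)}):\hoch(t)\to\hoch(\delta^{(r)}t)$, which is a chain map for free since $(\Id_C,\delta^{(r)})$ is a genuine morphism in $\cat{Twist}$, and then constructs a ``loop concatenation'' map $\widetilde\mu_r:\hoch(\delta^{(r)}t)\to\hoch(t)$, which is where all the work lives. Your proposal collapses these into a single formula whose ``cyclic correction'' is left unspecified; the notation $t(c'_{(i)})$ is also off, since the factors $u_i(c)$ of $\psi^{(r)}(\si c)$ for $i\ge 2$ live in $\Om C$ and should be hit with $\alpha_t$, not $t$, while only the first factor $u_1(c)=\si c_1|\cdots|\si c_k$ is unpacked and its letters fed through $t$ in a cyclic sum. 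Pinning this down, and then checking $\widetilde\mu_r d_{\delta^{(r)}t}=d_t\widetilde\mu_r$, is the real content of the proof, and your sketch does not carry it out. The two hypotheses enter exactly as you say: hypothesis (1) gives $\delta^{(r)}\alpha_t=\alpha_t^{\otimes r}\psi^{(r)}$, and hypothesis (2) upgrades to $\tau(\alpha_t\otimes\alpha_t)\psi=(\alpha_t\otimes\alpha_t)\psi$, which is what allows the factor $\alpha_t(u_1(c_\theta))$ to be moved past the others when matching the two sides of the chain-map identity. Your reading of the primitively generated case is correct in substance, though the summand $u_1(c)=1$ contributes nothing because then $k=0$ and the cyclic sum is empty, not because $t$ kills unit components.

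So: right endgame, but a misstep at the start that is wrong in exactly the Hirsch-versus-Alexander--Whitney direction the hypotheses are calibrated for, a missing intermediate factorization that both clarifies and simplifies the argument, and a verification that is asserted rather than performed.
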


There are two special cases of Theorem \ref{thm:exists-nthpower} that are particularly worthy of note.  

\begin {cor}\label{cor:nth-cohoch}  If $(C,\psi)$ is a balanced Hirsch coalgebra,  then the coHochschild complex $\cohoch(C)$ of $C$ admits an \rth-power map $\widetilde\lambda_{r}$, for all positive integers $r$, that is natural with respect to morphisms in $\cat {Hirsch}_{R}$ (cf.~Notation \ref{notn:hirsch}).  In particular,
$$\xymatrix{\Om C \ar[d]_{\lambda _{r}}\cof & \cohoch (C)\ar[d]_{\widetilde\lambda_{r}}\fib & C\ar@{=}[d]\\
 		    \Om C\cof &\cohoch (C) \fib& C}$$
commutes, where $\lambda_{r}$ denotes the \rth-power map on $\Om C$.
\end{cor}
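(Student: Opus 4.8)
The plan is to deduce this corollary directly from Theorem \ref{thm:exists-nthpower}, applied to the universal twisting cochain $t_{\Om}:C\to \Om C$. First I would observe that since $(C,\psi)$ is a Hirsch coalgebra, the loop comultiplication $\psi:\Om C\to \Om C\otimes \Om C$ is by definition a morphism of chain algebras, so that $(\Om C,\psi)$ is a chain Hopf algebra; we take $H=(\Om C,\psi)$ and $t=t_{\Om}$. By Example \ref{ex:cohoch} we have $\hoch(t_{\Om})=\cohoch(C)$, so an $r^{\text{th}}$-power map on $\hoch(t_{\Om})$ of the form guaranteed by Theorem \ref{thm:exists-nthpower} is precisely what is claimed, with $\lambda_{r}$ the $r^{\text{th}}$-power map on $\Om C$ in the sense of equation (\ref{eqn:convolution}) applied to the Hopf algebra $(\Om C,\psi)$.

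It then remains to verify the two hypotheses of Theorem \ref{thm:exists-nthpower}. For hypothesis (1), recall that $\alpha_{t_{\Om}}=\Id_{\Om C}$, which is trivially a map of coalgebras from $(\Om C,\psi)$ to itself. For hypothesis (2), we must check that $\tau\delta t_{\Om}=\delta t_{\Om}$, i.e., that $\tau\psi t_{\Om}=\psi t_{\Om}$; but since $(C,\psi)$ is balanced, $\psi$ is cocommutative, i.e., $\tau\psi=\psi$, and the required identity is immediate. Theorem \ref{thm:exists-nthpower} therefore produces the endomorphism $\widetilde\lambda_{r}$ of $\cohoch(C)$ fitting into the displayed diagram. (If moreover $(\Om C,\psi)$ is primitively generated, the last assertion of Theorem \ref{thm:exists-nthpower} gives $\widetilde\lambda_{r}=\Id_{C}\otimes\lambda_{r}$, though this is not needed for the statement.)

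Finally, for the naturality assertion I would check that a morphism $f$ in $\cat{Hirsch}_{R}$ from $(C,\psi)$ to $(C',\psi')$ gives rise to a morphism $(f,\Om f)$ in $\cat{Twist}_{\mathrm{HH}}$ between the triples $\big((C,\psi),t_{\Om},(\Om C,\psi)\big)$ and $\big((C',\psi'),t_{\Om}',(\Om C',\psi')\big)$. Indeed, $(f,\Om f)$ is a morphism in $\cat{Twist}$ (it is $U(f)$ in the notation of the adjunction $U\dashv S$), and, taking $g=\Om f$ and $\delta=\psi$, both the Hirsch-compatibility condition of Notation \ref{notn:twist-hirsch} and the Hopf-compatibility condition of Notation \ref{notn:twist-hopf} that together define $\cat{Twist}_{\mathrm{HH}}$ reduce to the single equation $(\Om f\otimes \Om f)\psi=\psi'\Om f$, which holds precisely because $f\in\cat{Hirsch}_{R}\big((C,\psi),(C',\psi')\big)$. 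The naturality of $\widetilde\lambda_{r}$ with respect to $\cat{Twist}_{\mathrm{HH}}$ then restricts to naturality with respect to $\cat{Hirsch}_{R}$. I do not anticipate any genuine obstacle: the corollary is a direct specialization of Theorem \ref{thm:exists-nthpower}, and the only points requiring a word of justification are that $(\Om C,\psi)$ really is a chain Hopf algebra and that the \emph{balanced} hypothesis is exactly what supplies condition (2).
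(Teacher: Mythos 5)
Your proof is correct and follows essentially the same route as the paper: both apply Theorem \ref{thm:exists-nthpower} to the universal twisting cochain $t_{\Om}:C\to\Om C$, observe that $\alpha_{t_{\Om}}=\Id_{\Om C}$ gives hypothesis (1) and balancedness gives hypothesis (2), and deduce naturality by noting that $f\in\cat{Hirsch}_{R}$ induces $(f,\Om f)\in\cat{Twist}_{\mathrm{HH}}(t_{\Om},t_{\Om})$. Your write-up simply spells out the verifications a bit more explicitly than the paper does.
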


\begin{proof}  Apply Theorem \ref{thm:exists-nthpower} to the twisting cochain $t_{\Om }:C\to \Om C$. Hypotheses (1) and (2) are satisfied because $\alpha_{t_{\Om}}=\Id_{\Om C}$ and because $(C,\psi)$ is balanced. 

With respect to the naturality of $\widetilde \lambda _{r}$, note that if $f\in \cat {Hirsch}\big( (C, \psi), (C', \psi')\big)$, then $(f, \Om f)\in \cat {Twist}_{\text{HH}}(t_{\Om}, t_{\Om})$.
\end{proof}

\begin{cor}\label{cor:nth-hoch}  If $H$ is a cocommutative chain Hopf algebra, then the Hochschild complex $\hoch (H)$ of $H$ admits an \rth-power map $\widetilde\lambda_{r}$, for all positive integers $r$, that is natural with respect to chain Hopf algebra maps.  In particular,
$$\xymatrix{H \ar[d]_{\lambda _{r}}\cof & \hoch (H)\ar[d]_{\widetilde\lambda_{r}}\fib & \Bar H\ar@{=}[d]\\
 		    H\cof &\hoch (H) \fib& \Bar H}$$
commutes, where $\lambda_{r}$ denotes the \rth-power map on $H$.
\end{cor}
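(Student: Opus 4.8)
The plan is to deduce this directly from Theorem \ref{thm:exists-nthpower}, applied to the couniversal twisting cochain $t_{\Bar}:\Bar H\to H$. Recall that $\hoch(t_{\Bar})=\hoch(H)$ (Example \ref{ex:hoch}), and that the associated twisted extension is $H\cof \hoch(t_{\Bar})\fib \Bar H$, so that in the notation of Theorem \ref{thm:exists-nthpower} we take $C=\Bar H$. The task is then to verify the two hypotheses of that theorem and the naturality claim.

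First I would equip $\Bar H$ with its Hirsch coalgebra structure. By Theorem \ref{thm:bar-hopf}, $\Bar H$ is naturally an Alexander-Whitney coalgebra $(\Bar H,\omega)$, hence by Remark \ref{rmk:aw-to-gerst} a Hirsch coalgebra whose loop comultiplication is $\psi=q\omega$ on $\Om\Bar H$. The same theorem asserts that the counit $\ve_H:\Om\Bar H\to H$ is a map of Hopf algebras, in particular a map of chain coalgebras; since $\alpha_{t_{\Bar}}=\ve_H$, this is exactly hypothesis (1) of Theorem \ref{thm:exists-nthpower}.

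Next I would check hypothesis (2), that $\tau\delta\, t_{\Bar}=\delta\, t_{\Bar}$ as maps $\Bar H\to H\otimes H$. Since $t_{\Bar}$ vanishes on all words $sa_{1}|\cdots|sa_{n}$ with $n\neq 1$, both composites vanish except on a generator $sa$, where the required identity reads $\tau\delta(a)=\delta(a)$; this holds for all $a\in H$ precisely because $H$ is cocommutative. Theorem \ref{thm:exists-nthpower} then provides, for each positive integer $r$, the endomorphism $\widetilde\lambda_{r}$ of $\hoch(t_{\Bar})=\hoch(H)$ fitting into the stated commuting diagram with $\lambda_{r}$ on $H$ and $\Id_{\Bar H}$ on the fibre.

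Finally, for naturality with respect to a chain Hopf algebra map $g:H\to H'$, I would observe that $(\Bar g,g)$ is a morphism of twisting cochains from $t_{\Bar}$ to $t'_{\Bar}$ (functoriality of the couniversal twisting cochain functor $V$), that it respects the Hopf structures because $g$ does, and that it respects the Hirsch structures because the Alexander-Whitney coalgebra structure of Theorem \ref{thm:bar-hopf} is natural in $H$, so $\Om\Bar g$ intertwines the loop comultiplications $q\omega$. Hence $(\Bar g,g)\in\cat{Twist}_{\mathrm{HH}}(t_{\Bar},t'_{\Bar})$, and the desired naturality of $\widetilde\lambda_{r}$ follows from the naturality clause of Theorem \ref{thm:exists-nthpower}. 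I expect this last point to be the only genuinely delicate step, since it rests entirely on the naturality assertion in Theorem \ref{thm:bar-hopf}; the rest of the argument is a straightforward instantiation of the existence theorem.
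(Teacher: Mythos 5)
Your proposal is correct and follows essentially the same route as the paper's proof: apply Theorem \ref{thm:exists-nthpower} to $t_{\Bar}:\Bar H\to H$, invoking Theorem \ref{thm:bar-hopf} to verify hypothesis (1) and the naturality of the Alexander--Whitney structure, and cocommutativity of $H$ for hypothesis (2). Your verification of hypothesis (2) via the vanishing of $t_{\Bar}$ on words of length $\neq 1$ is a useful explicit check that the paper leaves implicit.
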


\begin{proof} Apply Theorem \ref{thm:exists-nthpower} to the twisting cochain $t_{\Bar}:\Bar H\to H$.  Since $\ve_{H}:\Om \Bar H\to H$ is a map of coalgebras (Theorem \ref{thm:bar-hopf}), hypothesis (1) holds, while hypothesis (2) follows from the cocommutativity of $H$.

With respect to the naturality of $\widetilde \lambda _{r}$, note that Theorem \ref{thm:bar-hopf} implies that  the Alexander-Whitney coalgebra structure on $\Bar H$ is natural in $H$.  Any chain Hopf algebra map $g:H\to H'$ therefore induces a morphism $(\Bar g, g):t_{\Bar }\to t_{\Bar}$ in $\cat {Twist}_{\text{HH}}.$ 
\end{proof}

The naturality of the power map enables us to compare the constructions of the two corollaries above, via a twisting cochain.

\begin{cor}\label{cor:compare-nth} Let $(C,\psi)$ be a balanced Hirsch coalgebra, and let $H$ be a cocommutative chain Hopf algebra with comultiplication $\delta$.  Let $\psi_{H}$ denote the natural comultiplication on $\Om \Bar H$ with respect to which $\ve_{H}:(\Om \Bar H,\psi_{H}) \to (H,\delta)$ is a morphism of chain Hopf algebras (cf.~Theorem \ref{thm:bar-hopf}).

If $t:C\to H$ is a twisting cochain such that  $\Om \beta_{t}: (\Om C, \psi)\to (\Om \Bar H, \psi_{H})$ is a morphism of chain Hopf algebras, then 
$$\xymatrix {\cohoch (C) \ar [d]_{\widetilde\lambda _{r}}\ar [rr]^{\hoch(\beta_{t},\alpha_{t})}_{}&&\hoch (H) \ar [d]_{\widetilde\lambda _{r}}\\
\cohoch (C)\ar [rr]^{\hoch(\beta_{t},\alpha_{t})}_{}&&\hoch (H)}$$
commutes.
\end{cor}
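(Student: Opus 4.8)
The plan is to recognize the horizontal map $\hoch(\beta_{t},\alpha_{t})$ as the map induced by a morphism in $\cat{Twist}_{\mathrm{HH}}$ and then to invoke the naturality of the power map asserted in Theorem \ref{thm:exists-nthpower}. Recall first how the two vertical maps arise: the power map $\widetilde\lambda_{r}$ on $\cohoch(C)=\hoch(t_{\Om})$ is, by Corollary \ref{cor:nth-cohoch}, the one produced by Theorem \ref{thm:exists-nthpower} applied to the object $\big((C,\psi),\,t_{\Om}\colon C\to\Om C,\,(\Om C,\psi)\big)$ of $\cat{Twist}_{\mathrm{HH}}$, while the power map on $\hoch(H)=\hoch(t_{\Bar})$ is, by Corollary \ref{cor:nth-hoch}, the one produced by Theorem \ref{thm:exists-nthpower} applied to $\big((\Bar H,\psi_{H}),\,t_{\Bar}\colon\Bar H\to H,\,(H,\delta)\big)$; here the Hirsch structure on $\Bar H$ is the one underlying the Alexander--Whitney structure of Theorem \ref{thm:bar-hopf}, whose associated loop comultiplication is $\psi_{H}$, and $\ve_{H}\colon(\Om\Bar H,\psi_{H})\to(H,\delta)$ is a morphism of chain Hopf algebras. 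Under these identifications, and recalling from Corollary \ref{cor:cohoch-hoch} that $\hoch(\beta_{t},\alpha_{t})=\beta_{t}\otimes\alpha_{t}$, the square in question is precisely the naturality square of $\widetilde\lambda_{r}$ for the pair $(\beta_{t},\alpha_{t})$.

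It therefore remains to verify that $(\beta_{t},\alpha_{t})$ is a morphism in $\cat{Twist}_{\mathrm{HH}}$ from the first object to the second. That $(\beta_{t},\alpha_{t})$ lies in $\cat{Twist}(t_{\Om},t_{\Bar})$ --- i.e.\ that $\beta_{t}$ is a chain coalgebra map, $\alpha_{t}$ a chain algebra map, and $\alpha_{t}\circ t_{\Om}=t_{\Bar}\circ\beta_{t}$ --- is exactly Remark \ref{rmk:twisting-induced} together with the commuting square of twisting cochains used in the proof of Corollary \ref{cor:cohoch-hoch}. For the Hirsch condition defining $\cat{Twist}_{\mathrm{Hirsch}}$ one must check $(\Om\beta_{t}\otimes\Om\beta_{t})\psi=\psi_{H}\,\Om\beta_{t}$, and this is precisely the hypothesis that $\Om\beta_{t}\colon(\Om C,\psi)\to(\Om\Bar H,\psi_{H})$ is a morphism of chain Hopf algebras. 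For the Hopf condition defining $\cat{Twist}_{\mathrm{Hopf}}$ one must check that $\alpha_{t}\colon(\Om C,\psi)\to(H,\delta)$ respects comultiplications; but by Remark \ref{rmk:twisting-induced} we have $\alpha_{t}=\ve_{H}\circ\Om\beta_{t}$, a composite of morphisms of chain Hopf algebras (the first by Theorem \ref{thm:bar-hopf}, the second by hypothesis), hence itself one.

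Granting these verifications, the conclusion is immediate: applying the naturality clause of Theorem \ref{thm:exists-nthpower} to the $\cat{Twist}_{\mathrm{HH}}$-morphism $(\beta_{t},\alpha_{t})$ gives the commutativity of the square, since its horizontal arrows are $\hoch(\beta_{t},\alpha_{t})=\beta_{t}\otimes\alpha_{t}$ and its vertical arrows are the power maps $\widetilde\lambda_{r}$ of Corollaries \ref{cor:nth-cohoch} and \ref{cor:nth-hoch}. I do not expect a substantive obstacle in carrying this out; the only real content is the bookkeeping needed to unwind the definition of $\cat{Twist}_{\mathrm{HH}}$, the one mildly delicate point being the observation that the Hirsch-compatibility of $(\beta_{t},\alpha_{t})$ is handed over verbatim by the hypothesis on $\Om\beta_{t}$, whereas the Hopf-compatibility drops out once $\alpha_{t}$ is factored as $\ve_{H}\circ\Om\beta_{t}$.
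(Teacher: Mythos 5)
Your proposal is correct and follows essentially the same route as the paper's own proof: factor $\alpha_{t}=\ve_{H}\circ\Om\beta_{t}$ to see that $\alpha_{t}$ is a Hopf morphism, note that the Hirsch-compatibility condition is literally the stated hypothesis on $\Om\beta_{t}$, conclude that $(\beta_{t},\alpha_{t})$ is a morphism in $\cat{Twist}_{\mathrm{HH}}$ from $t_{\Om}$ to $t_{\Bar}$, and invoke the naturality clause of Theorem \ref{thm:exists-nthpower}. The only difference is that you unwind the three membership conditions ($\cat{Twist}$, Hirsch, Hopf) more explicitly than the paper's somewhat terse phrasing does, which is a harmless gain in clarity.
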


\begin{proof}  Observe that 
$$\xymatrix{ C\ar [d]_{t_{\Om}}\ar [r]^{\beta_{t}}&\Bar H\ar [d]^{t_{\Bar}}\\ \Om C \ar [r]^{\alpha_{t}}& H}$$
always commutes, i.e., that $(\beta_{t},\alpha_{t}):t_{\Om }\to t_{\Bar}$ is always a morphism in $\cat {Twist}$.
Since $\Om \beta_{t}: (\Om C,\psi)\to (\Om \Bar H, \psi_{H})$ is a morphisms of chain Hopf algebras by hypothesis, and $\alpha_{t}=\ve_{H}\circ \Om \beta _{t}$ (Remark \ref{rmk:twisting-induced}), 
$\alpha_{t}:(\Om C,\psi) \to (H,\delta)$ is also a morphism of chain Hopf algebras, whence $(\beta_{t},\alpha_{t})$ 
is actually a morphism in $\cat{Twist}_{\mathrm{HH}}$.
\end{proof}

\begin{ex}  Recall from Example \ref{ex:hpst} that if $L$ is a pointed simplicial set such that $C_{*}L$ is cocommutative, e.g., if $L$ is a simplicial suspension (reduced or unreduced), then $C_{*}\E L$ admits a natural, balanced Alexander-Whitney coalgebra structure.  Corollary \ref{cor:nth-cohoch} therefore implies that if $C_{*}L$ is cocommutative, then $\cohoch(C_*\E L)$ admits an \rth-power map, for all positive integers $r$.   If $L$ is itself a simplicial suspension, then $(\Om C_{*}\E L,\psi_{\E L})$ is primitively generated, and thus $\widetilde \lambda_{r}=\Id_{C_{*}\E L}\otimes \lambda_{r}$.

Moreover, if $C_{*}L$ is cocommutative, then $C_{*}\G\E L$ is a cocommutative chain Hopf algebra, as easily follows from an examination of the formulas for the simplicial suspension functor $\E$ and for the Kan loop group functor $\G$ (cf.~e.g., sections 2.1 (a) and (b) in \cite {hps2}), which imply the existence of a simplicial map 
$$L\to \G\E L: x \mapsto \overline{(1,x)}.$$
It therefore follows from Corollary \ref{cor:nth-hoch} that  if $C_{*}L$ is cocommutative, then $\hoch(C_*\G\E L)$ also admits an \rth-power map, for all positive integers $r$.   

Let $t_{\E L}:C_{*}\E L\to C_{*}\G\E L$ denote the Szczarba twisting cochain for $\E L$ (Example \ref{ex:szczarba}), with associated chain coalgebra map $\beta_{\E L}: C_{*}\E L \to\Bar C_{*} \G\E L$ and chain algebra map $\alpha_{\E L}: \Om C_{*}\E L  \to C_{*}\G\E L$, which together induce a chain map
$$ \hoch(\beta_{\E L},\alpha_{\E L}): \cohoch(C_*\E L)\to \hoch(C_*\G\E L).$$
It is natural to wonder under what conditions this map commutes with the \rth-power maps.  Recall from Example \ref{ex:szczarba} that $\alpha_{\E L}$, and thus $\beta_{\E L}$ and  $\hoch(\beta_{\E L},\alpha_{\E L})$, are quasi-isomorphisms if $L$ is actually reduced, since $\E L$ is then $1$-reduced.

It follows from the proof of Theorem 4.11 in \cite{hps2}  that for any pointed simplicial set $L$, 
$$\alpha_{\E L}:(\Om C_{*}\E L, \psi_{\E L}) \to (C_{*}\G\E L, \Delta)$$ 
is a chain Hopf algebra map, where $\Delta$ is the usual comultiplication on $C_{*}\G \E L$.  On the other hand, 
since $C_{*}\E L$ is a trivial coalgebra, 
$$\beta _{\E L}\big( \overline {(1,x)}\big) =s \big( t_{\E L}\overline {(1,x)}\big),$$
which implies that 
$$\Om \beta _{\E L}\big(s^{-1} \overline {(1,x)}\big) = s^{-1}\Big(s \big( t_{\E L}\overline {(1,x)}\big)\Big).$$
The formulas in the proof of Theorem \ref {thm:aw-bar-dcsh}  for the DCSH structure of the Alexander-Whitney map $f:\Bar (H\otimes H)\to \Bar H \otimes \Bar H$ imply that 
$$\psi_{H}\big(s^{-1}(sa)\big)=s^{-1}(sa)\otimes 1+ 1\otimes s^{-1}(sa)$$
for all $a\in H$ and for all connected chain Hopf algebras $H$.  In particular, therefore,
$$\psi_{C_{*}\G\E L}\circ \Om \beta _{\E L}\big(s^{-1} \overline {(1,x)}\big)=s^{-1}\Big(s \big( t_{\E L}\overline {(1,x)}\big)\Big)\otimes 1+ 1\otimes s^{-1}\Big(s \big( t_{\E L}\overline {(1,x)}\big)\Big)$$
for all $x\in C_{*}L$.

On the other hand, for all $x\in C_{*}L$
\begin{align*}
(\Om \beta_{\E L}\otimes  \Om \beta_{\E L})\circ \psi_{\E L}\big(s^{-1}\overline {(1,x)}\big) &=\Om \beta_{\E L}(\big(s^{-1}\overline {(1,x_{i})}\big) \otimes  \Om \beta_{\E L})\big(s^{-1}\overline {(1,x^{i})}\big) \\
&=s^{-1}\Big(s \big( t_{\E L}\overline {(1,x_{i})}\big)\Big)\otimes s^{-1}\Big(s \big( t_{\E L}\overline {(1,x^{i})}\big)\Big),
\end{align*}
where $\Delta(x)=x_{i}\otimes x^{i}$.  Since $t_{\E L} \overline {(1,y)}\not= 0$ for all $y\in C_{*}L\smallsetminus \{0\}$ (cf.~the explicit formula for $t_{\E L}$ given in \cite{hps2} just before Theorem 4.11), we conclude that $\Om \beta _{\E L}$ is a chain Hopf algebra map if and only if $C_{*}L$ is a trivial coalgebra.  In particular, if $L$ itself is a simplicial suspension (reduced or unreduced), then $\Om \beta _{\E L}$ is a chain Hopf algebra map, and Corollary \ref{cor:compare-nth} therefore implies that
$$\xymatrix {\cohoch (C_{*}\E L) \ar [d]_{\widetilde\lambda _{r}}\ar [rr]^{\hoch(\beta_{\E L},\alpha_{\E L})}&&\hoch (C_{*}\G\E L) \ar [d]_{\widetilde\lambda _{r}}\\
\cohoch (C_{*}\E L)\ar [rr]^{\hoch(\beta_{\E L},\alpha_{\E L})}&&\hoch (C_{*}\G\E L)}$$
commutes.
\end{ex}

\subsection{Proof of the existence of power maps via ``loop concatenation''}

The key to the proof of Theorem \ref{thm:exists-nthpower} is the following result, which is analogous to the existence of topological loop concatenation (cf.~equation (\ref{eqn:top-nth})).  In the statement below, for a Hopf algebra $H$ with multiplication $\mu$ and comultiplication $\delta$, we let $\mu ^{(r)}:H^{\otimes r}\to H$ and $\delta ^{(r)}:H\to H^{\otimes r}$ denote the iterated multiplication and comultiplication maps. 

\begin{thm}\label{thm:exists-concat}  Let $C$ be a Hirsch coalgebra, with loop comultiplication $\psi: \Om C\to \Om C\otimes \Om C$.   Let $H$ be a chain Hopf algebra, with multiplication $\mu$ and comultiplication $\delta: H\to H\otimes H$.  Let $t:C\to H$ be a twisting cochain.

 If 
 \begin{enumerate}
\item  the induced chain algebra map $\alpha _{t}:\Om C\to H$ is also a map of coalgebras, and
 \item $\tau\delta t=\delta t$, where $\tau:H\otimes H\xrightarrow\cong H\otimes H$ is the symmetry isomorphism,
 \end{enumerate} 
 then there is a chain map $\widetilde\mu_{r}:\hoch(\delta ^{(r)} t)\to \hoch (t)$, natural with respect to morphisms in $\cat {Twist}_{\mathrm{Hirsch}}$ (cf.~Notation \ref{notn:twist-hirsch}), such that 
 $$\xymatrix{H^{\otimes r} \ar[d]_{\mu^{(r)}}\cof & \hoch (\delta^{(r)}t)\ar[d]_{\widetilde\mu_{r}}\fib & C\ar@{=}[d]\\
 		    H\cof &\hoch (t) \fib& C}$$
commutes.  In particular, when the Hopf algebra $(\Om C, \psi)$ is primitively generated, $\widetilde\mu _{r}=\Id_{C}\otimes \mu^{(r)}$. 
\end {thm}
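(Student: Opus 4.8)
The plan is to build $\widetilde\mu_r$ explicitly as a "smash-then-concatenate" map on the underlying graded module $C\otimes H^{\otimes r}$ of $\hoch(\delta^{(r)}t)$, mimicking the definition of $\widetilde\mu^{(r)}_{top}$ in \eqref{eqn:top-nth}. Recall that $\hoch(\delta^{(r)}t)$ has underlying module $C\otimes H^{\otimes r}$ with $C$ acting as a bicomodule over itself and $H^{\otimes r}$ an $H^{\otimes r}$-bimodule, and differential twisted by $\delta^{(r)}t:C\to H^{\otimes r}$. First I would use hypothesis (1): since $\alpha_t:\Om C\to H$ is a coalgebra map with respect to the loop comultiplication $\psi$, the iterate $\alpha_t^{\otimes r}\circ\psi^{(r)}:\Om C\to H^{\otimes r}$ factors the "diagonal" data, and unravelling via Remark \ref{rmk:unfold-dcsh} lets me express the comodule structure of $C$ against $\delta^{(r)}t$ in terms of the family $\{\psi_k\}$ attached to $\psi$ (where $\psi_k:\overline C\to(\overline C)^{\otimes 2k}$ suitably reorganized). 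Concretely, for $c\in C$ I would write, suppressing signs and summation,
$$
\widetilde\mu_r(c\otimes a_1\otimes\cdots\otimes a_r)=\sum_{k\geq 1}c'_{k}\otimes \mu^{(r)}\!\big(a_1\cdot t(w_1)\,,\,a_2\cdot t(w_2)\,,\,\ldots\big),
$$
where the $w_i$ and the "residual" $c'_k$ come from applying $\psi$-type higher homotopies to $c$ and then concatenating the $r$ slots cyclically into $H$ via $\mu^{(r)}$; in the primitively generated case all higher $\psi_k$ vanish, leaving exactly $\Id_C\otimes\mu^{(r)}$, which forces the stated formula. The cleanest way to organize this is probably to factor $\widetilde\mu_r$ through $\hoch^{sh}$ of an appropriate morphism in $\cat{Twist^{sh}}$: the pair $(\varphi,\mu^{(r)})$ where $\varphi:\Om C\to\Om C$ is built from the loop comultiplication so that $\mu^{(r)}\circ(\alpha_t^{\otimes r})\circ q^{(r)}\circ\omega^{(r)}=\alpha_t\circ\varphi$, i.e., $\varphi$ realizes the $r$-fold "convolution of the identity" at the cobar level. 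Then Theorem \ref{thm:hoch-extnatl} hands me the chain map and the commuting ladder for free, and $\varphi_1=\Id_C$ gives the identity on the base $C$.

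Second, I would verify the commuting square with $\mu^{(r)}$ and the identity on $C$. The top horizontal $H^{\otimes r}\cof\hoch(\delta^{(r)}t)$ is the standard inclusion; chasing it through $\widetilde\mu_r$ amounts to evaluating the formula at $c=1$ (the coaugmentation), where all higher $\psi_k$ terms die and only $1\otimes\mu^{(r)}(a_1\otimes\cdots\otimes a_r)$ survives — precisely $\mu^{(r)}$ followed by the inclusion $H\cof\hoch(t)$. The projection to $C$ is similarly immediate from $\varphi_1=\Id_C$. The naturality with respect to $\cat{Twist}_{\mathrm{Hirsch}}$ follows from the naturality of $\hoch^{sh}$ (Theorem \ref{thm:hoch-extnatl}) together with the fact that a morphism $(f,g)$ in $\cat{Twist}_{\mathrm{Hirsch}}$ intertwines the loop comultiplications, hence intertwines the maps $\varphi$, hence the induced $\widetilde\mu_r$; this is a diagram chase with no surprises once the construction is phrased via $\hoch^{sh}$.

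The main obstacle, and the place I expect the real work to be, is checking that $\widetilde\mu_r$ is a \emph{chain map}, i.e., $\widetilde\mu_r\,d_{\delta^{(r)}t}=d_t\,\widetilde\mu_r$. If I route everything through $\hoch^{sh}(\varphi,\mu^{(r)})$ this is automatic, so the obstacle reduces to proving that the pair $(\varphi,\mu^{(r)})$ genuinely lies in $\cat{Twist^{sh}}$, that is, $\mu^{(r)}\circ\alpha_{\delta^{(r)}t}=\alpha_t\circ\varphi$ as chain algebra maps $\Om C\to H$. Unwinding, $\alpha_{\delta^{(r)}t}=\delta^{(r)}\!\circ(\text{stuff})$ is the cobar map associated to the cartesian/diagonal twisting cochain, and one must show that concatenating its $r$ $H$-valued outputs via $\mu^{(r)}$ reproduces $\alpha_t$ precomposed with the $r$-fold loop-diagonal $\varphi$. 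This is exactly where hypothesis (2), $\tau\delta t=\delta t$ (cocommutativity of $\delta t$), is needed: it guarantees that the order in which the $r$ concatenated factors are interleaved — forced on us by the twisted differential's cyclic structure — does not matter up to the Koszul signs, so that the naive loop-concatenation formula is compatible with the twisting. Making the signs come out right in the expansion of $d_t\widetilde\mu_r-\widetilde\mu_r d_{\delta^{(r)}t}$ — using the three bulleted identities in the proof of Theorem \ref{thm:hoch-extnatl} (the $\psi_k$-coherence relation, the factorization $g\circ t$, and the twisting-cochain relation) — will be the bulk of the verification, but it is a bookkeeping argument rather than a conceptual one.
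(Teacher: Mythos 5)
There is a genuine gap, and it sits precisely where you correctly predicted the real work would be. Your plan is to realize $\widetilde\mu_r$ as $\hoch^{sh}(\varphi,\mu^{(r)})$ for a suitable $\varphi:\Om C\to\Om C$, which requires $(\varphi,\mu^{(r)})$ to be a morphism in $\cat{Twist^{sh}}$. By definition that forces $\mu^{(r)}\in\cat{Alg}_R(H^{\otimes r},H)$, i.e.\ $\mu^{(r)}$ must be a map of chain \emph{algebras}. But the iterated multiplication $\mu^{(r)}$ is an algebra map only when $H$ is graded commutative, which is nowhere assumed; the prototypical target is $H=\Om C$ for a balanced Hirsch coalgebra $C$ (Corollary \ref{cor:nth-cohoch}), a free associative and highly noncommutative algebra. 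Hypothesis (2), that $\tau\delta t=\delta t$, is a \emph{co}commutativity condition on the comultiplication $\delta$, and has no bearing on whether $\mu^{(r)}$ respects products; it cannot rescue the reduction. The paper flags exactly this obstruction in the remark immediately following the theorem's statement, noting that ``existence of the map $\widetilde\mu_r$ does not follow immediately from the naturality---even extended---of the Hochschild construction, since $\mu^{(r)}$ is in general not a map of algebras.''

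The paper instead defines $\widetilde\mu_r$ by an explicit formula with a cyclic shape (the word $u_1(c)=\si c_1|\cdots|\si c_k$ coming from $\psi^{(r)}(\si c)$ is cut at each $j$, and the two halves are wrapped around the string $w_1\cdot\alpha_t(u_2(c))\cdots\alpha_t(u_r(c))\cdot w_r$) and verifies the chain-map identity $d_t\widetilde\mu_r=\widetilde\mu_r d_{\delta^{(r)}t}$ by direct expansion. In that verification, hypothesis (1) gives $\delta^{(r)}t=\alpha_t^{\otimes r}\psi^{(r)}t_{\Om}$, and hypothesis (2) is used to establish that $(\alpha_t\otimes\alpha_t)\psi$ is $\tau$-invariant, which lets the $r$ slots be permuted cyclically when matching terms. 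Your sketch of the formula, the commuting ladder, the primitively generated special case, and the naturality argument are all sound; but the chain-map step must be a hands-on computation, because there is no morphism in $\cat{Twist^{sh}}$ (nor in $\cat{Twist_{sh}}$) available to outsource it to.
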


\begin{rmk} Existence of the map $\widetilde \mu_{r}$ does not follow immediately from the naturality---even extended---of the Hochschild construction, since $\mu ^{(r)}$ is in general not a map of algebras.
\end{rmk}

\begin{proof} For all $c\in \overline C$, we suppress one summation and write
$$\psi^{(r)}(\si c)=u_{1}(c)\otimes u_{2}(c)\otimes\cdots \otimes u_{r}(c)$$
and
$$u_{1}(c)=\si c_{1}|\cdots|\si c_{k}.$$
Moreover, we write $x_{\theta}\otimes x^{\theta}$ for the result of evaluating the comultiplication of $C$ or of $H$ on an element $x$.

Define $\widetilde\mu _{r}:\hoch (\delta ^{(r)}t)\to \hoch (t)$ by
{\smaller{\begin{align*}
&\widetilde\mu_{r}\big(c\otimes(w_{1}\otimes \cdots \otimes w_{r})\big)\\
&=\sum_{1\leq j\leq k}\pm c_{j}\otimes t(c_{j+1})\cdot\,\ldots\,\cdot t(c_{k})\cdot w_{1}\cdot \alpha_{t}\big(u_{2}(c)\big)\cdot\, \ldots\, \cdot \alpha_{t}\big(u_{r}(c)\big)\cdot w_{r}\cdot t(c_{1})\cdot\,\ldots\,\cdot t(c_{j-1}),
\end{align*}}}

\noindent where the signs are determined by the Koszul rule.  Note that if $(\Om C,\psi)$ is primitively generated, then this formula reduces to $\widetilde\mu_{r}=\Id_{C}\otimes \mu ^{(r)}$. Note further that naturality with respect to morphisms in $\cat{Twist}_{\mathrm{Hirsch}}$ follows immediately from the explicit formula for $\widetilde \mu_{r}$.

Before proving that $\widetilde\mu_{r}$ is a chain map, we make the following two observations, in which all signs are determined by the Koszul rule.
\begin{enumerate}
\item The loop comultiplication $\psi$ is a chain algebra map.  It follows that for all $c\in \overline C$,
\begin{align*}
&u_{1}(dc)\otimes \cdots \otimes u_{r}(dc)\pm u_{1}(c_{\theta})u_{1}(c^{\theta})\otimes \cdots \otimes u_{r}(c_{\theta})u_{r}(c^{\theta})\\
&=\sum _{1\leq q\leq r}\pm u_{1}(c)\otimes \cdots \otimes du_{q}(c)\otimes \cdots \otimes u_{r}(c)\\
&=\sum_{1\leq j\leq k}\pm \si c_{1}|\cdots|\si (dc_{j})|\cdots |\si c_{k} \otimes u_{2}(c)\otimes \cdots \otimes u_{r}(c)\\
&\qquad\qquad \pm \si c_{1}|\cdots|\si (c_{j})_{\theta}|\si (c_{j})^{\theta}|\cdots |\si c_{k} \otimes u_{2}(c)\otimes \cdots \otimes u_{r}(c)\\
&\quad+ \sum _{2\leq q\leq r}\pm u_{1}(c)\otimes \cdots \otimes du_{q}(c)\otimes \cdots \otimes u_{r}(c).
\end{align*}

\item The hypothesis that $\tau \delta t= \delta t$ implies that $\tau (\alpha_{t}\otimes \alpha_{t})\psi=(\alpha_{t}\otimes \alpha_{t})\psi$, since $\alpha_{t}$ is a coalgebra map.  Consequently, for all permutations $\sigma$ of the set $\{1,...,r\}$,
$$\alpha_{t}\big(u_{1}(c)\big)\otimes \cdots \otimes \alpha_{t}\big(u_{r}(c)\big)=\pm \alpha_{t}\big(u_{\sigma(1)}(c)\big)\otimes \cdots \otimes \alpha_{t}\big(u_{\sigma(r)}(c)\big).$$
\end{enumerate}

To simplify our calculations, without true loss of generality, we suppose henceforth that $dw_{j}=0$ for all $j$, and that the differential on $C$ is identically 0, from which it follows that 
\begin{equation}\label{eqn:dt}
dt(c)=t(c_{\theta})t(c^{\theta})
\end{equation}
for all $c\in \overline C$.  The general case proceeds in essentially the same manner.  We also drop all signs, as they are all determined by the Koszul rule.  

To simplify notation a bit, we let
$$h(c\otimes w_{1}\otimes \cdots \otimes w_{r})= w_{1}\cdot \alpha_{t}\big(u_{2}(c)\big)\cdot\, \ldots\, \cdot \alpha_{t}\big(u_{r}(c)\big)\cdot w_{r}.$$
Under our hypothesis that $dw_{j}=0$ for all $j$, 
\begin{equation}\label{eqn:dh}
\begin{split}
dh&(c\otimes w_{1}\otimes \cdots \otimes w_{r})\\
&=\sum _{2\leq q \leq r}w_{1}\cdot \alpha_{t}\big(u_{2}(c)\big)\cdot\, \ldots\, \cdot \alpha_{t}\big(du_{q}(c)\big)\cdot\, \ldots\, \cdot \alpha_{t}\big(u_{r}(c)\big)\cdot w_{r}.
\end{split}
\end{equation}

Note that when the differential on $C$ is identically zero, and we are ignoring signs, the equation in observation (1) above reduces to 
{\smaller\begin{equation}\label{eqn:key-eqn}
\begin{split}
 \si (c_{\theta})_{1}&|\cdots|\si (c_{\theta})_{k'}|\si (c^{\theta})_{1}|\cdots |\si (c^{\theta})_{k''}\otimes u_{2}(c_{\theta})\cdot u_{2}(c^{\theta})\otimes \cdots \otimes u_{r}(c_{\theta})\cdot u_{r}(c^{\theta})\\
&=\sum_{1\leq j\leq k} \si c_{1}|\cdots|\si (c_{j})_{\theta}|\si (c_{j})^{\theta}|\cdots |\si c_{k} \otimes u_{2}(c)\otimes \cdots \otimes u_{r}(c)\\
&\quad+ \sum _{2\leq q\leq r} \si c_{1}|\cdots|\si c_{k}\otimes u_{2}(c)\otimes \cdots \otimes du_{q}(c)\otimes \cdots \otimes u_{r}(c).
\end{split}
\end{equation}}

To see that $\widetilde \mu_{r}$ is a chain map, note first that
{\smaller \begin{align*}
&d_{t}\widetilde \mu _{r}\big(c\otimes(w_{1}\otimes \cdots \otimes w_{r})\big)\\
&=\sum _{1\leq j\leq k} (c_{j})_{\theta}\otimes t\big((c_{j})^{\theta}\big)\cdot t(c_{j+1})\cdot\,\ldots\,\cdot t(c_{k})\cdot h(c\otimes w_{1}\otimes \cdots \otimes w_{r})\cdot t(c_{1})\cdot\,\ldots\,\cdot t(c_{j-1})\\
&\qquad\quad + (c_{j})^{\theta}\otimes t(c_{j+1})\cdot\,\ldots\,\cdot t(c_{k})\cdot h(c\otimes w_{1}\otimes \cdots \otimes w_{r})\cdot t(c_{1})\cdot\,\ldots\,\cdot t(c_{j-1})\cdot t\big((c_{j})_{\theta}\big)\\
&\qquad\quad + c_{j}\otimes \sum _{j+1\leq l\leq k} t(c_{j+1})\cdot\,\ldots\,\cdot t\big((c_{l})_{\theta}\big)\cdot t\big((c_{l})^{\theta}\big)\cdot\,\ldots\,\cdot  t(c_{k})\\
&\hphantom{\qquad\quad + c_{j}\otimes \sum _{j+1\leq l\leq k} t(c_{j+1})\cdot\,\ldots\,\cdot t\big((c_{l})_{\theta}\big)}\qquad\cdot h(c\otimes w_{1}\otimes \cdots \otimes w_{r})\cdot t(c_{1})\cdot\,\ldots\,\cdot t(c_{j-1})\\
&\qquad\quad + c_{j}\otimes t(c_{j+1})\cdot\,\ldots\,\cdot t(c_{k})\cdot dh(c\otimes w_{1}\otimes \cdots \otimes w_{r})\cdot t(c_{1})\cdot\,\ldots\,\cdot t(c_{j-1})\\
&\qquad\quad + c_{j}\otimes \sum _{1\leq l\leq j-1} t(c_{j+1})\cdot\,\ldots\,\cdot   t(c_{k})\cdot h(c\otimes w_{1}\otimes \cdots \otimes w_{r})\\
&\hphantom{\qquad\quad + c_{j}\otimes \sum _{1\leq l\leq j-1} t(c_{j+1})\cdot\,\ldots\,\cdot t\big((c_{l})_{\theta}\big)}\qquad\cdot t(c_{1})\cdot\,\ldots\,\cdot t\big((c_{l})_{\theta}\big)\cdot t\big((c_{l})^{\theta}\big)\cdot\,\ldots\,\cdot t(c_{j-1}),
\end{align*}}

\noindent where we have applied equation (\ref{eqn:dt}) and the formula for $d_{t}$ from Definition \ref{defn:Hochschild-cx}.

Next observe that
{\smaller
\begin{align*}
&\widetilde\mu_{r}d_{\delta^{(r)}t}\big(c\otimes(w_{1}\otimes \cdots \otimes w_{r})\big)=\widetilde\mu _{r}\bigg(c_{\theta}\otimes \Big(\alpha_{t}\big(u_{1}(c^{\theta})\big)\cdot w_{1}\otimes \cdots \otimes\alpha_{t}\big(u_{r}(c^{\theta})\big)\cdot w_{r}\Big)\\
&\hphantom{\widetilde\mu_{r}d_{\delta^{(r)}t}\big(c\otimes(w_{1}\otimes \cdots \otimes w_{r})\big)=\widetilde\mu _{n}\Big(}+ c^{\theta}\otimes \Big(w_{1}\cdot\alpha_{t}\big(u_{1}(c_\theta)\big)\otimes \cdots \otimes w_{r}\cdot\alpha_{t}\big(u_{r}(c_{\theta})\big)\Big)\bigg)\\
&=\sum_{1\leq j\leq k'}(c_{\theta})_{j}\otimes t\big((c_{\theta})_{j+1}\big)\cdot\,\ldots\,\cdot t\big((c_{\theta})_{k'}\big)\cdot h\Big(c_{\theta}\otimes \alpha_{t}\big(u_{1}(c^{\theta})\big)\cdot w_{1}\otimes\cdots\otimes \alpha_{t}\big(u_{r}(c^{\theta})\big)\cdot w_{r}\Big)\\
&\hphantom{=\sum_{1\leq j\leq k}(c_{\theta})_{j}\otimes t\big((c_{\theta})_{j+1}\big)\cdot\,\ldots\,\cdot t\big((c_{\theta})_{k}\big)\cdot h\Big(c_{\theta}\otimes \alpha_{t}\big(u_{1}(c^{\theta})\big)}\qquad\cdot t\big((c_{\theta})_{1}\big)\cdot\,\ldots\,\cdot t\big((c_{\theta})_{j-1}\big)\\
&\quad+\sum_{1\leq j\leq k''}(c^{\theta})_{j}\otimes t\big((c^{\theta})_{j+1}\big)\cdot\,\ldots\,\cdot t\big((c^{\theta})_{k''}\big)\cdot h\Big(c^{\theta}\otimes w_{1}\cdot\alpha_{t}\big(u_{1}(c_\theta)\big)\otimes \cdots\otimes w_{r}\cdot\alpha_{t}\big(u_{r}(c_\theta)\big)\Big)\\
&\quad\hphantom{=\sum_{1\leq j\leq k}(c_{\theta})_{j}\otimes t\big((c_{\theta})_{j+1}\big)\cdot\,\ldots\,\cdot t\big((c_{\theta})_{k}\big)\cdot h\Big(c_{\theta}\otimes \alpha_{t}\big(u_{1}(c^{\theta})\big)}\qquad\cdot t\big((c^{\theta})_{1}\big)\cdot\,\ldots\,\cdot t\big((c^{\theta})_{j-1}\big).
\end{align*}
}

\noindent Implicit in this calculation is the fact that 
$$\delta ^{(r)}t=\alpha_{t}^{\otimes r}\psi ^{(r)}t_{\Om}:C\to H^{\otimes r},$$
since $\alpha_{t}$ is a coalgebra map.

Finally,
{\smaller\begin{equation}\label{eqn:h1}
\begin{split}
&h\Big(c_{\theta}\otimes \alpha_{t}\big(u_{1}(c^{\theta})\big)\cdot w_{1}\otimes\ldots\otimes \alpha_{t}\big(u_{r}(c^{\theta})\big)\cdot w_{r}\Big)\\
&=\alpha_{t}\big(u_{1}(c^{\theta})\big)\cdot w_{1}\cdot \alpha_{t}\big(u_{2}(c_\theta)\big)\cdot \alpha_{t}\big(u_{2}(c^{\theta})\big)\cdot\,\ldots\, \cdot \alpha_{t}\big(u_{r}(c_\theta)\big)\cdot\alpha_{t}\big(u_{r}(c^{\theta})\big)\cdot w_{r}
\end{split}
\end{equation}}

\noindent and
{\smaller\begin{equation}\label{eqn:h2}
\begin{split}
&h\Big(c^{\theta}\otimes w_{1}\cdot\alpha_{t}\big(u_{1}(c_\theta)\big)\otimes \cdots\otimes w_{r}\cdot\alpha_{t}\big(u_{r}(c_\theta)\big)\Big)\\
&= w_{1}\cdot \alpha_{t}\big(u_{1}(c_\theta)\big)\cdot \alpha_{t}\big(u_{2}(c^{\theta})\big)\cdot\,\ldots\, \cdot \alpha_{t}\big(u_{r-1}(c_\theta)\big)\cdot\alpha_{t}\big(u_{r}(c^{\theta})\big)\cdot w_{r}\cdot \alpha_{t}\big(u_{r}(c_\theta)\big)\\
&= w_{1}\cdot \alpha_{t}\big(u_{2}(c_\theta)\big)\cdot \alpha_{t}\big(u_{2}(c^{\theta})\big)\cdot\,\ldots\, \cdot \alpha_{t}\big(u_{r}(c_\theta)\big)\cdot\alpha_{t}\big(u_{r}(c^{\theta})\big)\cdot w_{r}\cdot \alpha_{t}\big(u_{1}(c_\theta)\big),
\end{split}
\end{equation}}

\noindent where we use observation (2) above in the final step of this calculation.  Recall that we are suppressing both a summation and signs from the notation.

We now explain why the terms of $d_{t}\widetilde \mu _{r}\big(c\otimes(w_{1}\otimes \cdots \otimes w_{r})\big)$ cancel with those of $\widetilde\mu_{r}d_{\delta^{(r)}t}\big(c\otimes(w_{1}\otimes \cdots \otimes w_{r})\big)$. Define a morphism of graded $R$-modules
$$\Gamma: (\Om C)^{\otimes r} \to C\otimes H$$
by
\begin{align*}
\Gamma& (v_{1}\otimes \cdots \otimes v_{r})\\
&=\sum_{1\leq i\leq k}\pm a_{i}\otimes t(a_{i+1})\cdot \ldots\cdot t(a_{k})\cdot w_{1}\cdot \alpha_{t}(v_{2})\cdot \ldots\cdot \alpha_{t}(v_{r})\cdot w_{r}\cdot t(a_{1})\cdot \ldots\cdot t(a_{i-1}),
\end{align*}
where $w_{1},...,w_{r}$ are the elements of $H$ fixed above, $v_{1}=\si a_{1}|\cdots |\si a_{k}$, and the signs are given by the Koszul rule, as usual.

Note that $d_{t}\widetilde\mu _{r}\big(c\otimes (w_{1}\otimes\cdots \otimes w_{r})\big)$ is equal to the result of applying $\Gamma$ to the right-hand side of equation (\ref{eqn:key-eqn}).  On the other hand, it follows from equations (\ref{eqn:h1}) and (\ref{eqn:h2}) that $\widetilde\mu _{r}d_{\delta ^{(r)}t}\big(c\otimes (w_{1}\otimes\cdots \otimes  w_{r})\big)$ is equal to the result of applying $\Gamma $ to the left-hand side of equation (\ref{eqn:key-eqn}).  

The naturality of $\widetilde \mu_{r}$ follows easily from inspection of its definition.
\end{proof}

Motivated by the definition of topological \rth-power map on a free loop space (\ref{eqn:top-nth}), we now complete the proof of the existence of the algebraic \rth-power map.

\begin{proof}[Proof of Theorem \ref{thm:exists-nthpower}] 
Since the diagram
$$\xymatrix{C\ar [d]_{t}\ar @{=}[r]&C\ar[d]^{\delta ^{(r)}t}\\ H\ar [r]^{\delta ^{(r)}} &H^{\otimes r}}$$
commutes, the pair $(\Id_{C},\delta^{(r)})$ is a morphism in $\cat {Twist}$ from $t$ to $\delta ^{(r)}t$ and therefore induces a chain map $\widetilde\delta_{r}:=\hoch (\Id_{C}, \delta^{(r)}): \hoch (t)\to \hoch (\delta^{(r)}t)$ such that 
 $$\xymatrix{H \ar[d]_{\delta^{(r)}}\cof & \hoch (t)\ar[d]_{\widetilde\delta_{r}}\fib & C\ar@{=}[d]\\
 		    H^{\otimes r}\cof &\hoch (\delta^{(r)}t) \fib& C}$$
commutes.  Note that $\widetilde \delta_{r}$ is natural with respect to morphisms in $\cat {Twist}_{\mathrm {Hopf}}$, as chain Hopf algebra maps commute with iterated comultiplications.

Let $\widetilde\lambda _{r}=\widetilde \mu _{r}\circ \widetilde \delta_{r}$, which is natural with respect to morphisms in $\cat {Twist}_{\mathrm{HH}}$, since $\widetilde\delta_{r}$  and $\widetilde \mu _{r}$ are natural with respect to morphisms in $\cat {Twist}_{\mathrm{Hopf}}$ and $\cat {Twist}_{\mathrm{Hirsch}}$, respectively.
\end{proof}

\begin{rmk}\label{rmk:formula} Combining the formula developed in the proof of Theorem \ref{thm:exists-concat} for $\widetilde\mu _{r}$ with the identity $\widetilde\delta _{r}=\Id_{C}\otimes \delta ^{(r)}$, we obtain the following formula for $\widetilde \lambda_{r}$.  If $c\in C$ and $w\in H$, then
{\smaller{\begin{align*}
&\widetilde\lambda_{r}(c\otimes w)\\
&=\sum_{1\leq j\leq k}\pm c_{j}\otimes t(c_{j+1})\cdot\,\ldots\,\cdot t(c_{k})\cdot w_{1}\cdot \alpha_{t}\big(u_{2}(c)\big)\cdot\, \ldots\, \cdot \alpha_{t}\big(u_{r}(c)\big)\cdot w_{r}\cdot t(c_{1})\cdot\,\ldots\,\cdot t(c_{j-1}),
\end{align*}}}

\noindent where (suppressing obvious summations)
$$\delta^{(r)}(w)=w_{1}\otimes \cdots \otimes w_{r},$$
$$\psi^{(r)}(\si c)=u_{1}(c)\otimes u_{2}(c)\otimes\cdots \otimes u_{r}(c),$$
and
$$u_{1}(c)=\si c_{1}|\cdots|\si c_{k}.$$
\end{rmk}

\begin{ex}  Recall Example \ref{ex:hpst}.  Let $K$ be a simplicial double suspension, either $\E^2 L$ for some pointed simplicial set $L$ or $\S M$ for some simplicial set $M$.  If $C=C_{*}K$, then  every element of $C$ is primitive and $\Om C$ is primitively generated, which implies that
$$\cohoch (C)=\big(C\otimes T(s^{-1}\overline C), d_{\cohoch}\big),$$
where, if $[-,-]$ denotes the graded commutator, then
$$d_{\cohoch}(c\otimes w)= dc\otimes w +(-1)^{|c|} c\otimes d_{\Om}w- 1\otimes [s^{-1} c, w],$$ 
and
$$\widetilde \lambda_{r}(x\otimes   w)=x\otimes \lambda_{r} (w)$$
for all $x\in  C$ and for all $w\in \Om C$.
\end{ex}

\begin{ex}\label{ex:rpinfty} Let $K$ be the nerve of the cyclic group of order two, which is a reduced simplicial model of $\mathbb RP^\infty$ (cf.~Remark \ref{rmk:rpinfty}).  An easy calculation shows that $C_{k}K$ is free abelian on one generator $z_{k}$ for each $k$.  Moreover, 
$$\Delta (z_{k})=\sum _{i=0}^k z_{i}\otimes z_{k-i}$$
for all $k$, which implies that $C_{*}K\otimes \mathbb F_{2}$ is cocommutative, i.e.,  that $K$ is a symmetric simplicial set.  Note that the differential in $C_{*}K\otimes \mathbb F_{2}$ is exactly $0$.

Let $C=C_{*}\E K\otimes \mathbb F_{2}$, and let $y_{k}$ denote the suspension of $z_{k}$. Consider $\cohoch (C)=(C\otimes T(C_{+}K), d_{\cohoch})$, where
$$
d_{\cohoch}(y_{l}\otimes z_{k_{1}}|\cdots |z_{k_{m}})= -1\otimes z_{l}|z_{k_{1}}|\cdots |z_{k_{m}} +(-1)^{lk} 1\otimes z_{k_{1}}|\cdots |z_{k_{m}}|z_{l},
$$
where $k=k_{1}+\cdots + k_{m}$.
Moreover, for all $k$,
$$\psi_{\E K}(z_{k})=\sum _{i=0}^k z_{i}\otimes z_{k-i}\in TC_{+}K\otimes TC_{+}K$$
and so 
$$\psi_{\E K}^{(r)}(z_{k})=\sum _{k_{1}+\cdots +k_{r}=k} z_{k_{1}}\otimes \cdots \otimes z_{k_{r}}\in (TC_{+}K)^{\otimes r}$$
for all $r$.
The formula in Remark \ref{rmk:formula} therefore implies that
\begin{align*}
\widetilde \lambda _{r}&(y_{l}\otimes z_{k_{1}}|\cdots |z_{k_{m}})\\
=&\sum  y_{l_{1}}\otimes z_{k_{1,1}}|\cdots |z_{k_{m,1}}|z_{l_{2}}|\cdots |z_{l_{r}}| z_{k_{1,r}}|\cdots |z_{k_{m,r}},
\end{align*}
where the sum is taken over 
\begin{itemize}
\item all $(l_{1},...,l_{r})$ such that $\sum _{j}l_{j}=l$, and
\item all $(k_{i,1},...,k_{i,r})$ such that $\sum _{j}k_{i,j}=k_{i}$, for all $1\leq i\leq m$.
\end{itemize}
\end{ex}

\begin{rmk}  All of the results in this section can be dualized, at least in the finite-type case.  We leave the straightforward task of dualizing the statements to the interested reader.
\end{rmk}

%%%%%%%
%SEC 4      %
%%%%%%%

\section{The topological power map and its models}\label{sec:geometry}

Let $X$ be a connected topological space that has the homotopy type of the realization of a reduced simplicial set $K$, and let $r\in \mathbb N$. In this section we show that, under certain conditions on $X$, the algebraic \rth-power map 
$$\widetilde \lambda_{r}:\cohoch (C_{*}K)\to \cohoch (C_{*}K)$$  
of Corollary \ref{cor:nth-cohoch} is a model of the topological \rth-power map 
$$\widetilde \lambda ^{(r)}_{top}:\op LX\to \op LX$$ 
(cf.~(\ref{eqn:top-nth})).  More precisely, we prove the following theorem, which is essential to the construction of the chain model of topological cyclic homology in \cite {hess-rognes}. 

\begin{thm}\label{thm:topo-alg} If $X$ is a topological space such that $X\simeq |K|$ for some reduced simplicial set $|K|$, then there is a chain map 
$$\zeta:  \cohoch (C_{*}K)\xrightarrow {} S_{*} \op LX,$$
which is a quasi-isomorphism if $K$ is $1$-reduced.
Moreover,  if $K=\S M$, where $M$ is any simplicial set, then for all $r\in \mathbb N$,
$$\xymatrix{\cohoch(C_{*}K) \ar[d]_{\widetilde \lambda _{r}}\ar [r]^(0.55){\zeta}_(0.55){}&S_{*} \op LX\ar[d] ^{S_{*}\widetilde\lambda _{top}^{(r)}}\\
\cohoch (C_{*}K) \ar [r]^(0.55){\zeta}_(0.55){}&S_{*} \op L X }$$
commutes up to chain homotopy.
\end{thm}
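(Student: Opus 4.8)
The plan is to obtain $\zeta$ as a composite of the comparison maps assembled in the preceding results, and then to deduce the homotopy commutativity of the power-map square by concatenating the compatibility statements attached to those comparisons. To construct $\zeta$, note that since $K$ is reduced, the unit $K\to\overline\W\G K$ of the Kan loop group adjunction is a weak equivalence, and the realization $|\overline\W\G K|$ is naturally homotopy equivalent to the classifying space $B|\G K|$; composing with the given equivalence $X\simeq|K|$, we obtain a homotopy equivalence $h\colon X\xrightarrow{\simeq}B|\G K|$ together with a homotopy inverse $\bar h$. Let $\iota_K\colon\cohoch(C_*K)\to C_*\widehat\H K$ be the chain map of Theorem \ref{thm:cohoch-ez} and $\theta_K\colon C_*\widehat\H K\to S_*\op L B|\G K|$ the weak equivalence of Theorem \ref{thm:small-to-big}, and set
\[
\zeta=\bigl(S_*\op L\bar h\bigr)\circ\theta_K\circ\iota_K\colon\cohoch(C_*K)\longrightarrow S_*\op LX .
\]
If $K$ is $1$-reduced, then $\iota_K$ is a quasi-isomorphism (Theorem \ref{thm:cohoch-ez}), $\theta_K$ is a quasi-isomorphism (Theorem \ref{thm:small-to-big}), and $S_*\op L\bar h$ is a quasi-isomorphism since $\bar h$ is a homotopy equivalence, so $\zeta$ is a quasi-isomorphism.

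Now suppose $K=\S M$. By Example \ref{ex:hpst}, $C_*K$ is a balanced Hirsch coalgebra with $\Om C_*K$ primitively generated, so Corollary \ref{cor:nth-cohoch} applies and, by Remark \ref{rmk:formula}, the algebraic power map is simply $\widetilde\lambda_{r}=\Id_{C_*K}\otimes\lambda_{r}$, where $\lambda_{r}$ is the \rth-power map of $\Om C_*K$. I would then verify the required compatibility one arrow of $\zeta$ at a time. First, Theorem \ref{thm:compat} states precisely that $\iota_K$ intertwines $\widetilde\lambda_{r}$ with the induced \rth-power map on $C_*\widehat\H K$ of Remark \ref{rmk:rth-simpl-cohoch}, up to chain homotopy; this is the only step in which the double-suspension hypothesis is genuinely used. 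Second, by Theorem \ref{thm:small-to-big}, $\theta_K$ commutes, up to chain homotopy, with the \rth-power maps on $C_*\widehat\H K$ and on $S_*\op L B|\G K|$, the latter being $S_*\widetilde\lambda_{top}^{(r)}$. Third, since the topological \rth-power map is defined by loop concatenation (cf.~(\ref{eqn:top-nth})), it is strictly natural in the space, so $\op L\bar h$ strictly commutes with $\widetilde\lambda_{top}^{(r)}$, and hence $S_*\op L\bar h$ strictly commutes with $S_*\widetilde\lambda_{top}^{(r)}$.

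Sliding $\widetilde\lambda_{r}$ to the right across $\zeta=\bigl(S_*\op L\bar h\bigr)\circ\theta_K\circ\iota_K$, one arrow at a time, using the three compatibilities above together with the elementary fact that the image of a chain homotopy under a chain map is again a chain homotopy, produces a chain homotopy from $\zeta\circ\widetilde\lambda_{r}$ to $\bigl(S_*\widetilde\lambda_{top}^{(r)}\bigr)\circ\zeta$, which is exactly the asserted commuting square. The main obstacle is not in this assembly, which is essentially bookkeeping with the directions of arrows and with the composition of homotopies, but is concentrated in Theorem \ref{thm:compat}: there one must match the algebraic power map, through its explicit formula (Remark \ref{rmk:formula}), with the B\"okstedt-Hsiang-Madsen-style loop-concatenation power map on the simplicial coHochschild model \cite{bhm}. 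It is precisely because both descriptions collapse when $K$ is a double suspension---to $\Id\otimes\lambda_{r}$ on the algebraic side, and to the correspondingly simple expression of Remark \ref{rmk:rth-simpl-cohoch} on the topological side---that this identification can be carried through, whereas for a general reduced $K$ the higher Hirsch homotopies obstruct a clean comparison.
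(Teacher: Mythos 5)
Your proof is correct and matches the paper's strategy: the paper states that Theorem~\ref{thm:topo-alg} is an immediate consequence of Theorems~\ref{thm:small-to-big}, \ref{thm:cohoch-ez}, and \ref{thm:compat}, and you simply carry out that assembly explicitly, including the (strictly natural) transport of the topological power map along the homotopy equivalence identifying $|\B G|$ with $X$. One small caution: your symbols $\iota_K$ and $\theta_K$ clash with the paper's notation, where $\theta_K$ already denotes the map $\cohoch(C_*K)\to C_*\coH K$ that you call $\iota_K$.
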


\begin{rmk}  In terms of the notation used in the introduction, the algebraic model of $\op LX$ that we propose to use in studying power maps is thus $fls_{*}(X):=\cohoch (C_{*}K)$, when $K$ is a 1-reduced simplicial set, and $|K|\simeq X$.
\end{rmk}

\begin{rmk} Recall the definition of symmetric simplicial sets (Definition \ref{defn:symmetric}).  In future work we will show that the result above also holds, over $\mathbb F_{2}$, for topological spaces that realize symmetric simplicial sets.  In particular, if $X$ is a suspension of a wedge of truncated real projective spaces, then the algebraic \rth-power map is a model of the topological \rth-power map, over $\mathbb F_{2}$.
\end{rmk}

Before proving Theorem \ref{thm:topo-alg}, we illustrate its utility by applying it to carrying out explicit calculations of power maps in the homology of $\op L S^n$, for $n\geq 2$.

\begin{ex} Let $n\geq 2$, and let $S^n$ denote the simplicial model of the $n$-sphere with exactly one nondegenerate simplex of positive degree: $y$ in degree $n$.  An easy calculation shows that 
$$\cohoch(C_{*}S^n)\cong \big ((\zz \cdot 1 \oplus \zz \cdot y)\otimes Tx, D_{\cohoch}\big ),$$
where $Tx$ denotes the tensor algebra on a generator $x$ of degree $n-1$ and the differential $D$ is specified by
$$D(1\otimes x^k)=0$$
and
$$D(y\otimes x^k)=\begin{cases} 0&: \text{if }n \text{ odd or } k \text{ even}\\ -2\otimes x^{k+1}&:\text{if } n\text{ even and } k \text{ odd.}\end{cases}$$
Thus, as is well known, $H_{*}(\op LS^{2n+1})\cong(\zz \cdot 1 \oplus \zz \cdot y)\otimes Tx$ as graded $\zz$-modules, while 
$$H_{*}(\op LS^{2n})\cong \zz \oplus \bigoplus _{k\geq 0} \zz \cdot (1\otimes x^{2k+1}) \oplus\bigoplus_{k\geq 1}\zz/2\zz \cdot (1\otimes x^{2k})\oplus\bigoplus _{k\geq 0}\zz\cdot (y\otimes x^{2k}).$$

Since $S^n$ is a double suspension for all $n\geq 2$ and therefore $(\Om C_{*}S^n, \psi_{S^n})$ is primitively generated, the algebraic \rth-power map $\widetilde\lambda _{r}$ on $\cohoch(C_{*}S^n)$ is simply $\Id\otimes \lambda_{r}$, i.e.,
$$\widetilde\lambda _{r}(y\otimes x^k)=y\otimes \lambda_{r}(x^k)=y\otimes r^k x^k,$$
since  $\lambda_{r}$ is the usual \rth-power map on the primitively generated tensor algebra $Tx$.
\end{ex}

All of the simplicial constructions (the ordinary and cyclic bar constructions, Artin-Mazur totalization, twisted cartesian products, etc.) used in this section are recalled in Appendix \ref{sec:simpl-constr}, for the convenience of the reader and to fix our notation.  

\subsection{The B\"okstedt-Hsiang-Madsen model of the power map}

The starting point for our proof of Theorem \ref{thm:topo-alg} is the following construction due to B\"okstedt, Hsiang and Madsen, which can be found in sections 1 and 2 of  \cite{bhm}.  

Let $G$ be a topological group.  Let $\Z G$ and $\B G$ denote the cyclic and ordinary simplicial bar constructions on $G$ (see Appendix \ref{sec:simpl-constr}), and let $\pi:\Z G \to \B G$ be the obvious projection map.  Since $\Z G$ is a cyclic set, its geometric realization admits a natural circle action, which, when composed with $|\pi|$, gives rise to a continuous map
$$S^1\times |\Z G| \to |\Z G|\xrightarrow {|\pi|} |\B G|.$$
Taking the adjoint
$$h:|\Z G|\to \op L |\B G|$$
one obtains a homotopy equivalence, which B\"okstedt, Hsiang and Madsen proved to be compatible up to homotopy with power maps, as follows.

Let $\boldsymbol \Delta$ denote the usual simplex category, with objects $[n]$ for every non-negative integer $n$. Let $sd_{r}:\boldsymbol \Delta \to \boldsymbol \Delta$ denote the $r$-fold edgewise subdivision functor that takes $[n-1]$ to its $r$-fold concatenation $[rn-1]\cong [n-1]\coprod \cdots \coprod [n-1]$ and $\varphi \in \boldsymbol \Delta \big ([n-1], [m-1]\big)$ to $\varphi \coprod\cdots\coprod \varphi$.  The $r$-fold edgewise subdivision of a simplicial object $K$ in a category $\cat C$, denoted $sd_{r}K$, is then given by the composite $K\circ sd_{r}$, where we consider $K$ as a functor from $\boldsymbol \Delta^{op}$ to $\cat C$.  

For all simplicial sets (or spaces) $K$ and for all $r\in \mathbb N$, B\"okstedt, Hsiang and Madsen constructed a natural homeomorphism 
\begin{equation}\label{eqn:homeo}
D_{r}:|sd_{r}K| \xrightarrow \cong |K|.
\end{equation}
They observed moreover that, for any topological group $G$,  $\Z G$ embeds into $sd_{r}\Z G$ as the fixed points of the natural action of $C_{r}$, the cyclic group of order $r$, on $sd_{r}\Z G$, i.e., there is a simplicial injection given in dimension $n$ by
$$\iota_{r}:(\Z G)_{n}\to (sd_{r}\Z G)_{n}: (a_{1},...,a_{n},b)\mapsto (a_{1},...,a_{n},b, a_{1},...,a_{n},b,... ,a_{1},...,a_{n},b),$$
where the sequence $a_{1},...,a_{n},b$ appears $r$ times.

In the course of the proof of Proposition 2.6 in \cite{bhm}, the authors establish the following equivariance result.

\begin{prop}\label{prop:bhm}\cite{bhm} For every topological group $G$ and for every  $r\in \mathbb N$, the diagram
$$\xymatrix{|\Z G|\ar [r]^h_{\simeq}\ar [d] _{D_{r}\circ |\iota _{r}|}&\op L |\B G|\ar [d] ^{\widetilde \lambda_{top}^{(r)}}\\
|\Z G|\ar [r]^h_{\simeq}&\op L |\B G|}$$
commutes up to homotopy.
\end{prop}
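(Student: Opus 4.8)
The plan is to compare the two composites in the square after passing to the adjoint maps $S^{1}\times|\Z G|\to|\B G|$. By construction $h$ is the adjoint of the map $\mathrm{act}:=|\pi|\circ a$, where $a:S^{1}\times|\Z G|\to|\Z G|$ is the natural circle action on the realization of the cyclic set $\Z G$; thus $h\circ(D_{r}\circ|\iota_{r}|)$ is adjoint to the map $(z,x)\mapsto|\pi|\bigl(z\cdot D_{r}|\iota_{r}|(x)\bigr)$. On the other side, I would first observe that the topological $r^{\text{th}}$-power map $\widetilde\lambda_{top}^{(r)}$ on $\op L|\B G|\cong\mathrm{Map}(S^{1},|\B G|)$ is, up to homotopy, precomposition with the degree-$r$ self-map $\rho_{r}:S^{1}\to S^{1}$, $z\mapsto z^{r}$: the map $\rho_{r}$ factors as the $r$-fold pinch $S^{1}\to\bigvee_{r}S^{1}$ followed by the fold map, so for any $\ell$ the composite $\ell\circ\rho_{r}$ agrees, up to reparametrization of the circle coordinate, with the $r$-fold self-concatenation $\widetilde\mu_{top}^{(r)}(\ell,\dots,\ell)=\widetilde\lambda_{top}^{(r)}(\ell)$. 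Consequently $\widetilde\lambda_{top}^{(r)}\circ h$ is adjoint to $(z,x)\mapsto|\pi|\bigl(z^{r}\cdot x\bigr)$, and the problem reduces to constructing a homotopy between the two maps $(z,x)\mapsto|\pi|\bigl(z\cdot D_{r}|\iota_{r}|(x)\bigr)$ and $(z,x)\mapsto|\pi|\bigl(z^{r}\cdot x\bigr)$.

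The engine of this homotopy is two compatibility properties of the edgewise subdivision established in \cite{bhm}. First, $sd_{r}$ of a cyclic set is again a cyclic set, and the natural homeomorphism $D_{r}$ of (\ref{eqn:homeo}) is natural in the simplicial object and intertwines the circle action coming from the cyclic structure of $sd_{r}Y$ with the circle action on $|Y|$ reparametrized through $\rho_{r}$; applied to the projection $\pi:\Z G\to\B G$, naturality gives $|\pi|\circ D_{r}=D_{r}\circ|sd_{r}\pi|$ as maps $|sd_{r}\Z G|\to|\B G|$. Second, as recalled just before the statement, $|\iota_{r}|$ identifies $|\Z G|$ with the $C_{r}$-fixed subspace of $|sd_{r}\Z G|$, and it is equivariant for the residual circle action of $S^{1}\cong S^{1}/C_{r}$ on the target.

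Combining these, the loop $z\mapsto|\pi|\bigl(z\cdot D_{r}|\iota_{r}|(x)\bigr)$ differs from the loop $z\mapsto|\pi|\bigl(z^{r}\cdot x\bigr)$ only by the reparametrization of the $S^{1}$-coordinate packaged into $D_{r}$ and $\iota_{r}$; since this is a continuous family, indexed by $x$, of self-homeomorphisms of $S^{1}$ canonically isotopic to the identity, one obtains the required homotopy of maps $S^{1}\times|\Z G|\to|\B G|$, and hence the square commutes up to homotopy. I expect the one genuinely delicate point to be the bookkeeping in the previous paragraph: verifying that the ``speed-up by $r$'' introduced by $D_{r}$ on the cyclic side matches exactly the exponent $r$ appearing in $\rho_{r}$ on the topological side --- precisely the computation carried out in Sections 1 and 2 of \cite{bhm}, which I would invoke rather than redo.
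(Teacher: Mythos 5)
The paper does not prove this proposition: it is quoted directly from \cite{bhm}, where (as the paper notes) it is established in the course of proving their Proposition~2.6. So there is no in-paper argument to compare yours against; the only question is whether your sketch is a faithful account of the \cite{bhm} reasoning, and it is. The reduction to adjoint maps $S^1\times|\Z G|\to|\B G|$, the identification of $\widetilde\lambda_{top}^{(r)}$ with precomposition by $\rho_r$ (which, with the standard affine parametrization of concatenation, is actually an equality and not merely a homotopy), and the appeal to the circle-action properties of $D_r$ and of the $C_r$-fixed-point inclusion $\iota_r$ are exactly the ingredients in \cite{bhm}. You correctly locate the genuine content in the equivariance bookkeeping for edgewise subdivision and defer that to \cite{bhm} \S\S1--2, which is precisely the dependency the paper itself takes on. One small caveat: your final homotopy is phrased as coming from a ``continuous family of self-homeomorphisms of $S^1$ isotopic to the identity,'' which slightly understates what has to be checked (the relevant claim is about $S^1$-equivariance of $D_r$ and of $|\iota_r|$, not merely a fibrewise circle reparametrization), but since you invoke \cite{bhm} for the details this does not create a gap.
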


\subsection{Various simplicial models of the power map}

We begin by defining, for all $r\in \mathbb N$ and for all simplicial groups $G$, a simplicial map $\widetilde \lambda _{simp}^{(r)}:\Z G\to \Z G$ extending the usual simplicial $r^{\text{th}}$-power map $\lambda^{(r)}$ on $\C G$, the constant simplicial group.

\begin{rmk}  For all $r\in \mathbb N$ there is a natural transformation $e_{r}:\Id_{\boldsymbol \Delta}\to sd_{r}$, which is defined as follows.  For any object $[n-1]$ in $\boldsymbol \Delta$, $e_{r}([n-1]):[n-1]\to [rn-1]$ is given by inclusion onto the final (right-hand) copy of $[n-1]$.  The natural transformation $e_{r}$ defines in turn a natural map $e_{\bullet}:sd_{r}K \to K$, for all simplicial sets $K$.
\end{rmk} 

\begin{prop}\label{prop:john}  For all $r\in \mathbb N$ and for all simplicial sets $K$, the homeomorphism $D_{r}:|sd_{r}K|\to |K|$ (\ref{eqn:homeo}) is naturally homotopic to the geometric realization of $e_{\bullet}:sd_{r}K\to K$.
\end{prop}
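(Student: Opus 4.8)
The plan is to realize both $D_r$ and $|e_\bullet|$ as geometric realizations of natural transformations of cosimplicial spaces and then to join these two transformations by an elementary affine homotopy. Throughout I use that geometric realization is the coend $|X|=\int^{[n]\in\boldsymbol\Delta}X_n\times\Delta^n$, that $sd_r[m]=[r(m+1)-1]$ in the paper's indexing, and that products with the unit interval $I=[0,1]$ commute with the colimits defining realization (working, as usual, in a convenient category of spaces).

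First I would recall the standard cosimplicial description of the subdivision homeomorphism: $D_r:|sd_rK|\to|K|$ is induced by the natural transformation of cosimplicial spaces $\kappa:\Delta^{\bullet}\Rightarrow\Delta^{sd_r(\bullet)}$ whose component $\kappa_m:\Delta^m\to\Delta^{r(m+1)-1}$ is the ``thin diagonal'' into the $r$-fold join $\Delta^m\ast\cdots\ast\Delta^m=\Delta^{r(m+1)-1}$; in barycentric coordinates, $\kappa_m(u)=\tfrac1r(u\|u\|\cdots\|u)$, the $(m+1)$-tuple $u$ written $r$ times in a row and rescaled by $1/r$. Here ``induced by $\kappa$'' means: apply $\mathrm{id}\times\kappa_m:K_{sd_r[m]}\times\Delta^m\to K_{sd_r[m]}\times\Delta^{sd_r[m]}$ followed by the coend structure map of $|K|$ at $[n]=sd_r[m]$; naturality of $\kappa$ in $[m]$ together with the dinaturality relations of the coend guarantee that this descends to a well-defined map on $|sd_rK|$, and this map is precisely $D_r$ of \cite{bhm}. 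I would then note that $|e_\bullet|$ has exactly the same form: applying the functor $\Delta^{(-)}\colon\boldsymbol\Delta\to\mathbf{Top}$ to the natural transformation $e_r\colon\mathrm{Id}_{\boldsymbol\Delta}\Rightarrow sd_r$ yields a natural transformation of cosimplicial spaces $\Delta^{e_r(\bullet)}\colon\Delta^{\bullet}\Rightarrow\Delta^{sd_r(\bullet)}$, with component the ``last face'' inclusion $\Delta^{e_r[m]}\colon\Delta^m\to\Delta^{r(m+1)-1}$, $u\mapsto(0\|\cdots\|0\|u)$. A one-line dinaturality computation (pushing the simplicial operator $e_r[m]$ across the coend) identifies the map on realizations induced by $\Delta^{e_r(\bullet)}$ with $|e_\bullet|$.

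The heart of the argument is then to connect $\kappa$ and $\Delta^{e_r(\bullet)}$ by a homotopy of natural transformations. I would take the straight-line homotopy $H_m\colon\Delta^m\times I\to\Delta^{r(m+1)-1}$, $H_m(u,s)=(1-s)\,\kappa_m(u)+s\,\Delta^{e_r[m]}(u)$. This lands in the affine simplex $\Delta^{r(m+1)-1}$ by convexity, it equals $\kappa_m$ at $s=0$ and $\Delta^{e_r[m]}$ at $s=1$, and it is natural in $[m]$: for any $\varphi\in\boldsymbol\Delta([m],[m'])$ the map $\Delta^{sd_r\varphi}$ is affine, hence commutes with the convex combination, and both $\kappa$ and $\Delta^{e_r(\bullet)}$ are already natural, so $\Delta^{sd_r\varphi}\circ H_m(\cdot,s)=H_{m'}(\Delta^{\varphi}(\cdot),s)$. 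Thus $H$ is a homotopy $\Delta^{\bullet}\times I\Rightarrow\Delta^{sd_r(\bullet)}$ from $\kappa$ to $\Delta^{e_r(\bullet)}$, natural in the cosimplicial variable.

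Finally I would realize $H$: using $|sd_rK|\times I\cong\int^{[m]}K_{sd_r[m]}\times\Delta^m\times I$, the assignment $\mathrm{id}\times H_m$ followed by the coend structure maps of $|K|$ produces a continuous map $\widehat H_K\colon|sd_rK|\times I\to|K|$, well-defined on the coend by naturality of $H$ and manifestly natural in the simplicial set $K$; by the first step it restricts to $D_r$ at $s=0$ and to $|e_\bullet|$ at $s=1$, which is exactly the asserted natural homotopy. The only genuinely non-formal ingredient is the explicit cosimplicial formula for $D_r$ invoked at the start—so that is where I would be most careful to match the conventions of \cite{bhm}—while the convexity/affineness argument and the commutation of $\times I$ with realization are routine; I do not expect any serious obstacle beyond this bookkeeping.
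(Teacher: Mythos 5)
Your proof is correct and follows essentially the same approach as the paper: both $D_r$ and $|e_\bullet|$ are given simplexwise by affine maps into the target simplex, so the straight-line (convex) homotopy between them descends to realizations. The paper states this in two sentences; your version simply makes explicit the coend bookkeeping, the cosimplicial formulas for the two maps, and the naturality check, which the paper leaves implicit.
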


\begin{proof}  Both $D_{r}$ and $|e_{\bullet}|$ embed the geometric $(n-1)$-simplex in $|sd_{r}K|$ associated to an element in $sd_{r}K_{n-1}=K_{rn-1}$ linearly into the corresponding geometric $(rn-1)$-simplex in $|K|$.  The convex linear combination of these two maps, formed within the latter simplex, thus defines a homotopy between $D_{r}$ and $|e_{\bullet}|$.
\end{proof}

Taken together, Propositions \ref{prop:bhm} and \ref{prop:john} imply the existence of the following purely simplicial model of the power map.

\begin{cor}\label{cor:simpl-rth}  For all $r\in \mathbb N$ and for all topological groups $G$, the map of graded sets $\widetilde \lambda _{simp}^{(r)}:\Z G\to \Z G$, given in degree $n$ by
$$\widetilde \lambda _{simp}^{(r)} (a_{1},..., a_{n},b)=(a_{1},..., a_{n},b(ab)^{r-1}),$$
where $a=a_{1}\cdots a_{n}$, is a simplicial map.  Moreover 
$$\xymatrix{\C G \ar [d] ^{\lambda^{(r)} }\cof &\Z G\ar [d]^{\widetilde\lambda_{simp}^{(r)} }\fib &\B G\ar @{=}[d]\\
	\C G\cof &\Z G\fib &\B G}$$
commutes, and 
$$\xymatrix{|\Z G|\ar [rr]^h_{\simeq}\ar [d] ^{|\widetilde \lambda _{simp}^{(r)}|}&&\op L |\B G|\ar [d] ^{\widetilde \lambda_{top}^{(r)}}\\
|\Z G|\ar [rr]^h_{\simeq}&&\op L |\B G|}$$
commutes up to homotopy, where $h$ is the equivalence of Proposition \ref{prop:bhm}.
\end{cor}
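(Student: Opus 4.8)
The plan is to realize $\widetilde \lambda _{simp}^{(r)}$ as the composite of two \emph{simplicial} maps that are already in hand, namely the injection $\iota_r:\Z G\to sd_{r}\Z G$ and the natural map $e_\bullet:sd_{r}\Z G\to \Z G$, and then to feed that identification into Propositions \ref{prop:bhm} and \ref{prop:john}. First I would check that in degree $n$ the composite $e_\bullet\circ\iota_r:\Z G\to\Z G$ is given \emph{exactly} by the formula in the statement. Unwinding the definitions: $\iota_r$ sends $(a_1,\ldots,a_n,b)$ to the $r$-fold repetition of the block $a_1,\ldots,a_n,b$, regarded as an element of $(sd_{r}\Z G)_n=(\Z G)_{r(n+1)-1}$, while $e_\bullet$ in degree $n$ is the operator induced on $\Z G$ by the coface $e_r([n]):[n]\hookrightarrow [r(n+1)-1]$ onto the last block, which on the cyclic bar construction restricts an $(r(n+1)-1)$-simplex to the face spanned by its last $n+1$ vertices. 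Applied to the periodic tuple $\iota_r(a_1,\ldots,a_n,b)$, this leaves $a_1,\ldots,a_n$ untouched in the first $n$ coordinates and multiplies all the remaining group coordinates (including the original $b$), read in cyclic order, into the wraparound coordinate, producing $b(a_1\cdots a_n\,b)^{r-1}=b(ab)^{r-1}$; a one-line induction on $r$, or a direct bookkeeping argument, makes this precise. Since $\iota_r$ is a simplicial injection and $e_\bullet$ is a simplicial map (both recalled in Appendix \ref{sec:simpl-constr}), the composite $\widetilde \lambda _{simp}^{(r)}=e_\bullet\circ\iota_r$ is a simplicial map, which settles the first assertion.

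Next I would verify the twisted-extension diagram. The projection $\pi:\Z G\to\B G$ forgets the last ($b$-)coordinate, and $\widetilde \lambda _{simp}^{(r)}$ modifies only that coordinate, so $\pi\circ\widetilde \lambda _{simp}^{(r)}=\pi$ and the square over $\B G$ commutes. Restricting to the fibre $\C G$, the subsimplicial group on which $a_1=\cdots=a_n$ are the identity, so that $a=a_1\cdots a_n$ is the identity, the formula becomes $(e,\ldots,e,b)\mapsto(e,\ldots,e,b^r)$, which is precisely the simplicial $r^{\text{th}}$-power map $\lambda^{(r)}$ on the constant simplicial group $\C G$. Hence $\widetilde \lambda _{simp}^{(r)}$ is a morphism of the twisted extension in the statement.

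Finally, for the homotopy-commuting square involving $h$: Proposition \ref{prop:john} supplies a natural homotopy $D_r\simeq|e_\bullet|$, whence $D_r\circ|\iota_r|\simeq|e_\bullet|\circ|\iota_r|=|e_\bullet\circ\iota_r|=|\widetilde \lambda _{simp}^{(r)}|$; Proposition \ref{prop:bhm} gives $h\circ(D_r\circ|\iota_r|)\simeq\widetilde \lambda_{top}^{(r)}\circ h$; concatenating the two homotopies yields $h\circ|\widetilde \lambda _{simp}^{(r)}|\simeq\widetilde \lambda_{top}^{(r)}\circ h$, as required. The only genuine work lies in the first step, namely tracking how the iterated face operator of the cyclic bar construction acts on the periodic tuple $\iota_r(a_1,\ldots,a_n,b)$ and confirming that the cyclic wraparound contribution assembles into $b(ab)^{r-1}$ with exactly this ordering of factors; once that identification is established, the remaining two assertions are purely formal consequences of Propositions \ref{prop:bhm} and \ref{prop:john}.
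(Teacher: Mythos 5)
Your proof is correct and takes essentially the same route as the paper: identify $\widetilde\lambda_{simp}^{(r)}$ with $e_\bullet\circ\iota_r$, so simpliciality is automatic, the first square is inspection, and the homotopy square follows by concatenating the homotopies of Propositions \ref{prop:bhm} and \ref{prop:john}. The paper's only further shortcut is to record that $e_\bullet$ in degree $n$ is the iterated face $d_0^{(r-1)(n+1)}$, which makes the bookkeeping giving $b(ab)^{r-1}$ immediate.
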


\begin{proof}  Observe that for any simplicial set $e_{\bullet}:sd_{r}K\to K$ is equal in degree $n-1$ to the composite
$$(sd_{r}K)_{n-1}\xrightarrow= K_{rn-1}\xrightarrow {d_{0}^{(r-1)n}} K_{n-1}.$$
A straightforward simplicial computation then shows that $\widetilde \lambda _{simp}^{(r)} =e_{\bullet}\circ \iota _{r}$, which implies both that $\widetilde \lambda _{simp}^{(r)} $ is simplicial and that the second diagram commutes up to homotopy.  That the first diagram commutes is obvious from the definition of $\widetilde \lambda _{simp}^{(r)} $. 
\end{proof}

To simplify this simplicial model of the power map further, we next consider simplicial groups, instead of topological groups. 

Let $\tot$ denote the Artin-Mazur totalization functor.  As recalled in  Appendix \ref{sec:simpl-constr} (cf.~Remark \ref{rmk:totalize}), there is an isomorphism $\tot \B G\cong \overline \W G$ for every simplicial group $G$.  

\begin{defn} Let $G$ be a simplicial group. The \emph{simplicial Hochschild construction} on $G$, denoted $\H G$, is the simplicial set $\overline \W G \underset {\upsilon_{G}}\times {}^\gamma G$, where ${}^\gamma G$ denotes $G$ seen as a left $G$-space, with $G$ acting on itself by conjugation, and $\upsilon_{G}$ denotes the couniversal twisting function $\upsilon_{G}: \overline \W  G\to G$ (cf.~Remark \ref{rmk:couniv-twist}).  
\end{defn}

\begin{thm}\label{thm:tot-zg} If $G$ is a simplicial group, then applying the totalization functor to the sequence of bisimplicial maps
$$\xymatrix@1{\C G\;\cof &\; \Z G\; \fib& \;\B G}$$
gives rise to a Kan fibration
$$\xymatrix@1{G\;\cof &\; \H G\; \fib& \;\overline \W G.}$$
\end{thm}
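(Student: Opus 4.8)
The plan is to compute the Artin--Mazur totalizations of the three bisimplicial sets in the sequence, identify them --- compatibly with the maps --- with $G$, $\H G$ and $\overline{\W} G$, and then deduce that the resulting map is a Kan fibration from the classical fact that a twisted cartesian product with simplicial-group structure group is a Kan fibration.

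First I would pin down the totalizations. The isomorphism $\tot\B G\cong\overline{\W} G$ is already available (cf.\ Remark~\ref{rmk:totalize}). Since $\C G$ is constant in the bar direction with value $G$, an unwinding of the definition of $\tot$ shows that an $n$-simplex of $\tot\C G$ is determined by its top component, giving a natural isomorphism $\tot\C G\cong G$. The crux is the identification $\tot\Z G\cong\H G$. For this I would use that, in each internal (group) degree $q$, the projection $(\Z G)_{\bullet,q}\to(\B G)_{\bullet,q}$ is the classical presentation of the cyclic nerve $N^{\mathrm{cy}}(G_q)$ as the twisted cartesian product of the ordinary nerve $N(G_q)$ with ${}^\gamma G_q$, where $G_q$ acts on itself by conjugation and the twisting function is the couniversal twisting function of $G_q$. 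These presentations being natural in $[q]$, the bisimplicial map $\Z G\to\B G$ is a twisted cartesian product of bisimplicial sets; and $\tot$ --- by its construction together with the naturality of the couniversal twisting functions --- carries this data to the twisted cartesian product over $\tot\B G=\overline{\W} G$ with fiber ${}^\gamma G$ along $\upsilon_G$, which is precisely $\H G$. One sees at the same time that $\tot\pi$ becomes the canonical projection $\H G\to\overline{\W} G$ and that the inclusion $\tot\C G\to\tot\Z G$ becomes the fiber inclusion $G\to\H G$, so that the totalized sequence is indeed $G\to\H G\to\overline{\W} G$.

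Finally I would invoke the standard theorem (see, e.g., Gabriel--Zisman, or May, \emph{Simplicial objects in algebraic topology}, section~18) that for any simplicial group $G$, any simplicial $G$-set $F$, any simplicial set $B$ and any twisting function $\tau\colon B\to G$, the projection $B\times_\tau F\to B$ is a Kan fibration with fiber $F$. Applied with $B=\overline{\W} G$, $F={}^\gamma G$ (which, as a simplicial set, is just $G$, hence a Kan complex since $G$ is a simplicial group) and $\tau=\upsilon_G$, this yields that $\H G\to\overline{\W} G$ is a Kan fibration with fiber $G$, which is the assertion. The one genuine obstacle is the middle identification $\tot\Z G\cong\H G$, i.e.\ tracking the conjugation twisting function through the Artin--Mazur totalization and matching it with $\upsilon_G$; I expect to dispatch this either by the compatibility of $\tot$ with twisted cartesian products developed in Appendix~\ref{sec:simpl-constr} or, failing that, by an explicit but routine comparison of the $n$-simplices together with a check that the face and degeneracy operators agree. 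Everything else is formal or a citation.
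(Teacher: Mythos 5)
Your outline is sound, and your fallback is exactly what the paper does, so in the end the two approaches coincide; but be aware that your preferred route is not available as a citation. Appendix~\ref{sec:simpl-constr} only recalls definitions --- there is no compatibility statement there about $\tot$ carrying levelwise twisted cartesian products of bisimplicial sets to twisted cartesian products, and formulating and proving such a lemma in enough generality to apply here amounts to the same explicit computation you hope to avoid. The paper's proof is precisely your second route made concrete: it writes $(x_0,\dots,x_n)\in\tot(\Z G)_n$ with $x_i=(a_{i,1},\dots,a_{i,n-i},b_i)$, unwinds the conditions $d_0^vx_i=d_{i+1}^hx_{i+1}$ to show that all $a_{i,j}$ with $j>1$ and all $b_i$ with $i<n$ are determined by $(a_{0,1},\dots,a_{n-1,1},b_n)$, defines $\kappa(x_0,\dots,x_n)=(a_{0,1},\dots,a_{n-1,1},b_n)$, and writes down an explicit inverse --- this is the substantive content of the theorem, not a ``routine'' check. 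Two small corrections to your sketch: the levelwise cyclic nerve is $\Z G_{p,\bullet}$ (fixing the group degree $p$, varying $q$), and it is the cyclic nerve of $G_p$, not $(\Z G)_{\bullet,q}$ or $G_q$; and the paper's $\overline\W$ convention has $d_0$ dropping the \emph{last} coordinate (cf.\ the remark following its definition in Appendix~\ref{sec:simpl-constr}), opposite to the convention arising from the vertical faces of $\B G$ and $\Z G$, so even the levelwise twisted cartesian product identification needs a reindexing, which $\kappa$ in fact performs. Your final step --- invoking May to obtain the Kan fibration from the fact that ${}^\gamma G$ is a Kan complex --- is correct and is the same argument the paper relies on via the remark on twisted cartesian products in Appendix~\ref{sec:simpl-constr}.
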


\begin{proof} As recalled in Definition \ref{defn:artin-mazur},
$$\tot(\Z G)_n=\{(x_{0},...,x_{n})\in \prod_{i=0}^n \Z G_{i,n-i}\mid d_{0}^v x_{i}=d^h_{i+1}x_{i+1} \quad \forall\; 0\leq i<n\}.$$ 
Given $(x_{0},...,x_{n})\in\tot(\Z G)_n$, write for each $i$,
$$x_{i}=(a_{i,1},...,a_{i,n-i},b_{i}),$$
where $a_{i,j}, b_{i}\in G_{i}$ for all $j$. 
The condition $d_{0}^v x_{i}=d^h_{i+1}x_{i+1}$ then says that $d_{i+1}b_{i+1}=b_{i}a_{i,1}$ and that $d_{i+1}a_{i+1, j}=a_{i ,j+1}$ for all $1\leq j<n-i$.  It follows that
\begin{align*}
b_{n-1}&=d_{n}b_{n}\cdot a_{n-1, 1}^{-1}\\
b_{n-2}&=d_{n-1}b_{n-1}\cdot a_{n-2, 1}^{-1}=d_{n-1}d_{n}b_{n}\cdot  a_{n-2 ,2}^{-1}a_{n-2, 1}^{-1}\\
&\vdots\\
b_{0}&=d_{1}b_{1}\cdot a_{0,1}^{-1}=\cdots =d_{1}\cdots d_{n}b_{n}\cdot a_{0,n}^{-1}\cdots a_{0,1}^{-1}
\end{align*}
and that 
\begin{align*}
a_{0,n}&= d_{1}a_{1, n-1}=d_{1}d_{2}a_{2 ,n-2}=\cdots = d_{1}\cdots d_{n-1}a_{n-1, 1}\\
a_{0,n-1}&= d_{1}a_{1, n-2}=d_{1}d_{2}a_{2, n-3}=\cdots = d_{1}\cdots d_{n-2}a_{n-2, 1}\\
&\vdots\\
a_{0,2}&=d_{1}a_{1,1}.
\end{align*}

Define a map of graded sets $\kappa: \tot (\Z G)\to \H G$ by 
$$\kappa_{n}: \tot (\Z G)_{n} \to \H G_{n}: (x_{0},...,x_{n})\mapsto (a_{0,1}, a_{1,1},...,a_{n-1,1},b_{n}),$$
where we use the same notation as above.  It is a straightforward calculation to show that $\kappa$ is indeed a simplicial map.  Moreover, $\kappa$ admits an inverse $\kappa^{-1}$, defined as follows.

Given $(a_{0},...,a_{n-1},b_{n})\in \H G_{n}$, let $a_{i,1}=a_{i}$ and for all $j>1$,
$$a_{i,j}=d_{i+1}\cdots d_{i+j-1}a_{i+j-1}.$$
Let furthermore
$$b_{i}=d_{i+1}\cdots d_{n}b_{n}\cdot a_{i,n-i}^{-1} a_{i,n-i-1}^{-1} \cdots a_{i, 1}^{-1}.$$
Finally, let
$$y_{i}=(a_{i,1}, a_{i,2},...,a_{i,n-i}, b_{i}).$$
We can then set $\kappa^{-1}(a_{0},...,a_{n-1},b_{n})=(y_{0},...,y_{n})$.

The isomorphism $\kappa$ lifts the well-known isomorphism $\tot (\B G)\cong \overline \W G$.
\end{proof} 

\begin{rmk}\label{rmk:simpl-rth}  Let $\lambda_{hoch}^{(r)}=\kappa\tot (\widetilde \lambda _{simp}^{(r)})\kappa^{-1}:\H G\to \H G$, where $\kappa$ is the isomorphism constructed in the proof of Theorem \ref{thm:tot-zg}.  A direct calculation shows that
$$\lambda_{hoch}^{(r)}(a_{0},...,a_{n-1},b)=(a_{0},...,a_{n-1},b^r),$$
 for all $a_{0}\in G_{0},...,a_{n-1}\in G_{n-1}, b\in G_{n}$.  In particular,
$$\xymatrix{G \ar [d] ^{\lambda^{(r)} }\cof &\H G\ar [d]^{\lambda_{hoch}^{(r)} }\fib &\overline\W G\ar @{=}[d]\\
	G\cof &\H G\fib &\overline \W G}$$
commutes, where $\lambda^{(r)}$ is the usual $r^{\text{th}}$-power map on a simplicial group.
\end{rmk}

We can further simplify the simplicial model of the power map, when the simplicial group we consider is of the form $\G K$. 

\begin{defn} Let $K$ be a reduced simplicial set. The \emph{simplicial coHochschild construction} on $K$, denoted $\coH K$, is the simplicial set $K\underset {\tau_{K}}\times {}^\gamma {\G K}$, where ${}^\gamma \G K$ denotes $\G K$ seen as a left $\G K$-space, with $\G K$ acting on itself by conjugation, and $\tau_{K}$ denotes the universal twisting function $\tau _{K}:K\to \G K$ of Remark \ref{rmk:twisting-fcn}. 
\end{defn}

\begin{rmk}\label{rmk:rth-simpl-cohoch}  Let $r\in \mathbb N$, and let $K$ be a reduced simplicial set. It follows from the naturality of the twisted cartesian product construction that the map of  graded sets $\lambda_{cohoch}^{(r)}: \coH K\to \coH K$ given in degree $n$ by 
$$\lambda_{cohoch}^{(r)}(x, a)=(x, a^r)$$
is simplicial, since the $r^{\text{th}}$-power map on ${}^\gamma {\G K}$ is a morphism of left $\G K$-spaces.  Furthermore, the diagram
$$\xymatrix{\G K \ar [d] ^{\lambda^{(r)} }\cof &\coH K\ar [d]^{\lambda_{cohoch}^{(r)} }\fib &K\ar @{=}[d]\\
	\G K\cof &\coH K\fib &K}$$
commutes, where $\lambda^{(r)}$ is the usual $r^{\text{th}}$-power map on a simplicial group.
\end{rmk}

Recall the weak equivalence $\eta_{K}$ of Remark \ref{rmk:eta-equiv}.

\begin{thm}\label{thm:simpl-cohoch-to-hoch} If $K$ is a reduced simplicial set, then there is a commuting diagram of Kan fibrations
$$\xymatrix{\G K \ar @{=} [d] \cof &\coH K\ar [d]^{\eta_{K}\times \Id_{\G K}}\fib &K\ar [d]^{\eta_{K}}\\
	\G K\cof &\H \G K\fib &\overline \W \G K}$$
where are vertical arrows are weak equivalences.  Furthermore, 
$$\xymatrix {\coH K\ar [d]^{\eta_{K}\times \Id_{\G K}}\ar[rr]^{\lambda_{cohoch}^{(r)}}&&\coH K\ar [d]^{\eta_{K}\times \Id_{\G K}}\\ \H\G K \ar [rr]^{\lambda _{hoch}^{(r)}}&&\H\G K}$$
commutes, i.e., the equivalence $\coH K\xrightarrow\simeq \H \G K$ commutes with the natural $r^{\text{th}}$-power maps.
\end{thm}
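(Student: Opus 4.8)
The plan is to realize the square as a map of twisted cartesian products and to read both assertions off the explicit formulas. The single piece of non-formal input is the identity of twisting functions $\upsilon_{\G K}\circ\eta_K=\tau_K$, which says that the weak equivalence $\eta_K\colon K\to\overline{\W}\G K$ is the classifying map of $\tau_K$: under the standard bijection between twisting functions $K\to\G K$ and simplicial maps $K\to\overline{\W}\G K$, post-composition with the couniversal twisting function $\upsilon_{\G K}$ carries $\eta_K$ to $\tau_K$ (cf.~Remarks~\ref{rmk:twisting-fcn}, \ref{rmk:couniv-twist} and~\ref{rmk:eta-equiv}). Granting this, the functoriality of the twisted cartesian product in the base---a simplicial map of bases commuting with the twisting functions, together with a $\G K$-equivariant map of fibres, here $\Id_{{}^\gamma\G K}$---produces the simplicial map $\eta_K\times\Id_{\G K}\colon\coH K\to\H\G K$; the only operator requiring a check is the twisted face operator $d_0$, which is respected precisely because $\upsilon_{\G K}(\eta_K(x))=\tau_K(x)$ for all $x$. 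This map projects to $\eta_K$ over the bases and is the identity on fibres, which gives the commuting diagram, and both rows are Kan fibrations: $\coH K\to K$ is a twisted cartesian product with fibre the simplicial group $\G K$, which is a Kan complex, while $\H\G K\to\overline{\W}\G K$ is a Kan fibration by Theorem~\ref{thm:tot-zg}.

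Next I would show the vertical maps are weak equivalences: $\eta_K$ is one by Remark~\ref{rmk:eta-equiv} and $\Id_{\G K}$ trivially is, and since a map of Kan fibrations of simplicial sets inducing weak equivalences on base and fibre induces a weak equivalence on total spaces---e.g.\ by the five lemma applied to the long exact sequences of homotopy groups, the monodromy actions matching because $\upsilon_{\G K}\circ\eta_K=\tau_K$---the map $\eta_K\times\Id_{\G K}$ is a weak equivalence. For the compatibility with power maps, recall from Remarks~\ref{rmk:rth-simpl-cohoch} and~\ref{rmk:simpl-rth} that $\lambda_{cohoch}^{(r)}$ and $\lambda_{hoch}^{(r)}$ are simplicial and that each is the identity on the base coordinate and $a\mapsto a^r$ on the fibre coordinate; since $\eta_K\times\Id_{\G K}$ covers $\eta_K$ on the bases and is the identity on fibres, on underlying graded sets both composites $(\eta_K\times\Id_{\G K})\circ\lambda_{cohoch}^{(r)}$ and $\lambda_{hoch}^{(r)}\circ(\eta_K\times\Id_{\G K})$ send $(x,a)$ to $(\eta_K(x),a^r)$, and as all four maps are simplicial the square commutes.

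I expect the main---and essentially only---obstacle to be the bookkeeping around the twisting-function identity $\upsilon_{\G K}\circ\eta_K=\tau_K$ and the conjugation action on the fibre: one must track ``${}^\gamma\G K$'' consistently on both sides and note that the $r$-th-power endomorphism is equivariant for conjugation, since $g a^r g^{-1}=(gag^{-1})^r$. Once these are in hand, the twisted simplicial identities needed for all four maps follow formally from the definitions of the twisted cartesian product and of $\overline{\W}$.
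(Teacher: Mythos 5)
Your proof is correct and follows essentially the same approach as the paper's, which simply asserts that $\eta_K\times\Id_{\G K}$ is simplicial "by a simple calculation" and that the power-map square commutes "by inspection of the definitions." You usefully identify the classifying-map identity $\upsilon_{\G K}\circ\eta_K=\tau_K$ as the precise reason the twisted face $d_0$ is respected, but this is just filling in the details of the same argument, not a different route.
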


\begin{proof}  Since both of the horizontal sequences in the first diagram are Kan fibrations, if the middle vertical map is indeed a simplicial map, then it is a weak equivalence.  It is a matter of simple calculation to show that $\eta_{K}\times \Id_{\G K}$ commutes with all faces and degeneracies and is therefore a simplicial map.  That the second diagram commutes is obvious after inspection of the definitions.
\end{proof}

\subsection{Chain models of the power map}

We can now show that, at least on the chain level, $\lambda_{cohoch}^{(r)}$ is an acceptable model of $\widetilde \lambda_{top}^{(r)}$.

\begin{thm} \label{thm:small-to-big}For any reduced simplicial set $K$, there is a natural commutative diagram of chain complexes 
\begin{equation}\label{eqn:small-to-big}
\xymatrix{
C_{*}\G K\;\ar@{>->}[r]^(0.4)\iota\ar [d]&C_*\coH K \ar @{->>}[r]^(0.6){\pi}\ar [d] &C_*K\ar [d]\\
S_{*}G\;\ar@{>->}[r]^(0.4)\iota &S_*(\op L|\B G|)\ar @{->>}[r]^(0.6){\pi} &S_{*}|\B G|,}
\end{equation}
where $G=|\G K|$.  Furthermore,  
\begin{equation}\label{eqn:rth-small-to-big}
\xymatrix {C_{*}\coH K\ar [d]^{\simeq}\ar[rr]^{C_{*}\lambda_{cohoch}^{(r)}}&&C_{*}\coH K\ar [d]^{\simeq}\\ S_*(\op L|\B G|)\ \ar [rr]^{S_{*}\widetilde \lambda_{top}^{(r)}}&& S_*(\op L|\B G|)}
\end{equation}
commutes up to homotopy.
\end{thm}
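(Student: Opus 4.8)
The plan is to realize the vertical comparison map of (\ref{eqn:small-to-big}) as a composite of natural maps, and then to track the power map through each factor. Write $G=|\G K|$. First, the weak equivalence $\eta_{K}\times\Id_{\G K}\colon\coH K\xrightarrow{\simeq}\H\G K$ of Theorem \ref{thm:simpl-cohoch-to-hoch} and the inverse of the isomorphism $\kappa\colon\tot(\Z\G K)\xrightarrow{\cong}\H\G K$ of Theorem \ref{thm:tot-zg} identify $C_{*}\coH K$ with $C_{*}\tot(\Z\G K)$, over the corresponding identification of $C_{*}K$ with $C_{*}\overline\W\G K=C_{*}\tot(\B\G K)$. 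Next, the canonical quasi-isomorphism $C_{*}M\xrightarrow{\simeq}S_{*}|M|$, valid for every simplicial set $M$, applied to $M=\tot(\Z\G K)$, followed by the natural map induced by the comparison between Artin--Mazur totalization and the diagonal recalled in Appendix \ref{sec:simpl-constr}, together with the observation that geometric realization preserves finite products (so the realization of the bisimplicial cyclic bar construction $\Z\G K$ is the topological cyclic bar construction $\Z G$, and similarly $|\B\G K|=|\B G|$), yields a chain map $C_{*}\tot(\Z\G K)\to S_{*}(\Z G)$. Finally one applies $S_{*}h$, where $h\colon|\Z G|\xrightarrow{\simeq}\op L|\B G|$ is the B\"okstedt--Hsiang--Madsen equivalence of Proposition \ref{prop:bhm}. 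The total composite $C_{*}\coH K\to S_{*}(\op L|\B G|)$ is a quasi-isomorphism, being a composite of quasi-isomorphisms and the homotopy equivalence $S_{*}h$, and it is natural in $K$ since each factor is.

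For the diagram (\ref{eqn:small-to-big}) itself, the top row is $C_{*}$ of the fibre inclusion and base projection of the Kan fibration $\G K\hookrightarrow\coH K\twoheadrightarrow K$, and the bottom row is $S_{*}$ of the fibre inclusion and evaluation map of $G\simeq\Om|\B G|\hookrightarrow\op L|\B G|\xrightarrow{e}|\B G|$; the outer columns are the standard quasi-isomorphisms $C_{*}\G K\xrightarrow{\simeq}S_{*}G$ and $C_{*}K\xrightarrow{\simeq}S_{*}|\B G|$ (using $|K|\simeq|\overline\W\G K|\simeq|\B G|$ for the latter). Commutativity of the right-hand square follows, via the relation $e\circ h=|\pi|$ with $\pi\colon\Z G\to\B G$ the cyclic bar projection, from the commutativity (after realization) of the first diagrams of Theorems \ref{thm:simpl-cohoch-to-hoch} and \ref{thm:tot-zg}, the latter being the assertion that $\kappa$ lifts $\tot(\B G)\cong\overline\W G$; commutativity of the left-hand square is then obtained by restricting the commutative outer square to fibres.

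To see that (\ref{eqn:rth-small-to-big}) commutes up to chain homotopy, I would follow $C_{*}\lambda_{cohoch}^{(r)}$ along the composite above. Under $\eta_{K}\times\Id_{\G K}$ it corresponds strictly to $C_{*}\lambda_{hoch}^{(r)}$ by the second diagram of Theorem \ref{thm:simpl-cohoch-to-hoch}, and under $\kappa^{-1}$ to $C_{*}\tot(\widetilde\lambda_{simp}^{(r)})$ by the identity $\lambda_{hoch}^{(r)}=\kappa\,\tot(\widetilde\lambda_{simp}^{(r)})\,\kappa^{-1}$ of Remark \ref{rmk:simpl-rth}. Since $\widetilde\lambda_{simp}^{(r)}$ is given by the single formula $(a_{1},\dots,a_{n},b)\mapsto(a_{1},\dots,a_{n},b(ab)^{r-1})$, which is natural in the group object, it is a bisimplicial endomorphism of $\Z\G K$ realizing to exactly the simplicial map $\widetilde\lambda_{simp}^{(r)}$ of Corollary \ref{cor:simpl-rth} on $\Z G$; by naturality of $C_{*}M\to S_{*}|M|$ and of the totalization/diagonal comparison, $C_{*}\tot(\widetilde\lambda_{simp}^{(r)})$ therefore corresponds to $S_{*}|\widetilde\lambda_{simp}^{(r)}|$ on $S_{*}(\Z G)$. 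Finally the second diagram of Corollary \ref{cor:simpl-rth} gives $h\circ|\widetilde\lambda_{simp}^{(r)}|\simeq\widetilde\lambda_{top}^{(r)}\circ h$, and $S_{*}$ takes homotopic maps to chain homotopic maps, so $S_{*}h\circ S_{*}|\widetilde\lambda_{simp}^{(r)}|$ is chain homotopic to $S_{*}\widetilde\lambda_{top}^{(r)}\circ S_{*}h$. Concatenating --- every step strict except the last --- gives the claim.

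The main obstacle is the bookkeeping of the middle step: one must make precise, and compatible with the (bi)simplicial power maps, the identifications between Artin--Mazur totalization, the diagonal, and geometric realization for $\Z\G K$ and $\B\G K$, so that the one formula of Corollary \ref{cor:simpl-rth} governs the cyclic bar model, its totalization $\H\G K\cong\tot(\Z\G K)$, and its realization $\Z G$ all at once, and so that the Kan fibration $\G K\hookrightarrow\coH K\twoheadrightarrow K$ realizes (up to these identifications) to the fibration underlying $h$. Granting these compatibilities, everything else is formal: the remaining comparisons are strict (Theorems \ref{thm:tot-zg} and \ref{thm:simpl-cohoch-to-hoch}, Remark \ref{rmk:simpl-rth}) or homotopy-commutative for a reason already recorded (Proposition \ref{prop:bhm}, Corollary \ref{cor:simpl-rth}), and $S_{*}$ turns a homotopy-commutative square of spaces into a chain-homotopy-commutative square of chain complexes.
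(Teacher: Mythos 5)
Your proposal is correct and follows essentially the same route as the paper: the vertical comparison is built from $\eta_K\times\Id_{\G K}$, the isomorphism $\kappa$ of Theorem \ref{thm:tot-zg}, the total-complex/diagonal/realization identifications, and finally $S_*h$, with the power maps tracked through each factor via Theorem \ref{thm:simpl-cohoch-to-hoch}, Remark \ref{rmk:simpl-rth}, naturality, and Corollary \ref{cor:simpl-rth}. The ``bookkeeping'' you flag in the middle step is exactly what the paper spells out in its long ladder diagram, using the Cegarra--Remedios isomorphism $C_*(\tot\,\Z\G K)\cong\tot\,C_{**}\Z\G K$, the Eilenberg--Zilber map to $C_*(\operatorname{diag}\Z\G K)$, and the fact that levelwise-then-Segal realization agrees with realizing the diagonal.
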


\begin{proof} The diagram (\ref{eqn:small-to-big}) is obtained by composing vertically in the following dia\-gram, where $\operatorname{diag}$ denotes the usual diagonal functor from bisimplicial sets to simplicial sets.

\begin{equation}\label{eqn:bigdiag}\xymatrix{C_{*}\G K\;\ar@{>->}[r]^(0.5)\iota\ar @{=}[d]&C_*\coH K \ar @{->>}[r]^(0.55){\pi}\ar [d]^\simeq &C_*K\ar [d]^\simeq\\
C_{*}\G K\;\ar@{>->}[r]^(0.5)\iota\ar [d]^\cong &C_*\H\G K\ar @{->>}[r]^(0.5){\pi}\ar [d]^\cong &C_*\overline\W\G K\ar [d]^\cong\\
\tot C_{**}(\C \G K)\;\ar@{>->}[r]^(0.5)\iota\ar [d]^\simeq &\tot C_{**}(\Z\G K)\ar @{->>}[r]^(0.5){\pi}\ar [d]^\simeq &\tot C_{**}\B\G K\ar [d]^\simeq\\
C_{*}(\operatorname{diag}\C \G K)\;\ar@{>->}[r]^(0.5)\iota\ar [d]^\simeq &C_{*}(\operatorname{diag}\Z\G K)\ar @{->>}[r]^(0.5){\pi}\ar [d]^\simeq &C_{*}\operatorname{diag}\B\G K\ar [d]^\simeq\\
S_{*}(|\operatorname{diag}\C \G K|)\;\ar@{>->}[r]^(0.5)\iota\ar [d]^\cong &S_{*}(|\operatorname{diag}\Z\G K|)\ar @{->>}[r]^(0.4){\pi}\ar [d]^\cong &S_{*}(|\operatorname{diag}\B\G K|)\ar [d]^\cong\\
S_{*}(|\C G|)\;\ar@{>->}[r]^(0.5)\iota\ar [d]^\simeq &S_{*}(|\Z G|)\ar @{->>}[r]^(0.5){\pi}\ar [d]^\simeq &S_{*}(|\B G|)\ar@{=} [d]\\
S_{*}(G)\;\ar@{>->}[r]^(0.5)\iota &S_{*}(\op L|\B G|)\ar @{->>}[r]^(0.5){\pi}&S_{*}(|\B G|)}
\end{equation}

The commutativity of the first row of squares follows from Theorem \ref{thm:simpl-cohoch-to-hoch}, while that of the second row follows by a slight generalization of the isomorphism (25) in \cite{cegarra-remedios}.  The third row of vertical arrows are Eilenberg-Zilber equivalences, so that the third row commutes by naturality of the Eilenberg-Zilber maps. Note that the Tot in the third line denotes the usual total complex of a bicomplex, rather than Artin-Mazur totalization.  Naturality again implies the commutativity of the fourth row, since the vertical maps here are the natural equivalences $C_{*}(-)\xrightarrow \sim S_{*}|-|$.

To understand the isomorphisms in the fifth row, recall (cf., e.g., section 1 in \cite{cegarra-remedios}) that if $K_{\bullet \bullet}$ is a bisimplicial set, the space obtained by first realizing levelwise and then taking the Segal realization of the resulting simplicial space is naturally homeomorphic to the realization of $\operatorname{diag} K_{\bullet \bullet}$.

Finally, the last row of the diagram commutes since 
$$\xymatrix{
|\C G|\ar[r]^(0.5)\iota\ar [d]^\cong &|\Z G|\ar @{->>}[r]^(0.5){\pi}\ar [d]^\simeq_{h} &|\B G|\ar@{=} [d]\\
G\ar[r]^(0.5)\iota &\op L|\B G|\ar @{->>}[r]^(0.5){\pi}&|\B G|}$$
commutes, by construction of $h$.

To verify that diagram (\ref{eqn:rth-small-to-big}) commutes, apply the second part of Theorem \ref{thm:simpl-cohoch-to-hoch} in the first row of diagram (\ref{eqn:bigdiag}), naturality in the  four succeeding rows and Corollary \ref{cor:simpl-rth} in the last row. 
\end{proof}

It remains to link $\cohoch (C_{*}K)$ to the other chain models for the free loop space and then to establish compatibility with the power maps.  The first of these goals was already attained  in \cite{hps-cohoch} (Theorems  3.15 and 3.16), where  $\op L K$ was used to denote the simplicial coHochschild construction $\coH K$.  

Recall the Szczarba twisting cochain $t_{K}$ and its associated chain algebra morphism $\alpha_{K}$ (Example \ref{ex:szczarba}).

\begin{thm}\label{thm:cohoch-ez}\cite{hps-cohoch} For any  reduced simplicial set $K$, there is a commutative diagram of chain complexes
\begin{equation}\label{eqn:cohoch-ez}
\xymatrix{
\Om C_{*}K\;\ar [d]_{\alpha_{K}}\ar@{>->}[r]^(0.4)\iota &\cohoch(C_{*}K)\ar [d]_{\theta_{K} } \ar @{->>}  [r]^(0.6){\pi_{\cohoch}} &C_{*}K\ar @{=}[d]\\
C_{*}\G K\;\ar@{>->}[r]^(0.4)\iota &C_*\coH K\ar @{->>}[r]^(0.6){\pi_{\op T}} &C_{*}K,}
\end{equation}
which is natural in $K$.  Furthermore, the vertical arrows are quasi-isomorphisms if $K$ is 1-reduced.

Here,  $\pi_{\cohoch}= \Id_{C_{*}K}\otimes \ve$, where $\ve:\Om C_{*}K\to R$ is the natural coaugmentation, and $\pi_{\op T}=C_{*}p_{K}$, where $p_{K}:\coH K \to K$ is the Kan fibration of Theorem \ref {thm:simpl-cohoch-to-hoch}.
\end{thm}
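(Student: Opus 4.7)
The plan is to construct $\theta_{K}$ as a composite that combines the naturality of the Hochschild complex functor with a two-sided version of the shuffle equivalence for twisted Cartesian products. First, I would use the defining relation $\alpha_{K}\circ t_{\Om}=t_{K}$ of Szczarba's cochain to observe that $(\Id_{C_{*}K},\alpha_{K})$ is a morphism in $\cat{Twist}$ from $t_{\Om}:C_{*}K\to \Om C_{*}K$ to $t_{K}:C_{*}K\to C_{*}\G K$. Proposition \ref{prop:hoch-natl} therefore supplies a chain map
$$\Id\otimes \alpha_{K}:\cohoch(C_{*}K)=\hoch(t_{\Om})\longrightarrow \hoch(t_{K})=\bigl(C_{*}K\otimes C_{*}\G K,\,d_{t_{K}}\bigr),$$
where the target carries the two-sided twisted differential of Definition \ref{defn:Hochschild-cx} involving both a left and a right action of $t_{K}$.

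The second ingredient is a shuffle-type map
$$sh_{K}:\hoch(t_{K})\longrightarrow C_{*}\coH K$$
generalising Szczarba's classical equivalence for ordinary twisted Cartesian products to the conjugation-twisted setting. Since $\coH K=K\underset{\tau_{K}}{\times}{}^{\gamma}\G K$ has face $d_{0}$ acting on the fibre by conjugation by $\tau_{K}$, the induced differential on $C_{*}\coH K$ contains perturbation terms from \emph{both} sides by $t_{K}$, exactly matching $d_{t_{K}}$. I would construct $sh_{K}$ by Szczarba's shuffle formula, modified so as to insert values of $t_{K}$ simultaneously on both sides of the group factor to encode the conjugation action; equivalently one may bootstrap from Szczarba's one-sided shuffle on the associated semidirect product and then identify the fibres. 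I then set $\theta_{K}:=sh_{K}\circ(\Id\otimes\alpha_{K})$.

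Verifying that the diagram commutes is routine. The right-hand square commutes because both horizontal projections are essentially $\Id_{C_{*}K}\otimes\ve$ and $sh_{K}$ restricts to the identity on the base $C_{*}K$. The left-hand square commutes because $sh_{K}$ restricts to the identity on the fibre $C_{*}\G K$, so its composite with $\Id\otimes\alpha_{K}$ agrees with $\alpha_{K}$ on the subcomplex $\Om C_{*}K\hookrightarrow \cohoch(C_{*}K)$. Naturality in $K$ is inherited from the naturality of Szczarba's cochain, of $\alpha_{K}$, and of the shuffle construction.

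When $K$ is $1$-reduced, $\alpha_{K}$ is a quasi-isomorphism by Example \ref{ex:szczarba}. A Zeeman comparison argument applied to the pair of twisted extensions, filtering by degree in the coalgebra factor $C_{*}K$, then shows that $\Id\otimes\alpha_{K}$ is a quasi-isomorphism; and the shuffle map $sh_{K}$ is a quasi-isomorphism by the classical perturbation-lemma strengthening of Eilenberg--Zilber. Hence $\theta_{K}$ is a quasi-isomorphism. The main obstacle will be the construction and verification of $sh_{K}$: one must write down Szczarba's shuffle formula generalised to accommodate both left and right action of $t_{K}$, and check with some combinatorial care that the resulting map is a chain map from $\hoch(t_{K})$ into $C_{*}\coH K$. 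Once this two-sided shuffle is in hand, the remainder of the argument is essentially formal.
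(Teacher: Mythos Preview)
Your approach differs from the paper's in a basic way: the theorem is quoted from \cite{hps-cohoch}, and the paper's remark following the statement makes clear that the original proof is by \emph{acyclic models}, not by an explicit shuffle construction. So you are proposing a genuinely different route, and the comparison is worth making.

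Your factorization $\theta_{K}=sh_{K}\circ(\Id\otimes\alpha_{K})$ is a natural idea, and the first factor $\Id\otimes\alpha_{K}:\cohoch(C_{*}K)\to\hoch(t_{K})$ is indeed a chain map by Proposition~\ref{prop:hoch-natl}. The difficulty lies entirely in the second factor $sh_{K}$, and here there is a real gap. You write that because $d_{0}$ in $\coH K$ acts on the fibre by conjugation, ``the induced differential on $C_{*}\coH K$ contains perturbation terms from both sides by $t_{K}$, exactly matching $d_{t_{K}}$.'' This is not correct as stated. Conjugation by $\tau_{K}(x)$ means $a\mapsto \tau_{K}(x)\cdot a\cdot \tau_{K}(x)^{-1}$, so at the chain level the one-sided twisted Eilenberg--Zilber perturbation involves $t_{K}$ on the left and something built from the \emph{antipode} (the chain-level inverse) on the right. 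By contrast, the Hochschild differential $d_{t_{K}}$ of Definition~\ref{defn:Hochschild-cx} has two separate perturbation terms, one using the right coaction and left multiplication by $t_{K}$, the other using the left coaction and right multiplication by $t_{K}$, with no inversion anywhere. These two twisted structures are related, but not by a formula-level identity; the passage between them is precisely the content of the isomorphism $\kappa:\tot(\Z G)\cong\H G$ in the proof of Theorem~\ref{thm:tot-zg}, which explicitly uses group inverses. Your proposed $sh_{K}$ would have to encode a chain-level analogue of this, and ``Szczarba's shuffle formula, modified so as to insert values of $t_{K}$ simultaneously on both sides'' does not yet do so.

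The acyclic models argument avoids this entirely: one never writes down $sh_{K}$ or the intermediate complex $\hoch(t_{K})$, but instead builds $\theta_{K}$ degree by degree, using representability of the source functor and contractibility of the models in the target to lift. That is what buys naturality and existence simultaneously without any combinatorics. Your approach, if completed, would give a more explicit formula, but the missing step is substantial and is essentially the whole theorem.
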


\begin{rmk} We correct here a mistake in the statement of Theorem 3.16 in \cite {hps-cohoch}, where it was asserted that the vertical maps above were quasi-isomorphisms for all reduced simplicial sets.  The proof of the existence of the map $\theta_{K}$ is independent of whether or not it is a quasi-isomorphism.

We remark moreover that, if we restrict to simplicial double suspensions, i.e., $K=\S M$ for some simplicial set $M$, then the acyclic models proof of Theorem \ref{thm:cohoch-ez} can be modified to resemble that of Theorem \ref{thm:bigthm} in Appendix \ref{app:acyclic}, so that we obtain  that $\theta_{\S M}$ is natural in $M$.
\end{rmk}

The proof that the equivalence $\theta_{K}:\cohoch(C_{*}K)\to C_{*}\coH K$ is compatible, at least up to homotopy, with the power maps defined on the source and target requires a bit of work, which we carry out in the next section.

\subsection{Compatibility with power maps}

Theorem \ref{thm:topo-alg} is an immediate consequence of Theorems \ref{thm:small-to-big} and \ref{thm:cohoch-ez}, together with the following theorem.

\begin{thm}\label{thm:compat}   If $K=\S M$, where $M$ is any simplicial set, then
$$\xymatrix{\cohoch(C_{*}K) \ar[d]_{\widetilde \lambda _{r}}\ar [r]^(0.55){\theta_{K}}_(0.55){\simeq}&C_{*}\coH K\ar[d] ^{C_{*}\lambda_{cohoch}^{(r)}}\\
\cohoch (C_{*}K) \ar [r]^(0.55){\theta_{K}}_(0.55){\simeq}&C_{*}\coH K}$$
commutes up to a chain homotopy that is natural in $M$, where $\theta_{K}$ is the natural equivalence of Theorem \ref{thm:cohoch-ez}.
\end{thm}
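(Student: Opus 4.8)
The plan is to prove the claim by acyclic models, in the same spirit as the construction of $\theta_{K}$ in Theorem \ref{thm:cohoch-ez} and the refinement of that construction to a transformation natural in $M$ noted in the remark following it. Fix $r\in\mathbb N$ and treat $M$ as the variable. Consider the two functors $\cat{sSet}\to\cat{Ch}_{R}$ defined by $F(M)=\cohoch(C_{*}\S M)$ and $G(M)=C_{*}\coH\S M$, and the two natural transformations $\Phi=(C_{*}\lambda_{cohoch}^{(r)})\circ\theta_{\S M}$ and $\Psi=\theta_{\S M}\circ\widetilde\lambda_{r}$ from $F$ to $G$. Here $\theta_{\S M}$ is natural in $M$ by the cited refinement of Theorem \ref{thm:cohoch-ez}; $\widetilde\lambda_{r}$ is natural by Corollary \ref{cor:nth-cohoch}, applied to the balanced Hirsch coalgebra $(C_{*}\S M,\psi_{\S M})$ of Example \ref{ex:hpst}; and $\lambda_{cohoch}^{(r)}$ is natural by Remark \ref{rmk:rth-simpl-cohoch}. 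It suffices to construct a natural chain homotopy $\Phi\simeq\Psi$.

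First I would verify that $F$ is free with respect to the models $\{\Delta^{n}\}_{n\geq 0}$ in each degree. Indeed $C_{*}\S M$ is free on the nondegenerate simplices of the double suspension $\S M$, and these correspond naturally, up to a degree shift, to the simplices of $M$, so $C_{*}\S M$ is free on models; and $F(M)=C_{*}\S M\otimes T(\si\overline{C_{*}\S M})$ is, in each degree, a finite direct sum of tensor products of graded pieces of $C_{*}\S M$, hence again free on models. Since $\Phi$ and $\Psi$ both cover $\Id_{C_{*}\S M}$ under the projections of Theorem \ref{thm:cohoch-ez} and Remark \ref{rmk:rth-simpl-cohoch}, they respect the augmentations of $F$ and $G$ onto the constant functor $R$; one therefore passes to the augmentation ideals $\widetilde F$ and $\widetilde G$, so that $\widetilde F$ is free on models in every degree.

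Next I would verify that $\widetilde G$ is acyclic on models: when $M=\Delta^{n}$, both the base $\S\Delta^{n}$ and the fibre $\G\S\Delta^{n}$ of the Kan fibration $\coH\S\Delta^{n}\to\S\Delta^{n}$ are weakly contractible, hence $\coH\S\Delta^{n}$ is weakly contractible and $\widetilde G(\Delta^{n})$ is acyclic. Finally, $\Phi$ and $\Psi$ agree in degree $0$ — since $\S M$ is reduced, $\cohoch(C_{*}\S M)_{0}=R$, and both maps send its generator to the basepoint class of $C_{*}\coH\S M$ — so they induce the same map on $H_{0}$. The acyclic models comparison theorem now yields a natural chain homotopy between $\Phi$ and $\Psi$ on $\widetilde F$, which extends by zero in degree $0$ to the required natural homotopy $\Phi\simeq\Psi$.

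I expect the main obstacle to be not the homotopy-theoretic input but the verification that $F$ is genuinely free on the models $\{\Delta^{n}\}$ in each degree: this requires a sufficiently explicit description of $C_{*}\S M$ and of the tensor-algebra summands of $\cohoch(C_{*}\S M)$ as functors of $M$, together with the augmented version of acyclic models, so that the degree-zero mismatch between $F$ and a free-on-models functor causes no trouble — in effect, the machinery underlying Theorem \ref{thm:cohoch-ez} and Appendix \ref{app:acyclic} must be reused. A secondary point to flag is that the argument uses naturality of $\theta_{\S M}$ in $M$, not merely in $K$, which is precisely why the hypothesis $K=\S M$ is imposed; as a consistency check, restricting the resulting homotopy to the fibre $\Om C_{*}\S M\to C_{*}\G\S M$ recovers the independently verifiable fact that the Szczarba map $\alpha_{\S M}$ carries $\lambda_{r}$ to the $r$-th power map of the chain Hopf algebra $C_{*}\G\S M$ up to homotopy, which follows by factoring $g\mapsto g^{r}$ through the diagonal and iterated multiplication and invoking the Eilenberg--Zilber theorem.
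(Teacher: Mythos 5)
Your instinct to argue by acyclic models, anchored against the projection to $C_{*}\S M$, is exactly the paper's idea, but the execution contains a decisive gap that the paper works hard to avoid.

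The main problem is that $\cohoch(C_{*}\S M)$ is \emph{not} free on the models $\{\Delta[n]\}_{n\ge 0}$. A degree-$n$ generator has the form $x\otimes \si y_{1}|\cdots|\si y_{l}$ with $x$ and $y_{1},\dots,y_{l}$ simplices of $\S M$; a single map $\Delta[n]\to M$ picks out only one simplex of $M$ and cannot functorially produce such a tuple. To represent these elements one needs models that are disjoint unions of simplices (and, in the paper's arrow-category formulation, inclusions $\Delta[k]\hookrightarrow\Delta[k]\amalg\Delta[\vec n]$). This is precisely what the paper's model set $\M$ does, via the maps $\vp_{k;\vec n}$ and $\vp_{\vec n}$. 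You flag this representability as the expected obstacle, but it is not merely a bookkeeping issue to be checked — on your model set it actually fails, and the whole acyclic-models comparison cannot be launched without the richer models.

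Even after correcting the models, a second gap appears that your proposal does not anticipate: the model $\emptyset$ (in the paper, the arrow $\Id_{\emptyset}$) is unavoidable, because $\S\emptyset\cong S^{1}$ embeds in every $\S M$, and the special simplices $s_{0}^{n}a_{0}$ and $(k,s_{0}^{n-k}c_{0})$ live in every double suspension and contribute to $C_{*}\S M$ regardless of $M$. But $\widehat\H(\tau_{\S\emptyset}\Id_{\emptyset})\cong \S\emptyset\times\G\S\emptyset$ models $\op L S^{1}$, which is not contractible, so the acyclicity you invoke (``$\coH\S\Delta^{n}$ is weakly contractible'') does not cover this model. The paper handles this by singling out $\Id_{\emptyset}$ as a hypothesis of Theorem \ref{thm:bigthm} and verifying by hand that $\widehat\xi(\Id_{\emptyset})$ is nullhomotopic relative to the fibre, using an explicit description of $\P(\Id_{\emptyset})$ and $\Q(\Id_{\emptyset})$ as concentrated in degrees $0$ and $1$. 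Relatedly, your anchoring of the induction on agreement in $H_{0}$ is too weak: the paper anchors on the restriction to the fibre $\Om C_{*}\S M\to C_{*}\G\S M$ (the lemma producing $\Theta(M)$), which is the step that forces the homotopy to live in $\ker C_{*}p$ and makes the inductive extension go through naturally. Finally, the paper first proves the strictly more general Theorem \ref{thm:compat-gen} over the arrow category $\cat S$ and specialises to $g=\Id_{M}$; the arrow-category setting is what lets the models separate ``base'' simplices from ``fibre'' simplices, which is needed even in the diagonal case.
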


We actually establish a more general result, which allows us to apply acyclic models methods in the proof. Consider a simplicial map $f:K\to K'$ between reduced simplicial sets and the induced twisting cochain
$$C_{*}K\xrightarrow {C_{*}f} C_{*}K'\xrightarrow {t_{\Om}} \Om C_{*}K'.$$
Let $t_{\Om,f}$ denote this composite.

If the natural comultiplication on either $\Om C_{*}K$ or $\Om C_{*}K'$ (cf.~Example \ref{ex:hpst}) is cocommutative, then Theorem  \ref{thm:exists-nthpower} implies that 
for any  $r\in \mathbb N$, there is an endomorphism of chain complexes 
 $$\widetilde\lambda_{r}:\hoch( t_{\Om, f})\to \hoch( t_{\Om ,f}),$$
 natural with respect to morphisms in the category $\mathrm {Arr}(\cat{sSet_{0}})$ of arrows in $\cat {sSet}_{0}$, the category of reduced simplicial sets, 
 such that 
 $$\xymatrix{\Om C_{*}K' \ar[d]_{\lambda _{r}}\cof & \hoch (t_{\Om ,f})\ar[d]_{\widetilde\lambda_{r}}\fib & C_{*}K\ar@{=}[d]\\
 		    \Om C_{*}K'\cof &\hoch (t_{\Om,f}) \fib& C_{*}K}$$
commutes, where $\lambda_{r}$ denotes the usual \rth-power map on $\Om C_{*}K'$.

On the other hand, from the simplicial map $f:K\to K'$ we can construct a twisting function
$$K\xrightarrow f K'\xrightarrow {\tau _{K'}} \G K'$$
and thus a twisted cartesian product
$$\widehat\H (\tau_{K'}f):=K\underset {\tau_{K'}f} \times {}^\gamma\G K',$$
where ${}^\gamma\G K'$ denotes $\G K'$ endowed with the  left $\G K'$-action given by conjugation.
As in Remark \ref{rmk:rth-simpl-cohoch},  for any  $r\in \mathbb N$  the naturality of the twisted cartesian product construction implies that the map of  graded sets $\lambda_{cohoch}^{(r)}:\widehat\H (\tau_{K'}f)\to \widehat\H (\tau_{K'}f)$ given in degree $n$ by 
$$\lambda_{cohoch}^{(r)}(x, a)=(x, a^r)$$
is simplicial, since the $r^{\text{th}}$-power map on ${}^\gamma {\G K'}$ is a morphism of left $\G K'$-spaces.  Furthermore, the diagram
\begin{equation}\label{eqn:restrict-power}\xymatrix{\G K' \ar [d] ^{\lambda^{(r)} }\cof &\widehat\H (\tau_{K'}f)\ar [d]^{\lambda_{cohoch}^{(r)} }\fib &K\ar @{=}[d]\\
	\G K'\cof &\widehat\H (\tau_{K'}f)\fib &K}
\end{equation}
commutes, where $\lambda^{(r)}$ is the usual $r^{\text{th}}$-power map on a simplicial group.

Theorems 3.15 and 3.16 in \cite{hps-cohoch} imply that $\hoch(t_{\Om, f})$ is a chain model of $\widehat\H (\tau_{K'}f)$, in the following sense.

\begin{thm}\label{thm:cohoch-ez-gen}\cite{hps-cohoch} For any simplicial map  $f:K\to K'$ between reduced simplicial sets, there is a commutative diagram of chain complexes
\begin{equation}\label{eqn:cohoch-ez}
\xymatrix{
\Om C_{*}K'\;\ar [d]_{\alpha_{K'}}\ar @{>->} [r]^(0.4)\iota &\hoch(t_{\Om, f})\ar [d]_{\theta_{f} } \ar @{->>}  [r]^(0.6){\pi_{\cohoch}} &C_{*}K\ar @{=}[d]\\
C_{*}\G K'\;\ar@{>->}[r]^(0.4)\iota &C_*\widehat\H(\tau _{K'}f)\ar @{->>}[r]^(0.6){\pi_{\op T}} &C_{*}K,}
\end{equation}
which is natural in $f$.  Furthermore, the vertical arrows are quasi-isomorphisms if $K'$ is 1-reduced. 

Here, $\pi_{\cohoch}=\Id_{C_{*}K}\otimes \ve$, where $\ve:\Om C_{*}K'\to R$ is the natural augmentation, and $\pi_{\op T}=C_{*}p_{f}$, where $p_{f}:\widehat\H(\tau _{K'}f)\to K$ is the natural projection.
\end{thm}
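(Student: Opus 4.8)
The plan is to reduce this to the already-established Theorems 3.15 and 3.16 of \cite{hps-cohoch}, which are precisely the special case $K=K'$, $f=\Id_{K}$, and then to promote that special case to the relative situation using functoriality. First I would observe that $\widehat\H(\tau_{K'}f)=K\underset{\tau_{K'}f}{\times}{}^{\gamma}\G K'$ is, by the very definition of a twisted cartesian product along a composite twisting function, the pullback of the twisted cartesian product $\coH K'=K'\underset{\tau_{K'}}{\times}{}^{\gamma}\G K'$ along $f:K\to K'$; concretely there is a cartesian square of simplicial sets
$$\xymatrix{\widehat\H(\tau_{K'}f)\ar [r]\ar [d]_{p_{f}}&\coH K'\ar [d]^{p_{K'}}\\ K\ar [r]^{f}&K',}$$
with the top map covering $f$ and restricting to the identity on the fibre $\G K'$. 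Dually, on the algebraic side, the twisting cochain $t_{\Om,f}=t_{\Om}\circ C_{*}f$ factors through $t_{\Om}:C_{*}K'\to\Om C_{*}K'$, so by Proposition \ref{prop:hoch-natl} the pair $(C_{*}f,\Id_{\Om C_{*}K'})$ is a morphism in $\cat{Twist}$ from $t_{\Om,f}$ to $t_{\Om}=t_{\Om}^{C_{*}K'}$, inducing a chain map $\hoch(t_{\Om,f})\to\cohoch(C_{*}K')$ that is the identity on the $\Om C_{*}K'$ summand and $C_{*}f$ on the quotient $C_{*}K$. Thus both the simplicial and the algebraic objects in the relative statement are pulled back along $f$ from the absolute objects associated to $K'$.

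Next I would construct $\theta_{f}$. Take $\theta_{K'}:\cohoch(C_{*}K')\to C_{*}\coH K'$ from Theorem \ref{thm:cohoch-ez} (the absolute case applied to $K'$). Because $C_{*}$ commutes with finite limits of simplicial sets only up to the Eilenberg--Zilber/Alexander--Whitney maps, the naive pullback diagram does not immediately give a chain map $\hoch(t_{\Om,f})\to C_{*}\widehat\H(\tau_{K'}f)$; instead I would invoke the acyclic models argument underlying the proof of Theorem \ref{thm:cohoch-ez}, now carried out in the category $\mathrm{Arr}(\cat{sSet}_{0})$ of arrows $f:K\to K'$ rather than in $\cat{sSet}_{0}$. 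The relevant functors $f\mapsto\hoch(t_{\Om,f})$ and $f\mapsto C_{*}\widehat\H(\tau_{K'}f)$ both send the arrow to a chain complex sitting in a short exact sequence with $\Om C_{*}K'$ (resp.\ $C_{*}\G K'$) on the left and $C_{*}K$ on the right; one checks that the source functor is free on representable arrows $\Delta[n]\to\Delta[m]$ and that the target functor is acyclic on these (its homology is that of a point, since $\widehat\H(\tau_{K'}f)$ is then a contractible twisted product), and that $\iota$ and $\pi_{\cohoch},\pi_{\op T}$ exhibit the requisite boundary data. Standard acyclic models then produces $\theta_{f}$, natural in $f$, making (\ref{eqn:cohoch-ez}) commute; naturality in $f$ applied to the map $f\to\Id_{K'}$ in $\mathrm{Arr}(\cat{sSet}_{0})$ recovers compatibility with $\theta_{K'}$.

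The quasi-isomorphism claim when $K'$ is $1$-reduced follows from the five lemma applied to the two short exact rows of (\ref{eqn:cohoch-ez}): the right vertical map is $\Id_{C_{*}K}$, and the left vertical map is $\alpha_{K'}$, which is a quasi-isomorphism for $1$-reduced $K'$ by Example \ref{ex:szczarba}; hence $\theta_{f}$ is a quasi-isomorphism. The main obstacle I anticipate is not the formal pullback bookkeeping but verifying the hypotheses of the acyclic models machinery in $\mathrm{Arr}(\cat{sSet}_{0})$ — specifically, identifying the correct small models (pairs of standard simplices together with a simplicial map between them, or rather their non-degenerate building blocks) on which the source functor is free, and checking that $C_{*}\widehat\H(\tau_{K'}f)$ is acyclic on those models; this is where one must be careful that the twisted product over a composite twisting function is still a pullback and that its total space is contractible when the base arrow is a face/degeneracy generator. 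Once that is in place, the comparison of the short exact sequences and the five-lemma conclusion are routine.
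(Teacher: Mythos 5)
The paper does not prove this theorem; it is a restatement of Theorems 3.15 and 3.16 of \cite{hps-cohoch}, which establish the absolute case, and the relative version is taken for granted with only a remark that the acyclic models argument of \cite{hps-cohoch} can be adapted to give naturality when $f$ is of the form $\S g$. Your proposal is therefore an attempt to reconstruct a proof that this paper merely cites. That said, the broad strategy you choose — acyclic models carried out in the arrow category, with models given by maps between standard simplices — is precisely the strategy that the paper's remark and Appendix~\ref{app:acyclic} suggest, so you are pointed in the right direction.

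There is, however, a genuine technical error in your justification of the quasi-isomorphism claim. The rows of diagram~(\ref{eqn:cohoch-ez}) are \emph{not} short exact sequences of chain complexes: while $\iota$ is injective and $\pi_{\cohoch}$, $\pi_{\op T}$ are surjective, $\ker\pi_{\cohoch}$ equals $C_{*}K\otimes\overline{\Om C_{*}K'}$ rather than $\im\iota=\Om C_{*}K'$, and similarly on the simplicial side. These rows are twisted tensor extensions, not extensions in the abelian category sense, so the five lemma has no purchase. The correct argument (and the one the paper uses in the analogous Corollary~\ref{cor:cohoch-hoch}) is to filter both middle complexes by degree in the ``base'' factor, compare the resulting spectral sequences of the twisted products, and invoke the Zeeman comparison theorem with $\Id_{C_{*}K}$ on the base and $\alpha_{K'}$ on the fibre.

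A further caution: your choice of models, ``representable arrows $\Delta[n]\to\Delta[m]$,'' is not quite what makes the freeness/representability argument go through. Appendix~\ref{app:acyclic} works with a richer family of models — $\Id_{\emptyset}$, $\Id_{\Delta[n]}$, the inclusions $\vp_{\vec n}:\emptyset\hookrightarrow\Delta[\vec n]$ and $\vp_{k;\vec n}:\Delta[k]\hookrightarrow\Delta[k]\amalg\Delta[\vec n]$ — and representability of the source functor is established via an explicit section of the assembly map, relying on the specific combinatorial structure of $\S M$ and the factorizations in Remark~\ref{rmk:factor}. You flag the choice of models as ``the main obstacle,'' which is honest, but the resolution is more involved than picking all maps of simplices; a single simplicial map $\Delta[n]\to\Delta[m]$ does not account for the decomposition of a generator of $\hoch(t_{\Om,f})$ into a base simplex together with an unordered list of fibre simplices, which is what forces the coproducts $\Delta[k]\amalg\Delta[\vec n]$ into the model set.
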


\begin{rmk}  Modifying the acyclic models argument in \cite{hps-cohoch} along the lines of Appendix \ref{app:acyc}, we can show that if $f=\S g$ for some simplicial map $g:M\to M'$, then $\theta_{f}$ can be chosen naturally in $g$.
\end{rmk}

Our goal now is to prove the following, generalized compatibility result.  Recall from Corollary \ref{cor:nth-cohoch} that for any Hirsch coalgebra $(C,\psi)$, the algebraic power map $\widetilde \lambda_{r}:\cohoch (C)\to \cohoch (C) $ is natural with respect to morphisms in $\cat {Hirsch}_{R}$.  Since $C_{*}f:(C_{*}K,\psi_{K})\to (C_{*}K', \psi_{K'})$ is a morphism in $\cat {Hirsch}_{R}$ for any simplicial map $f:K\to K'$,  the power map $\widetilde \lambda_{r}:\cohoch (C_{*}K)\to \cohoch (C_{*}K) $ is natural with respect to simplicial maps, for all reduced simplicial sets $K$ such that $(C_{*}K, \psi_{K})$ is a balanced Hirsch coalgebra.

\begin{thm}\label{thm:compat-gen}   Let $g:M \to M'$ be a  simplicial map. If $f=\S  g$ and $K'=\S M'$,  then
$$\xymatrix{\hoch (t_{\Om ,f})\ar[d]_{\widetilde \lambda _{r}}\ar [r]^(0.5){\theta_{f}}&C_{*}\widehat\H (\tau_{K'}f)\ar[d] ^{C_{*}\lambda_{cohoch}^{(r)}}\\
\hoch (t_{\Om, f}) \ar [r]^(0.5){\theta_{f}}&C_{*}\widehat\H (\tau_{K'}f)}$$
commutes up to a chain homotopy that is natural in $g$.
\end{thm}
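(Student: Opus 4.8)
The plan is to prove this via acyclic models, in the spirit of the original proof of Theorem~\ref{thm:cohoch-ez} in \cite{hps-cohoch} (as adapted in Appendix~\ref{app:acyc}), exploiting the fact that under the hypothesis $f = \S g$, $K' = \S M'$, every positive-degree element of $C_{*}K'$ is primitive, so that $(\Om C_{*}K', \psi_{K'})$ is primitively generated and the algebraic power map $\widetilde\lambda_{r}$ on $\hoch(t_{\Om,f})$ is simply $\Id_{C_{*}K}\otimes \lambda_{r}$, where $\lambda_{r}$ is the usual \rth-power map on the primitively generated tensor algebra $\Om C_{*}K'$ (cf.~Remark~\ref{rmk:formula} and the example following it). Likewise $C_{*}\lambda_{cohoch}^{(r)}$ is induced by the simplicial map $(x,a)\mapsto (x,a^r)$ on $\widehat\H(\tau_{K'}f)$, whose ``algebra direction'' is the genuine \rth-power map $a\mapsto a^r$ on the simplicial group $\G K'$. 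So both vertical maps in the square are, on the nose, $\Id$ in the ``base'' ($C_{*}K$) direction and an honest \rth-power/convolution map in the ``fiber'' direction, and the two fibers $\Om C_{*}K'$ and $C_{*}\G K'$ are connected by the quasi-isomorphism of chain algebras $\alpha_{K'}$ induced by the Szczarba twisting cochain.

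First I would set up the acyclic models framework: following the argument in \cite{hps-cohoch} and its strengthening in Appendix~\ref{app:acyc}, fix as models the simplicial double suspensions $\S\Delta^n$ (or rather the objects $\S g$ for $g$ in a suitable small category of models), for which the relevant chain functors are corepresentable and the target functor $M\mapsto C_{*}\widehat\H(\tau_{\S M'}\S g)$ takes acyclic values on models in positive degrees, because $\widehat\H(\tau_{K'}f)$ is a twisted cartesian product with contractible fiber-type pieces over a model base. Then both composites $C_{*}\lambda_{cohoch}^{(r)}\circ \theta_{f}$ and $\theta_{f}\circ \widetilde\lambda_{r}$ are natural chain maps $\hoch(t_{\Om,\S g}) \to C_{*}\widehat\H(\tau_{\S M'}\S g)$ lying over $\Id_{C_{*}M}$ (more precisely, inducing compatible maps on the base and on the fiber up to the identifications above), so by the acyclic models comparison theorem it suffices to check that they agree in degree $0$ and induce the same map on the ``fiber'' summand $\Om C_{*}K' \to C_{*}\G K'$, i.e.\ that $\alpha_{K'}\circ \lambda_{r} = (C_{*}(a\mapsto a^r))\circ \alpha_{K'}$ as maps $\Om C_{*}K' \to C_{*}\G K'$, up to natural chain homotopy. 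In degree $0$ both composites are the canonical inclusion, so the content is entirely the fiber compatibility.

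The heart of the argument — and the step I expect to be the main obstacle — is therefore this fiber-level statement: that the Szczarba chain algebra map $\alpha_{K'}:\Om C_{*}K' \to C_{*}\G K'$ intertwines the algebraic \rth-power map on $\Om C_{*}K'$ (convolution power of the identity with respect to the primitively-generated Hopf structure $\psi_{K'}$) with the map on $C_{*}\G K'$ induced by the simplicial \rth-power map $a\mapsto a^r$ on $\G K'$, at least up to a chain homotopy natural in $g$. The clean way to see this is to recall, as in the long Example following Corollary~\ref{cor:compare-nth}, that since $K'=\S M'$ has trivial comultiplication, $\Om\beta_{t_{K'}}:(\Om C_{*}K',\psi_{K'}) \to (\Om\Bar C_{*}\G K', \psi_{C_{*}\G K'})$ is a chain Hopf algebra map, and $\alpha_{K'} = \ve_{C_{*}\G K'}\circ \Om\beta_{t_{K'}}$ with $\ve_{C_{*}\G K'}$ a Hopf algebra map by Theorem~\ref{thm:bar-hopf}; hence $\alpha_{K'}$ is a chain Hopf algebra map, and chain Hopf algebra maps commute with the \rth-power (convolution) map by equation~(\ref{eqn:convolution}). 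Finally, the simplicial \rth-power map $a\mapsto a^r$ on $C_{*}\G K'$ agrees up to natural chain homotopy with the Hopf-algebraic \rth-power map $\lambda_r$ on $C_{*}\G K'$ — this is the standard Eilenberg--Zilber/acyclic-models comparison between the geometric iterated loop concatenation and the algebraic iterated multiplication of the (homotopy co-commutative, since $M'$ is a suspension, hence $\G\S M'$ is cocommutative) chain Hopf algebra $C_{*}\G K'$, which is exactly the kind of statement underlying Corollary~\ref{cor:simpl-rth} and Remark~\ref{rmk:simpl-rth}. Chaining these identifications together, together with the explicit identification $\widetilde\lambda_r = \Id\otimes\lambda_r$ on $\hoch(t_{\Om,f})$ and $C_{*}\lambda_{cohoch}^{(r)}$ being $\Id$ times the geometric \rth-power on the fiber, gives the required homotopy-commutativity; the naturality in $g$ comes for free from the acyclic models construction of the homotopy on the models $\S g$.
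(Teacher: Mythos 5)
Your overall strategy matches the paper's: reduce to a fiber-level comparison of power maps along the Szczarba quasi-isomorphism $\alpha_{K'}$, show that $\alpha_{K'}$ intertwines the convolution $r$th power on $\Om C_{*}K'$ with the chain map induced by the simplicial $r$th power on $\G K'$ (using that $\alpha_{K'}$ is a chain Hopf algebra map, which the paper gets directly from Theorem 4.11 of \cite{hps2}, plus an Eilenberg--Zilber comparison for the lower two squares), and then propagate this fiber homotopy to the total complex by acyclic models. The fiber-level argument you sketch is essentially the content of the unnamed lemma preceding the paper's proof, and is sound.

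However, there is a genuine gap in the acyclic models half of your argument. You assert that the target functor ``takes acyclic values on models in positive degrees, because $\widehat\H(\tau_{K'}f)$ is a twisted cartesian product with contractible fiber-type pieces over a model base,'' and propose $\S\Delta[n]$ (or morphisms $\S g$ between such) as models. This fails in two linked ways. First, representability of the relevant degree-$n$ chain functor $\P_{n}$ forces you to include $\Id_{\emptyset}:\emptyset\to\emptyset$ among the models: the simplices $s_{0}^{n}a_{0}$ and $(k,s_{0}^{n-k}c_{0})$ of $\S M$ are present in \emph{every} double suspension and can only be hit naturally from the initial object. Second, for that very model the acyclicity you invoke is false: $\S\emptyset\simeq S^{1}$, $\G\S\emptyset$ is a model for $\Om S^{1}\simeq\mathbb Z$, and $\ker\bigl(C_{*}p_{\Id_{\emptyset}}\bigr)$ has nontrivial homology. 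This is precisely why the paper formulates and proves Theorem~\ref{thm:bigthm} with a separate hypothesis about the corestriction of $\widehat\xi(\Id_{\emptyset})$, and why the proof of Theorem~\ref{thm:compat-gen} has to verify this base case by an explicit degree-by-degree computation, including the judicious choice $\theta(\Id_{\emptyset})=\nabla(\Id_{C_{*}\S\Id_{\emptyset}}\otimes\alpha_{\S\emptyset})$ and the observation that both $\P(\Id_{\emptyset})$ and $\Q(\Id_{\emptyset})$ are concentrated in degrees $0$ and $1$. Your sketch, as written, would not see this obstruction, and the classical acyclic-models comparison you appeal to does not apply in the form you state it.

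A secondary, less serious imprecision: you write that $\G\S M'$ is cocommutative; it is not (it is a free simplicial group, and $C_{*}\G\S M'$ with the Alexander--Whitney diagonal is not cocommutative). What is actually used is that $\Om C_{*}\S M'$ with the loop comultiplication $\psi_{\S M'}$ is primitively generated, hence cocommutative, and that $\alpha_{K'}$ is a coalgebra map from $(\Om C_{*}\S M',\psi_{\S M'})$ to $(C_{*}\G\S M',\Delta)$. This does not affect the shape of your argument, but you should state the cocommutativity hypothesis where it actually lives.
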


Note that Theorem \ref{thm:compat} follows immediately from Theorem \ref{thm:compat-gen} applied to $g=\Id_{M}$.
Theorem \ref{thm:compat-gen} in turn is a consequence of the following technical result, which we prove in Appendix \ref{app:acyclic}.  Let $\cat S$ denote the category of morphisms in $\cat {sSet}$.

\begin{thm}\label{thm:bigthm} 
 Let $\widehat \xi$ be a natural transformation from the functor
$$\P:\cat S\to \cat {Ch}_{R}: (g:M\to M') \mapsto \hoch(t_{\Om,\S g})$$
to the functor
$$\Q:\cat S\to \cat {Ch}_{R}: (g:M\to M') \mapsto C_{*}\widehat\H (\tau_{\S M'}\S g)$$
extending a natural transformation 
$$\xi:\Om C_{*}\S \to C_{*}\G\S : \cat {sSet} \to \cat {Ch}_{R},$$ i.e., 
\begin{equation*}\label{eqn:htpyzero}
\xymatrix{\Om C_{*}\S M'\ar[d]_{\xi (M')}\cof^(0.55)\iota &\P(g)\ar[d]^{\widehat \xi(g)}\\ 
C_{*}\G \S M'\cof ^(0.55)\iota&\Q(g)}
\end{equation*}
commutes for all $g:M\to M'$.

Suppose that for all $g:M\to M'$, $\xi(M')$ is naturally chain homotopic to the zero map, and the composite
$$\P(g) \xrightarrow {\widehat \xi (g)} \Q (g) \xrightarrow {C_{*}p_{g}} C_{*}\S M$$
is zero, where $p_{g}:\widehat\H (\tau_{\S M'}\S g)\to \S M$ is the projection. If the corestriction $$\widehat \xi (\Id_{\emptyset}): \P (\Id_{\emptyset})\to \ker C_{*}p_{\Id_{\emptyset}}$$ 
is chain homotopic to the zero map, then $\widehat \xi (g)$ is naturally chain homotopic to the zero map for all $g$.
\end{thm}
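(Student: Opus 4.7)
The plan is to proceed by an acyclic models argument on the arrow category $\cat S$, building a natural chain homotopy of $\widehat \xi(g)$ to zero by induction on a skeletal filtration of the source $M$, with base of induction supplied by the hypothesis at $\Id_\emptyset$.

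First, I would use hypothesis (2) to corestrict $\widehat \xi(g)$ to a natural chain map $\P(g) \to \ker C_* p_g$, and observe that hypothesis (1), together with the commutativity of the given extension diagram, forces the restriction of this corestriction to the subcomplex $\Om C_* \S M' \subset \P(g)$ to factor through $C_* \G \S M' \subset \ker C_* p_g$ as a naturally null-homotopic map. Fixing a natural chain homotopy $h_\xi(M')$ for $\xi(M')$, the problem becomes that of extending $h_\xi(M')$ to a natural degree-one map $\widehat s(g): \P(g) \to \Q(g)$ that agrees with $h_\xi(M')$ on the fiber subcomplex and satisfies $d\widehat s(g) + \widehat s(g) d = \widehat \xi(g)$.

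Second, I would construct $\widehat s(g)$ inductively along the skeleta of $M$, with models being the standard simplices. Each non-degenerate $n$-simplex $x \in M_n$ yields a simplicial map $\sigma_x: \Delta^n \to M$ and therefore a morphism in $\cat S$ from $g \circ \sigma_x$ to $g$, given by the pair $(\sigma_x, \Id_{M'})$, which by naturality induces $\P(g \circ \sigma_x) \to \P(g)$. Hence $\widehat s(g)$ on the basis element of $\P(g) = C_* \S M \otimes \Om C_* \S M'$ associated to the cell $\S x$ is determined, by naturality, by its value on the tautological top simplex in $\P(g \circ \sigma_x)$. The base case $M = \emptyset$ gives $g = \Id_\emptyset$ and is handled directly by hypothesis (3), noting that $\ker C_* p_{\Id_\emptyset}$ maps naturally into $\ker C_* p_g$ for every $g$, so that the homotopy at $\Id_\emptyset$ propagates coherently. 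The inductive step then reduces, for each $g: \Delta^n \to M'$, to extending a boundary homotopy (already constructed on the $(n-1)$-skeleton by the inductive hypothesis) across the interior of $\Delta^n$.

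The main obstacle — and the heart of the argument — will be verifying that the obstruction cycle produced at each stage of the induction is a boundary in the appropriate degree of $\ker C_* p_g$ over the model $g: \Delta^n \to M'$. Since $\widehat\H(\tau_{\S M'}\S g)$ is a twisted cartesian product with contractible base $\S \Delta^n$, the kernel $\ker C_* p_g$ is chain equivalent to $\overline{C_* \S \Delta^n} \otimes C_* \G \S M'$ with a twisted differential; the contractibility of $\Delta^n$ provides the required acyclicity, but one must check that the twisted differential in $\hoch(t_{\Om,\S g})$ does not obstruct the extension. This is precisely where the restriction to simplicial double suspensions is essential: because $\S$ is a double suspension, $(\Om C_* \S M', \psi_{\S M'})$ is primitively generated (cf.\ Example \ref{ex:hpst}), the Szczarba cochain $t_{\Om, \S g}$ takes an especially simple form, and the obstruction cycle reduces modulo lower filtration to a cycle in the fiber that is already killed by $h_\xi(M')$. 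Assembling the resulting extensions coherently in $M$ and $M'$ then yields the desired natural null-homotopy of $\widehat \xi(g)$.
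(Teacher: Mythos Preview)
Your proposal has a genuine gap at the ``heart of the argument.'' You choose as models the arrows $g\circ\sigma_x:\Delta^n\to M'$ with $M'$ arbitrary, and then claim that the contractibility of $\S\Delta^n$ makes $\ker C_*p_g$ acyclic. It does not. The fibration $\widehat\H(\tau_{\S M'}\S g)\to \S\Delta^n$ has fiber ${}^\gamma\G\S M'$, so with contractible base the total space is equivalent to $\G\S M'$, and the long exact sequence for $\ker C_*p_g$ gives $H_n(\ker C_*p_g)\cong H_n(\G\S M')$ for $n>0$. This is nonzero for general $M'$, so the obstruction cycle need not bound. Your fallback---that the obstruction ``reduces modulo lower filtration to a cycle in the fiber that is already killed by $h_\xi(M')$''---is not justified: the obstruction lives in $\ker C_*p_g$, not in the image of $C_*\G\S M'$, and there is no filtration argument in your outline that forces it there. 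The primitivity of $(\Om C_*\S M',\psi_{\S M'})$ simplifies $\widetilde\lambda_r$ but does nothing to trivialize the homology of $\G\S M'$.

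The paper avoids this by a different choice of models: it works in the arrow category $\cat S$ with models that control \emph{both} source and target simultaneously, namely $\Id_\emptyset$, $\Id_{\Delta[n]}$, the maps $\vp_{\vec n}:\emptyset\to\Delta[\vec n]$, and the inclusions $\vp_{k;\vec n}:\Delta[k]\hookrightarrow\Delta[k]\amalg\Delta[\vec n]$. For every such $\vp$ (except $\Id_\emptyset$, which is handled by hypothesis) the target is a disjoint union of simplices, so $\S$ of the target is contractible, $\G\S$ of it is contractible, and $\ker C_*p_\vp$ is genuinely acyclic. The price is that one must prove $\P_n$ is $\M$-representable for this richer model set, which requires exploiting the tensor decomposition $C_k\S M\otimes \si\overline C_{n_1}\S M'\otimes\cdots\otimes\si\overline C_{n_l}\S M'$ and assigning to each generator a morphism $(a_w,b_w):\vp_{k';\vec n'}\to g$ built from the Yoneda classifiers of the individual factors. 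The induction then runs on chain degree rather than on skeleta of $M$.
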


The application of this theorem to the case of power maps proceeds as follows.  Let $g: M\to M'$ be any simplicial map. Recall first that Theorem \ref{thm:exists-nthpower} implies that  $\hoch(t_{\Om,\S g})$ admits a power map $\widetilde \lambda_{r}$,  since the chain Hopf algebra $(\Om C_{*}\S M', \psi_{\S M'})$  is primitively generated and thus cocommutative.   Moreover it follows from the remark following Theorem \ref{thm:cohoch-ez-gen}   that there is a commutative diagram of chain complexes
\begin{equation}\label{eqn:cohoch-ez}
\xymatrix{
\Om C_{*}\S M'\;\ar [d]_{\alpha_{\S M'}}\ar @{>->} [r]^(0.5)\iota &\hoch(t_{\Om, \S g})\ar [d]_{\theta (g) } \ar @{->>}  [r]^(0.6){\pi_{\cohoch}} &C_{*}M\ar @{=}[d]\\
C_{*}\G \S M'\;\ar@{>->}[r]^(0.5)\iota &C_*\widehat\H(\tau _{\S M'}\S g)\ar @{->>}[r]^(0.6){C_{*}p_{g}} &C_{*}M,}
\end{equation}
where $\theta(g)$ is natural in $g$.

Recall from diagram (\ref{eqn:restrict-power}) that the power map $\lambda^{(r)}_{cohoch}$ on $\widehat\H(\tau _{\S M'}\S g)$ restricts to the usual power map $\lambda ^{(r)}: \G \S M'\to \G\S M'$ on the fiber.   To apply Theorem \ref{thm:bigthm}, we need control over the relation between $C_{*}\lambda^{(r)}$ and $\lambda _{r}:\Om C_{*}\S M'\to \Om C_{*}\S M'$, as provided by the following lemma. 

\begin{lem} For any  simplicial set $M$, the diagram
$$\xymatrix {\Om C_{*}\S M\ar [d]_{\alpha_{\S M}}\ar [r]^{\lambda _{r}}&\Om C_{*}\S M\ar [d]^{\alpha_{\S M}}\\
C_{*}\G \S M\ar [r]^{C_{*}\lambda ^{(r)}}& C_{*}\G \S M}$$
commutes up to natural chain homotopy.
\end{lem}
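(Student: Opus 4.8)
The plan is to factor the comparison through the chain Hopf algebra structures carried by both sides. First I would record two ingredients. Since $\S M$ is, in particular, a one-fold simplicial suspension $\E L$ of a reduced (hence pointed) simplicial set $L$, the proof of Theorem~4.11 of \cite{hps2} (cf.\ the discussion following Corollary~\ref{cor:compare-nth}) shows that the Szczarba map is a morphism of chain Hopf algebras
$$\alpha_{\S M}:(\Om C_{*}\S M,\psi_{\S M})\longrightarrow (C_{*}\G\S M,\Delta),$$
where $\Delta$ is the usual Alexander--Whitney comultiplication on the normalized chains of the simplicial group $\G\S M$, and the compatible product on $C_{*}\G\S M$ is the Pontryagin product $\mu_{\Delta}$ induced by the group multiplication via the Eilenberg--Zilber map. (By Example~\ref{ex:hpst}, $\psi_{\S M}$ is in fact primitively generated, since every positive-degree chain of a double suspension is primitive, but this is not needed here.) On the algebra side, by equation~(\ref{eqn:convolution}) the map $\lambda_{r}$ appearing in the statement is the $r$-fold convolution power $\Id^{*r}=m^{(r)}\psi_{\S M}^{(r)}$ of the identity of $(\Om C_{*}\S M,\psi_{\S M})$, where $m$ is the concatenation product; similarly write $\Id^{*r}_{\Delta}=\mu_{\Delta}^{(r)}\Delta^{(r)}$ for the $r$-fold convolution power of the identity of $(C_{*}\G\S M,\Delta,\mu_{\Delta})$.

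Because a morphism of chain Hopf algebras is simultaneously a map of algebras and of coalgebras, it intertwines convolution powers of the identity, so one has the \emph{strict} identity
$$\alpha_{\S M}\circ\lambda_{r}=\alpha_{\S M}\circ m^{(r)}\psi_{\S M}^{(r)}=\mu_{\Delta}^{(r)}\,\alpha_{\S M}^{\otimes r}\,\psi_{\S M}^{(r)}=\mu_{\Delta}^{(r)}\,\Delta^{(r)}\,\alpha_{\S M}=\Id^{*r}_{\Delta}\circ\alpha_{\S M}.$$
Hence it suffices to produce a natural chain homotopy $\Id^{*r}_{\Delta}\simeq C_{*}\lambda^{(r)}$ on $C_{*}\G\S M$. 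For this I would use that the $r$-th power map on the simplicial group $G=\G\S M$ factors as $G\xrightarrow{\Delta^{(r)}_{\mathrm{top}}}G^{\times r}\xrightarrow{m_{G}^{(r)}}G$, with $\Delta^{(r)}_{\mathrm{top}}$ the iterated diagonal and $m_{G}^{(r)}$ the iterated multiplication. Applying $C_{*}$ and inserting the iterated Eilenberg--Zilber homotopy $\mathrm{EZ}^{(r)}\circ\mathrm{AW}^{(r)}\simeq\Id_{C_{*}(G^{\times r})}$ (obtained by induction on $r$ from the classical Eilenberg--Zilber theorem, and natural in the factors), one gets
$$C_{*}\lambda^{(r)}=C_{*}(m_{G}^{(r)})\circ C_{*}(\Delta^{(r)}_{\mathrm{top}})\;\simeq\;\bigl(C_{*}(m_{G}^{(r)})\circ\mathrm{EZ}^{(r)}\bigr)\circ\bigl(\mathrm{AW}^{(r)}\circ C_{*}(\Delta^{(r)}_{\mathrm{top}})\bigr).$$
By the very definitions of $\Delta$ and of $\mu_{\Delta}$ on $C_{*}G$, together with coassociativity of $\Delta$ and associativity of $\mu_{\Delta}$, the second factor equals $\Delta^{(r)}$ and the first equals $\mu_{\Delta}^{(r)}$, so the right-hand side is exactly $\mu_{\Delta}^{(r)}\Delta^{(r)}=\Id^{*r}_{\Delta}$. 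Since all of the maps and homotopies involved are natural in $G$, and $G=\G\S M$ is functorial in $M$, the resulting homotopy is natural in $M$. Combining the two steps gives $\alpha_{\S M}\circ\lambda_{r}=\Id^{*r}_{\Delta}\circ\alpha_{\S M}\simeq C_{*}\lambda^{(r)}\circ\alpha_{\S M}$, naturally in $M$, which is the assertion.

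The only genuinely delicate point is the homotopy $\Id^{*r}_{\Delta}\simeq C_{*}\lambda^{(r)}$: one must set up the iterated Eilenberg--Zilber homotopy carefully enough to keep track of its naturality under maps of $G$, and one must check --- with the Koszul signs --- that composing $\mathrm{AW}^{(r)}$ with the iterated topological diagonal really reproduces $\Delta^{(r)}$, and that composing $\mathrm{EZ}^{(r)}$ with the iterated topological multiplication reproduces the iterated Pontryagin product $\mu_{\Delta}^{(r)}$. Both facts are classical, but the bookkeeping for general $r$ (most conveniently organized by an induction on $r$) is the part that requires attention; everything else is formal.
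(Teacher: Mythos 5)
Your proof is correct and is essentially the same argument as the paper's, repackaged into two explicit steps: a strict equality $\alpha_{\S M}\circ\lambda_r=\Id^{*r}_\Delta\circ\alpha_{\S M}$ coming from $\alpha_{\S M}$ being a chain Hopf algebra map (which corresponds to the paper's two upper squares), followed by the natural Eilenberg--Zilber homotopy $\Id^{*r}_\Delta\simeq C_*\lambda^{(r)}$ (the paper's two lower squares). One small slip: $\S M=\E\Eu M$ with $\Eu M$ pointed but \emph{not} reduced, so you should say ``pointed'' rather than ``reduced (hence pointed)''; this does not affect the argument since the cited theorem applies to any pointed simplicial set.
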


\begin{notn}  Henceforth, we denote the natural chain homotopy of the lemma above by $\Theta(M)$.
\end{notn}

\begin {proof} Let $K=\S M$. Consider the diagram of chain maps
$$\xymatrix {\Om C_{*}K\ar [d]_{\alpha_{K}}\ar [rr]^{\psi_{K}^{(r)}}&& (\Om C_{*}K)^{\otimes r}\ar [rr] ^{\mu^{(r)}}\ar[d]_{\alpha_{K}^{\otimes r}}&&\Om C_{*}K\ar[d]^{\alpha_{K}}\\
C_{*}\G K \ar [d]_{=}\ar [rr]^{\Delta_{C_{*}\G K}^{(r)}}&& (C_{*}\G K)^{\otimes r}\ar [d]_{\nabla^{(r)}}\ar [rr]^{\mu^{(r)}}&& C_{*}\G K\ar [d]^=\\
C_{*}\G K \ar [rr]^{C_{*}\Delta_{\G K}^{(r)}}&& C_{*}(\G K^{\times r})\ar [rr]^{C_{*}m ^{(r)}}&& C_{*}\G K,}$$
where $\nabla ^{(r)}$ is defined by iterating the natural Eilenberg-Zilber equivalence.
The upper lefthand square commutes strictly by Theorem 4.11 in \cite{hps2}.  The upper righthand square commutes strictly, since $\alpha_{K}$ is an algebra morphism.  The lower lefthand square commutes up to  natural chain homotopy because $\Delta_{C_{*}\G K}= f \circ C_{*}\Delta_{\G K}$, where $f$ is the natural Alexander-Whitney equivalence, and $\nabla f$ is naturally chain homotopic to the identity.   Finally, the lower righthand square commutes strictly, by definition of the multiplication on $C_{*}\G K$.
 \end{proof}

\begin{proof}[Proof of Theorem  \ref{thm:compat-gen}] Apply Theorem \ref{thm:bigthm} to 
$$\xi(M')=\alpha_{\S M'}\circ \lambda _{r}-C_{*}\lambda ^{(r)}\circ \alpha_{\S M'}: \Om C_{*}\S M'\to C_{*}\G \S M'$$
and its extension
$$\widehat \xi (g)=\theta(g) \circ \widetilde \lambda _{r}-C_{*}\lambda_{cohoch}^{(r)}\circ \theta(g): \hoch(t_{\Om,g})\to C_*\widehat\H(\tau _{\S M'}g).$$
Note that $C_{*}p_{g}\circ \widehat \xi (g)=0$ for all $g$,  since 
$$\xymatrix {\hoch(t_{\Om,g})\ar [d]_{\widetilde\lambda_{r}}\ar [r]^{\pi_{\cohoch}}&C_{*}M\ar [d]^=\\
\hoch(t_{\Om,g})\ar [r]^{\pi_{\cohoch}}&C_{*}M}$$
and 
$$\xymatrix {C_*\widehat\H(\tau _{\S M'}g)\ar [d]_{C_{*}\lambda_{cohoch}^{(r)}}\ar [r]^(0.6){C_{*}p_{g}}&C_{*}M\ar [d]^=\\
C_*\widehat\H(\tau _{\S M'}g)\ar [r]^(0.6){C_{*}p_{g}}&C_{*}M}$$
commute.  

The last hypothesis of Theorem \ref{thm:bigthm} that we need to check concerns $\widehat \xi(\Id _{\emptyset})$, which much be chain homotopic to 0, via a chain homotopy whose image is contained in $\ker C_{*}p_{\Id_{\emptyset}}$.  We begin by observing that the proof of Theorem  \ref{thm:cohoch-ez-gen} for simplicial double suspensions, i.e., the proof of the existence of $\theta (g)$ in general, is a proof by acyclic models, where the model that initiates the construction is the identity $\emptyset \to \emptyset$.  We can therefore choose $\theta (\Id _{\emptyset})$  to be any chain map that makes the diagram (\ref{eqn:cohoch-ez}) commute for $g=\Id_{\emptyset}$.  

We now show that one possible choice of $\theta (\Id_{\emptyset})$ is $\nabla(\Id_{C_{*}\S \emptyset}\otimes \alpha_{\S \emptyset})$. With this choice of $\theta(\Id_{\emptyset})$, it is immediate that $\widehat\xi(\Id_{\emptyset})$ is chain homotopic to 0, via the chain homotopy $\nabla(\Id_{C_{*}\S\Id_{\emptyset}}\otimes \Theta(\S{\emptyset}))$, whose image is obviously contained in $\ker C_{*}p_{\Id_{\emptyset}}$.

Since $C_{1}\S \emptyset = R\cdot (1,c_{0})$, and $C_{>1}\S \emptyset =0$, a simple calculation shows that 
$$\P (Id_{\emptyset})\cong C_{*}\S \emptyset \otimes \Om C_{*}\S \emptyset,$$ 
the usual, untwisted tensor product of complexes. Furthermore, $\P (Id_{\emptyset})$ is concentrated in degrees $0$ and $1$.

Let $\mathsf F$ denote the free group functor. Straightforward simplicial calculations show that 
$$(\S\Id_{\emptyset})_{n}=\{s_{0}^na_{0}\}\cup \{s_{i_{1}}s_{i_{2}}\cdots s_{i_{n-1}}(c,1)\mid s_{i_{1}}>s_{i_{2}}>\cdots>s_{i_{n-1}}\}$$
and
$$(\G\S\Id_{\emptyset})_{n}= \mathsf F\big(\,\overline {s_{n}s_{n-1}\cdots s_{0}(c,1)}\,\big),$$
from which is follows that the $0^{\text{th}}$-face map of $\widehat\H(\tau _{\S \emptyset}\Id_{\emptyset})$ is untwisted, i.e., 
$$\widehat\H(\tau _{\S \emptyset}\Id_{\emptyset})\cong \S \Id_{\emptyset}\times \G \S \emptyset.$$
Moreover, since $\overline {s_{n}s_{n-1}\cdots s_{0}(c,1)}\in \im s_{n-1}\cap \cdots \cap \im s_{0}$, all simplices of degree greater than $1$ are degenerate, and so 
$$\Q (\Id_{\emptyset})=C_{*}\big(\widehat\H(\tau _{\S \emptyset}\Id_{\emptyset})\big)=C_{*}(\S\Id_{\emptyset}\times \G\S\Id_{\emptyset}) $$
is also concentrated in degrees 0 and 1.  It follows that we can set 
$$\theta (\Id_{\emptyset})=\nabla(\Id_{C_{*}\S\Id_{\emptyset}}\otimes \alpha_{\S{\emptyset}}).$$
\end{proof}

\appendix

%%%%%%%
%SEC 5      %
%%%%%%%

\section{The proof of Theorem \ref{thm:bar-hopf}}\label{sec:bar-hopf}

We begin by recalling those elements of homological perturbation
theory that we need in order to prove that applying  the bar construction to  a chain Hopf algebra gives rise to an Alexander-Whitney coalgebra.

\begin{defn}Suppose that $\n :(X, \del ) \rightarrow (Y,d)$ and
$f:(Y,d)\rightarrow (X,\del )$ are morphisms in $\cat {Ch}_{R}$.  If $f\n = \Id_{X}$  and there exists a
chain homotopy
$h : (Y,d)\rightarrow (Y,d)$ such that
\begin{enumerate}
\item $dh +h d =\n f -\Id_{Y}$,
\item $h \n =0$,
\item $fh =0$, and
\item $h \sp 2=0$,
\end{enumerate}
then $(X,d) \sdr{\n}f (Y,d)\circlearrowleft h$ is a \emph{strong
deformation retract (SDR) of chain complexes.}

If, moreover, $(Y,d, \Delta _{Y})$ and
$(X,d, \Delta _{X})$ are chain coalgebras and $\n$ is  a morphism of
coalgebras, then the SDR $(X,d) \sdr{\n}f (Y,d)\circlearrowleft h$ is called \emph{Eilenberg-Zilber data} \cite {gugenheim-munkholm}.
\end{defn}

\begin{rmk} If $(X,d) \sdr{\n}f (Y,d)\circlearrowleft h$ is Eilenberg-Zilber data, then 
$$(d\otimes \Id_{X}+\Id_{X}\otimes d)\bigl((f\otimes f)\Delta _{Y} h\bigr)+\bigl((f\otimes
f)\Delta_{Y}h\bigr)
d=\Delta_{X} f-(f\otimes f)\Delta_{Y},$$
i.e., $f$ is a map
of coalgebras up to chain homotopy. In fact, as stated precisely in the next theorem (due to Gugenheim and Munkholm and slightly strengthened in section 2.3 of \cite{hps2}), $f$ is a DCSH map, under reasonable local finiteness conditions.
\end{rmk}

Recall that if $V$ is a non-negatively graded $R$-module with $V_{0}=R$, then $\overline V$ denotes $V_{>0}$.

\begin{thm}\label{thm:g-m} \cite{gugenheim-munkholm, hps2} Let $(X,d) \sdr{\n}f
(Y,d)\circlearrowleft h$ be Eilenberg-Zilber data such that $X$  and $Y$ are connected. Let $\overline \Delta_{Y}:\overline Y \to \overline Y^{\otimes 2}$ denote the reduced comultiplication on $Y$. Let $F_0=0$, and let $F_{1}$ be the composite
$$
\overline Y\xrightarrow f \overline X\xrightarrow{s^{-1}}s^{-1}\overline X.
$$
For $k\geq 2$, let  
$$
F_{k}=-\sum _{i+j=k}(F_{i}\otimes F_{j})\overline\Delta _{Y} h:\overline Y\rightarrow T^k(s^{-1}\overline X).
$$
If for all $y\in Y$, there exists $N(y)\in \mathbb N$ such that $F_{k}(y)=0$ for all $k>N(y)$, then
$$F=\prod _{k\geq 1}F_{k}=\bigoplus _{k\geq 1}F_{k}: Y \to \Om X$$
is a twisting cochain.
 In particular, $f:Y\to X$ is a DCSH map, and $\alpha_{F}:\Om Y\to \Om X$ realizes its strong homotopy structure.
 \end{thm}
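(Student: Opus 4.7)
The plan is to verify directly that $F=\bigoplus_{k\geq 1}F_k$ satisfies the defining equation of a twisting cochain,
$$d_\Omega F + F d_Y = m(F\otimes F)\overline\Delta_Y,$$
by decomposing along the word-length grading $\Omega X=\bigoplus_k T^k(s^{-1}\overline X)$ and verifying each graded component by induction on $k$. Writing the cobar differential as $d_\Omega = d_{int}+d_{ext}$, where $d_{int}$ is induced componentwise by $d_X$ and $d_{ext}:T^k\to T^{k+1}$ is the insertion of the reduced comultiplication $\overline\Delta_X$, the projection onto $T^k$ becomes
$$d_{int}F_k + F_k d_Y + d_{ext}F_{k-1}=\sum_{i+j=k,\ i,j\geq 1}m(F_i\otimes F_j)\overline\Delta_Y. \qquad (*_k)$$
The base case $(*_1)$ is immediate: the right-hand sum and $d_{ext}F_0$ both vanish, so it reduces to the statement that $F_1=s^{-1}f$ commutes with differentials, which holds because $f$ is a chain map.

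For the inductive step, I would apply $d_X$ to the recursive formula $F_k=-\sum_{i+j=k}(F_i\otimes F_j)\overline\Delta_Y h$ and unfold the result with the inductive hypotheses $(*_i)$ and $(*_j)$. This produces four kinds of terms: (i) pieces of the form $F_l\,d_Y h$, (ii) pieces $d_{ext}F_{l-1}\,h$, (iii) convolution terms $m(F_p\otimes F_q)\overline\Delta_Y h$, and (iv) the outer factor $F_k d_Y$. The SDR relation $dh+hd=\nabla f-\mathrm{Id}$ allows $hd_Y$ to be replaced in (i) by $\nabla f-\mathrm{Id}-dh$: the $\mathrm{Id}$-piece cancels (iv) after regrouping via coassociativity of $\overline\Delta_Y$, and the $\nabla f$-piece — thanks to $\nabla$ being a strict coalgebra map — reconstructs the $d_{ext}F_{k-1}$ contribution via the coalgebra comparison $\overline\Delta_X f \simeq (f\otimes f)\overline\Delta_Y$ mediated by $h$. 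The convolution terms in (iii) reassemble, again by coassociativity of $\overline\Delta_Y$ and the defining recursion for $F_k$, into the right-hand side of $(*_k)$, while the relations $fh=0$, $h\nabla=0$, and $h^2=0$ annihilate every residual term.

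The main obstacle I expect is the bookkeeping of Koszul signs and the precise regrouping of iterated applications of $\overline\Delta_Y$ via coassociativity, which must exactly match the shape of the recursion defining $F_k$. The finiteness hypothesis on $N(y)$ is needed only to ensure that $F(y)$ lies in the honest direct sum $\Omega X$ rather than in a completion, and that the global equation assembles from the $(*_k)$. Once $F$ is established as a twisting cochain, the DCSH conclusion is immediate: the associated chain algebra map $\alpha_F:\Omega Y\to \Omega X$, defined on generators by $\alpha_F(s^{-1}y)=F(y)=\sum_k F_k(y)$, projects onto $s^{-1}\overline X=T^1$ as $F_1=s^{-1}\circ f$, which is precisely the compatibility required by Remark \ref{rmk:unfold-dcsh} for $\alpha_F$ to realize the strong homotopy structure of $f$.
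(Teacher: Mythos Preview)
The paper does not actually prove this theorem: it is stated with citations to \cite{gugenheim-munkholm} and \cite{hps2} and then applied, so there is no ``paper's own proof'' to compare against. Your outline is essentially the standard direct verification that appears in those references --- project the twisting-cochain identity onto each word-length component $T^k(s^{-1}\overline X)$ and induct on $k$, using the SDR identities and the fact that $\nabla$ is a strict coalgebra map to close the induction. That is exactly the right strategy, and the role you assign to each of the side conditions $fh=0$, $h\nabla=0$, $h^2=0$ is correct.

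Two small points of precision. First, in your base case you say $(*_1)$ reduces to $F_1$ ``commuting with differentials''; since $F_1=s^{-1}f$ has degree $-1$, the correct statement is $d_{int}F_1 + F_1 d_Y = 0$, i.e.\ $F_1$ \emph{anti}commutes, which is still immediate from $f$ being a chain map together with the Koszul sign in $s^{-1}$. Second, in your inductive step the phrase ``$\overline\Delta_X f \simeq (f\otimes f)\overline\Delta_Y$ mediated by $h$'' is doing real work: what you need is the exact identity $(f\otimes f)\overline\Delta_Y h = \overline\Delta_X f - (f\otimes f)\overline\Delta_Y$ up to the appropriate differentials, which follows from applying $(f\otimes f)\overline\Delta_Y$ to $dh+hd=\nabla f - \mathrm{Id}$ and using that $\nabla$ is comultiplicative and $f\nabla=\mathrm{Id}$. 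As you anticipate, the genuine labour is entirely in the Koszul-sign bookkeeping and in matching the coassociativity regroupings to the recursion; neither hides a conceptual obstacle.
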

 
\begin{rmk} \label{rmk:Fk} Given Eilenberg-Zilber data $(X,d) \sdr{\n}f (Y,d)\circlearrowleft h$, there is a closed formula for each of the $F_{k}$'s above.  For any $k\geq 2$, let 
$$h_{k}=\sum _{0\leq i\leq k-2} \Id_{\overline Y}^{\otimes i}\otimes \overline\Delta_{Y}h\otimes \Id_{\overline Y}^{\otimes k-i-2}:\overline Y^{\otimes k-1}\to \overline Y^{\otimes k}$$
and let
\begin{equation}\label{eqn:Hk}
H_{k}=h_{k}\circ h_{k-1}\circ \cdots \circ h_{2}:\overline Y\to \overline Y^{\otimes k}.
\end{equation}
Then 
$$F_{k}=\sn {k+1} (\si f)^{\otimes k} \circ H_{k}.$$
\end{rmk}

We prove Theorem \ref{thm:bar-hopf} in this section by applying Theorem \ref{thm:g-m} to  appropriately chosen Eilenberg-Zilber data.  We now set up the desired SDR. 

 In the development below, we use  the following helpful notation for simplicial expressions. 
 
 \begin{notn} If $J$ is any  set of non-negative integers
$j_1<j_2<\cdots <j_r$, let  
$$s_J=s_{j_r}\cdots
s_{j_1},$$ 
and let $| J|=r$.   

 For non-negative integers $m\leq n$, let $[m,n]=\{j\in \mathbb Z\mid m\leq j\leq n\}$. Let $\mathbf \Delta$ denote the category with objects
$$\ob \mathbf  \Delta=\{[0,n]\mid n\geq 0\}$$
and 
$$\mathbf  \Delta \bigl ([0,m], [0,n]\bigr)=\{ f:[0,m]\to [0,n]\;| \;f\text{ order-preserving set map}\}.$$
Viewing a simplicial $R$-module $M_{\bullet}$ as a contravariant functor from $\mathbf \Delta $ to the category of $R$-modules, given $x\in M_n:=M([0,n])$ and $0\leq a_1<a_2<\cdots <a_m\leq n$, let
$$x_{a_1...a_m}:=M(\mathbf a)(x)\in M_m$$
where $\mathbf a:[0,m]\to [0,n]:j\mapsto a_j$.  Note that in particular
$$x_{0...r}=d_{r+1}\cdots d_{n}x,$$
while for all $m<r$,
$$x_{0...mr...n}=d_{m+1}\cdots d_{r-1}x.$$
\end{notn}
 
 \begin{ex}\label{ex:eilenberg-maclane} Let $\mathcal A$ denote the usual functor from simplicial $R$-modules to $\cat {Ch}_{R}$, i.e., for any simplicial $R$-module $M_{\bullet}$,  the graded $R$-module underlying $\mathcal A(M_{\bullet})$ is $\{M_{n}\}_{n\geq 0}$, and the differential in degree $n$ is given by the alternating sum of the face maps from $M_{n}$ to $M_{n-1}$.  Let $\mathcal A_{N}$ denote its normalized variant. 

In Theorem 2.1a) of \cite{eilenberg-maclane} Eilenberg and Mac Lane gave explicit formulas for a natural SDR of chain complexes
\begin{equation}\label{eqn:em-sdr}
\mathcal A_{N}(M_{\bullet})\otimes \mathcal A_{N}(M'_{\bullet})\sdr \nabla f \mathcal A_{N}(M_{\bullet}\boxtimes M'_{\bullet})\circlearrowleft h,
\end{equation}
where $\boxtimes$ denotes the levelwise tensor product of simplicial $R$-modules.  In particular, if $x\in M_{m}$ and $x'\in M'_{n}$, then 
\begin{equation}\label{eqn:f}
f(x\boxtimes y)=\sum _{0\leq \ell \leq n} x_{0...\ell}\otimes y_{\ell...n}
\end{equation}
and
\begin{equation}\label{eqn:nabla}
\n (x\otimes x')=\sum _{0\leq \ell\leq n}\sum _{A\cup B=[0,n-1]\atop |A|=n-\ell, |B|=\ell}\pm s_{A}x\boxtimes s_{B}x',
\end{equation}
where the sign of a summand is the sign of the shuffle permutation corresponding to the pair $(A,B)$.

If $R[K]$ denotes the free simplicial $R$-module generated by a simplicial set $K$, then $C_{*}K\otimes R\cong\mathcal A_{N}\big(R[K]\big)$. It follows that, when applied to $M_{\bullet}=R[K]$ and $M'_{\bullet}=R[K']$, for simplicial sets $K$ and $K'$, Eilenberg and Mac Lane's strong deformation retract becomes the usual Eilenberg-Zilber/Alexander-Whitney equivalence
$$C_{*}K\otimes C_{*}K'\sdr \nabla f  C_{*}(K\times K')\circlearrowleft h,$$
which is in fact Eilenberg-Zilber data.  In the case $K=K'$, these Eilenberg-Zilber data give rise to the Alexander-Whitney coalgebra structure on $C_{*}K$ \cite{hpst}.

In order to prove Theorem \ref{thm:bar-hopf}, we consider another special case of the Eilenberg-Mac Lane SDR. Recall that if $A$ is an augmented chain algebra, then $\Bar A$ is the normalized chain complex associated to the simplicial chain algebra $\Bar_{\bullet }A$, where $\Bar _{n}A=A^{\otimes n}$.  The degeneracy maps are given in terms of the unit map $R\to A$ by
$$s_{i}:A^{\otimes n}\to A^{\otimes n+1}: a_{1}\otimes \cdots \otimes a_{n}\mapsto a_{1}\otimes \cdots \otimes a_{i}\otimes 1\otimes a_{i+1}\otimes\cdots \otimes a_{n},$$
while the face maps are given in terms of the multiplication or the augmentation $\ve$ by
$$d_{i}:A^{\otimes n}\to A^{\otimes n-1}: a_{1}\otimes \cdots \otimes a_{n}\mapsto \begin{cases} 
\ve(a_{1})\cdot (a_{2}\otimes a_{3}\otimes\cdots \otimes a_{n})&:i=0\\
a_{1}\otimes \cdots \otimes a_{i}\cdot a_{i+1}\otimes\cdots \otimes a_{n}&:0<i<n\\
(a_{1}\otimes\cdots \otimes a_{n-1})\cdot \ve(a_{n})&:i=n.
\end{cases}$$

If $M_{\bullet}=\Bar_{\bullet}A$ and $M'_{\bullet}=\Bar_{\bullet}A'$, then Eilenberg and Mac Lane's strong deformation retract becomes
\begin{equation}\label{eqn:em-bar}
\Bar A\otimes \Bar A'\sdr \nabla f \Bar (A\otimes A')\circlearrowleft h,
\end{equation}
after identifying $\Bar (A\otimes A')$ with $ \mathcal A_{N}(M_{\bullet}\boxtimes M'_{\bullet})$ via a levelwise isomorphism
\begin{equation}\label{eqn:ident} (A\otimes A')^{\otimes n}\cong A^{\otimes n}\otimes A'^{\otimes n}.\end{equation}
The map $\nabla$
 is exactly the equivalence defined in Example \ref{ex:milgram} via the twisting cochain $t_{\Bar}*t_{\Bar}$.   In particular, $\nabla$ is a map of coalgebras, which implies that (\ref{eqn:em-bar}) is Eilenberg-Zilber data.
 
 Note that equation (\ref{eqn:nabla}) implies that  $\n(sa_{1}|\cdots |sa_{m}\otimes sa'_{1}|\cdots |sa'_{n})$ is equal to the signed sum of all possible $(m,n)$-shuffles of 
\begin{equation}\label{eqn:nabla-bar}
s(a_{1}\otimes 1)|\cdots |s(a_{m}\otimes 1)|s(1\otimes a'_{1})|\cdots |s(1\otimes a'_{n}).
\end{equation}
Moreover, equation (\ref{eqn:f}) implies that 
\begin{equation}\label{eqn:f-bar}
\begin{split}
f\big(& s(a_{1}\otimes a'_{1})|\cdots |s(a_{n}\otimes a'_{n})\big)\\
&=\sum_{0\leq \ell\leq n} \ve (a_{\ell + 1})\cdots \ve (a_{n})\ve(a'_{1})\cdots \ve(a'_{\ell }) \cdot sa_{1}|\cdots|sa_{\ell}\otimes sa'_{\ell+1}|\cdots |sa'_{n}
\end{split}
\end{equation}
Equation (\ref{eqn:f-bar}) implies that 
$$f\Big(\Bar_{n} (A\otimes A')\Big)\subset \bigoplus_{n'+n''=n} \Bar_{n'} A\otimes \Bar _{n''}A' .$$
 \end{ex}
 
 To prove Theorem \ref{thm:bar-hopf}, we apply Theorem \ref{thm:g-m} to the bar construction SDR of Eilenberg and Mac Lane (\ref{eqn:em-bar}).  We must therefore prove local finiteness of the associated $F_{k}$'s, which follows from a technical result proved in \cite {hpst} (Lemma 5.3), expressed below in terms of simplicial $R$-modules instead of simplicial sets.

\begin{lem} \cite{hpst}\label{lem:hpst-htpy}  Let $M_{\bullet}$ and $M'_{\bullet}$ be simplicial $R$-modules.
Let $m<r\leq n$ be non-negative integers, and let $A$ and $B$ be disjoint sets of non-negative integers such that $ A\cup B=[m+1,n]$ and $|B|=r-m$.

Let $h^{A,B}: (M\boxtimes M')_{n}\to (M\boxtimes M')_{n+1}$ be the $R$-linear map given by
$$h^{A,B}(x\boxtimes x')=s_{A\cup\{m\}}\,x_{0\ldots r}\boxtimes s_B\,x'_{0\ldots mr\ldots n}.$$
for all $x\in M_{n}$ and $x'\in M'_{n}$.
Then the Eilenberg--Mac Lane homotopy in level $n$ 
$$h:\mathcal A_{n}(M_{\bullet}\boxtimes M'_{\bullet})=M_{n}\otimes M'_{n}\to M_{n+1}\otimes M'_{n+1}=\mathcal A_{n+1}(M_{\bullet}\boxtimes M'_{\bullet})$$ 
is given by
$$h(x\boxtimes x') =
\sum_{ m<r,\; A\cup B=[m+1,n] \atop
    |A|=n-r,\;|B|=r-m} 
\pm h^{A,B}(x\boxtimes x'),$$
where the sign is that of the shuffle permutation associated to the couple $(A,B)$.
\end{lem}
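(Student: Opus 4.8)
The plan is to follow \cite{hpst} and prove the closed formula of Lemma~\ref{lem:hpst-htpy} by induction on $n$, after a preliminary reduction to a universal case. First I would observe that both sides of the asserted identity are $R$-bilinear in $(x,x')$ and natural in the pair of simplicial $R$-modules $(M_\bullet,M'_\bullet)$: the Eilenberg--Mac Lane homotopy $h$ occurring in the SDR of Example~\ref{ex:eilenberg-maclane} is natural by construction \cite{eilenberg-maclane}, and each summand $h^{A,B}(x\boxtimes x')=s_{A\cup\{m\}}\,x_{0\ldots r}\boxtimes s_B\,x'_{0\ldots m r\ldots n}$ is built solely from degeneracy operators and face-restrictions, hence is natural as well. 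By Yoneda, any $x\in M_n$ is the image of the fundamental simplex $\iota_n\in R[\Delta^n]_n$ under a unique map of simplicial $R$-modules $R[\Delta^n]\to M_\bullet$, so $x\boxtimes x'$ is the image of $\iota_n\boxtimes\iota_n$ under a map $R[\Delta^n]\boxtimes R[\Delta^n]\to M_\bullet\boxtimes M'_\bullet$ intertwining both sides of the formula. It therefore suffices to prove the identity for $M_\bullet=M'_\bullet=R[\Delta^n]$ evaluated on $\iota_n\boxtimes\iota_n$.

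Next I would run the induction on $n$ against Eilenberg and Mac Lane's original description of $h$ in \cite{eilenberg-maclane}, which expresses $h$ in level $n$ recursively in terms of $h$ in level $n-1$ together with a ``new'' contribution involving the top degeneracy $s_n$, coming from the part of $\nabla f-\mathrm{Id}$ created in passing from level $n-1$ to level $n$. For the base cases: in degree $0$ there is no pair $m<r\le 0$, matching $h=0$; in degree $1$ the unique admissible datum $m=0$, $r=1$, $A=\emptyset$, $B=\{1\}$ yields $s_0 x_{01}\boxtimes s_1 x'_{01}=s_0 x\boxtimes s_1 x'$ with sign $+1$, and a direct check that $dh=\nabla f-\mathrm{Id}$ in degree $1$ (using (\ref{eqn:f}), (\ref{eqn:nabla}) and the simplicial identities for $d_i s_j$) confirms this. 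For the inductive step I would substitute the level-$(n-1)$ formula into the recursion and match terms: the terms coming from level $n-1$ recover those target summands in which the largest index $n$ is carried by one of the degeneracy strings, while the terms involving $s_n$ recover those summands in which $n$ is, or bounds, the pivot $r$. The necessary regrouping uses only the simplicial identities $s_i s_j=s_{j+1}s_i$ for $i\le j$, the $d_i s_j$ relations, and the behaviour of the face-restrictions $x_{a_1\ldots a_k}$ under degeneracies.

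The main obstacle, as is typical for explicit homotopy formulas of this kind, is the bookkeeping: one must verify that the recursive unfolding reorganizes \emph{exactly} into the asserted sum over pairs $(m,r)$ with $m<r\le n$ and disjoint decompositions $A\cup B=[m+1,n]$ with $|A|=n-r$, $|B|=r-m$, with no collisions (each target summand occurring with multiplicity one) and with the sign of each summand equal to the shuffle sign of the couple $(A,B)$. I would control the signs by tracking, at each step of the recursion, how inserting the new index $n$ and commuting it past the appropriate block of the existing degeneracy string alters the underlying permutation: each such step contributes precisely the transposition(s) needed to update the shuffle sign of $(A,B)$, so the signs match term by term. Once this signed bijection between recursion terms and target terms is established, the induction closes and Lemma~\ref{lem:hpst-htpy} follows; the two special cases $M_\bullet=R[K]$ and $M_\bullet=\Bar_\bullet A$ used elsewhere in this appendix are then immediate specializations (cf.\ Example~\ref{ex:eilenberg-maclane}).
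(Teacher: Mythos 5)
The paper itself does not prove Lemma~\ref{lem:hpst-htpy}: it is cited verbatim from \cite{hpst} (Lemma~5.3 there), which in turn rests on the explicit formulas of Eilenberg--Mac Lane's Theorem~2.1a in \cite{eilenberg-maclane}. The content of the lemma is a \emph{re-indexing} of Eilenberg and Mac Lane's closed-form expression for the homotopy $D$ into the $(m,r,A,B)$-shuffle notation used here, not a new derivation; the paper offers no inductive argument.

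Your proposal instead tries to \emph{re-derive} the formula by induction on simplicial degree. Two issues keep this from being a proof. First, the engine of the induction --- the claim that ``Eilenberg and Mac Lane's original description of $h$ in \cite{eilenberg-maclane}\ldots expresses $h$ in level $n$ recursively in terms of $h$ in level $n-1$ together with a `new' contribution involving the top degeneracy $s_n$'' --- is not what \cite{eilenberg-maclane} provides. Theorem~2.1a gives the three maps $f$, $\nabla$, $h$ in closed form; there is no stated recursion of the form $h_n = (\text{terms from }h_{n-1}) + (\text{$s_n$-terms})$. Any dimension-shifting recursion for $h$ would have to be derived (e.g.\ from the perturbation-lemma construction of the SDR), and that derivation is precisely the part your argument leaves implicit. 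Without a concrete, correct recursion in hand, the inductive step has nothing to unfold. Second, even granting such a recursion, the heart of the matter --- the claimed signed bijection between recursion terms and the summands $h^{A,B}(x\boxtimes x')$ indexed by pairs $m<r\le n$ and shuffles $(A,B)$, with the sign tracking worked out --- is asserted, not established. You acknowledge this yourself (``the main obstacle\ldots is the bookkeeping''), and that bookkeeping \emph{is} the proof.

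By contrast, the Yoneda reduction to $M_\bullet=M'_\bullet=R[\Delta^n]$ at $\iota_n\boxtimes\iota_n$ is correct but unnecessary: both $h$ and each $h^{A,B}$ are defined by universal simplicial operator formulas, so they are automatically natural and determined by those formulas. The base cases $n=0$ and $n=1$ are checked correctly. But the efficient route here --- and almost certainly the route taken in \cite{hpst} --- is to take Eilenberg--Mac Lane's explicit expression for $D(u\boxtimes v)$, change variables to the pivot pair $(m,r)$ and the shuffle $(A,B)$ of $[m+1,n]$, and verify that the degeneracy strings and face-restrictions match $s_{A\cup\{m\}}\,x_{0\ldots r}\boxtimes s_B\,x'_{0\ldots mr\ldots n}$ with the shuffle sign. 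That reparameterization is a finite, checkable computation; the inductive re-derivation you propose would require first rebuilding a recursion that the cited source does not supply.
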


\begin{ex} We are particularly interested in the case where $M_{\bullet}=\Bar _{\bullet}A$ and $M'_{\bullet}=\Bar _{\bullet}A'$, and we apply the identification (\ref{eqn:ident}) above.   We compute here one term of $h\big( s(a_{1}\otimes a_{1}')|s(a_{2}\otimes a_{2}')|s(a_{3}\otimes a_{3}')\big)$, to give some indication of the form of this homotopy, before providing general formulas below.

 Observe that if $x=a_{1}\otimes \cdots \otimes a_{n}\in \Bar _{n}A$, then 
$$x_{0...r}=\ve (a_{r+1}\cdot \ldots \cdot a_{n}) \cdot (a_{1}\otimes \cdots \otimes a_{r})\in \Bar _{r}A,$$
while if $x'=a'_{1}\otimes \cdots \otimes a'_{n}\in \Bar _{n}A'$, then 
$$x'_{0...mr...n}=a_{1}'\otimes \cdots \otimes a_{m}'\otimes a_{m+1}'\cdot\ldots\cdot a_{r}'\otimes a_{r+1}'\otimes \cdots \otimes a_{n}'.$$

When $n=3$, $r=2$, $m=1$, $A=\{3\}$ and $B=\{2\}$,
$$h^{A,B}\big( s(a_{1}\otimes a_{1}')|s(a_{2}\otimes a_{2}')|s(a_{3}\otimes a_{3}')\big)=\ve(a_{3})\cdot\big( s(a_{1}\otimes a_{1}')|s(1\otimes a_{2}')|s(a_{2}\otimes 1)|s(1\otimes a_{3}')\big),$$
because
$$s^{A\cup\{1\}} (a_{1}\otimes a_{2}\otimes a_{3})_{012}=\ve (a_{3})\cdot s_{3}s_{1} (a_{1}\otimes a_{2})=\ve(a_{3})\cdot (a_{1}\otimes 1\otimes a_{2}\otimes 1)$$
and
$$s^B (a'_{1}\otimes a'_{2}\otimes a'_{3})_{0123}=s_{2}(a'_{1}\otimes a'_{2}\otimes a'_{3})=a'_{1}\otimes a'_{2}\otimes 1\otimes  a'_{3}.$$
\end{ex}

In the case of the bar construction SDR, we obtain the following general, explicit formulas, where we use the notational shortcut
$$v=v_{1}|\cdots |v_{m} \quad\text{ and }\quad w=w_{1}|\cdots |w_{n}\;\Longrightarrow\; v|w:= v_{1}|\cdots |v_{m}|w_{1}|\cdots |w_{n}.$$

\begin{cor}\label{cor:bar-htpy} Let $A,A'\in \ob \cat {Alg}_{R}$. If $M_{\bullet}=\Bar_{\bullet}A$ and $M'_{\bullet}=\Bar_{\bullet}A'$, then the Eilenberg-Mac Lane homotopy $h:\Bar_{*} (A\otimes A')\to \Bar _{*+1}(A\otimes A')$ satisfies the following equations.
\begin{enumerate}
\item $h\big(s(1\otimes a'_{1})| \cdots |s(1\otimes a'_{n})\big)=0$ for all $a'_{1},...,a'_{n}\in A$.
\medskip
\item If $|a_{r}|\not=0$, then
{\smaller\begin{equation*}\begin{split}
h&\big(s(a_{1}\otimes a'_{1})| \cdots |s(a_{r}\otimes a'_{r})|s(1\otimes a'_{r+1})|\cdots |s(1\otimes a'_{n})\big)\\
=&\sum_{0\leq m<r}\pm s(a_{1}\otimes a'_{1})| \cdots |s(a_{m}\otimes a'_{m})|s(1\otimes a'_{m+1}\cdots a'_{r})|\n(sa_{m+1}|\cdots |sa_{r}\otimes sa'_{r+1}|\cdots |sa'_{n})
\end{split}\end{equation*}}

\noindent for all $a_{1},...,a_{r-1}\in A$ and $a'_{1},...,a'_{n}\in A'$.
\end{enumerate}
\end{cor}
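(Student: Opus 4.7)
The plan is to apply Lemma \ref{lem:hpst-htpy} to $M_{\bullet}=\Bar_{\bullet}A$ and $M'_{\bullet}=\Bar_{\bullet}A'$, then identify the summands that survive normalization in $\Bar(A\otimes A')\cong\mathcal A_N(\Bar_{\bullet}A\boxtimes\Bar_{\bullet}A')$. Recall that under this identification a parallel pair $(y_1,\ldots,y_{n+1})\boxtimes(z_1,\ldots,z_{n+1})\in A^{\otimes(n+1)}\otimes(A')^{\otimes(n+1)}$ corresponds to the tensor $s(y_1\otimes z_1)|\cdots|s(y_{n+1}\otimes z_{n+1})$, which is zero in the normalized bar complex if and only if some index $i$ satisfies $y_i=1_A$ and $z_i=1_{A'}$ simultaneously. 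This ``coincident-unit'' criterion governs the entire analysis.

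For part (1), take $x=1\otimes\cdots\otimes 1\in A^{\otimes n}$ and $x'=a'_1\otimes\cdots\otimes a'_n$. Every face $x_{0\ldots r'}$ is again a string of units, and $s_{A\cup\{m\}}$ preserves this property, so the $A$-factor at every position of the result is $1_A$. Because the indexing condition $m<r'$ forces $|B|=r'-m\geq 1$, at any position $b+1\in B+1$ the $A'$-factor $s_B x'_{0\ldots mr'\ldots n}$ is also $1_{A'}$, producing a coincident unit. Hence every summand of Lemma \ref{lem:hpst-htpy} vanishes after normalization.

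For part (2), the hypothesis $|a_r|\neq 0$ implies $\ve(a_r)=0$, whence $x_{0\ldots r'}=0$ for $r'<r$: iterated top-face augmentation strips the trailing units, reducing to $(a_1,\ldots,a_r)\in\Bar_r A$, and the next top face then multiplies by $\ve(a_r)=0$. For $r'>r$, the $r'-r$ residual units in $x_{0\ldots r'}=(a_1,\ldots,a_r,1,\ldots,1)$ occupy positions $p_{r+1},\ldots,p_{r'}$ of the final tuple, where $\{p_1<\cdots<p_{r'}\}$ is the complement of $(A\cup\{m\})+1$ in $\{1,\ldots,n+1\}$; the coincident-unit criterion forces $B+1\subseteq\{p_1,\ldots,p_r\}$, and a short cardinality argument (using $B+1\subseteq[m+2,n+1]$ together with the fact that, when $m\geq 1$, the initial segment $\{p_1,\ldots,p_m\}$ equals $\{1,\ldots,m\}$) shows this is impossible once $r'>r$. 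Only $r'=r$ therefore contributes.

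For $r'=r$ and each fixed $m\in\{0,\ldots,r-1\}$, one has $x_{0\ldots r}=(a_1,\ldots,a_r)$ and $x'_{0\ldots mr\ldots n}=(a'_1,\ldots,a'_m,a'_{m+1}\cdots a'_r,a'_{r+1},\ldots,a'_n)$. Reading off positions in the combined tuple, the first $m$ slots contain $a_i\otimes a'_i$ for $i=1,\ldots,m$, slot $m+1$ contains $1\otimes(a'_{m+1}\cdots a'_r)$ (the $1$ coming from the $s_m$ factor of $s_{A\cup\{m\}}$), and the remaining $n-m$ slots form a shuffle of the $r-m$ tensors $a_i\otimes 1$ (at positions $B+1$) with the $n-r$ tensors $1\otimes a'_j$ for $j>r$ (at positions $A+1$), determined by the partition $A\cup B=[m+1,n]$. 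Summing over all such partitions reconstructs $\nabla(sa_{m+1}|\cdots|sa_r\otimes sa'_{r+1}|\cdots|sa'_n)$ via \eqref{eqn:nabla-bar}, and the outer sum over $m$ yields the claimed identity. The principal obstacle is the sign check: one must verify that the signs produced by Lemma \ref{lem:hpst-htpy} agree with the Koszul signs attached to the shuffles in \eqref{eqn:nabla-bar}, which reduces to a combinatorial check on shuffle permutation signs that I expect to be tedious but mechanical.
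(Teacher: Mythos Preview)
Your proof is correct and follows essentially the same strategy as the paper's: apply Lemma~\ref{lem:hpst-htpy}, show that only the summation index $r'=r$ survives normalization, and then read off the formula. The paper disposes of $r'>r$ by a pigeonhole count of degree-zero entries in the $A$- and $A'$-factors, while you track positions explicitly and derive a cardinality contradiction for $B+1\subseteq\{p_1,\ldots,p_r\}$; these are equivalent bookkeeping for the same coincident-unit obstruction. Your separate treatment of part~(1) is a small improvement in clarity, since the paper's argument is phrased assuming $|a_M|>0$ and only covers that case implicitly.
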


\begin{rmk} Note that the formulas above imply that 
$$h\big(s(a_{1}\otimes 1)|\cdots |s(a_{r}\otimes 1)|s(1\otimes a'_{r+1})| \cdots |s(1\otimes a'_{n})\big)=0,$$
for all $a_{1},...,a_{r}\in A$ and $a'_{r+1},...,a'_{n}\in A'$ and for all $r\geq 0$, since in this case $a_{m+1}'=\cdots =a_{r}'=1$ for all $m<r$.
\end{rmk}

\begin{proof} Let 
$$w=s(a_{1}\otimes a'_{1})|\cdots |s(a_{M}\otimes a'_{M})|s(1\otimes a'_{M+1})|\cdots | s(1\otimes a'_{n})\in \Bar _{n}(A\otimes A'),$$
where $|a_{M}|>0$.  Note that  Lemma \ref{lem:hpst-htpy}  implies that $h(w)\in \Bar _{n+1}(A\otimes A')$.

It is clear that 
$$r<M \;\Longrightarrow\; h^{A,B}(w)=0$$
for all $0\leq m< r$ and $A$ and $B$ disjoint sets of non-negative integers such that $ A\cup B=[m+1,n]$ and $|B|=r-m$, since $\ve (a_{M})=0$.

We can also show that
$$r>M\; \Longrightarrow\; h^{A,B}(w)=0$$
for all $0\leq m< r$ and $A$ and $B$ disjoint sets of non-negative integers such that $ A\cup B=[m+1,n]$ and $|B|=r-m$.  To establish this implication, we consider the following two cases.  Suppressing summation, write 
$$h^{A,B}(w)=s(b_{1}\otimes b'_{1})|\cdots |s(b_{n+1}\otimes b'_{n+1}).$$
\begin{enumerate}
\item If $r>m\geq M$, then $|b_{m+1}|=\cdots =|b_{n}|=0$, while the list $b'_{m+1},...,b'_{n}$ includes at least $r-m$ elements of degree 0.  There exists therefore $k\in [m+1,n]$ such that both $b_{k}$ and $b'_{k}$ are of degree zero and therefore $s(b_{k}\otimes b'_{k})$ is degenerate in $\Bar_{\bullet}(A\otimes A')$, i.e., $s(b_{k}\otimes b'_{k})=0$ in the normalized complex.
\item If $r>M>m$, then the list $b_{m+1},..., b_{n}$ includes at most $M-m$ elements of positive degree, i.e., at least $n-M$ elements of degree 0.  On the other hand, the list $b'_{m+1},..., b'_{n}$ includes at least $r-m$ elements of degree 0.  Since 
$$(r-m)+ (n-M) = n -(m+M-r)> n-(m+1),$$
there exists $k\in [m+1,n]$ such that both $b_{k}$ and $b'_{k}$ are of degree zero and therefore $s(b_{k}\otimes b'_{k})$ is degenerate in $\Bar_{\bullet}(A\otimes A')$, i.e., $s(b_{k}\otimes b'_{k})=0$ in the normalized complex.
\end{enumerate} 

We conclude that the only nonzero summands of $h(w)$ are those for which $r=M$, in which case the formula given in the Corollary follows by straightforward application of the formula in Lemma \ref{lem:hpst-htpy}.
\end{proof}

\begin{thm}\label{thm:aw-bar-dcsh}  For all $A, A'\in \ob \cat{Alg}_{R}$, the Alexander-Whitney map
$$f:\Bar (A\otimes A')\to \Bar A\otimes \Bar A'$$
is a DCSH map.
\end{thm}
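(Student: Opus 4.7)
The plan is to apply the Gugenheim--Munkholm criterion (Theorem \ref{thm:g-m}) to the Eilenberg--Mac Lane strong deformation retract
$$\Bar A\otimes \Bar A'\sdr{\nabla}{f} \Bar(A\otimes A')\circlearrowleft h$$
of equation (\ref{eqn:em-bar}), which is Eilenberg--Zilber data since $\nabla$ is a coalgebra map.  It then suffices to verify the local finiteness hypothesis of Theorem \ref{thm:g-m}, i.e., to show that for every $y\in \Bar(A\otimes A')$ the iterated composite $H_k(y)$ of Remark \ref{rmk:Fk} vanishes for $k$ sufficiently large.

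To this end, I would fix $R$-bases of $\overline A$ and $\overline{A'}$ and consider the induced basis of $\overline{A\otimes A'}$, which splits into three families: \emph{pure $A$} elements $a\otimes 1$, \emph{pure $A'$} elements $1\otimes a'$, and \emph{mixed} elements $a\otimes a'$ with $a,a'$ basis elements of the respective augmentation ideals.  For a basis word $y=s(x_1)|\cdots |s(x_n)$ of $\Bar(A\otimes A')$, let $\mu(y)$ be the number of indices $i$ for which $x_i$ is mixed, and let $\mathcal F_p\subseteq \Bar(A\otimes A')$ denote the $R$-span of basis words with $\mu\le p$.  Since splitting of words preserves the total mixed count, $\overline\Delta(\mathcal F_p)\subseteq \bigoplus_{p_1+p_2=p}\mathcal F_{p_1}\otimes \mathcal F_{p_2}$.

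The key step is to establish that $h(\mathcal F_p)\subseteq \mathcal F_{p-1}$, with the convention $\mathcal F_{-1}=0$.  When $\mu(y)\ge 1$, let $r$ be the largest position in $y$ with nonunit $A$-coordinate and apply Corollary \ref{cor:bar-htpy}(2).  For the summand indexed by $0\le m<r$ not to vanish, each of positions $m+1,\dots,r$ must have nonunit $A$-coordinate (otherwise some factor $sa_j$ of the $\nabla$-entry is zero), and in addition the letter $s(1\otimes a'_{m+1}\cdots a'_r)$ must be nonzero in $\Bar(A\otimes A')$.  Since $\ve'(a'_j)=1$ when position $j$ is pure $A$ and $\ve'(a'_j)=0$ when position $j$ is mixed, this latter condition forces at least one position in $\{m+1,\dots,r\}$ to be mixed.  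Every mixed letter of such a non-vanishing summand then occurs among the first $m$ letters, since the remainder of the summand is the pure $A'$ letter $s(1\otimes a'_{m+1}\cdots a'_r)$ followed by a $\nabla$-shuffle of pure letters; hence the mixed count of the summand is strictly less than $\mu(y)$.  The case $\mu(y)=0$ is handled similarly: either part (1) of Corollary \ref{cor:bar-htpy} applies directly, or the same reasoning shows that no summand in part (2) survives.

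Iterating, a routine induction gives $H_k(\mathcal F_p)\subseteq \bigoplus_{p_1+\cdots+p_k=p-(k-1)}\mathcal F_{p_1}\otimes\cdots\otimes\mathcal F_{p_k}$, so that $H_k(y)=0$ whenever $k>\mu(y)+1$.  This establishes the required local finiteness, and Theorem \ref{thm:g-m} then yields that $f$ is a DCSH map.  The main obstacle is the combinatorial bookkeeping in the non-vanishing analysis, and in particular the subtle use of the augmentation of $A'$ to see that any surviving summand must involve a mixed letter strictly to the right of position $m$; once this strict monotonicity of $\mu$ under $h$ is in hand, the rest is formal.
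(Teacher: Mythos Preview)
Your proposal is correct and follows essentially the same approach as the paper: both apply Theorem~\ref{thm:g-m} to the Eilenberg--Zilber data~(\ref{eqn:em-bar}) and verify local finiteness via the explicit description of $h$ in Corollary~\ref{cor:bar-htpy}. Your mixed-letter count $\mu$ is dual to the paper's invariant $\zeta$ (counting degree-zero coordinates): for a length-$n$ word one has $\mu+\zeta=n$, so your claim $h(\mathcal F_p)\subseteq \mathcal F_{p-1}$ is exactly the paper's inequality $\zeta(h(w))\ge \zeta(w)+2$, and both yield the bound $F_k(y)=0$ for $k>\mu(y)+1$.
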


\begin{proof}  We prove this proposition by applying Theorem \ref{thm:g-m} to the Eilenberg-Zilber data (\ref{eqn:em-bar}). Note first that $\Bar A\otimes \Bar A'$ and $\Bar(A\otimes A')$ are both connected, by definition of the bar construction.

Given a nonzero element $w=s(a_{1}\otimes a_{1})|\cdots |s(a_{n}\otimes a'_{n})\in\Bar_{n}(A\otimes A')$, let 
$$\zeta(w)=\#\{ i\mid |a_{i}|=0\} + \#\{j\mid |a'_{j}|=0\}.$$
Let $\zeta (0)= +\infty$.

Let $w=s(a_{1}\otimes a_{1})|\cdots |s(a_{n}\otimes a'_{n})\in\Bar_{n}(A\otimes A')$. If $\zeta(w)>n$, then there exists $j\in [1,n]$ such that $|a_{j}|=0=|a'_{j}|$ and therefore $w$ corresponds to a degenerate element in $\Bar_{\bullet}(A\otimes A')$.  Since $\Bar (A\otimes A')$ is the normalized complex associated to $\Bar_{\bullet}(A\otimes A')$, it  follows that $w=0$.
We therefore conclude that
\begin{equation}\label{eqn:w-cond}
0\not=w\in\Bar_{n}(A\otimes A')\;\Longrightarrow\; \zeta(w)\leq n.
\end{equation}

Define a bifiltration of $\Bar (A\otimes A')$ by
$$\mathcal F^{p,n}\big (\Bar (A\otimes A')\big)=\{w\in \Bar_{\leq n} (A\otimes A')\mid \zeta (w)\geq p\},$$
and consider the induced bifiltration
$$\mathcal F^{p,n}\big (\Bar (A\otimes A')^{\otimes k}\big)=\bigoplus _{p_{1}+\cdots +p_{k}=p\atop n_{1}+...+n_{k}=n} \mathcal F^{p_{1},n_{1}}\big (\Bar(A\otimes A')\big)\otimes \cdots \otimes \mathcal F^{p_{k},n_{k}}\big (\Bar(A\otimes A')\big).$$
It is easy to check  that the comultiplication $\Delta :\Bar (A\otimes A')\to \Bar (A\otimes A')\otimes \Bar (A\otimes A')$ is a bifiltered map. Moreover, it follows from implication (\ref{eqn:w-cond}) that
\begin{equation}\label{eqn:pn-cond}
\mathcal F^{p,n}\big (\Bar (A\otimes A')^{\otimes k}\big)=0 \text{ for all }p>n\text { and } k\geq 1.
\end{equation}

To prove local finiteness of the $F_{k}$'s associated to the SDR (\ref{eqn:em-bar}), we show that 
\begin{equation}\label{eqn:zeta}
\zeta \big(h(w)\big)\geq \zeta (w)+2
\end{equation}
for all $w\in  \Bar(A\otimes A')$.  It follows that
$$h\Big(\mathcal F^{p,n}\big (\Bar (A\otimes A')\Big)\subset \mathcal F^{p+2, n+1}\big (\Bar (A\otimes A') \big)$$
for all $p$ and $n$, since the formulas in Corollary \ref{cor:bar-htpy} imply that
$$h\big(\Bar_{n}(A\otimes A)\big) \subset \Bar_{n+1}(A\otimes A').$$
Consequently, if $\overline\Delta$ denotes the reduced comultiplication on $\Bar (A\otimes A')$, then
$$\overline\Delta h\Big(\mathcal F^{p,n}\big (\Bar (A\otimes A')\Big)\subset \mathcal F^{p+2, n+1}\big (\Bar (A\otimes A') \otimes \Bar (A\otimes A')\big),$$
which is the base step in an easy recursive argument showing that
$$H_{k+1}\Big(\mathcal F^{p,n}\big (\Bar (A\otimes A')\Big)\subset \mathcal F^{p+2k, n+k}\big (\Bar (A\otimes A')^{\otimes k}\big),$$
for all $k\geq 1$, where the map $H_{k+1}$ is defined as in (\ref{eqn:Hk}).

Equation (\ref{eqn:pn-cond}) therefore implies that for all $w\in \mathcal F^{p,n}\big(\Bar (A\otimes A')\big)$ and for all $k> n-p+1$,
$$F_{k}(w)=(\si f)^{\otimes k}\circ H_{k} (w)=0.$$
Local finiteness of the $F_{k}$'s, which allows us to apply Theorem \ref{thm:g-m} and therefore conclude that $f:\Bar (A\otimes A')\to \Bar (A) \otimes \Bar (A')$ is a DCSH map, is thus a consequence of inequality (\ref{eqn:zeta}).

To complete the proof, we must prove that inequality (\ref{eqn:zeta}) holds.
It follows immediately from inspection of the formulas in Corollary \ref{cor:bar-htpy} and for $\n$ (\ref{eqn:nabla}) that, in comparison with $w=s(a_{1}\otimes a_{1})|\cdots |s(a_{n}\otimes a'_{n})$, each summand in the expression 
$$s(a_{1}\otimes a'_{1})| \cdots |s(a_{m}\otimes a'_{m})|s(1\otimes a'_{m+1}\cdots a'_{r})|\n(sa_{m+1}|\cdots |sa_{r}\otimes sa'_{r+1}|\cdots |sa'_{n}),$$ 
if nonzero,
\begin{itemize}
\item  contains $(n-r)+(r-m)+1$ new 1's, inserted in the last $n-m$ terms, and
\item has lost at most $(r-m-1)$ 1's, in the process of multiplying $a'_{m+1}\cdots a'_{r}$.
\end{itemize}
We see thus that
\begin{align*}
\zeta \big(s(a_{1}\otimes a'_{1})|& \cdots |s(a_{m}\otimes a'_{m})|s(1\otimes a'_{m+1}\cdots a'_{r})|\n(sa_{m+1}|\cdots |sa_{r}\otimes sa'_{r+1}|\cdots |sa'_{n})\big)\\
&\geq \zeta (w)+ (n-m+1)- (r-m-1)\\
&=\zeta(w)+n-r+2\\
&\geq \zeta(w)+2.
\end{align*}
\end{proof}

Before proving Theorem \ref{thm:bar-hopf}, we establish a technical lemma that plays an important role in showing that $\ve_{H}:\Om \Bar H\to H$ is a coalgebra map. Recall the cartesian product of twisting cochains from Definition \ref{defn:cartesian}.

\begin {lem}\label{lem:h-delta-f}  Let $A$ and $A'$ be augmented chain algebras.  For all $n>1$, the composite
{\smaller$$\xymatrix{\Bar_{n}(A\otimes A')\ar[r]^(0.45)h&\Bar _{n+1}(A\otimes A')\ar [r]^(0.35)\Delta &\bigoplus _{\ell+m=n+1}\Bar_{\ell}(A\otimes A')\otimes \Bar _{m}(A\otimes A')\ar [d]^{f\otimes f}\\
&&\bigoplus _{\ell+m=n+1\atop {\ell'+\ell ''=\ell\atop m'+m''=m}}\Big(\Bar_{\ell'}A\otimes\Bar _{\ell ''}A'\Big) \otimes \Big(\Bar_{m'}A\otimes \Bar _{m''}A'\Big)\ar [d]^{(t_{\Bar}*t_{\Bar})\otimes (t_{\Bar}*t_{\Bar})}\\
&&(A\otimes A')\otimes (A\otimes A')}$$}

\noindent is equal to zero.  Moreover,
$$(t_{\Bar}*t_{\Bar})^{\otimes 2}\circ f^{\otimes 2}\circ \Delta \circ h\big(s(a\otimes a')\big)=\begin{cases}\pm(1\otimes a')\otimes (a\otimes 1)&: |a|\cdot |a'|>0\\ 0&:\text{else.}\end{cases}$$
\end{lem}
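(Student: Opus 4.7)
The plan is to exploit a simple bar-length count. The key observation is that the composite $(t_{\Bar} * t_{\Bar}) \circ f$ vanishes on $\Bar_k(A \otimes A')$ for every $k \neq 1$. Indeed, $f$ preserves total bar length, sending $\Bar_k(A \otimes A')$ into $\bigoplus_{\ell + m = k} \Bar_\ell A \otimes \Bar_m A'$, while the formula $t_{\Bar} * t_{\Bar} = t_{\Bar} \otimes \eta\ve + \eta\ve \otimes t_{\Bar}$, combined with the fact that $t_{\Bar}$ lives in bar-length $1$ and $\ve$ in bar-length $0$, shows that $t_{\Bar} * t_{\Bar}$ is supported on $\Bar_1 A \otimes \Bar_0 A' \oplus \Bar_0 A \otimes \Bar_1 A'$.

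Granted this, the first assertion is immediate: since $\Delta$ splits a word in $\Bar_{n+1}(A \otimes A')$ into pairs whose bar-lengths sum to $n+1$, and the outer composite annihilates any factor of bar-length $\neq 1$, both factors in a surviving pair must have length exactly $1$, forcing $n+1 = 2$ and hence excluding $n > 1$.

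For $n = 1$ and $|a|\cdot|a'| > 0$, I would compute directly. By Corollary \ref{cor:bar-htpy}, the formula for $h$ collapses (with $m=0$, $r=1$, and $\nabla(sa \otimes [\,]) = s(a \otimes 1)$) to
$$h\bigl(s(a\otimes a')\bigr) = \pm\, s(1\otimes a')|s(a\otimes 1).$$
Splitting via $\Delta$ yields three summands; the two containing a $1 \in \Bar_0$ die by the length argument above, leaving only $s(1 \otimes a')\otimes s(a \otimes 1)$. Formula (\ref{eqn:f-bar}) reduces on each factor---since the omitted summands involve the degenerate $s(1)=0$---to $f\bigl(s(1\otimes a')\bigr) = 1\otimes sa'$ and $f\bigl(s(a\otimes 1)\bigr) = sa\otimes 1$, and then $(t_{\Bar}*t_{\Bar})(1\otimes sa') = 1\otimes a'$ and $(t_{\Bar}*t_{\Bar})(sa\otimes 1) = a\otimes 1$, producing $\pm(1\otimes a')\otimes(a\otimes 1)$. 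The ``else'' case is immediate: in the connected setting, $|a|\cdot|a'| = 0$ forces $a\propto 1$ or $a'\propto 1$, and the remark following Corollary \ref{cor:bar-htpy} then gives $h\bigl(s(a\otimes a')\bigr) = 0$.

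There is no real obstacle here: the argument is essentially one length-count plus a short direct computation. The only mildly delicate point is tracking the shuffle sign in the $n=1$ formula, which is why the statement records it simply as $\pm$.
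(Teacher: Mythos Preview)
Your argument is correct and is essentially the same as the paper's: both reduce the $n>1$ case to a bar-length count showing that $(t_{\Bar}*t_{\Bar})\circ f$ kills everything outside total bar-length $1$, and both treat $n=1$ by direct computation. Your packaging of the key observation as a single statement about $(t_{\Bar}*t_{\Bar})\circ f$ being supported in bar-length $1$ is somewhat cleaner than the paper's case split on $\ell',\ell'',m',m''$, and you spell out the $n=1$ calculation that the paper leaves as ``straightforward,'' but the underlying idea is identical.
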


\begin{proof} Let $\delta=(t_{\Bar}*t_{\Bar})^{\otimes 2}\circ f^{\otimes 2}\circ \Delta \circ h$.
Recall that $t_{\Bar}(sa)=a$ for all $a$ in $A$ or $A'$, while $t_{\Bar }(sa_{1}|\cdots |sa_{n})=0$ for all $n>1$. 

If $n>1$ and thus $\ell '+\ell ''+m'+m''=n+1>2$, then  
\begin{itemize}
\item at least one of $\ell'$, $\ell''$, $m'$ and $m''$ is greater than 1, or
\item $\ell'+\ell''=2$ and  $m'+m''\leq 2$, or
\item $\ell'+\ell''\leq 2$ and $m'+m''=2$.
\end{itemize}
In the first case, the corresponding summand of $\delta$ is zero, since $\Bar _{\geq 2} A\subset \ker t_{\Bar }$ and similarly for $A'$.  In the second and third cases, the corresponding summand of $\delta$ is also zero, since
$$\Bar_{1}A\otimes \Bar _{1}A'=s\overline A\otimes s\overline A'\subset \ker (t_{\Bar }* t_{\Bar}).$$

The case $n=1$ is established by a straightforward calculation.
\end{proof}  

\begin{cor}\label{cor:ker-hk} Let $A$ and $A'$ be augmented chain algebras.  Consider the composite
 $$\Om\Bar (A\otimes A')\xrightarrow {\alpha_{F}}\Om (\Bar A\otimes \Bar A')\xrightarrow q \Om \Bar A\otimes \Om \Bar A'\xrightarrow {\ve_{A}\otimes \ve_{A'}} A\otimes A',$$
where $F:\Bar (A\otimes A')\to \Om (\Bar A\otimes \Bar A')$ is the twisting cochain of Theorem \ref{thm:aw-bar-dcsh}. 
For all $n>1$, 
$$\si \Bar_{n}(A\otimes A')\subset \ker \Big( (\ve_{A}\otimes \ve_{A'})q\alpha_{F}\Big).$$
\end{cor}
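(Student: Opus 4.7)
The strategy is to combine an identification of the composite in question with a bar-degree counting argument. First, by Remark~\ref{rmk:cartesian-twist} one has $(\ve_{A}\otimes \ve_{A'})\circ q = \alpha_{t_{\Bar}*t_{\Bar}}$, so the statement reduces to showing that $\alpha_{t_{\Bar}*t_{\Bar}}\bigl(F(w)\bigr)=0$ for every $w\in \Bar_{n}(A\otimes A')$ with $n>1$, where $F=\sum_{k\geq 1}F_{k}$ is the twisting cochain produced by Theorem~\ref{thm:g-m} applied to the Eilenberg--Mac~Lane SDR~(\ref{eqn:em-bar}). Since $\alpha_{t_{\Bar}*t_{\Bar}}$ is multiplicative, it suffices to show this vanishing term-by-term in $k$.

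Next I would analyze $(t_{\Bar}*t_{\Bar})\circ f$ degree by degree. Inspection of formula~(\ref{eqn:f-bar}) shows that $f$ preserves total bar-degree, and the definition of $t_{\Bar}*t_{\Bar}$ makes it nonzero only on $\Bar_{1}A\otimes \Bar_{0}A'\oplus \Bar_{0}A\otimes \Bar_{1}A'$. Consequently $(t_{\Bar}*t_{\Bar})\circ f$ vanishes on $\Bar_{m}(A\otimes A')$ for every $m\neq 1$. In particular, $\alpha_{t_{\Bar}*t_{\Bar}}(F_{1}(w))=(t_{\Bar}*t_{\Bar})\bigl(f(w)\bigr)=0$ for any $w$ of bar-degree greater than $1$.

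For $k\geq 2$, I would invoke the closed form $F_{k}=\sn{k+1}(\si f)^{\otimes k}\circ H_{k}$ from Remark~\ref{rmk:Fk} and do a bar-degree count for $H_{k}$. Each operator $h_{j}$ consists of applying $\overline{\Delta}\circ h$ in a single tensor slot. Since the explicit formula of Corollary~\ref{cor:bar-htpy} shows that $h$ raises bar-degree by exactly $1$, and the deconcatenation comultiplication $\overline{\Delta}$ preserves total bar-degree while confining every output slot to $\Bar_{\geq 1}$, a straightforward induction on $k$ yields that $H_{k}$ maps $\Bar_{n}(A\otimes A')$ into the direct sum, over tuples $(n_{1},\dots,n_{k})$ with $n_{i}\geq 1$ and $n_{1}+\cdots+n_{k}=n+k-1$, of the tensor products $\Bar_{n_{1}}(A\otimes A')\otimes\cdots\otimes \Bar_{n_{k}}(A\otimes A')$. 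When $n>1$ one has $n_{1}+\cdots+n_{k}>k$, forcing at least one $n_{j}>1$.

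Applying $(\si f)^{\otimes k}$ and then the multiplicative map $\alpha_{t_{\Bar}*t_{\Bar}}$ to any such summand yields a product in $A\otimes A'$ containing the factor $(t_{\Bar}*t_{\Bar})\bigl(f(u_{j})\bigr)$ with $u_{j}\in \Bar_{n_{j}\geq 2}(A\otimes A')$, which is zero by the second step; hence the whole product vanishes, and $\alpha_{t_{\Bar}*t_{\Bar}}(F_{k}(w))=0$ for every $k\geq 1$. The only genuinely delicate point---already the main content of the case $k=2$, which recovers the first assertion of Lemma~\ref{lem:h-delta-f}---is pinning down that the bar-degree shift of $h$ is strictly $+1$ with no cancellations in the normalized complex, so that no mass can ``escape'' into an all-degree-$1$ tensor when $n>1$. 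Once this bookkeeping is verified from the explicit formula for $h$ in Corollary~\ref{cor:bar-htpy}, the argument extends uniformly to all $k$ and the corollary follows.
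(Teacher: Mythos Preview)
Your proposal is correct and follows essentially the same strategy as the paper: reduce to $\alpha_{t_{\Bar}*t_{\Bar}}$ via Remark~\ref{rmk:cartesian-twist}, use the closed form $F_{k}=\pm(\si f)^{\otimes k}\circ H_{k}$ from Remark~\ref{rmk:Fk}, and argue that at least one tensor factor of $H_{k}(w)$ has bar-degree $\geq 2$ and hence dies under $(t_{\Bar}*t_{\Bar})\circ f$. The paper phrases the last step as ``a straightforward inductive argument, of which Lemma~\ref{lem:h-delta-f} is the base step,'' whereas you make the induction explicit as a bar-degree count using only that $h$ raises bar-degree by exactly~$1$ (Corollary~\ref{cor:bar-htpy}) and that $\overline\Delta$ preserves it; your formulation is slightly more self-contained and avoids invoking the full statement of Lemma~\ref{lem:h-delta-f}, but the content is the same.
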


\begin{proof} Remark \ref{rmk:cartesian-twist} implies that 
$$\alpha_{t_{\Bar}*t_{\Bar}}=(\ve_{A}\otimes \ve_{A'})q:\Om (\Bar A\otimes \Bar A')\to A\otimes A'.$$
Furthermore, for all $w_{1},...,w_{n}\in \Bar A$ and $w'_{1},...,w'_{n}\in \Bar A'$,
$$\alpha_{t_{\Bar}*t_{\Bar}}\big(\si (w_{1}\otimes w'_{1})|\cdots |\si (w_{n}\otimes w'_{n})\big)=\pm a_{1}\cdots a_{n}\otimes a'_{1}\cdots a'_{n},$$
where (suppressing summation) $\alpha_{t_{\Bar}*t_{\Bar}}\big(\si (w_{j}\otimes w'_{j})\big)=a_{j}\otimes a'_{j}$ for all $1\leq j\leq n$, and the sign is determined by the Koszul rule.

Recall equation (\ref{eqn:Hk}), the definition of the operators $H_{k}$ associated to Eilenberg-Zilber data. A straightforward inductive argument, of which Lemma \ref{lem:h-delta-f} is the base step, shows that for all $n>1$
$$f^{\otimes k}\circ H_{k}\Big(\Bar_{n} (A\otimes A')\Big)\subset\ker (t_{\Bar}*t_{\Bar})^{\otimes k}.$$
 It then follows from the second half of Remark \ref{rmk:Fk} that 
$$\alpha_{F}\Big(\Bar_{n} (A\otimes A')\Big)\subset\ker \alpha_{t_{\Bar}*t_{\Bar}},$$
and we can conclude.
\end{proof}

\begin{proof}[Proof of Theorem \ref{thm:bar-hopf}] Let $H$ be a chain Hopf algebra.  From Theorem \ref{thm:aw-bar-dcsh} it follows that $\Bar H$ is a weak Alexander-Whitney coalgebra, where the chain algebra map realizing the DCSH structure of $\delta :H\to H\otimes H$ is the composite
$$\Om \Bar H \xrightarrow{\Om \Bar \delta} \Om \Bar (H\otimes H)\xrightarrow {\alpha _{F}} \Om (\Bar H\otimes \Bar H).$$

It remains to show that $(\Bar H, \alpha_{F}\circ \Om \Bar\delta)$ is actually an Alexander-Whitney coalgebra, i.e., that the comultiplication
$$\Om \Bar H \xrightarrow {\alpha_{F}\circ \Om \Bar\delta} \Om (\Bar H\otimes \Bar H)\xrightarrow q \Om \Bar H\otimes \Om \Bar H$$
is coassociative.   Essentially the same argument as in the proof of coassociativity of the canonical  diagonal on $\Om C_{*}K$ (Theorem 4.2 in \cite{hpst}) works here, since the comultiplication on the cobar constructions comes in both cases from the Alexander-Whitney map in the original Eilenberg-Mac Lane SDR (\ref{eqn:em-sdr}).

Let $\psi=q\circ \alpha_{F}\circ \Om\Bar \delta: \Om\Bar H\to \Om \Bar H\otimes \Om \Bar H$. To prove that $\ve_{H}:\Om \Bar H\to H$ is a coalgebra map, we must verify the following two claims.
\begin{enumerate}
\item $(\ve_{H}\otimes \ve _{H})\circ \psi \big(\si (sa)\big)= \delta (a)$ for all $a\in H$.
\item   $(\ve_{H}\otimes \ve _{H})\circ \psi \big(\si (sa_{1}|\cdots |sa_{n})\big)= 0$ for all $n>1$.
\end {enumerate}
Claim (2) is an immediate consequence of Corollary \ref{cor:ker-hk}.

On the other hand, Lemma \ref{lem:h-delta-f} implies that if $\delta (a)=a\otimes 1 +1\otimes a +a_{i}\otimes a^{i}$, then  
\begin{align*}
(\ve_{H}\otimes \ve_{H})&\psi \big(\si (sa)\big)\\
&=\alpha_{t_{\Bar}*t_{\Bar}}\alpha_{F}\big (\si (s(a\otimes 1) + s(1\otimes a) +s(a_{i}\otimes a^{i})\big)\\
&=\alpha_{t_{\Bar}*t_{\Bar}} \big(\si(sa\otimes 1) +\si (1\otimes sa)+ \si (1\otimes sa^{i})|\si (sa_{i}\otimes 1)\big)\\
&=\delta (a),
\end{align*}
and so Claim (1) holds as well.

To prove the dual result, concerning the cobar construction on $H$, note that if $H$ is connected and of finite type, then $\hom_{R} (\Om H, R)$ is isomorphic to $\Bar \hom_{R} (H, R)$, which is an Alexander-Whitney coalgebra by the argument above.  It follows that $\Om H$ is an Alexander-Whitney algebra and that $\eta_{H}:H\to \Bar \Om H$ is an algebra map.
\end{proof}

\begin{rmk} In \cite{kadeishvili}, Kadeishvili showed that if $H$ is a chain Hopf algebra, then $\Om H$ is a Hirsch algebra.  In proving above that $\Om H$ is an Alexander-Whitney algebra, we have established a stronger result, one that is necessary to proving the existence of multiplicative structure on $\cohoch(H)$.
\end{rmk} 

\section{The proof of Theorem \ref{thm:bigthm}}\label{app:acyclic}

We  prove Theorem \ref{thm:bigthm} by methods of acyclic models, for which the notion of {representability} is key.

\subsection{Representability}

The notion of {representability} used here is essentially that of \cite[Definition 28.1]{may}.  Note that if $X$ is any set, then $R[X]$ denotes the free $R$-module generated by $X$.

\begin{defn} Let $\cat C$ be any category, and let $\cat {Mod}_R$ be the category of $R$-modules.  Let $\mathfrak M$ be a set of objects in $\cat C$.

Let $\mathsf A: \cat C \to \cat {Mod}_R$ be a functor.  The \emph{$\mathfrak M$-linearization} of $\A$ is the functor
$$\widetilde {\mathsf A}_{\mathfrak M}: \cat C \to \cat {Mod}_R$$
defined on objects by
$$\widetilde {\mathsf A}_{\mathfrak M}(C) = \bigoplus _{M\in \mathfrak M}R\big[ \cat C(M,C)\big]\otimes_{R} \mathsf A(M)$$
and on morphisms by postcompostion in the obvious way.  

The  \emph{$\mathfrak M$-assembly map} is the natural transformation
$$\lambda_{\A,\mathfrak M}:\widetilde {\mathsf A}_{\mathfrak M}Ê\to A$$
the components of which are specified by
$$\lambda _{\A,\mathfrak M}(C)(a\otimes w)= \A(a)(w)$$
for all $a\in \cat C(M,C)$, $w\in \A(M)$ and $M\in \mathfrak M$.

The functor $\A$ is \emph{$\mathfrak M$-representable} if the $\M$-assembly map admits a section, i.e., if there is a natural transformaion $\sigma: \A \to \widetilde \A _{\M}$ such that $\lambda_{\A, \M} \circ \sigma =\Id_{\A}$.
\end{defn}

It is easy to see that the $\M$-assembly map is natural in $\A$, in the following sense.

\begin{lem}\label{lem:lift-transf}  If $\A, \A': \cat C \to \cat {Mod}_R$ are functors, and $\M$ is a set of objects in $\cat C$, then for every natural transformation $\alpha: \A \to \A'$, there is a natural transformation $\widetilde \alpha_{\M} :\widetilde\A_{\M}\to \widetilde \A'_{\M}$ such that 
$$\xymatrix{\widetilde \A_{\M}\ar [d]_{\lambda_{\A,\M}}\ar [r]^{\widetilde \alpha_{\M}}&\widetilde \A'_{\M}\ar [d]^{\lambda_{\A',\M}}\\
\A\ar [r]^\alpha&\A'}$$
commutes.
\end{lem}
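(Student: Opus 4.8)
The statement to prove (Lemma \ref{lem:lift-transf}) is essentially a naturality/functoriality assertion about the $\mathfrak M$-linearization construction, so the plan is to write down the obvious candidate for $\widetilde\alpha_{\mathfrak M}$ and check the square commutes by unwinding definitions on generators.

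First I would define $\widetilde\alpha_{\mathfrak M}$ componentwise. For an object $C$ in $\cat C$, the module $\widetilde\A_{\mathfrak M}(C)$ is the direct sum over $M\in\mathfrak M$ of $R[\cat C(M,C)]\otimes_R \A(M)$, and similarly for $\A'$. Since $\alpha$ is a natural transformation, we have maps $\alpha(M):\A(M)\to\A'(M)$ for every $M\in\mathfrak M$, and I would simply set
$$\widetilde\alpha_{\mathfrak M}(C) = \bigoplus_{M\in\mathfrak M}\Id_{R[\cat C(M,C)]}\otimes \alpha(M),$$
i.e.\ $\widetilde\alpha_{\mathfrak M}(C)(a\otimes w) = a\otimes\alpha(M)(w)$ for $a\in\cat C(M,C)$ and $w\in\A(M)$. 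Next I would verify that this is natural in $C$: given $c:C\to C'$ in $\cat C$, the induced map $\widetilde\A_{\mathfrak M}(c)$ acts by postcomposition on the $\cat C(M,C)$ factor, $a\otimes w\mapsto (c\circ a)\otimes w$, which visibly commutes with $\Id\otimes\alpha(M)$; so $\widetilde\alpha_{\mathfrak M}$ is a genuine natural transformation $\widetilde\A_{\mathfrak M}\to\widetilde\A'_{\mathfrak M}$.

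Then I would check commutativity of the square. Evaluating at $C$ and applying to a generator $a\otimes w$ with $a\in\cat C(M,C)$, $w\in\A(M)$: going down-then-right gives $\alpha(C)\big(\lambda_{\A,\mathfrak M}(C)(a\otimes w)\big) = \alpha(C)\big(\A(a)(w)\big)$, while going right-then-down gives $\lambda_{\A',\mathfrak M}(C)\big(a\otimes\alpha(M)(w)\big) = \A'(a)\big(\alpha(M)(w)\big)$. These agree precisely because $\alpha$ is a natural transformation, so the naturality square for the morphism $a:M\to C$ reads $\alpha(C)\circ\A(a) = \A'(a)\circ\alpha(M)$. Since both composites are $R$-linear and agree on the generators $a\otimes w$, they agree on all of $\widetilde\A_{\mathfrak M}(C)$, and the square commutes.

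There is no real obstacle here: the only thing to be mildly careful about is bookkeeping the direct-sum indexing over $\mathfrak M$ and checking naturality in the variable $C$ in addition to the commutativity of the displayed square, but both reduce immediately to the naturality of $\alpha$ and to the fact that $R[-]$ and $-\otimes_R\A(M)$ are functorial. I would present the argument in the two short steps above (define $\widetilde\alpha_{\mathfrak M}$, then evaluate both legs of the square on generators), citing the naturality of $\alpha$ as the one nontrivial input.
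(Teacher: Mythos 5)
Your proof is correct and follows exactly the paper's approach: define $\widetilde\alpha_{\M}(C)(a\otimes w)=a\otimes\alpha(M)(w)$ and verify commutativity via the naturality square $\A'(a)\circ\alpha(M)=\alpha(C)\circ\A(a)$. You also explicitly check naturality of $\widetilde\alpha_{\M}$ in $C$, which the paper leaves implicit but is a straightforward addition.
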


\begin{proof} The component of $\widetilde \alpha_{\M}$ at $C$ is defined by
$$\widetilde \alpha_{\M}(C)(a\otimes w)= a\otimes \alpha (M)(w)$$
for all $a\in \cat C(M,C)$, $w\in \A (M)$ and $M\in \M$.  The diagram above then commutes as desired, since $\A'(a)\circ \alpha (M)=\alpha(C)\circ \A (a)$ for all $a\in \cat C(M,C)$.
\end{proof}

In the proof below, we need the following easy lemma from \cite{may} (Lemma 29.5).

\begin{lem}\label{lem:may} Let $\A, \A':\cat C \to \cat {Mod}_R$, and let $\M$ be a set of objects in $\cat C$.  If $\A$ is a retract of $\A'$, and $\A'$ is $\M$-representable, then so is $\A$.
\end{lem}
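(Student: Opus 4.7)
The plan is to construct the desired section $\sigma: \A \to \widetilde \A_{\M}$ directly from the data at hand: the retraction structure $i:\A \to \A'$, $r:\A'\to \A$ with $r\circ i = \Id_{\A}$, together with the section $\sigma': \A' \to \widetilde{\A'}_{\M}$ guaranteed by the $\M$-representability of $\A'$. The natural candidate is the composite
\[
\sigma \;=\; \widetilde r_{\M} \circ \sigma' \circ i : \A \to \widetilde{\A'}_{\M}\to \widetilde\A_{\M},
\]
where $\widetilde r_{\M}: \widetilde{\A'}_{\M}\to \widetilde \A_{\M}$ denotes the natural transformation produced from $r$ by Lemma \ref{lem:lift-transf}.

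Verifying that $\sigma$ is a section of the assembly map $\lambda_{\A,\M}$ is then a routine diagram chase. First I would invoke Lemma \ref{lem:lift-transf} applied to $r:\A' \to \A$, which yields the identity $\lambda_{\A,\M} \circ \widetilde r_{\M} = r\circ \lambda_{\A',\M}$. Composing with $\sigma'\circ i$ on the right and using the hypothesis $\lambda_{\A',\M}\circ \sigma' = \Id_{\A'}$, I obtain
\[
\lambda_{\A,\M}\circ \sigma \;=\; r \circ \lambda_{\A',\M}\circ \sigma' \circ i \;=\; r\circ i \;=\; \Id_{\A}.
\]
The naturality of $\sigma$ follows immediately from the naturality of each of its three constituent transformations.

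There is no substantive obstacle here: once one recognizes that the lifting construction of Lemma \ref{lem:lift-transf} is functorial in $\A$, the argument reduces to the elementary observation that a retract of a split epimorphism is a split epimorphism (in the functor category $[\cat C, \cat{Mod}_{R}]$, applied to the assembly natural transformations). The only minor point to be careful about is checking that $\widetilde{(-)}_{\M}$ is itself functorial, i.e.~that $\widetilde{r}_{\M}$ commutes with the assembly maps in the way stated by Lemma \ref{lem:lift-transf}, which is exactly what that lemma provides.
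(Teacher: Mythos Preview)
Your proof is correct. The paper does not actually supply its own proof of this lemma; it simply cites it as Lemma 29.5 of \cite{may} and calls it ``easy.'' Your argument is the standard one and is exactly what one would expect: use the retraction data together with the functoriality of the $\M$-linearization (Lemma \ref{lem:lift-transf}) to transport the section from $\A'$ to $\A$. There is nothing to add.
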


\subsection{Models and their acyclicity}

We now define the set of models relevant for the proof of Theorem \ref{thm:bigthm} and prove their acyclicity.

\begin{notn} Let $n\geq 0$, and let $\P_{n}: \cat S \to \cat {Mod}_R$ denote the degree $n$ component of the functor $\P$. 
Set 
$$\op N=\big\{(n_{1},...,n_{l})\mid n_{i}\in \mathbb N\,\forall i, l\geq 1\big\},$$
$$\op K=\big\{(k;\vec n)\mid k\in \mathbb N; \vec n\in \op N\big\},$$
and 
$$\op K_{n}=\big\{ (k;n_{1},..., n_{l})\in \op K\mid n_{i}\geq 1\,\forall i;  n=k+ \sum_{1\leq i \leq l} (n_{i}-1)\big\}.$$

For all simplicial maps $g:M\to M'$,
$$\P_{n}(g)=(C_{n}\S M\otimes R) \oplus \bigoplus _{(k;\vec n)\in \op K_{n}}C_{k} \S M \otimes \si \overline C_{n_{1}}\S M'\otimes \cdots \otimes \si \overline C_{n_{l}}\S M',$$
where $R$ is the ground ring.
\end{notn}

The following simplicial maps serve as models for $\P _{n}$ in our acyclic models argument. To simplify notation, we let 
$$\Delta[\vec n]=\Delta[n_{1}]\coprod \cdots \coprod \Delta [n_{l}]$$
for all $\vec n=(n_{1},...,n_{l})\in \op N$.
We then let 
$$\vp_{\vec n}: \emptyset \to \Delta [\vec n]$$
 denote the unique simplicial map from $\emptyset $ to $\Delta [\vec n]$. For all $(k;\vec n)\in \op K$, let 
 $$\vp_{k;\vec n}: \Delta[k] \hookrightarrow \Delta [k]\coprod \Delta [\vec n]$$
 denote the inclusion. Finally, let 
 $$\M =\{\Id_{\emptyset}\}\cup\{ \Id_{\Delta [n]}\mid n\geq 0\}\cup \{\vp_{\vec n}\mid \vec n \in \op N\} \cup \{ \vp_{k;\vec n}\mid (k;\vec n)\in \op K_{n}\}.$$

The easy step of the proof of Theorem \ref{thm:bigthm} is to prove the following lemma.

\begin{lem}\label{lem:acyclic} For all $\vp \in \M\smallsetminus \{\Id_\emptyset\}$, 
$$\ker \big(C_{*}p_{\vp}: \Q (\vp) \to C_{*} M\big)$$
is acyclic, where $M$ denotes the domain of $\vp$.
\end{lem}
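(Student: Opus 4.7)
My plan is to use the Kan fibration structure of the projection $p_\vp$, together with a canonical section coming from the identity of the fiber group, to reduce acyclicity of the kernel to a quasi-isomorphism statement about $p_\vp$, and then to verify that statement model by model.

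First I would note that for any $\vp: M \to M'$ in $\cat{sSet}$, the projection $p_\vp: \widehat\H(\tau_{\S M'}\S\vp) \to \S M$ is a Kan fibration with fiber $\G\S M'$, since twisted cartesian products of this form always project via Kan fibrations. Furthermore, the conjugation action of $\G\S M'$ on itself fixes the identity element $1$, so the assignment $x \mapsto (x,1)$ defines a simplicial section $s_\vp: \S M \to \widehat\H(\tau_{\S M'}\S\vp)$ of $p_\vp$. The induced chain map $C_{*}s_\vp$ splits $C_{*}p_\vp$, yielding a direct-sum decomposition of chain complexes
$$
C_{*}\widehat\H(\tau_{\S M'}\S\vp) \;\cong\; C_{*}\S M \;\oplus\; \ker C_{*}p_\vp,
$$
so acyclicity of $\ker C_{*}p_\vp$ is equivalent to $C_{*}p_\vp$ being a quasi-isomorphism. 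By a standard Serre spectral sequence argument, this reduces to showing that the fiber $\G\S M'$ has trivial reduced homology on each model.

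Next I would verify the fiber condition case by case. For $\vp = \Id_{\Delta[n]}$ (with $n\ge 0$), since $\Delta[n]$ is contractible and the simplicial double suspension functor $\S$ preserves weak contractibility, $\S\Delta[n]$ is weakly contractible; hence the loop group $\G\S\Delta[n] \simeq \Omega|\S\Delta[n]|$ is weakly contractible, and the splitting above immediately gives acyclicity of $\ker C_{*}p_\vp$. For the remaining models $\vp_{\vec n}: \emptyset \to \Delta[\vec n]$ and $\vp_{k;\vec n}: \Delta[k] \hookrightarrow \Delta[k] \coprod \Delta[\vec n]$, I would use the explicit combinatorial description of $\S$ applied to these small simplicial sets---analogous to the calculation at the end of the proof of Theorem \ref{thm:compat-gen} showing that $\S\Id_\emptyset$ has non-degenerate simplices only in degrees $0$ and $1$---to either verify weak contractibility of the fiber directly or to construct an explicit chain contraction of $\ker C_{*}p_\vp$ by inspection of the non-degenerate simplices of the twisted cartesian product.

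The main obstacle I expect is the combinatorial bookkeeping required for the last two families of models. The underlying mechanism---splitting by the constant-loop section and contracting the remaining summand---is uniform, but tracking which simplices of $\widehat\H(\tau_{\S M'}\S\vp_{k;\vec n})$ are non-degenerate, and how the twisting function interacts with the conjugation action when $\Delta[\vec n]$ has multiple components, becomes intricate. I would therefore expect the proof to proceed by faithfully following the combinatorial conventions for $\S$ and $\G\S$ laid down in this paper and in \cite{hps2}, and to conclude by assembling the section-retraction splitting with the explicit degeneracy analysis.
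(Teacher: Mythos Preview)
Your overall strategy matches the paper's: reduce to showing that $H_*p_\vp$ is an isomorphism (your constant-loop section $x\mapsto(x,1)$ is a clean way to justify the short exact sequence that the paper simply writes down without comment), then handle the cases via contractibility of the relevant pieces plus a Serre spectral sequence. Where you expect intricate combinatorics for the families $\vp_{\vec n}$ and $\vp_{k;\vec n}$, the paper finishes each in one line. For $\vp_{k;\vec n}$ it invokes the same spectral sequence argument as for $\Id_{\Delta[n]}$, using that $|\S\Delta[n]|\simeq *$ and hence $|\G\S\Delta[n]|\simeq *$. For $\vp_{\vec n}$ it bypasses the fiber entirely and instead looks at the total space, observing that $|\Q(\vp_{\vec n})|\simeq|\S\emptyset|\simeq S^1$, so that $p_\vp$ is automatically a homology isomorphism. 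No explicit chain-level inspection of non-degenerate simplices is needed.
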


\begin{proof} Since there is a short exact sequence of chain complexes
$$0\to \ker C_{*}p_{\vp}\hookrightarrow \Q (\vp)\xrightarrow {C_{*}p_{\vp}}C_{*}M\to 0,$$
it suffices to prove that $H_{*}p_{\vp}$ is an isomorphism.

For any $n\geq 0$,  $|\S \Delta[n]|\sim *$, whence $|\G\S \Delta[n]|\sim \Om|\S\Delta[n]| \sim *$, and so
$$H_{s}\big(\S \Delta[n]\big)\cong H_{s}\big(\G\S \Delta[n]\big )\cong\begin{cases} R&: s=0\\ 0&: s>0.\end{cases}$$
An easy spectral sequence argument shows that $H_{*}p_{\vp}$ is an isomorphism, for all $\vp$ of the form $\Id _{\Delta [n]}$ or $\vp_{k;\vec n}$.  

If $\vp=\vp_{\vec n}$ for some $n\in \op N$, then $|\Q(\vp)|\sim |\S \emptyset|\sim S^1$, and $C_{*}p_{\vp}$ again clearly induces an isomorphism in homology.  
\end{proof}

\subsection{Representability of the models}
 
The next result is a key step in the proof of Theorem \ref{thm:bigthm}.  Let $\M$ be defined as in the previous section.

\begin{prop} For all $n\geq 0$, the functor $\P_{n}$ is $\M$-representable.
\end{prop}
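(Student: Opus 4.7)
My plan is to establish $\M$-representability summand by summand. Since the construction $\widetilde{(-)}_{\M}$ and the assembly map both commute with direct sums, it suffices to prove that each summand of
$$\P_n(g) = \big(C_n \S M\otimes R\big)\oplus\bigoplus_{(k;\vec n)\in\op K_n} C_k\S M\otimes \si\overline{C}_{n_1}\S M'\otimes\cdots\otimes\si\overline{C}_{n_l}\S M'$$
is $\M$-representable as a functor of $g$, and then take the direct sum of the resulting sections.

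For the trivial summand $C_n\S M\otimes R$, which depends only on the source of $g$, I would use the models $\Id_{\Delta[m]}$ for $m\geq 0$. A morphism $\Id_{\Delta[m]}\to g$ in $\cat S$ is forced by the commutative square to be determined by a single top map $x:\Delta[m]\to M$ (the bottom component being $g\circ x$), and the induced map on the relevant summand is $C_n\S x\otimes \Id_R$. A natural section is obtained via the Eilenberg--Zilber lemma: each class in $C_n\S M$ is a unique $R$-linear combination of non-degenerate $n$-simplices of $\S M$, and each such simplex arises from a unique non-degenerate simplex $x\in M_m$ via the image under $C_n\S x$ of a canonical generator of $C_n\S\Delta[m]$.

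For the summand indexed by $(k;\vec n)\in\op K_n$, the relevant model is $\vp_{k;\vec n}:\Delta[k]\hookrightarrow\Delta[k]\coprod\Delta[\vec n]$. Morphisms $\vp_{k;\vec n}\to g$ in $\cat S$ biject naturally with tuples $(x;y_1,\ldots,y_l)\in M_k\times\prod_i M'_{n_i}$, since the bottom extension of $g\circ x$ to $\Delta[k]\coprod\Delta[\vec n]$ is forced on the $\Delta[k]$ summand but free on each $\Delta[n_i]$. The induced map on the matching summand of $\P_n(\vp_{k;\vec n})$ factors as $C_k\S x\otimes\bigotimes_i\si\overline{C}_{n_i}\S y_i$, and the section is constructed factor by factor by the same Eilenberg--Zilber decomposition applied in each tensor slot. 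The remaining models $\vp_{\vec n}$ and $\Id_\emptyset$ absorb the edge cases in which the $C_k\S M$ factor originates from the basepoint component of the double suspension (a phenomenon that occurs because $\S\emptyset$ is non-trivial), where no honest simplex of $M$ is being recorded.

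The main obstacle will be verifying naturality of each section with respect to morphisms of $\cat S$, i.e., with respect to commutative squares of simplicial maps. This reduces to two standard ingredients: the functoriality of $\S$, which ensures that non-degenerate simplices of $\S M$ pull back compatibly under simplicial maps, and the uniqueness part of the Eilenberg--Zilber lemma, which ensures that the assignment of each chain to its non-degenerate expansion is itself natural. Once these are in place, the tensor product and direct sum structures propagate the naturality automatically, and the individual summand-wise sections assemble into the required section $\sigma:\P_n\to\widetilde{(\P_n)}_{\M}$.
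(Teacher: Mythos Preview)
Your summand-by-summand strategy and the identification of which model handles which type of generator are both correct and match the paper's approach. The genuine gap is in your naturality argument. You claim that ``the uniqueness part of the Eilenberg--Zilber lemma \ldots\ ensures that the assignment of each chain to its non-degenerate expansion is itself natural,'' but this is false: if $a:M\to N$ is a simplicial map and $\bar x\in M_m$ is non-degenerate, then $a(\bar x)$ may well be degenerate. Concretely, take a non-degenerate $z=(p,p',\bar x)\in(\S M)_n$; your section sends $z\otimes 1$ to $(\op Y(\bar x),g\op Y(\bar x))\otimes\big((p,p',\epsilon_m)\otimes 1\big)$. Applying $(\widetilde{\P_n})_\M(a,b)$ yields the nonzero basis element $(a\circ\op Y(\bar x),\ldots)\otimes\big((p,p',\epsilon_m)\otimes 1\big)$. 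But on the other side, if $a(\bar x)$ is degenerate then $(p,p',a(\bar x))$ is degenerate in $\S N$, hence zero in the normalized complex $C_n\S N$, and your section applied afterwards gives $0$. The naturality square does not commute.

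The paper repairs exactly this defect by first passing to the \emph{unnormalized} chain functor $\Pu_n$, whose basis in each degree consists of \emph{all} simplices. There the section is defined on an arbitrary simplex $x=(p,p',\bar x)$ (with $\bar x\in M_{n'}$ allowed to be degenerate) by $\sigma(g)(x\otimes 1)=(\op Y(\bar x),g\op Y(\bar x))\otimes\big((p,p',\epsilon_{n'})\otimes 1\big)$, and naturality follows immediately from the identity $\op Y\big(a(\bar x)\big)=a\circ\op Y(\bar x)$, which holds for every simplex. One then concludes that $\P_n$ is $\M$-representable because it is a natural retract of $\Pu_n$ (via the standard retraction of normalized onto unnormalized chains) and retracts of representable functors are representable. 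Your outline would be complete if you inserted this unnormalized-chains step; without it, the proposed section is not a natural transformation.
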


\begin{proof} Let $C^{u}_{*}$ denote the unnormalized chains functor.  Recall that the normalized chains functor $C_{*}$ is a retract of $C^{u}_{*}$ (Corollaries 22.2 and 22.3 in \cite{may}).  It follows that for all $n\geq 0$,  $\P_{n}$ is a retract of 
$$\Pu_{n}:\cat S\to \cat {Mod}_R,$$ 
which is defined on $g:M\to M'$ by
$$\Pu_{n}(g)=(C^{u}_{n}\S M\otimes R) \oplus \bigoplus _{(k;\vec n)\in \op K_{n}}C^{u}_{k} \S M \otimes \si \overline {C^{u}_{n_{1}}}\S M'\otimes \cdots \otimes \si \overline {C^{u}_{n_{l}}}\S M',$$
We prove that $\Pu_{n}$ is $\M$-representable and conclude by Lemma \ref{lem:may} that $\P_{n}$ is as well.

We need to show that the $\M$-assembly map 
$$\lambda: (\widetilde {\Pu_{n}})_{\M}\to \Pu_{n}$$ 
(where we have dropped some decorations in the notation) admits a section $\sigma$.  In other words, for each $g:M\to M'$, we need to define a homomorphism of abelian groups
$$\sigma (g): \Pu_{n}(g)\to (\widetilde {\Pu_{n}})_{\M}(g),$$
natural in $g$, such that $\lambda (g)\circ \sigma (g) =\Id.$  Since $\Pu_{n}(g)$ is a free $R$-module, it suffices to define $\sigma (g)$ on generators, then to extend additively.

If $w\in \Pu_{n}(g)$ is a generator, then either 
\begin{enumerate}
\item there exists $x\in \S M_{n}$ such that $w=x\otimes 1$, or  
\item there exists $(k;n_{1},...,n_{l})\in \op K_{n}$, $x\in \S M_{k}$ and $y_{i}\in \S M'_{n_{i}}$, for $1\leq i \leq l$,  such that
$$w=x \otimes \si y_{1}|\cdots | \si y_{l}.$$
\end{enumerate}

\noindent \textbf{Case (1):}  Let $n'=||x||$ (cf. Notation  \ref{notn:absvalue}).  
\begin{itemize}
\item If $n'\geq 0$, then write $x=(p,p',\bar x)$. Let $(a_{w},b_{w})$ denote the following morphism in $\cat S$ from $\Id_{\Delta[n']}$ to $g$.
$$\xymatrix{\Delta[n']\ar[d]_{\Id_{\Delta[n']}}\ar [rrr] ^{\op Y(\bar x)}&&& M\ar [d]^g\\
\Delta[n']\ar [rrr] ^(0.6){g\circ \op Y (\bar x) }&&& M'.}$$
Define 
$$\sigma (g)(w)\in R\big[\cat S(\Id_{\Delta[n']}, g)\big] \otimes \Pu _{n}(\Id_{\Delta[n']})\subset (\widetilde {\Pu_{n}})_{\M}(g)$$
by 
$$\sigma (g)(w)
=(a_{w}, b_{w})\otimes \big((p,p',\epsilon_{n'})\otimes 1).
$$
\medskip
\item If $k'=-1$, then either $x=s_{0}^ka_{0}$ or $x=(j, s_{0}^{k-j}c_{0})$ for some $1\leq j\leq k$. Let $(a_{w}, b_{w})$ denote the following morphism in $\cat S$ from $\Id _{\emptyset}$ to $g$.
$$\xymatrix{\emptyset\ar[d]_{\Id_{\emptyset}}\ar [rr] ^{u_{M}}&& M\ar [d]^g\\
 \emptyset\ar [rr] ^(0.55){ u_{M'}}&& M'.}$$
 Define 
$$\sigma (g)(w)\in R\big[\cat S(\Id_{\emptyset}, g)\big] \otimes \Pu _{n}(\Id_{\emptyset})\subset (\widetilde {\Pu_{n}})_{\M}(g)$$
by 
$$\sigma (g)(w)
=(a_{w}, b_{w})\otimes \big(x\otimes 1\big).
$$
This definition makes sense because $x$ is equal to a simplex present in $\S M$ for all $M$.
\end{itemize}

\noindent \textbf{Case (2):}  Let $k'= ||x||$, $ n_{i}'=||y_{i}||$ for all $1\leq i\leq l$.  Without loss of generality, we may assume that there exists $l'\in [0,l]$ such that $n_{i}'\geq 0$ if $i\leq l'$ and $n_{i}'=-1$ if $i> l'$.  Let $\vec n'=(n_{1}',...,n_{l'}')$.

\begin{itemize}
\item If $k'\geq 0$, then write $x=(p,p',\bar x)$ and $y_{{j}}=(q_{j},q_{j}',\bar y_{{j}})$ for all $1\leq j\leq l'$.  Let $(a_{w},b_{w})$ denote the following morphism in $\cat S$ from $\vp_{k';\vec n'}$ to $g$.
$$\xymatrix{\Delta[k']\ar[d]_{\vp_{k';\vec m'}}\ar [rrr] ^{\op Y(\bar x)}&&& M\ar [d]^g\\
\Delta[k']\coprod \Delta [\vec n']\ar [rrr] ^(0.6){g\circ \op Y (\bar x) + \sum _{1\leq j\leq l'}\op Y(\bar y_{{j}})}&&& M'.}$$
Define 
$$\sigma (g)(w)\in R\big[\cat S(\vp_{k';\vec n'}, g)\big] \otimes \Pu _{n}(\vp_{k';\vec n'})\subset (\widetilde {\Pu_{n}})_{\M}(g)$$
by 
\begin{align*}
\sigma (g)&(w)\\
&=(a_{w}, b_{w})\otimes \big((p,p',\epsilon_{k'})\otimes \si (q_{1},q'_{1}, \epsilon_{n_{1}'})|\cdots | \si (q_{l'}, q_{l'}', \epsilon_{n'_{l'}})|\si y_{l'+1}|\cdots |\si y_{l}\big).
\end{align*}
This definition makes sense because $||y_{i}||=-1$ for all $i>l'$, i.e.,  for all $i>l'$, either $y_{i}=s_{0}^{n_{i}}a_{0}$ or $y_{i}=(k_{i}, s_{0}^{n_{i}-k_{i}}c_{0})$ for some $k_{i}$, which are simplices present in every double suspension.
\medskip

\item If $k'=-1$, then either $x=s_{0}^ka_{0}$ or $x=(j, s_{0}^{k-j}c_{0})$ for some $1\leq j\leq k$.   We separate this case into two further parts.
\begin{itemize}
\item  If $l'>0$, then write $y_{{j}}=(q_{j},q_{j}',\bar y_{{j}})$ for all $1\leq j\leq l'$, and let $(a_{w},b_{w})$ denote the following morphism in $\cat S$ from $\vp_{\vec m'}$ to $g$.
$$\xymatrix{\emptyset\ar[d]_{\vp_{\vec m'}}\ar [rrr] ^{u_{M}}&&& M\ar [d]^g\\
 \Delta [\vec n']\ar [rrr] ^(0.55){ \sum _{1\leq j\leq l'}\op Y(\bar y_{{j}})}&&& M'.}$$
Define 
$$\sigma (g)(w)\in R\big[\cat S(\vp_{\vec n'}, g)\big] \otimes \Pu _{n}(\vp_{\vec n'})\subset (\widetilde {\Pu_{n}})_{\M}(g)$$
by 
\begin{align*}
\sigma (g)&(w)\\
&=(a_{w}, b_{w})\otimes \big(x\otimes \si (q_{1},q'_{1}, \epsilon_{n_{1}'})|\cdots | \si (q_{l'}, q_{l'}', \epsilon_{n'_{l'}})|\si y_{l'+1}|\cdots |\si y_{l}\big).
\end{align*}
This definition makes sense because $||x||=-1=||y_{i}||$ for all $i>l'$ (cf. above).

\item If $l'=0$, then  for each $1\leq i\leq l$, either $y_{i}=s_{0}^ka_{0}$ or $y_{i}=(j, s_{0}^{k-j}c_{0})$ for some $1\leq j\leq k$.  Let $(a_{w}, b_{w})$ denote the following morphism in $\cat S$ from $\Id _{\emptyset}$ to $g$.
$$\xymatrix{\emptyset\ar[d]_{\Id_{\emptyset}}\ar [rr] ^{u_{M}}&& M\ar [d]^g\\
 \emptyset\ar [rr] ^(0.55){ u_{M'}}&& M'.}$$
 Define 
$$\sigma (g)(w)\in R\big[\cat S(\Id_{\emptyset}, g)\big] \otimes \Pu _{n}(\Id_{\emptyset})\subset (\widetilde {\Pu_{n}})_{\M}(g)$$
by 
$$\sigma (g)(w)=(a_{w}, b_{w})\otimes \big(x\otimes \si y_{1}|\cdots | \si y_{l}\big).
$$
This definition makes sense because $||x||=-1=||y_{i}||$ for all $i$ (cf. above).
\end{itemize}
\end{itemize}

It follows easily from the observations in Remarks \ref{rmk:trivial} and \ref{rmk:factor}  that we have defined $\sigma(g)$ exactly so that
$$\lambda(g)\circ \sigma(g)=\Id_{(\Pu)^n_{k,\vec m}(g)}. $$
The naturality of $\sigma$ is a consequence of Remark \ref{rmk:trivial} and of the observation that for any simplicial morphism $a:M\to M'$ and any $z\in M_{n}$,
$$\op Y\big(a(z)\big)=a\circ \op Y(z): \Delta [n] \to M'.$$
\end{proof}

\subsection{The proof of Theorem \ref{thm:bigthm}}

Theorem \ref{thm:bigthm} follows immediately from the next proposition, by induction on degree.  The base of the induction is trivial, since $\P_{<0}(g)=0$.

\begin{prop}  Under the hypotheses of Theorem \ref{thm:bigthm}, let $\Theta$ denote the natural chain contraction of $\xi$. Suppose that for some $n\geq 0$ and for all simplicial maps $g:M\to M'$, there is an $R$-linear map of degree $+1$
$$\widehat \Theta(g): \P _{<n}(g)\to \Q _{<n+1}(g)$$
that is natural in $g$, agrees with the given chain homotopy for $g=\Id_{\emptyset}$, extends $\Theta(M')_{<n}$, and satisfies
$$C_{*}p_{g}\circ \widehat \Theta (g)=0\quad \text{and}\quad d_{\Q}\widehat \Theta(g)+\widehat \Theta(g)d_{\P}=\widehat \xi (g)|_{\P_{<n}(g)}.$$
Then  there is a natural $R$-linear extension of $\widehat \Theta(g)$ to 
$$\widehat \Theta(g): \P_{\leq n}(g)\to \Q_{\leq n+1}(g),$$
agreeing with the given chain homotopy for $g=\Id_{\emptyset}$ and such that 
$$C_{*}p_{g}\circ \widehat \Theta (g)=0\quad \text{and} \quad d_{\Q}\widehat \Theta(g)+\widehat \Theta(g)d_{\P}=\widehat \xi (g)|_{\P_{\leq n}(g)}.$$
\end{prop}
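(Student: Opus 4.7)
The plan is to carry out a standard acyclic-models lifting: extend $\widehat\Theta(g)$ in the new degree $n$ by prescribing it on the generators of $\P_n(\vp)$ for each model $\vp \in \M$ and transporting the definition naturally to arbitrary $g$ via the $\M$-representability of $\P_n$. Concretely, fix once and for all a natural section $\sigma:\P_n \to \widetilde{(\P_n)}_\M$ of the $\M$-assembly map, whose existence is guaranteed by the representability proposition just proved. Lemma \ref{lem:lift-transf} then tells us that any natural choice of maps $\widehat\Theta(\vp):\P_n(\vp)\to \Q_{n+1}(\vp)$, one for each $\vp\in\M$, assembles to a natural transformation $\widehat\Theta:\P_n\to \Q_{n+1}$ of functors on $\cat S$ by the formula
\[
\widehat\Theta(g) \;=\; \lambda_{\Q_{n+1},\M}(g)\circ \widetilde{\widehat\Theta}_\M(g)\circ \sigma(g).
\]

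For each canonical generator $w\in \P_n(\vp)$, form the obstruction
\[
z_w \;=\; \widehat\xi(\vp)(w)\;-\;\widehat\Theta(\vp)(d_\P w)\;\in\; \Q_n(\vp),
\]
which is well defined because $d_\P w$ has degree $n-1$ and $\widehat\Theta(\vp)$ is already given on $\P_{<n}(\vp)$. A direct calculation from the inductive homotopy identity $d_\Q\widehat\Theta(\vp)+\widehat\Theta(\vp)d_\P = \widehat\xi(\vp)|_{\P_{<n}(\vp)}$, together with the fact that $\widehat\xi(\vp)$ is a chain map (in the intended application it is a difference of chain maps), yields $d_\Q z_w = 0$. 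The two hypotheses $C_*p_g\circ\widehat\xi(g) = 0$ and $C_*p_g\circ\widehat\Theta(g) = 0$ (the latter from the induction hypothesis) further force $C_*p_\vp(z_w)=0$, so $z_w$ is a cycle in $\ker C_*p_\vp$.

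For $\vp \in \M\setminus\{\Id_\emptyset\}$, Lemma \ref{lem:acyclic} then supplies an element $u_w\in \ker C_*p_\vp$ of degree $n+1$ with $d_\Q u_w = z_w$, and we set $\widehat\Theta(\vp)(w)= u_w$; this is exactly where acyclicity of the fiber kernel is used. For the special model $\vp=\Id_\emptyset$, the complex $\P(\Id_\emptyset)$ is concentrated in degrees $\le 1$ (as observed in the proof of Theorem \ref{thm:compat-gen}), so in degree $n\geq 2$ there is nothing to do, while in degrees $0$ and $1$ the hypothesis of the proposition already supplies the required chain homotopy. Moreover, whenever the generator $w$ lies in the cobar subcomplex $\Om C_*\S M'\subset \P(\vp)$ (which for the models $\vp_{\vec n}$ and $\vp_{0;\vec n}$ happens on the summands with $k=0$), we override the acyclicity lift and set $\widehat\Theta(\vp)(w) = \Theta(M')(w)$, using the given natural chain contraction of $\xi$; the inductive hypothesis that $\widehat\Theta$ extends $\Theta$ in lower degrees guarantees $d_\Q\Theta(M')(w) = z_w$, so this forced prescription is consistent with the homotopy equation.

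The main technical obstacle will be the bookkeeping needed to reconcile the two extension recipes: the generic acyclicity lift on non-cobar generators and the forced prescription by $\Theta(M')$ on the cobar summand. One must verify that the explicit section $\sigma$ constructed in the representability proposition sends cobar-type elements to cobar-type elements at the model level (this can be read off Case (2) of that proof, where the component of the model matching $\Om C_*\S M'$ is respected by $\sigma$), so that the two prescriptions never clash after transport to arbitrary $g$. Once this compatibility is in hand, naturality of $\widehat\Theta(g)$, the vanishing $C_*p_g\circ\widehat\Theta(g) = 0$, and the chain homotopy identity in degree $n$ all follow automatically from their model-level versions via $\sigma$, completing the inductive step.
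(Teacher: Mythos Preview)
Your approach is essentially the same acyclic-models lift the paper carries out: construct the extension on each model $\vp\in\M$ by choosing a bounding chain for the obstruction cycle inside $\ker C_*p_\vp$, then transport to arbitrary $g$ via the section $\sigma$ of the assembly map. Two remarks.

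First, the override step on the cobar summand, and the accompanying bookkeeping about whether $\sigma$ respects the cobar decomposition, are unnecessary for the proposition as stated. The conclusion does \emph{not} require the degree-$n$ extension to agree with $\Theta(M')$ on $\Om C_*\S M'$; that compatibility appears only in the inductive hypothesis and is never invoked in the paper's proof. Dropping the override removes the ``main technical obstacle'' you flag.

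Second, your final sentence (``the chain homotopy identity in degree $n$ \ldots follows automatically from the model-level version via $\sigma$'') hides exactly the computation the paper spells out. The point is that $d_\Q$, $d_\P$, $\widehat\xi$, and $\widehat\Theta$ in degree $<n$ are all natural, so Lemma~\ref{lem:lift-transf} gives $\lambda_\M(\widetilde{d_\Q})_\M=d_\Q\lambda_\M$, etc.; combining these with $\lambda_\M\sigma=\Id$ one checks directly that
\[
d_\Q\bigl(\lambda_\M\widetilde\Theta'\sigma\bigr)+\widehat\Theta\,d_\P=\widehat\xi
\]
on $\P_n$. This is straightforward but not tautological, and it is the actual content of the verification.
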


\begin{proof}   We begin by applying Lemma \ref{lem:lift-transf} to $\widehat \Theta: \P_{k-1}\to \Q_{k}$ for all $k\leq n$ and to  $\widehat \xi: \P_{k}\to \Q_{k}$, $d_{\P}:\P_{k}\to \P_{k-1}$ and $d_{\Q}:\Q_{k}\to \Q_{k-1}$ for all $k$, obtaining the following commuting diagrams of natural transformations. 
$$\xymatrix{(\widetilde\P_{k-1})_{\M}\ar [d]_{\lambda_{\M}}\ar [r]^{\widetilde\Theta_{\M}}& (\widetilde\Q_{k})_{\M}\ar [d]^{\lambda_{\M}}&&(\widetilde\P_{k})_{\M}\ar [d]_{\lambda_{\M}}\ar [r]^{\widetilde\xi_{\M}}& (\widetilde\Q_{k})_{\M}\ar [d]^{\lambda_{\M}}\\
\P_{k-1}\ar [r]^{\widehat \Theta}&\Q_{k}&&\P_{k}\ar [r]^{\widehat \xi}&\Q_{k}}$$
$$\xymatrix{(\widetilde\P_{k})_{\M}\ar [d]_{\lambda_{\M}}\ar [r]^{(\widetilde d_{\P})_{\M}}& (\widetilde\P_{k-1})_{\M}\ar [d]^{\lambda_{\M}}&&(\widetilde\Q_{k})_{\M}\ar [d]_{\lambda_{\M}}\ar [r]^{(\widetilde d_{\Q})_{\M}}& (\widetilde\Q_{k-1})_{\M}\ar [d]^{\lambda_{\M}}\\
\P_{k}\ar [r]^{d_{\P}}&\P_{k-1}&&\Q_{n}\ar [r]^{d_{\Q}}&\Q_{k-1}}$$
An easy calculation using the explicit definition of the lifted natural transformation in Lemma \ref{lem:lift-transf} shows that 
$$(\widetilde d_{\Q})_{\M}\widetilde\Theta_{\M} + \widetilde\Theta_{\M}(\widetilde d_{\P})_{\M} =\widetilde \xi: (\widetilde\P_{n-1})_{\M} \to (\widetilde\Q_{n-1})_{\M}$$
and that 
$$\im \widetilde \Theta _{\M}\subseteq \bigoplus_{\vp\in \M}R\big[\cat S(\vp, -)\big]\otimes \ker C_{*}p_{\vp}.$$

We now extend $\widetilde \Theta_{\M}$ over $(\widetilde\P_{n})_{\M}$, then use the representability of $\P_{n}$ to obtain the desired extension of $\widehat \Theta_{\M}$ over $\P_{n}$. The existence of the extension is an easy consequence of the acyclicity of $\ker C_{*}p_{\vp}$ for all $\vp \in \M \smallsetminus \{\Id_{\emptyset}\}$, together with the hypothesis that $\widehat \xi (Id_{\emptyset})$ is chain homotopic to 0.

We can thus construct $\widetilde \Theta ': (\widetilde\P_{n})_{\M}\to (\widetilde\Q_{n+1})_{\M}$ such that
$$(\widetilde d_{\Q})_{\M}\widetilde\Theta'_{\M} + \widetilde\Theta_{\M}(\widetilde d_{\P})_{\M} =\widetilde \xi:(\widetilde\P_{n})_{\M} \to (\widetilde\Q_{n})_{\M}.$$
Now extend $\widehat \Theta$ to degree $n$ by
$$\widehat \Theta=\lambda_{\M}\widetilde \Theta'\sigma: \P_{n}\to \Q_{n+1},$$
where $\sigma: \P_{n}\to (\widetilde\P_{n})_{\M}$ is a section of $\lambda_{\M}$, the existence of which follows from the representability of $\P_{n}$.  Observe first that $\im \widehat \Theta \subseteq \ker C_{*}p$, by naturality of $p$. The following sequence of equalities of morphisms from $\P_{n}$ to $\Q_{n}$ then enables us to conclude.
\begin{align*}
d_{\Q}\widehat \Theta +\widehat \Theta d_{\P}&=d_{\Q}\lambda_{\M}\widetilde \Theta'\sigma + \widehat \Theta d_{\P}\\
&=\lambda_{\M}(\widetilde d_{\Q})_{\M}\widetilde\Theta'\sigma + \widehat \Theta d_{\P}\\
&=-\lambda_{\M}\widetilde \Theta _{\M}(\widetilde d_{\P})_{\M}\sigma + \lambda _{\M}\widetilde \xi \sigma + \widehat \Theta d_{\P}\\
&=-\widehat \Theta \lambda _{\M}(\widetilde d_{\P})_{\M}\sigma + \widehat \xi \lambda _{\M}\sigma + \widehat \Theta d_{\P}\\
&=-\widehat \Theta d_{\P}\lambda _{\M}\sigma + \widehat \xi + \widehat \Theta d_{\P}\\
&=\widehat \xi.
\end{align*}
\end{proof}

\section{Various useful simplicial constructions}\label{sec:simpl-constr}

We recall here the simplicial constructions that play a crucial role in section \ref{sec:geometry}.

\subsection{Twisting}

\begin{defn} Let $K$ be a simplicial set and $G$ a simplicial group, 
where the neutral element in any dimension is noted $e$. A 
degree $-1$ map of graded sets $\tau :K\to G$ is a \emph{twisting 
function} if 
\begin{align*}
d _{0}\tau (x)&=\bigl(\tau (d_{0}x)\bigr)^{-1}\tau (d
_{1}x)\\
d _{i}\tau (x)&=\tau (d
_{i+1}x)\quad i>0\\
s_{i}\tau (x)&=\tau (s_{i+1}x)\quad i\geq 0\\
\tau (s_{0}x)&=e
\end{align*}
for all $x\in K$.
\end{defn}

\begin{rmk}\label{rmk:twisting-fcn} Let $K$ be a reduced simplicial set, and let $\G K$ denote the Kan simplicial loop group on $K$ \cite{may}.  Let $\bar x\in (\G K)_{n-1}$ denote a free group generator, corresponding to $x\in K_{n}$. There is a universal, canonical twisting function $\tau_{K}:K\to \G K$, given by $\tau_{K}(x)=\bar x$.
\end{rmk}

Given a twisting function $\tau :K\to G$, where $G$ operates on the left on a simplicial set $L$, 
we can construct a \emph{twisted cartesian product} of $K$ 
and $L$, denoted $K\times _{\tau}L$, which is a simplicial set such 
that
$(K\underset{\tau}\times L)_{n}=K_{n}\times L_{n}$, with faces and 
degeneracies given by
\begin{align*} 
d_{0}(x,y)&=(d _{0}x,\tau (x)\cdot d _{0}y)\\
d _{i}(x,y)&=(d _{i}x,d_{i}y)\quad i>0\\
s_{i}(x,y)&=(s _{i}x,s_{i}y)\quad i\geq 0.
\end{align*}
If $L$ is a Kan complex, then the projection $K\underset{\tau}\times L\to K$ is 
a Kan fibration \cite {may}.

\subsection{Classifying spaces}

\begin{defn} Let $G$ be a simplicial group, where the neutral element in each dimension is denoted $e$.  The \emph{Kan classifying space} of $G$ is the simplicial set $\overline \W G$ such that $\overline \W G_{0}=\{(\;)\}$ is a singleton and for all $n>0$,
$$\overline \W G_{n}=G_{0}\times \cdots \times G_{n-1},$$
with face maps given by
$$d_{i}(a_{0},...,a_{n-1})=\begin{cases} (a_{0},...,a_{n-2})&: i=0\\ 
(a_{0},...,a_{n-i-2}, a_{n-i-1}\cdot d_{0}a_{n-i}, d_{1}a_{n-i+1},..., d_{i-1}a_{n-1})&: 0<i<n\\ 
(d_{1}a_{1},...,d_{n-1}a_{n-1})&: i=n\end{cases}$$
and degeneracies given by $s_{0}\big ( (\;)\big)=(e)$, while
$$s_{i}(a_{0},...,a_{n-1})=\begin{cases} (a_{0},...,a_{n-1},e)&:i=0\\ (a_{0},...,a_{n-i-1}, e,s_{0}a_{n-i},...,s_{i-1}a_{n-1})&: 0<i<n\\(e, s_{0} a_{0},...,s_{n-1}a_{n-1})&: i=n.\end{cases}
$$
\end{defn}

\begin{rmk} For the sake of coherence with the definition of the totalization functor below, our definition of the Kan classifying space differs from that in \cite{may} by a permutation of the factors.
\end{rmk}

\begin{rmk}\label{rmk:couniv-twist} The classifying space $\overline \W G$ deserves its name, as homotopy classes of simplicial maps into $\overline \W G$ classify twisted cartesian products with fiber $G$.  The universal $G$-bundle is a twisted cartesian product 
$$G\hookrightarrow \overline \W G\underset {\upsilon_{G}}\times G \twoheadrightarrow \overline \W G,$$
where $\upsilon_{G}: \overline \W  G\to G$ is the natural (couniversal) twisting function defined by
$$\upsilon_{G} (a_{0},...,a_{n-1})=a_{n-1},$$
and $G$ acts on itself by left multiplication.
\end{rmk}

\begin{rmk}\label{rmk:eta-equiv} The classifying space functor $\overline \W $ is right adjoint to the Kan loop group functor $\G $. Furthermore, the unit map 
$\eta_{K}: K\to \overline \W \G K$,
which is defined by
$$x\in K_{n}\Longrightarrow \eta_{K}(x)=(\overline{d_{0}^{n-1}x}, \overline{d_{0}^{n-2}x}, ..., \overline x),$$
is a weak equivalence for all reduced simplicial sets $K$.
\end{rmk}

\begin{defn}  Let $G$ be a simplicial group, where the neutral element in each dimension is denoted $e$. The \emph{bar construction} (or \emph{nerve}) on $G$ is the bisimplicial set $\B G$ such that 
$$\B G_{p,q}=(G_{p})^{\times q},$$ 
with horizontal and vertical face maps
$$d^h_{i}=(d_{i})^{\times q}:\B G_{p,q}\to \B G_{p-1,q}$$
and
$$d^v_{i}:\B G_{p,q}\to \B G_{p,q-1}: (a_{1},...,a_{q})\mapsto\begin{cases} (a_{2},...,a_{q}) &: i=0\\ (a_{1},...,a_{i}\cdot a_{i+1},...,a_{q})&: 0< i<q\\ (a_{1},...,a_{q-1}) &: i=q\end{cases}$$
and horizontal and vertical degeneracies
$$s^h_{i}=(s_{i})^{\times q}:\B G_{p,q}\to \B G_{p+1,q}$$
and
$$s^v_{i}:\B G_{p,q}\to \B G_{p,q+1}: (a_{1},...,a_{q})\mapsto (a_{1},...,a_{i},e, a_{i+1},...,a_{q})$$
for all $0\leq i\leq q$.
\end{defn}

\begin{defn} Let $G$ be a simplicial group, where the neutral element in each dimension is denoted $e$. The \emph{cyclic bar construction on $G$}  is the bisimplicial set $\Z G$ such that 
$$\Z G_{p,q}=(G_{p})^{\times q}\times G_{p},$$ 
with horizontal and vertical face maps
$$d^h_{i}=(d_{i})^{\times q+1}:\Z G_{p,q}\to \Z G_{p-1,q}$$
and
{\smaller{$$d^v_{i}:\Z G_{p,q}\to \Z G_{p,q-1}: (a_{1},...,a_{q},b)\mapsto\begin{cases} (a_{2},...,a_{q},b\cdot a_{1}) &: i=0\\ (a_{1},...,a_{i}\cdot a_{i+1},...,a_{q},b)&: 0< i<q\\ (a_{1},...,a_{q-1}, a_{q}\cdot b) &: i=q\end{cases}$$}}

\noindent and horizontal and vertical degeneracies
$$s^h_{i}=(s_{i})^{\times q+1}:\Z G_{p,q}\to \Z G_{p+1,q}$$
and
$$ s^v_{i}:\Z G_{p,q}\to \Z G_{p,q+1}: (a_{1},...,a_{q},b)\mapsto (a_{1},...,a_{i},e, a_{i+1},...,a_{q},b)$$
for all $0\leq i\leq q$.
\end{defn}

\begin{notn} Let $\pi: \Z G\to \B G$ denote the obvious projection map.  Note that the fiber of $\pi$ over the basepoint (neutral element) in $\B G$ is isomorphic to  $\C G$, the bisimplicial set that is constant in the vertical direction, with $\C G_{p,q}=G_{p}$ for all $p,q$.
\end{notn}

\begin{rmk} There are obvious analogous constructions of the ordinary and cyclic bar constructions for  topological groups.  We use the same notation for these constructions.
\end{rmk}

\begin{defn}\cite{artin-mazur}\label{defn:artin-mazur}  The (Artin-Mazur) \emph{totalization functor}, $\tot$, from bisimplicial sets to simplicial sets is defined as follows on objects.  If $K$ is a bisimplicial set, then for all $n\geq 0$,
$$\tot(K)_n=\{(x_{0},...,x_{n})\in \prod_{i=0}^n K_{i,n-i}\mid d_{0}^v x_{i}=d^h_{i+1}x_{i+1} \quad \forall\; 0\leq i<n\}.$$
Faces and degeneracies are given by
$$d_{i}(x_{0},...,x_{n})=(d_{i}^vx_{0},...,d_{1}^vx_{i-1},d_{i}^hx_{i+1},...,d_{i}^hx_{n})$$
and 
$$s_{i}(x_{0},...,x_{n})=(s_{i}^vx_{0},...,s_{0}^vx_{i},s_{i}^hx_{i},...,s_{i}^hx_{n})$$
for all $0\leq i\leq n$.
\end{defn}

\begin{rmk}\label{rmk:totalize} It is very easy to prove that $\tot \C K\cong K$ for every simplicial set $K$.  A somewhat more difficult computation shows that $\tot \B G\cong \overline \W G$ as well, which has been known for some time \cite{cegarra-remedios}.  
\end{rmk}

\subsection{Suspensions}\label{app:susp}

In this article we work with both reduced and unreduced simplicial suspension functors.  For the definition of the \emph{reduced simplicial suspension}
$$\E : \cat {sSet}_{*}\to \cat {sSet}_{0},$$
 we refer the reader to \cite[Definition 27.6]{may} and recall only that   if $L$ is a pointed simplicial set, with basepoint $x_{0}$, then $(\E L)_{0}=\{a_{0}\}$, and for $n>0$, 
$$(\E L)_{n}=\{s_{0}^na_{0}\}\amalg\coprod_{1\leq k\leq n} \{k\} \times L_{n-k}/\sim,$$
where $(k, s_{0}^{n-k}x_{0})\sim s_{0}^na_{0}$ for all $1\leq k\leq n$.

A useful formulation of the \emph{unreduced simplicial suspension} 
$$\Eu:\cat {sSet}\to \cat {sSet}_{*}$$
can be derived from the unreduced cone construction $M\hookrightarrow \widetilde C M$ in \cite[Example 2.16]{wu}, by setting $\Eu M= \widetilde CM/M$.  In particular, if $M$ is any simplicial set, then $(\Eu M)_{0}=\{b_{0},c_{0}\}$, where $b_{0}$ is the basepoint, and $c_{0}$ is the cone point, and for $n>0$,
$$(\Eu M)_{n}=\{s_{0}^nb_{0}, s_{0}^n c_{0}\}\amalg\coprod_{1\leq k\leq n} \{k\} \times M_{n-k}.$$ 

\begin{rmk}  Let $L$ be a pointed simplicial set, and let $M$ be any simplicial set. An easy calculation shows that every positive-degree element of  either $C_{*}\E L$ or $C_{*}\Eu M$ is primitive.
\end{rmk} 

The \emph{double suspension functor} for unpointed simplicial sets is 
 $$\S=\E \Eu:\cat {sSet}\to \cat{sSet}_{0}.$$
 The description above of $\E $ and $\Eu$  implies that $(\S M)_{0}=\{a_{0}\}$, $(\S M)_{1}=\{s_{0}a_{0}, (1, c_{0})\}$,  and for $n>1$, 
$$(\S M)_{n}= \{s_{0}^na_{0}, (k,s_{0}^{n-k}b_{0}),(k,s_{0}^{n-k}c_{0})\mid 1\leq k\leq n \}\amalg \coprod_{k+l+m=n} \{(k,l)\} \times M_{m}/\sim,$$
where $(k, s_{0}^{n-k}b_{0})\sim s_{0}^na_{0}$ for all $1\leq k\leq n$.  In particular, if $M=\emptyset$, the empty simplicial set, then $\S \emptyset$ is a simplicial model for the circle,  with
$$(\S \emptyset)_{n}=\{s_{0}^na_{0}, (k,s_{0}^{n-k}c_{0})\mid 1\leq k\leq n \}$$ for all $n\geq 1$.  

\begin{notn}\label{notn:absvalue}  For any simplicial set $M$, let $||-||: \S M \to \mathbb Z$ denote the function defined by 
$$x\in M_{m}\Longrightarrow ||(k,l, x)||=m, \forall k,l,$$
and
$$||s_{0}^na_{0}||=-1=||(k, s_{0}^{n-k}c_{0})||, \forall n,k.$$
\end{notn}

\begin{rmk}\label{rmk:trivial} It is trivial, but nonetheless useful, to observe that for {any} simplicial map $g:M\to M'$, the induced map
$\S g: \S M\to \S M'$ satisfies $\S g(s_{0}^na_{0})=s_{0}^na_{0}$, $\S g(k,s_{0}^{n-k}c_{0})=(k,s_{0}^{n-k}c_{0})$ and $\S g(k,l,x)=\big(k,l,g(x)\big)$ for all $n$, $k$, $l$ and $x$.
\end{rmk}

\begin{notn}  For any $n\geq 0$, let $\epsilon_{n}$ denote the unique nondegenerate $n$-simplex of $\Delta[n]$.

If $M$ is a simplicial set, and $x\in M_{n}$, let $ \op Y(x):\Delta[n]\to M$ denote the simplicial map corresponding to $x$ under the Yoneda isomorphism, i.e., $\op Y(x)(\epsilon_{n})=x$.
\end{notn}

\begin{rmk} \label{rmk:factor} Observe that if $M$ is a simplicial set, and $(k,l,x)\in (\S M)_{n}$, then 
$$\xymatrix{\Delta [n]\ar[rr]^{\op Y{(k,l,x)}}\ar [dr]_{\op Y{(k,l,\epsilon_{m})}}&&\S M\\  &\S \Delta [m] \ar [ur]_{\S \op Y( x)}}$$
commutes, where $m=n-k-l$.  There are, moreover, factorizations
$$\xymatrix{\Delta [n]\ar[rr]^{\op Y{(s_{0}^na_{0})}}\ar [dr]_{\op Y{(s_{0}^na_{0})}}&&\S M\\  &\S \emptyset \ar [ur]_{\S u_{M}}}$$
and 
$$\xymatrix{\Delta [n]\ar[rr]^{\op Y{(k,s_{0}^{n-k}c_{0})}}\ar [dr]_{\op Y{(k,s_{0}^{n-k}c_{0})}}&&\S M\\  &\S \emptyset \ar [ur]_{\S u_{M}}},$$
 for all $0\leq k\leq n$, where $u_{M}$ is the unique simplicial map from the empty simplicial set $\emptyset$ to $M$.
\end{rmk}

 \bibliographystyle{amsplain}
\bibliography{powermap}
\end{document}